\definecolor{dullmagenta}{rgb}{0.4,0,0.4}
\definecolor{darkblue}{rgb}{0,0,0.4}
\newtheorem{theorem}{Theorem}[section]
\newtheorem{lemma}[theorem]{Lemma}
\newtheorem{proposition}[theorem]{Proposition}
\newtheorem{corollary}[theorem]{Corollary}
\theoremstyle{definition}
\newtheorem{definition}[theorem]{Definition}
\newtheorem{example}[theorem]{Example}
\newtheorem{remark}[theorem]{Remark}
\begin{document}

\title[Pairs of rings sharing their units]{Pairs of rings sharing their units}

\author[G. Picavet and M. Picavet]{Gabriel Picavet and Martine Picavet-L'Hermitte}
\address{Math\'ematiques \\
8 Rue du Forez, 63670 - Le Cendre\\
 France}
\email{picavet.gm(at) wanadoo.fr}

\begin{abstract} We are working in the category of commutative unital rings and denote by $\mathrm U(R)$ the group of units of a nonzero ring $R$. An extension of rings $R\subseteq S$, satisfying $\mathrm U(R)=R \cap\mathrm U(S)$ is usually called local. This paper is devoted to the study of ring extensions such that $\mathrm U(R)=\mathrm U(S)$, that we call strongly local. P. M. Cohn in a paper, entitled Rings with zero divisors, introduced some strongly local extensions. We generalized under the name Cohn's rings his definition and give a comprehensive study of these extensions. As a consequence, we give a constructive proof of his main result. Now Lequain and Doering studied strongly local extensions, where $S$ is semilocal, so that $S/\mathrm J(S)$, where $\mathrm J(S)$ is the Jacobson radical of $S$, is Von Neumann regular. These  rings are usually called $J$-regular. We establish many results on $J$-regular rings in order to get substantial results on strongly local extensions when $S$ is $J$-regular. The Picard group of a $J$-regular ring is trivial, allowing to evaluate the group $\mathrm U(S)/\mathrm U(R)$ when  $R$ is $J$-regular. We then are able to give a complete characterization of the Doering-Lequain context. A Section is devoted to examples. In particular, when $R$ is a field, the strongly local and weakly strongly inert properties  are equivalent.
\end{abstract} 

\subjclass[2010]{Primary:13B02;  Secondary: 13B25}

\keywords {Group of units, local extension, strongly local extension, $J$-regular ring,  integral extension, FCP extension. }

\maketitle

\section{Introduction and Notation} This paper deals with commutative unital rings and their morphisms. Any ring $R$ is supposed nonzero. We denote by $\mathrm U(R)$ the set of units of a ring $R$. We will call {\it strongly local} an extension of rings $R\subseteq S$ such that $\mathrm U(R) = \mathrm U(S)$, also termed as an SL extension. The reason why is that a ring extension $R\subseteq S$ is classically called local if $\mathrm U(R) = \mathrm U(S)\cap R$. Naturally these notions do not coincide and have a ring morphism version. If $R\to S$ is a ring morphism and $Q$ is a prime ideal of $S$ lying over $P$ in $R$, then $R_P\to S_Q$ is usually a called a local morphism of $R\to S$.

Our work  takes its origin in the reading of two papers.
One of them was written by P.M. Cohn \cite{ZCOHN}: for any ring $R$, he exhibits an SL  extension $R\subseteq R'$, such that any non unit of $R'$ is a zerodivisor. To prove his main result, Cohn introduces some special rings in a lemma. We have considered rings of the same vein. The idea is as follows: if $I$ is an ideal of a ring $R$, we define the ring $R/\! \! /I:= R[X]/XI[X]$ (where $X$ is an indeterminate over $R$). This notation may seem weird, but it explains that the ring $R/I$ is shifted. When $I$ is a semiprime ideal, we have an SL extension $R\subseteq R/\! \! /I$ with very nice properties. To have a better understanding, consider a field $R$ and $I=0$, we recover $R \subseteq R[X]$. We reprove Cohn's result and give a constructive proof, not using a transfinite induction.  But Cohn's ring is not necessarily the same as ours, by lack of unicity. All these considerations are developed in Section 6. In Section 7 we consider a ring morphism $R \to R\{X\}$ used by E. Houston in the context of Noetherian rings and their dimensions. We generalize his results and give a link with the rings $R/\! \! /M$, where $M$ is a maximal ideal of $R$. 

The other one was written by Doering and Lequain \cite{DL}. This paper deals with pair of semilocal rings sharing their group of units. A first observation is that for a semilocal ring $R$ with Jacosbon radical $J$, the ring $R/J$ is Von Neumann regular-absolutely flat, in which case the ring is called in the literature $\mathrm J$-regular. Note that units are closely linked to Jacobson radicals. In order to generalize Lequain-Doering's results in a substantial way, we were lead to study $\mathrm J$-regular rings and their behavior with respect to ring morphisms, a subject not treated in the literature. Actually for an extension $R\subseteq S$, there is an exact sequence of Abelian groups $1\to\mathrm U(R)\to\mathrm U(S)\to\mathcal I(R,S)\to\mathrm{Pic}(R)$, where $\mathcal I(R,S)$ is the group of $R$-submodules of $S$ that are invertible. Now if $R$ is $\mathrm J$-regular, its Picard group is zero, so that $\mathcal I(R,S)$ measures the defect of strongly localness of the extension. A first crucial result is that for an SL extension $R\subseteq S$, then $S$ is $\mathrm J$-regular if and only if $R$ is $\mathrm J$-regular and $R\subseteq S$ is integral seminormal. If these conditions hold, then $\mathrm J(R)=\mathrm J(S)$. This material is developed in Section 5. The paper culminates in Section 8 with a substantial result in the Doering-Lequain style: an extension $R\subseteq S$, where $S$ is a semilocal ring, is SL if and only if $R\subseteq S$ is a seminormal integral FIP extension, whose residual extensions are isomorphisms and such that $R/M\cong\mathbb Z/2 \mathbb Z$ for each $M\in\mathrm{MSupp}(S/R)$. Now Section 2 examines the behavior of local and SL extensions and many examples are provided. In Section 3 we give a list of extensions that are local. Section 4 explores the properties of SL extensions. We give in Section 9 a series of examples of SL extensions, for example strongly inert extensions. These examples show that it seems impossible to find a general criterion for the SL property, except in the semilocal case.

As an example, we build at the end of the paper a strongly local ring morphism $f:\mathbb F_2[X]/(X^3-1)\to \mathbb F_2[X]/(X^4-X)$.

If $R\subseteq S$ is a (ring) extension, we denote by $[R,S]$ the set of all $R$-subalgebras of $S$. 
  
We will mainly consider ring morphisms that are ring extensions. A property $(P)$ of ring morphisms $f:R\to S$ is called universal if for any base change $R\to R'$, the ring morphism $R'\to R'\otimes_R S$ verifies $(P)$. 
As usual, Spec$(R)$ and Max$(R)$ are the set of prime and maximal ideals of a ring $R$. We denote by $\kappa_R(P)$ the residual field $R_P/PR_P$ at a prime ideal $P$ of $ R$. If $R\subseteq S$ is a ring extension and $Q\in\mathrm{Spec}(S)$, there exists a residual field extension $\kappa_R(Q\cap R)\to\kappa_S(Q)$. 

A {\it (semi-)local} ring is  a ring with (finitely many maximal ideals) one maximal ideal. 
  For an extension $R\subseteq S$ and an ideal $I$ of $R$, we write $\mathrm{V}_S(I):=\{P\in\mathrm{Spec}(S)\mid I\subseteq P\}$.
  The support of an $R$-module $E$ is $\mathrm{Supp}_R(E):=\{P\in\mathrm{Spec}(R)\mid E_P\neq 0\}$, and $\mathrm{MSupp}_R(E):=\mathrm{Supp}_R(E)\cap\mathrm{Max}(R)$. When $R\subseteq S$ is an extension, we will set $\mathrm{Supp}(T/R):=\mathrm {Supp}_R(T/R)$ and $\mathrm{Supp}(S/T):=\mathrm{Supp}_R(S/T)$ for each $T\in [R,S]$, unless otherwise specified. 
  
For a ring $R$, we denote by $\mathrm Z(R)$ the set of all zerodivisors of $R$, by $\mathrm{Nil}(R)$ the set of nilpotent elements of $R$
and by $\mathrm J(R)$ its Jacobson  radical. The Picard group of a ring $R$ is denoted by $\mathrm{Pic}(R)$.
   
Now  $(R:S)$ is the conductor of $R\subseteq S$. The integral closure of $R$ in $S$ is denoted by $\overline R^S$ (or by $\overline R$ if no confusion can occur).

A ring extension $R\subseteq S$ is called an {\it i-extension} if the natural map $\mathrm{Spec}(S)\to\mathrm{Spec}(R)$ is injective. 

An extension $R\subseteq S$ is said to have FIP (for the ``finitely many intermediate algebras property") or is an FIP  extension if $[R,S]$ is finite. 
We also say that the extension $R\subseteq S$ has FCP (or is an FCP extension) if each chain in $[R,S]$ is finite, or equivalently, its lattice is Artinian and Noetherian. An FCP extension is finitely generated, and (module) finite if integral. 

In the case of an FCP extension $R\subseteq S$ where $S$ is semilocal, we will show in Theorem \ref{5.05} that there is a greatest $T\in[R,S]$ such that $R\subseteq T$ is SL, that is $R\subseteq T$ is a unique MSL-subextension.

Finally, $|X|$ is the cardinality of a set $X$, $\subset$ denotes proper inclusion and for a positive integer $n$, we set $\mathbb{N}_n:=\{1,\ldots,n\}$. If $R$ and $S$ are two isomorphic rings, we will write $R\cong S$. If $M$ and $N$ are $R$-modules, we write $M\cong_R N$ if $M$ and $N$ are isomorphic as $R$-modules. 

\section{First properties of (strongly) local extensions} 

Generalizing the definition of a local ring morphism between local rings, a ring morphism $f= R\to S$ is called {\it local} if $f^{-1}(\mathrm U(S)) =\mathrm U(R)$.  We recover the case of a local extension of local rings $f: (R,M) \to (S,N)$ where $f^{-1}(N)=M$. We will mainly be concerned by ring extensions $R \subseteq S$, and in this case the extension is local (reflects units) if  $ \mathrm{U}(R)= \mathrm{U}(S)\cap R$.

We call a ring extension $R\subseteq S$  {\it strongly local (SL)} if $\mathrm{U}(R)= \mathrm{U}(S)$. A strongly local extension is obviously local and a  ring morphism $f:R\to S$ is called {\it strongly local (SL)} if $f(\mathrm U(R))=\mathrm U(S)$. 

Note that if $f$ is a surjective local morphism then $f$ is SL. Indeed we always have $f (\mathrm U(R))\subseteq\mathrm U(S)$. Assume that $f$ is surjective and local, and let $y\in\mathrm U(S)$. There exists some $a\in R$ such that $y=f(a)\in\mathrm U(S)$. It follows that $a\in f^{-1}(\mathrm U(S))=\mathrm U(R)$, so that $y\in f(\mathrm U(R))$, giving $\mathrm U(S)\subseteq f(\mathrm U(R))$, and then $f(\mathrm U(R))=\mathrm U(S)$, that is $f$  is SL. 

A first example of local extension is given by an $R$-module $M$ and its Nagata extension $f: R \to R\oplus M$, where $f(x) = (x,0)$. A unit $(a,m)$ of $R\oplus M$ 
 is such that $a$ is a unit of $R$ and $(a,m)$ has an inverse  of the form $(a',n)$, where $a'=a^{-1}$
 and $n= -a'^2m$. But this extension is not  SL. 

The extension $\mathbb Z \subseteq \mathbb Z[2i] $ is SL.

If $R$ is a ring then the extension $R[X^2] \subseteq R[X]$ is local but not SL 
 in general. For example, if $a\in R$ is nilpotent, then $1+aX\in \mathrm{U}(R[X])$ but $1+aX\not\in \mathrm{U}(R[X^2])$.

 \begin{proposition}\label{3.0} An extension $R\subseteq S$ such that $R$ and $S$ have the same prime ideals is local and is trivial if it is SL. 
\end{proposition}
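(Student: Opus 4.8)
The plan is to read the hypothesis in its strongest literal sense: the prime ideals of $R$ and those of $S$ are the very same subsets of $S$, so that $\mathrm{Spec}(R)=\mathrm{Spec}(S)$ as posets under inclusion. The single observation that powers the whole argument is that the maximal ideals of a ring are exactly the maximal elements of its spectrum; hence $\mathrm{Max}(R)=\mathrm{Max}(S)$, and moreover each maximal ideal $M$ of $S$, being a prime ideal of $R$, is a subset of $R$.

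To obtain locality, I would invoke the elementary fact that an element of a ring is a unit if and only if it lies in no maximal ideal. Given $x\in\mathrm{U}(S)\cap R$, the element $x$ avoids every maximal ideal of $S$, hence every maximal ideal of $R$ since these coincide; as $x\in R$, this means $x\in\mathrm{U}(R)$. Combined with the always-valid inclusion $\mathrm{U}(R)\subseteq\mathrm{U}(S)\cap R$, this yields $\mathrm{U}(R)=\mathrm{U}(S)\cap R$, so the extension is local.

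For the final clause I would argue by contradiction. Suppose the extension is SL and pick $s\in S\setminus R$. Because every maximal ideal of $S$ is contained in $R$, an element lying outside $R$ can lie in no maximal ideal of $S$; thus $s$ avoids all of $\mathrm{Max}(S)$ and is therefore a unit of $S$. But SL gives $\mathrm{U}(S)=\mathrm{U}(R)\subseteq R$, forcing $s\in R$, a contradiction. Hence $S\setminus R=\emptyset$ and $R=S$.

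I do not expect a genuine obstacle here: once the right reduction is in place the proof is two lines. The only point to handle with care is the passage from ``$R$ and $S$ have the same prime ideals'' to ``every maximal ideal of $S$ is an honest ideal of $R$''; it is precisely this feature (which fails, for instance, for the Nagata extension $R\subseteq R\oplus M$, where the primes of the larger ring are not subsets of $R$, so that the extension is local but not SL) that turns every element of $S\setminus R$ into a unit of $S$ and makes strong localness collapse the extension. The guiding example is a field extension $k\subseteq L$, whose only prime is $\{0\}\subseteq k$: it is local, and it is SL exactly when $k=L$.
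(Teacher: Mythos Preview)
Your proof is correct and follows essentially the same line as the paper's: both hinge on the fact that the maximal ideals of $S$ lie inside $R$, so that an element of $S\setminus R$ must be a unit of $S$, which is impossible under SL. The one genuine difference is that the paper first invokes \cite[Proposition 3.3]{AD} to conclude that $R$ and $S$ are \emph{local} rings sharing a single maximal ideal $M$, and then finishes via the partition $S=M\cup\mathrm{U}(S)=M\cup\mathrm{U}(R)=R$. You bypass that citation entirely: your observation that $\mathrm{Spec}(R)=\mathrm{Spec}(S)$ as posets forces $\mathrm{Max}(R)=\mathrm{Max}(S)$, and this alone (without ever knowing there is just one maximal ideal) suffices for both conclusions. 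So your argument is self-contained and marginally more general in spirit, while the paper's is terser once the external reference is granted.
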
 
\begin{proof} We know that $R$ and $S$ are local rings with the same maximal ideal   \cite[Proposition 3.3]{AD}. Let $M$ be this common maximal ideal. As usual, we have $\mathrm{U}(R)\subseteq\mathrm{U}(S)\cap R$. Let $x\in\mathrm{U}(S)\cap R$. Then, $x\not\in P$, for any $P\in\mathrm{Spec}(S)=\mathrm{Spec}(R)$. Since $x\in R$, it follows that $x\in\mathrm{U}(R)$, so that $\mathrm{U}(R)=\mathrm{U}(S)\cap R$ and $R\subseteq S$ is local. If $R\subseteq S$ is SL, that is $\mathrm{U}(R)=\mathrm{U}(S)$, we have $R=M\cup\mathrm{U}(R)=M\cup\mathrm{U}(S)=S$.
\end{proof}

See \cite{AD} for examples and also PVD and $D+M$ construction.

 \begin{proposition}\label{3.1} Let $f:R\to S$ and $g:S\to T$ be two ring morphisms.  
\begin{enumerate}
\item If $f$ and $g$ are (strongly) local, so is $g\circ f$.
\item If $g\circ f$ is local, so is $f$. In case $f$ is surjective, then $g$ is local.
\item If $g\circ f$ is SL, so is $g$. In case $g$ is injective, then $f$ is SL.
\end{enumerate}
\end{proposition}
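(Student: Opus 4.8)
The plan is to reduce every assertion to two elementary facts valid for \emph{any} ring morphism $\varphi:A\to B$: the image of a unit is a unit, so $\varphi(\mathrm U(A))\subseteq\mathrm U(B)$; equivalently, $\mathrm U(A)\subseteq\varphi^{-1}(\mathrm U(B))$. Once these automatic inclusions are fixed, each of the three parts becomes a short diagram chase, and the only genuine bookkeeping is to track which inclusion comes for free and which one consumes a hypothesis.

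For part (1) I would compute the relevant preimage and image directly. Locality of $g\circ f$ is read off from the identity $(g\circ f)^{-1}(\mathrm U(T))=f^{-1}(g^{-1}(\mathrm U(T)))$: substituting $g^{-1}(\mathrm U(T))=\mathrm U(S)$ (locality of $g$) and then $f^{-1}(\mathrm U(S))=\mathrm U(R)$ (locality of $f$) gives the claim. The SL case is the dual computation $(g\circ f)(\mathrm U(R))=g(f(\mathrm U(R)))=g(\mathrm U(S))=\mathrm U(T)$, using the SL hypotheses on $f$ and then on $g$.

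Parts (2) and (3) are the more interesting direction, where one factor is deduced from the composite. For (2), to see that $f$ is local I would take $x$ with $f(x)\in\mathrm U(S)$, push it through $g$ (a unit stays a unit) to get $(g\circ f)(x)\in\mathrm U(T)$, and invoke locality of $g\circ f$ to conclude $x\in\mathrm U(R)$; combined with the free inclusion this yields $f^{-1}(\mathrm U(S))=\mathrm U(R)$. For the second claim of (2), surjectivity of $f$ lets me lift any $y\in g^{-1}(\mathrm U(T))$ to some $x\in R$ with $f(x)=y$, run the same argument to land $x\in\mathrm U(R)$, and then $y=f(x)\in\mathrm U(S)$. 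Part (3) is the SL analogue: from $\mathrm U(T)=(g\circ f)(\mathrm U(R))=g(f(\mathrm U(R)))\subseteq g(\mathrm U(S))$ I obtain surjectivity of $g$ on units; and when $g$ is injective, given $y\in\mathrm U(S)$ the element $g(y)\in\mathrm U(T)$ equals some $(g\circ f)(u)=g(f(u))$ with $u\in\mathrm U(R)$, and cancelling the injective $g$ forces $y=f(u)\in f(\mathrm U(R))$.

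I do not expect any serious obstacle: the content is entirely formal. The one point deserving care is that the extra hypotheses in (2) and (3) are not interchangeable and are each used in exactly one spot — surjectivity of $f$ is what realizes a unit of $T$ as coming from $R$ in (2), while injectivity of $g$ is what lets one cancel $g$ to transfer an equality of images back to $S$ in (3). Recognizing that the ``free'' inclusion always points the same way ($\mathrm U(\cdot)$ into the preimage, image into $\mathrm U(\cdot)$) is what keeps the several separate chases from getting tangled.
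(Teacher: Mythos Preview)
Your proposal is correct and mirrors the paper's proof almost verbatim: the paper also computes $(g\circ f)^{-1}(\mathrm U(T))=f^{-1}(g^{-1}(\mathrm U(T)))$ and $(g\circ f)(\mathrm U(R))=g(f(\mathrm U(R)))$ for part (1), and for parts (2) and (3) it runs exactly the same element chases you describe, using surjectivity of $f$ to lift and injectivity of $g$ to cancel in the respective second claims. There is no difference in approach to comment on.
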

\begin{proof} (1) Assume that $f$ and $g$ are local. Then, $f^{-1}(\mathrm U(S))=\mathrm U(R)$ and $g^{-1}(\mathrm U(T))=\mathrm U(S)$, so that $(g\circ f)^{-1}(\mathrm U(T))=f^{-1}[g^{-1}(\mathrm U(T))]=f^{-1}(\mathrm U(S))=\mathrm U(R)$, so that $g\circ f$ is local.

Assume that $f$ and $g$ are SL. Then, $f(\mathrm U(R))=\mathrm U(S)$ and $g (\mathrm U(S))=\mathrm U(T)$, so that $(g\circ f)(\mathrm U(R))=g[f(\mathrm U(R))]=g(\mathrm U(S))=\mathrm U(T)$, giving that $g\circ f$ is  SL.

(2) Assume that $g\circ f$ is local. Then, $(g\circ f)^{-1}(\mathrm U(T))=\mathrm U(R)$, so that $f^{-1}[g^{-1}(\mathrm U(T))]=\mathrm U(R)$. Obviously, $\mathrm U(R)\subseteq f^{-1}(\mathrm U(S))$. Let $x\in f^{-1}(\mathrm U(S))$. It follows that $f(x)\in \mathrm U(S)$, whence $(g\circ f)(x)=g[f(x)]\in g(\mathrm U(S))\subseteq\mathrm U(T)$. To end, $x\in(g\circ f)^{-1}(\mathrm U(T))=\mathrm U(R)$ and $f^{-1}(\mathrm U(S)\subseteq \mathrm U(R)$, giving $f^{-1}(\mathrm U(S))=\mathrm U(R)$ and $f$ is local.

Assume that, moreover, $f$ is surjective. Obviously, $\mathrm U(S)\subseteq g^{-1}(\mathrm U(T))$. Let $y\in g^{-1}(\mathrm U(T))$, so that $g(y)\in\mathrm U(T)\ (*)$. But, $y\in S$ and $f$ surjective imply that there exists $x\in R$ such that $y=f(x)$. By $\ (*)$, we get $(g\circ f)(x)=g[f(x)]=g(y)\in\mathrm U(T)$, 
from which it follows that 
 $x\in(g\circ f)^{-1}(\mathrm U(T))=\mathrm U(R)$ and $y=f(x)\in f(\mathrm U(R))\subseteq\mathrm U(S)$. To end, $\mathrm U(S)= g^{-1}(\mathrm U(T))$ and $g$ is local.
 
(3) Assume that $g\circ f$ is SL. Then, $(g\circ f)(\mathrm U(R))=\mathrm U(T)$. Obviously, $g[\mathrm U(S)]\subseteq\mathrm U(T)$. Let $y\in\mathrm U(T)$. There exists $x\in\mathrm U(R)$ such that $y=(g\circ f)(x)=g[f(x)]\in g[f(\mathrm U(R))]\subseteq g[\mathrm U(S)]$, which gives $\mathrm U(T)\subseteq g[\mathrm U(S)]$ and $g[\mathrm U(S)]=\mathrm U(T)$. Then,  $g$ is SL.

Assume that, moreover, $g$ is injective. Obviously, $f[\mathrm U(R)]\subseteq\mathrm U (S)$. Let $z\in\mathrm U(S)$. Then, $g(z)\in\mathrm U(T)=(g\circ f)(\mathrm U(R))$, so that there exists $x\in\mathrm U(R)$ such that $g(z)=(g\circ f)(x)=g[f(x)]$. Since $g$ is injective, it follows that $z=f(x)\in f[\mathrm U(R)]$ and $\mathrm U(S)\subseteq f[\mathrm U(R)]$. To end, $\mathrm U(S)= f(\mathrm U(R))$ and $f$ is  SL.
\end{proof}

\begin{remark}\label{3.11} Let $f:R\to S$  be a ring morphism. Since the ring morphism $R/\ker(f)\to S$ associated to $f$ is injective and the canonical ring morphism $R\to R/\ker(f)$ is surjective, we may consider the extension $R/\ker(f)\subseteq S$. Then, Proposition \ref{3.1} shows that $f$ is (strongly) local if and only if $R\to R/\ker(f)$ and $R/\ker(f)\to S$ are (strongly) local. 
\end{remark}

\begin{corollary}\label{3.10} Let $R\subseteq S\subseteq T$ be a tower of extensions.  
\begin{enumerate}
\item $R\subseteq T$ is SL if and only if $R\subseteq S$ and $S\subseteq T$ are SL.
\item If $R\subseteq S$ and $S \subseteq T$ are  local, then $R\subseteq  T$ is  local.
\item If $R\subseteq  T$ is  local, then $R\subseteq  S$ is  local.
\end{enumerate}
\end{corollary}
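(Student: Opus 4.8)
The plan is to deduce all three items from Proposition \ref{3.1} by specializing $f$ and $g$ to the inclusion maps $f\colon R\hookrightarrow S$ and $g\colon S\hookrightarrow T$, whose composite $g\circ f$ is the inclusion $R\hookrightarrow T$. The point to keep in mind throughout is that, because we are dealing with a tower of extensions, both $f$ and $g$ are injective; this is exactly what activates the converse clauses of Proposition \ref{3.1}(3).

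For (1) I would prove the two implications separately. The backward direction is immediate: if $R\subseteq S$ and $S\subseteq T$ are SL then $f$ and $g$ are SL, so $g\circ f$ is SL by Proposition \ref{3.1}(1), i.e. $R\subseteq T$ is SL. For the forward direction, assume $g\circ f$ is SL. Proposition \ref{3.1}(3) at once gives that $g$ is SL, hence $S\subseteq T$ is SL; and since $g$ is injective, the same part of the proposition yields that $f$ is SL, hence $R\subseteq S$ is SL.

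Parts (2) and (3) are then direct instances. For (2), locality of $f$ and $g$ gives locality of $g\circ f$ by Proposition \ref{3.1}(1). For (3), locality of $g\circ f$ gives locality of $f$ by Proposition \ref{3.1}(2); note that no injectivity hypothesis is needed here, so the statement holds as written.

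There is essentially no obstacle: the entire content is packaged in Proposition \ref{3.1}, and the corollary is simply its transcription to the extension setting. The only subtlety worth flagging is that the SL statement in (1) is a genuine equivalence precisely because the inclusion $g$ is injective---without that, Proposition \ref{3.1}(3) would force SL-ness of the composite to propagate to $g$ but not to $f$. One may also observe that (1) has no full ``local'' analogue: the corresponding tower statements for locality split into the one-directional implications (2) and (3), mirroring the asymmetry already visible in Proposition \ref{3.1}(2).
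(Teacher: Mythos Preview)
Your proposal is correct and follows exactly the approach of the paper, which simply writes ``Obvious by Proposition \ref{3.1}.'' Your elaboration correctly identifies that the injectivity of $g$ is what makes the forward implication in (1) work via Proposition \ref{3.1}(3), and your remarks on the asymmetry between the local and SL cases are accurate.
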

\begin{proof} Obvious by Proposition \ref{3.1}.
\end{proof}

\begin{proposition}\label{3.2} If $\{R_i\subseteq S_i\}_{i\in I}$ is a family of extensions, then $\Pi_{i\in I}R_i\subseteq\Pi_{i\in I}S_i$ is a (strongly) local extension if and only if all the elements of the family are (strongly) local.
\end{proposition}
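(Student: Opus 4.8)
The plan is to reduce the statement to the elementary fact that units of a direct product of rings are computed componentwise. Writing $R := \prod_{i\in I} R_i$ and $S := \prod_{i\in I} S_i$, an element $(x_i)_{i\in I}$ of $R$ (resp. of $S$) is invertible if and only if each coordinate $x_i$ is invertible in $R_i$ (resp. in $S_i$), since a coordinatewise inverse exists exactly when every coordinate admits one. Hence $\mathrm U(R)=\prod_{i\in I}\mathrm U(R_i)$ and $\mathrm U(S)=\prod_{i\in I}\mathrm U(S_i)$, both viewed as subsets of $S$ via the inclusions $\mathrm U(R_i)\subseteq\mathrm U(S_i)\subseteq S_i$. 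This is the only ring-theoretic input; the rest is set-theoretic bookkeeping over the index set.

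First I would record the auxiliary observation that if $\{A_i\}_{i\in I}$ and $\{B_i\}_{i\in I}$ are families of nonempty sets with $A_i\subseteq B_i$ inside a fixed ambient product, then $\prod_{i\in I}A_i=\prod_{i\in I}B_i$ holds if and only if $A_i=B_i$ for every $i$. The nontrivial direction is proved by fixing an index $j$ and an element $b\in B_j$, choosing (here the nonemptiness is used) some $a_i\in A_i$ for each $i\neq j$, and testing the tuple that equals $b$ in coordinate $j$ and $a_i$ elsewhere: its membership in $\prod_{i\in I}A_i$ forces $b\in A_j$. Since every group of units contains the identity, each $\mathrm U(R_i)$ is nonempty, so this lemma applies in our situation.

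For the strongly local case I would then translate $\mathrm U(R)=\mathrm U(S)$ into $\prod_{i\in I}\mathrm U(R_i)=\prod_{i\in I}\mathrm U(S_i)$ and invoke the auxiliary observation with $A_i=\mathrm U(R_i)$ and $B_i=\mathrm U(S_i)$: equality is equivalent to $\mathrm U(R_i)=\mathrm U(S_i)$ for all $i$, that is, to each extension $R_i\subseteq S_i$ being SL. For the local case I would first verify the compatibility $\mathrm U(S)\cap R=\prod_{i\in I}(\mathrm U(S_i)\cap R_i)$, which is immediate since an element of $S$ lies in $R$ (resp. in $\mathrm U(S)$) precisely when each of its coordinates does. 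Setting $A_i=\mathrm U(R_i)$ and $B_i=\mathrm U(S_i)\cap R_i$, and using $\mathrm U(R_i)\subseteq\mathrm U(S_i)\cap R_i$, the auxiliary observation yields that $\mathrm U(R)=\mathrm U(S)\cap R$ is equivalent to $\mathrm U(R_i)=\mathrm U(S_i)\cap R_i$ for all $i$, i.e. to each $R_i\subseteq S_i$ being local.

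The only genuinely delicate point is the auxiliary set-theoretic lemma and, in particular, the role of nonemptiness of each factor: without it the implication from equality of products to equality of factors can fail, so it is essential to note that a group of units is always nonempty. Everything else is a direct unwinding of definitions, and the argument is uniform in the cardinality of $I$.
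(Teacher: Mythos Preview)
Your proof is correct and complete. The paper itself states Proposition~\ref{3.2} without proof, evidently regarding the componentwise description of units in a product as rendering the result immediate; your argument supplies the details the authors omit, and in particular your care with the nonemptiness hypothesis in the set-theoretic lemma is the one point where a hasty write-up could go wrong.
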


\begin{proposition}\label{3.21} Consider a pullback square in the category of commutative rings:
$$\begin{matrix}
        R         & \to &        S          \\
\downarrow &  {}  & \downarrow \\
       R'         & \to &     S'         
\end{matrix}$$
 whose horizontal maps are extensions. Then if $R'\subseteq S'$ is (strongly) local, so is $R\subseteq S$.
\end{proposition}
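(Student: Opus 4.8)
The plan is to unwind the pullback into a concrete description and then run a short diagram chase on units. Write $\beta\colon S\to S'$ for the right-hand vertical map and $\alpha\colon R\to R'$ for the left-hand one. By the universal property the corner $R$ is the fiber product $S\times_{S'}R'=\{(s,r')\mid\beta(s)=r'\}$, where I identify $R'$ with its image under the bottom extension $R'\subseteq S'$. Since $R'\subseteq S'$ is injective, the projection $R\to S$, $(s,r')\mapsto s$, is injective, so the top map is indeed an extension, and its image is exactly $\beta^{-1}(R')=\{s\in S\mid\beta(s)\in R'\}$. Thus I would first record the identification $R=\beta^{-1}(R')\subseteq S$, with $R\subseteq S$ the inclusion and, by commutativity of the square, $\alpha=\beta|_R$ (so $\alpha(x)=\beta(x)$ for $x\in R$).

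Next I would note that for any extension one has $\mathrm{U}(R)\subseteq\mathrm{U}(S)\cap R\subseteq\mathrm{U}(S)$ automatically, so in both cases only a reverse inclusion requires proof. For the strongly local case, take $u\in\mathrm{U}(S)$; applying the ring map $\beta$ gives $\beta(u)\in\mathrm{U}(S')$, and the hypothesis $\mathrm{U}(R')=\mathrm{U}(S')$ forces $\beta(u)\in\mathrm{U}(R')$. Then $\beta(u)$ and $\beta(u^{-1})=\beta(u)^{-1}$ both lie in $R'$, so both $u$ and $u^{-1}$ lie in $\beta^{-1}(R')=R$; hence $u\in\mathrm{U}(R)$, giving $\mathrm{U}(S)\subseteq\mathrm{U}(R)$ and therefore $\mathrm{U}(R)=\mathrm{U}(S)$. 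The local case is the same chase with a restricted starting point: for $x\in\mathrm{U}(S)\cap R$ one has $\beta(x)\in\mathrm{U}(S')\cap R'=\mathrm{U}(R')$ (using that $R'\subseteq S'$ is local together with $\beta(x)=\alpha(x)\in R'$), and then $x,x^{-1}\in\beta^{-1}(R')=R$ yields $x\in\mathrm{U}(R)$, so $\mathrm{U}(S)\cap R=\mathrm{U}(R)$.

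The only genuinely delicate point is the opening step: one must confirm that the pullback is carried by $\beta^{-1}(R')$ and that both horizontal maps being extensions forces the top projection to be injective, so that $R\subseteq S$ is an honest extension rather than merely a morphism. Once this identification is in place the argument is purely formal, resting on the two facts that a ring homomorphism sends units to units and that membership in $R$ is detected by $\beta$ landing in $R'$; no further obstacle arises, and the parenthetical ``(strongly)'' is handled uniformly by the two parallel chases above.
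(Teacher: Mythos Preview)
Your proof is correct and follows essentially the same approach as the paper's: both arguments push a unit of $S$ down to $S'$ via the vertical map, invoke the (strongly) local hypothesis on $R'\subseteq S'$ to see that the image and its inverse lie in $R'$, and then pull back to conclude that the unit and its inverse lie in $R$. The only difference is presentational: you first make the identification $R=\beta^{-1}(R')\subseteq S$ explicit and then run the chase inside $S$, whereas the paper works directly with elements of the fiber product as pairs $(a,b)\in S\times R'$; your formulation is a bit cleaner, but the content is the same.
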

\begin{proof} Assume first that $R'\subseteq S'$ is local, so that $\mathrm {U}(R')=\mathrm{U}(S')\cap R'$. As $\mathrm{U}(R)\subseteq\mathrm{U}(S)\cap R$, it is enough to show that $\mathrm{U}(S)\cap R\subseteq\mathrm{U}(R)$. Let $f:S\to S'$ and $a\in\mathrm{U}(S)\cap R$. Then, $ b:=f(a)\in\mathrm{U}(S')\cap R'=\mathrm{U}(R')$ is a unit in $R'$. Let $g: R\to R'$ and consider $x=(a,b)=(a,f(a))\in R$ with $b=g(x)=f(a)$. Set $a':= a^{-1}\in\mathrm{U}(S)$ and $b':=f(a')$. It follows that $b'\in\mathrm{U}(S')$ satisfies $bb '=1$ in $S'$ and is the (unique) inverse of $b$, so that $b'\in R'$. Set $x':=(a',b')$, with $ b'=g(x')=f(a')$. Then, $xx'=(aa',bb')=1$, so that $x\in\mathrm{U}(R)$. It follows that for any $a\in\mathrm{U}(S)\cap R$, there exists a unique $x=(a,f(a))\in\mathrm{U}(R)$, 
from which we can infer that
 $\mathrm{U}(S)\cap R\subseteq\mathrm{U}(R)$ and $R\subseteq S$ is local.

Assume now that $R'\subseteq S'$ is SL, so that $\mathrm{U}(R')=\mathrm{U}(S')$. The proof is similar, taking $a\in\mathrm{U}(S)$ instead of $a\in\mathrm{U}(S)\cap R$ and taking in account that $\mathrm{U}(S')=\mathrm{U}(R') $. Then   $R\subseteq  S$ is  SL.
\end{proof}

 \begin{corollary}\label{5.1} Let $R\subseteq S$ be an extension sharing an ideal $I$ such that $R/I\subseteq S/I$ is (strongly) local. Then $R\subseteq S$ is (strongly) local.
 \end{corollary}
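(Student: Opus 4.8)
The plan is to recognize Corollary \ref{5.1} as a direct instance of Proposition \ref{3.21}. Saying that $R\subseteq S$ share the ideal $I$ means that $I\subseteq R$ and $SI\subseteq I$, so that $I$ is simultaneously an ideal of $R$ and of $S$; in particular the canonical map $R/I\to S/I$ is injective, hence an extension. I would therefore assemble the commutative square
$$\begin{matrix}
R & \to & S \\
\downarrow & {} & \downarrow \\
R/I & \to & S/I
\end{matrix}$$
whose horizontal arrows are the extensions $R\subseteq S$ and $R/I\subseteq S/I$, and whose vertical arrows are the quotient maps.

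The key --- and essentially only --- step is to check that this square is a pullback, i.e. that $R=S\times_{S/I}(R/I)$. Since $R/I\to S/I$ is injective, the fibre product identifies with the set of those $s\in S$ whose image in $S/I$ lies in the subring $R/I$. But the image of $s$ lies in $R/I$ exactly when $s\in R+I$, and since $I\subseteq R$ we have $R+I=R$; thus the fibre product is precisely $R$, the two structural maps being the inclusion $R\hookrightarrow S$ and the quotient $R\to R/I$. This is the heart of the matter, and it is immediate once the meaning of ``sharing $I$'' is unwound.

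With the pullback in place, Proposition \ref{3.21} applies verbatim: taking $R'=R/I$ and $S'=S/I$, the hypothesis that the bottom extension $R'\subseteq S'$ be (strongly) local is exactly what we are given, and the conclusion is that the top extension $R\subseteq S$ is (strongly) local. I do not expect any genuine obstacle, since the only nontrivial point is the fibre-product identification above; should one prefer to avoid Proposition \ref{3.21}, the same statement can be obtained directly by lifting a unit of $S$ through the quotient $S\to S/I$ and using uniqueness of inverses in $S/I$ to pull both the element and its inverse back into $R=R+I$, but routing through the pullback is cleaner and reuses machinery already established.
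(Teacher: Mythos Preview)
Your proposal is correct and follows exactly the paper's approach: the paper's proof is simply ``Obvious by Proposition \ref{3.21},'' and you have supplied the routine verification that the commutative square with vertical quotient maps is a pullback. The fibre-product identification you spell out is precisely the ``obvious'' part the paper omits.
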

\begin{proof} Obvious by Proposition \ref{3.21}.
\end{proof}

\begin{proposition}\label{3.22} Let $\{R\subseteq S_i\}_{i\in I}$ be an upward directed family of (strongly) local extensions. Then so is $R\subseteq\cup[S_i\mid i\in I]$.
\end{proposition}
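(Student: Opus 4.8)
The plan is to reduce everything to the single observation that, for an upward directed family, taking units commutes with the directed union, i.e. $\mathrm{U}(\bigcup_i S_i) = \bigcup_i \mathrm{U}(S_i)$. First I would set $S := \bigcup[S_i \mid i \in I]$ and recall that upward directedness guarantees that $S$ is a ring containing $R$: any two elements lie in a common $S_k$, so sums and products are computed inside some member of the family. Thus $R \subseteq S$ is a genuine extension and it makes sense to ask whether it is (strongly) local. I would also record at the outset the always-true inclusions $\mathrm{U}(R) \subseteq \mathrm{U}(S) \cap R$ (local case) and $\mathrm{U}(R) \subseteq \mathrm{U}(S)$ (SL case), so that only the reverse inclusions remain.

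Next I would prove the key lemma $\mathrm{U}(S) = \bigcup_i \mathrm{U}(S_i)$. The inclusion $\bigcup_i \mathrm{U}(S_i) \subseteq \mathrm{U}(S)$ is immediate: if $x \in \mathrm{U}(S_i)$, its inverse already lies in $S_i \subseteq S$. For the reverse inclusion, take $x \in \mathrm{U}(S)$ with inverse $y \in S$; then $x \in S_i$ and $y \in S_j$ for some $i,j$, and by upward directedness there is $k \in I$ with $S_i, S_j \subseteq S_k$, so that $x$ and $y$ both lie in $S_k$ and the relation $xy = 1$ forces $x \in \mathrm{U}(S_k)$. This is the only place where directedness is genuinely used beyond making $S$ a ring, and it is the step I expect to be the crux: one must merge the index carrying a unit and the index carrying its inverse into a single member of the family, and without directedness this fails.

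With the lemma in hand the two cases follow by a short computation. In the SL case I would simply write $\mathrm{U}(S) = \bigcup_i \mathrm{U}(S_i) = \bigcup_i \mathrm{U}(R) = \mathrm{U}(R)$, using the hypothesis $\mathrm{U}(S_i) = \mathrm{U}(R)$ for every $i$. In the local case I would intersect with $R$: $\mathrm{U}(S) \cap R = \big(\bigcup_i \mathrm{U}(S_i)\big) \cap R = \bigcup_i (\mathrm{U}(S_i) \cap R) = \bigcup_i \mathrm{U}(R) = \mathrm{U}(R)$, where each equality $\mathrm{U}(S_i) \cap R = \mathrm{U}(R)$ is the hypothesis that $R \subseteq S_i$ is local and the distribution of the intersection over the union is valid since intersection commutes with arbitrary unions. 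A direct argument avoiding the earlier tower results (Proposition \ref{3.1}, Corollary \ref{3.10}) is preferable here, since those concern finite composites rather than directed colimits.
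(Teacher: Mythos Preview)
Your proof is correct and follows essentially the same approach as the paper's: both arguments hinge on the observation that a unit of the union, together with its inverse, can be placed in a common $S_k$ by directedness, so that $\mathrm{U}(S)=\bigcup_i\mathrm{U}(S_i)$, after which the local and SL cases are immediate. The paper carries this out inline rather than isolating it as a lemma, but the content is identical.
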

\begin{proof} Set $\mathcal F:=\{S_i\}_{i\in I}$ and $T:=\cup[S_i\mid i\in I]$. Let $x\in\mathrm U(T)$. Since $\mathcal F$ is an upward directed family of extensions of $R$,   there exists $y\in\mathrm U(T)$ such that $xy=1\ (*)$ and there exists some $i\in I$ such that $x,y\in S_i$, with $xy=1$ in $S_i$ by $(*)$. This shows that $x\in\mathrm U(S_i)\ (**)$.

If $R\subseteq S_i$ is local for each $S_i$, let $x\in\mathrm U(T)\cap R$. Then, $(**)$ shows that $x\in\mathrm U(S_i)\cap R=\mathrm U(R)$ giving $\mathrm U(T)\cap R=\mathrm U(R)$ and $R\subseteq T$ is  local.

If $R\subseteq S_i$ is SL for each $S_i$, let $x\in\mathrm U(T)$. Then, $(**)$ shows that $x\in\mathrm U(S_i)=\mathrm U(R)$ giving $\mathrm U(T)=\mathrm U(R)$ and $R\subseteq T$ is SL.
\end{proof}

\begin{proposition}\label{3.24} Let $R\subseteq S$ be a ring extension such that $R_M\subseteq S_M$ is (strongly) local for any $M\in\mathrm{MSupp}(S/R)$, then so is $R\subseteq S$.
\end{proposition}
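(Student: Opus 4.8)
The plan is to reduce the global statement to the pointwise hypotheses by means of the standard local–global principles for units and for module membership, handling the ``local'' and ``SL'' flavors in parallel. Throughout I would rely on three elementary facts. First, for $x\in R$ one has $x\in\mathrm U(R)$ if and only if $x/1\in\mathrm U(R_M)$ for every $M\in\mathrm{Max}(R)$, since $x/1$ is a unit of the local ring $R_M$ exactly when $x\notin M$. Second, for $x\in S$ one has $x\in R$ if and only if the image of $x$ in $(S/R)_M=S_M/R_M$ vanishes for all $M$, i.e. $x/1\in R_M$ inside $S_M$ for each $M$, because a module element is zero precisely when all its localizations at maximal ideals are zero. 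Third, if $M\notin\mathrm{MSupp}(S/R)$ then $(S/R)_M=0$, whence $R_M=S_M$, using exactness of localization. I would also note that flatness of localization makes each $R_M\subseteq S_M$ a genuine extension and sends a unit of $S$ to a unit of $S_M$.

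For the local case I would establish the nontrivial inclusion $\mathrm U(S)\cap R\subseteq\mathrm U(R)$. Fix $x\in\mathrm U(S)\cap R$ and a maximal ideal $M$ of $R$. If $M\in\mathrm{MSupp}(S/R)$, the hypothesis gives $\mathrm U(R_M)=\mathrm U(S_M)\cap R_M$, and since $x/1\in\mathrm U(S_M)$ while $x/1\in R_M$, I conclude $x/1\in\mathrm U(R_M)$. If instead $M\notin\mathrm{MSupp}(S/R)$, then $R_M=S_M$ by the third fact, so $x/1\in\mathrm U(S_M)=\mathrm U(R_M)$. As this holds for every $M$, the first fact yields $x\in\mathrm U(R)$.

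For the SL case I would prove $\mathrm U(S)\subseteq\mathrm U(R)$, the point being that an element $x\in\mathrm U(S)$ is not a priori known to lie in $R$, so membership must be recovered first. Fix $x\in\mathrm U(S)$ and $M\in\mathrm{Max}(R)$. If $M\in\mathrm{MSupp}(S/R)$, the hypothesis $\mathrm U(R_M)=\mathrm U(S_M)$ together with $x/1\in\mathrm U(S_M)$ gives $x/1\in\mathrm U(R_M)$; if $M\notin\mathrm{MSupp}(S/R)$, then $R_M=S_M$ and again $x/1\in\mathrm U(R_M)$. In particular $x/1\in R_M$ for every $M$, so the second fact forces $x\in R$, and then, since $x/1\in\mathrm U(R_M)$ for all $M$, the first fact gives $x\in\mathrm U(R)$.

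The step I would be most careful with is precisely this passage in the SL case from the collection of local memberships to the global conclusion $x\in R$: this is where the module-theoretic principle applied to $S/R$ is indispensable, whereas in the local case membership in $R$ is granted at the outset. Everything else is a routine unwinding of the definitions one localization at a time, with the identity $R_M=S_M$ off the support covering the maximal ideals outside $\mathrm{MSupp}(S/R)$.
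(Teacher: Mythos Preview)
Your proof is correct and follows essentially the same localization strategy as the paper: check each $M\in\mathrm{Max}(R)$, invoke the hypothesis on $\mathrm{MSupp}(S/R)$ and the identity $R_M=S_M$ off the support, and then globalize. The only cosmetic difference is that the paper tracks the inverse $y$ of $x$ and uses the local--global principle for $S/R$ to place $y$ in $R$ (concluding from $xy=1$ in $R$), whereas you work directly with $x$, using that $x\in R$ is a unit iff $x/1\in\mathrm U(R_M)$ for every $M$; your version of the local case is in fact slightly leaner, since it avoids any appeal to module-theoretic local--global there.
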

\begin{proof} Assume first that $R_M\subseteq S_M$ is local for any $M\in\mathrm{MSupp}(S/R)$. Since we always have $\mathrm{U}(R)\subseteq\mathrm{U}(S)\cap R$, it is enough to prove that any $x\in\mathrm{U}(S)\cap R$ is in $\mathrm{U}(R)$. So, let  $x\in\mathrm{U}(S)\cap R$. There exists $y\in\mathrm{U}(S)$ such that $xy=1\ (*)$ in $S$,  so that $(x/1)(y/1)=1\ (**)$ in $S_M$ for any $M\in\mathrm{MSupp}(S/R)$. Then, $x/1\in\mathrm{U}(S_M)\cap R_M=\mathrm{U}(R_M)$ for any $M\in\mathrm{MSupp}(S/R)$. It follows that for any $M\in\mathrm {MSupp}(S/R)$, there exists some $y_M/s_M\in\mathrm{U}(R_M)$ such that $(x/1)(y_M/s_M)=1$ in $R_M\subseteq S_M$. This implies that by $(**)$ we get $y_M/s_M=y/1$ in $S_M$ by the uniqueness of the inverse, so that $y/1\in R_M$ for any $M\in\mathrm{MSupp}(S/R)$. Moreover, let $M\not\in\mathrm{MSupp}(S/R)$. Then, $R_M=S_M$, 
so  that
 $y/1\in R_M$ for any $M\in\mathrm{Max}(R)$ and $(*)$ shows that $x\in\mathrm{U}(R)$.

Assume that $R_M\subseteq S_M$ is SL for any $M\in\mathrm{MSupp}(S/R)$. Since we always have $\mathrm{U}(R)\subseteq\mathrm{U}(S)$, it is enough to prove that any $x\in\mathrm{U}(S)$ is in $\mathrm{U}(R)$. So, let $x\in\mathrm{U}(S)$. There exists $y\in\mathrm{U}(S)$ such that $xy=1\ (*)$ in $S$ 
 which entails that
 $(x/1)(y/1)=1\ (**)$ in $S_M$ for any $M\in\mathrm{MSupp}(S/R)$. For the rest of the proof, it is enough to copy the same part of the proof of the local case. 
\end{proof}

\begin{proposition}\label{3.25} Let $R\subseteq S$ be a ring extension and $\Sigma$ a saturated multiplicative closed subset of $R$ which is also a saturated multiplicative  closed subset of $S$. If $R_{\Sigma}\subseteq S_{\Sigma}$ is (strongly) local, then so is $R\subseteq S$.
\end{proposition}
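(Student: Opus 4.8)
The plan is to follow the pattern of Proposition \ref{3.24}, replacing localization at a maximal ideal by localization at $\Sigma$, and to isolate the one genuinely new ingredient: descending an inverse from $S_\Sigma$ back into $R$. First I would record the elementary facts attached to a saturated multiplicative set. For any ring $A$ and any saturated $\Sigma\subseteq A$ one has $\mathrm U(A)\subseteq\Sigma$, since $1=uu^{-1}\in\Sigma$ forces $u\in\Sigma$ by saturation; and for $a\in A$ the image $a/1$ lies in $\mathrm U(A_\Sigma)$ if and only if $a\in\Sigma$ (the nontrivial implication reads $a\in\Sigma$ off a relation $tab=ts$ with $ts\in\Sigma$, using saturation). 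Since $\Sigma\subseteq R\subseteq S$ and $R\subseteq S$ is an extension, flatness of localization makes $R_\Sigma\subseteq S_\Sigma$ an extension, so the hypothesis is meaningful.

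For the local case I would take $x\in\mathrm U(S)\cap R$ and let $y\in S$ be its inverse, so $xy=1$. Localizing, $x/1\in\mathrm U(S_\Sigma)\cap R_\Sigma$, which equals $\mathrm U(R_\Sigma)$ because $R_\Sigma\subseteq S_\Sigma$ is local. Hence $x/1$ admits an inverse $z\in R_\Sigma$; comparing $z$ with $y/1$ inside $S_\Sigma$ and invoking uniqueness of inverses gives $y/1=z\in R_\Sigma$. The SL case runs identically, starting from an arbitrary $x\in\mathrm U(S)$ and using $\mathrm U(R_\Sigma)=\mathrm U(S_\Sigma)$. In both cases one is left with the information that the $S$-inverse $y$ of $x$ maps into $R_\Sigma$.

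The main obstacle, and the place where saturation in $S$ really bites, is the final descent: knowing $y/1\in R_\Sigma$ does not by itself force $y\in R$, because $\Sigma$ may contain zerodivisors and $R\to R_\Sigma$ need not be injective. Here I would use that $\Sigma$ is saturated \emph{in $S$}: from $xy=1\in\Sigma$ saturation yields $y\in\Sigma\subseteq R$ (and likewise $x\in\Sigma$). Thus $y\in R$ with $xy=1$, so $x\in\mathrm U(R)$, giving $\mathrm U(S)\cap R\subseteq\mathrm U(R)$ in the local case and $\mathrm U(S)\subseteq\mathrm U(R)$ in the SL case; the reverse inclusions being automatic, $R\subseteq S$ is local, respectively SL. It is worth noting that this last step already forces $\mathrm U(S)\subseteq\Sigma\subseteq R$, whence $\mathrm U(R)=\mathrm U(S)$, so that in fact the SL conclusion holds from saturation in $S$ alone; routing the argument through $R_\Sigma\subseteq S_\Sigma$ nonetheless keeps it parallel to Proposition \ref{3.24} and exhibits the statement as the natural localized companion of that result.
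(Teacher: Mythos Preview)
Your argument is correct and lands on exactly the same key step as the paper: from $xy=1\in\Sigma$ and the saturation of $\Sigma$ in $S$, conclude $y\in\Sigma\subseteq R$, hence $x\in\mathrm U(R)$. The paper's proof does not actually route through $R_\Sigma$ at all; it passes (somewhat superfluously) to $S_\Sigma$, reads off a relation $uxy=u$ with $u\in\Sigma$, and then invokes saturation in $S$ to get $y\in\Sigma$. Your detour through $R_\Sigma$ and uniqueness of inverses is unnecessary, as you yourself observe, but it does no harm.

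Your closing remark is sharper than the paper: the hypothesis that $R_\Sigma\subseteq S_\Sigma$ be (strongly) local is never used, and in fact the mere existence of a multiplicatively closed $\Sigma\subseteq R$ that is saturated in $S$ already forces $\mathrm U(S)\subseteq\Sigma\subseteq R$, so $R\subseteq S$ is SL outright. The paper's proof has the same feature but does not comment on it.
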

\begin{proof} Assume first that $R_{\Sigma}\subseteq S_{\Sigma}$ is local. Since we always have $\mathrm{U}(R)\subseteq\mathrm{U}(S)\cap R$, it is enough to prove that any $x\in\mathrm{U}(S)\cap R$ is in $\mathrm{U}(R)$. So, let $x\in\mathrm{U}(S)\cap R$. There exists $y\in\mathrm{U}(S)$ such that $xy=1\ (*)$ in $S$ which implies that $(x/1)(y/1) =1\ (**)$ in $S_{\Sigma}$, which is equivalent to $u=uxy$ for some $u\in\Sigma$. In particular, $y\in\Sigma\subseteq R$ because $\Sigma$ is closed in $S$. Using $(*)$, we get that $x\in\mathrm{U}(R)$ and $R\subseteq S$ is  local. 

Assume now that $R_{\Sigma}\subseteq S_{\Sigma}$ is SL. Since we always have $\mathrm{U}(R)\subseteq\mathrm{U}(S)$, it is enough to prove that any $x\in\mathrm{U}(S)$ is in $\mathrm{U}(R)$. So, let $x\in\mathrm{U}(S)$. There exists $y\in\mathrm{U}(S)$ such that $xy=1\ (*)$ in $S$, whence $(x/1)(y/1)=1\ (**)$ in $S_{\Sigma}$. For the rest of the proof, it is enough to copy the same part of the proof of the local case. 
\end{proof}

 We consider the Nagata idealization $R (+) M$ of an $R$-module $M$.

\begin{proposition}\label{3.61} (1) Let $M$ be an $R$-module and $N$ an $ R$-submodule of $M$. Then, $R\subseteq R(+)M$ and $R(+)N\subseteq R(+)M$ are local extensions. But $R\subseteq R(+)M$ is SL if and only if $M=0$ and $R(+)N\subseteq R(+)M$ is SL if and only if $M=N$.

(2) If $R\subseteq S$ is a ring extension and $M$ an $S$-module, then $M$ is also an $R$-module and $R\subseteq S$ is SL if and only if $R(+)M\subseteq S(+)M$ is SL. 
\end{proposition}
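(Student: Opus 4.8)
The entire proposition rests on a single computation: a description of the group of units of a Nagata idealization. The plan is to first establish that for any $R$-module $M$ one has
\[
\mathrm U(R(+)M)=\mathrm U(R)\times M,
\]
and then to read off each assertion by comparing groups of units set-theoretically, so that no part requires more than bookkeeping once the unit formula is in hand.

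To prove the unit formula I would use that the projection $\pi\colon R(+)M\to R$, $(a,m)\mapsto a$, is a surjective ring morphism, hence carries units to units, forcing the first coordinate of any unit to lie in $\mathrm U(R)$. Conversely, for $a\in\mathrm U(R)$ and arbitrary $m\in M$ a direct check gives $(a,m)(a^{-1},-a^{-2}m)=(1,0)$, exactly as already recorded for the Nagata extension in Section 2. Thus $(a,m)$ is a unit precisely when $a\in\mathrm U(R)$, with the displayed inverse being the unique one, which yields the formula.

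Granting this, part (1) should fall out immediately. Identifying $R$ with $R(+)0\subseteq R(+)M$ gives $\mathrm U(R(+)M)\cap R=(\mathrm U(R)\times M)\cap(R\times 0)=\mathrm U(R)$, so $R\subseteq R(+)M$ is local; it is SL exactly when $\mathrm U(R)\times M=\mathrm U(R)\times 0$, and since $1\in\mathrm U(R)$ this forces $M=0$, the converse being trivial. The same reasoning handles $R(+)N\subseteq R(+)M$: here $\mathrm U(R(+)N)=\mathrm U(R)\times N$ and $\mathrm U(R(+)M)\cap(R(+)N)=(\mathrm U(R)\times M)\cap(R\times N)=\mathrm U(R)\times N$, giving locality, while the SL condition $\mathrm U(R)\times N=\mathrm U(R)\times M$ is equivalent to $N=M$ because $\mathrm U(R)$ is nonempty (it contains $1$).

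For part (2) the only preliminary point to verify is that $(a,m)\mapsto(a,m)$ defines an injective ring morphism $R(+)M\to S(+)M$; this holds because the $R$-module structure on $M$ is the restriction along $R\subseteq S$ of its $S$-module structure, so the two idealization products agree on elements of $R(+)M$. Once this is checked, the unit formula yields $\mathrm U(R(+)M)=\mathrm U(R)\times M$ and $\mathrm U(S(+)M)=\mathrm U(S)\times M$, whence $R(+)M\subseteq S(+)M$ is SL if and only if $\mathrm U(R)\times M=\mathrm U(S)\times M$, if and only if $\mathrm U(R)=\mathrm U(S)$, that is, if and only if $R\subseteq S$ is SL. I do not expect any genuine obstacle: the whole content is concentrated in the unit description, and everything else is a comparison of product sets.
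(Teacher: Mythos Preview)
Your proposal is correct and follows essentially the same approach as the paper: both proofs rest on the characterization $\mathrm U(R(+)M)=\{(a,m)\mid a\in\mathrm U(R)\}$ together with the explicit inverse $(a^{-1},-a^{-2}m)$, and then read off all the assertions by comparing unit sets. Your write-up is slightly more explicit (using the projection $R(+)M\to R$ to justify one direction, and checking that $R(+)M\to S(+)M$ is a well-defined ring map), but there is no substantive difference.
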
 
\begin{proof} (1) Let $(x,m)\in R(+)M$. Then, $(x,m)\in\mathrm U(R(+)M)$ if and only if $ x\in\mathrm U(R)$. Under this condition, we have $(x,m)^{-1}=(x^{-1},-x^{-2}m)$. 
It follows that $\mathrm U(R(+)M)\cap R=\mathrm U(R)$, so that $R\subseteq R(+)M$ is local but $R\subseteq R(+)M$ is SL if and only if $M=0$. 

If $N$ a submodule of $M$ we get that $\mathrm U(R(+)M)\cap(R(+)N)=\mathrm U(R(+)N)$, whence $R(+)N\subseteq R(+)M$ is local but $R(+)N\subseteq R(+)M$ is SL if and only if $M=N$. 

(2) If $M$ an $S$-module, then obviously, $M$ is also an $R$-module. Let $(x,m)\in S(+) M$. By the proof of (1), $(x,m)\in\mathrm U(S(+)M)$ if and only if $x\in\mathrm U(S)$. A similar equivalence holds for $\mathrm U(R(+)M) $ and $\mathrm U(R)$. Then the equivalence of (2) is obvious.
\end{proof}

We recall in the next section some results of local extensions we get in \cite{Pic 17} and add some new results. We will look more precisely at SL extensions in the other  sections.

\section{Properties of local extensions}

An extension $R\subseteq S$ is called {\it survival} if for each ideal $I$ of $R$  such that $I\neq R$, then $IS\neq S$ (equivalently $PS\neq S$ for each $P\in\mathrm{Spec}(R)$). 

\begin{proposition}\label{3.3}   Lying-over or survival extensions  are local.

In case the ideals of a ring $R$ are linearly ordered, an extension $R\subseteq S$ is survival if and only if it is  local.
\end{proposition}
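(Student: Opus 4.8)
The plan is to verify in each case the only nontrivial inclusion. Since for any extension one always has $\mathrm U(R)\subseteq\mathrm U(S)\cap R$, it suffices to show $\mathrm U(S)\cap R\subseteq\mathrm U(R)$, i.e. that no non-unit of $R$ can become a unit in $S$. Concretely, I would take $x\in R$ lying in some maximal ideal $M$ of $R$ and show $x\notin\mathrm U(S)$.

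For the lying-over case, I would invoke the existence of $Q\in\mathrm{Spec}(S)$ with $Q\cap R=M$. Then $x\in M\subseteq Q$ forces $x\in Q$, and since a unit lies in no prime of $S$, this yields $x\notin\mathrm U(S)$, as wanted. For the survival case the argument is even more direct: if $x\in R$ is a non-unit then $xR$ is a proper ideal of $R$, so survival gives $xS=(xR)S\neq S$, whence $x\notin\mathrm U(S)$. This already proves that survival extensions are local, and in particular the implication survival $\Rightarrow$ local of the last assertion.

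It remains to prove, under the hypothesis that the ideals of $R$ are linearly ordered by inclusion, that local $\Rightarrow$ survival. First I would observe that a linearly ordered ideal lattice forces $R$ to be local: any two maximal ideals are comparable, hence equal, so there is a unique maximal ideal $M$, and every proper ideal $I$ satisfies $I\subseteq M$ and $IS\subseteq MS$. Thus survival reduces to the single statement $MS\neq S$. Suppose instead $MS=S$; writing $1=\sum_{i=1}^n m_is_i$ with $m_i\in M$ and $s_i\in S$, I would use the linear ordering to note that among the finitely many principal ideals $m_iR$ one, say $m_jR$, contains all the others, so each $m_i=r_im_j$ with $r_i\in R$. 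Substituting gives $1=m_j\bigl(\sum_{i=1}^n r_is_i\bigr)$, so $m_j\in\mathrm U(S)\cap R=\mathrm U(R)$ by locality, contradicting $m_j\in M$. Hence $MS\neq S$ and the extension is survival.

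I expect this last reduction to be the crux of the argument: the point is that under a linearly ordered ideal lattice every finitely generated ideal is principal, which is exactly what lets one pull a single element $m_j$ out of the relation $1\in MS$ and then feed it into the locality hypothesis. By contrast, the two implications to \emph{local} are routine, each amounting to the single observation that a non-unit of $R$ is trapped in a prime (resp. a proper ideal) that survives in $S$.
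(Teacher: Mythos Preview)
Your proof is correct and follows essentially the same approach as the paper. The only cosmetic differences are that you supply the one-line argument for the survival case directly (the paper instead cites \cite[Proposition 8.8]{Pic 17}), and in the linearly ordered case you first observe that $R$ is local and reduce to showing $MS\neq S$, whereas the paper runs the same ``extract a principal generator and apply locality'' argument with an arbitrary proper ideal $I$; the key step in both is identical.
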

\begin{proof} \cite[Definition before Proposition 8.8]{Pic 17} gives the result for a survival extension. 

Assume that $R\subseteq S$ has lying-over. Since we always have $\mathrm{U}(R)\subseteq\mathrm{U}(S)\cap R$, it is enough to prove that any $x\in\mathrm{U}(S)\cap R$ is in $\mathrm{U}(R)$. So, let $x\in\mathrm{U}(S)\cap R$ be such that $x\not\in\mathrm{U}(R)$. There exists some $P\in\mathrm{Spec}(R)$ such that $x\in P$ and there exists some $Q\in\mathrm{Spec}(S)$ lying over $P$. Then $x\in P\subseteq PS\subseteq Q$, a contradiction with $x\in\mathrm{U}(S)$. 

Assume that the ideals of a ring $R$ are linearly ordered and that the extension $R\subseteq S$ is local. Let $I$ be an ideal of $R$ such that $I\neq R$ and $IS=S$. There exists some $x_1,\ldots,x_n\in I$ and $s_1, \ldots,s_n\in S$ such that $\sum_{i=1}^ns_ix_ i=1\ (*)$. But the $Rx_i$ are linearly ordered. Let $x_k$ be such that $Rx_i\subseteq R x_k$ for each $i\in\mathbb N_n$. Then $(*)$ implies $sx_k=1$ for some $s\in S$, so that $x_k\in\mathrm{U}(S)\cap R=\mathrm{U}( R)$, a contradiction. Then, $R\subseteq S$ is survival. 
\end{proof}

\begin{proposition}\label{3.4} \cite[Proposition 8.8]{Pic 17} An extension $R\subseteq S$ is survival if and only if  $R(X) \subseteq S(X)$ is local. 
\end{proposition}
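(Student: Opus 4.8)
The plan is to translate the local condition for $R(X)\subseteq S(X)$ into a statement about contents of polynomials and then match it with survival. Recall that $R(X)=U^{-1}R[X]$, where $U=\{f\in R[X]\mid \mathrm{c}_R(f)=R\}$ is the set of polynomials whose content $\mathrm{c}_R(f)$ (the ideal generated by the coefficients) equals $R$. Since $\mathrm{c}_S(f)=\mathrm{c}_R(f)S$ for $f\in R[X]$, every $f\in U$ becomes a unit-content polynomial over $S$, so the localization map $R[X]\to S[X]\to S(X)$ carries $U$ into the units and yields the extension $R(X)\subseteq S(X)$. First I would record the standard description of the units of a Nagata ring: since $\mathrm{Max}(R(X))=\{MR(X)\mid M\in\mathrm{Max}(R)\}$ and $f/g\in MR(X)$ exactly when $\mathrm{c}_R(f)\subseteq M$ (here $g\in U$ is already a unit modulo $MR(X)$), an element $f/g\in R(X)$ with $g\in U$ is a unit if and only if $\mathrm{c}_R(f)\not\subseteq M$ for every $M\in\mathrm{Max}(R)$, i.e. $\mathrm{c}_R(f)=R$.

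Second, I would use this to rewrite the local condition. The inclusion $\mathrm{U}(R(X))\subseteq\mathrm{U}(S(X))\cap R(X)$ always holds, so locality is equivalent to $\mathrm{U}(S(X))\cap R(X)\subseteq\mathrm{U}(R(X))$. Take $h\in R(X)$ and write $h=f/g$ with $f,g\in R[X]$ and $g\in U$. As $g$ is already a unit in $S(X)$, the element $h$ is a unit of $S(X)$ iff $\mathrm{c}_S(f)=S$, that is $\mathrm{c}_R(f)S=S$, whereas $h$ is a unit of $R(X)$ iff $\mathrm{c}_R(f)=R$. Hence $R(X)\subseteq S(X)$ is local if and only if, for every $f\in R[X]$, the condition $\mathrm{c}_R(f)S=S$ forces $\mathrm{c}_R(f)=R$. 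Since the contents $\mathrm{c}_R(f)$ range exactly over the finitely generated ideals of $R$ (any such ideal $(a_0,\ldots,a_n)$ is the content of $a_0+a_1X+\cdots+a_nX^n$), this says precisely: for every finitely generated ideal $I\neq R$ one has $IS\neq S$.

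Finally, I would check that survival for finitely generated ideals is equivalent to survival in general, which closes the argument. If $IS=S$ for some proper ideal $I$, then $1=\sum_i s_ix_i$ with $x_i\in I$ and $s_i\in S$, so the finitely generated ideal $I_0:=(x_1,\ldots,x_n)\subseteq I$ is proper and satisfies $I_0S=S$; thus survival and its finitely generated version coincide. Combining the two equivalences gives the statement. The only nonroutine input is the unit description of the Nagata ring; everything else is a direct computation with contents. I expect the main obstacle to be justifying $\mathrm{Max}(R(X))=\{MR(X)\}$ together with the equivalence $f/g\in MR(X)\Leftrightarrow\mathrm{c}_R(f)\subseteq M$ cleanly, but these are classical properties of $R(X)$ and may simply be quoted.
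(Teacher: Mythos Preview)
The paper does not give its own proof of this proposition; it simply quotes the result from \cite[Proposition 8.8]{Pic 17}. So there is nothing to compare your argument against here.

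That said, your proof is correct and self-contained. The reduction of the local condition on $R(X)\subseteq S(X)$ to the content criterion ``$\mathrm{c}_R(f)S=S\Rightarrow\mathrm{c}_R(f)=R$'' is exactly right, and the observation that contents of polynomials exhaust the finitely generated ideals, together with the passage from arbitrary to finitely generated ideals in the definition of survival, completes the equivalence. The only facts you import are the description $\mathrm{Max}(R(X))=\{MR(X)\mid M\in\mathrm{Max}(R)\}$ and the characterization of units in $R(X)$ via content, both of which are standard and indeed used elsewhere in the paper (e.g.\ in the proof of Proposition~\ref{5.142}). Your write-up could serve as a replacement for the bare citation.
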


Recall that an extension $R\subseteq S$ is called {\it Pr\"ufer} if $R\subseteq T$ is a flat epimorphism for each $T\in[R,S]$ (or equivalently, if $R\subseteq S$ is a normal pair) \cite[Theorem 5.2, page 47]{KZ}.

In \cite{Pic 5}, we defined an extension $R\subseteq S$ to be {\it quasi-Pr\"ufer} if it can be factored $R\subseteq R'\subseteq S$, where $R\subseteq R'$ is integral and $R'\subseteq S$ is Pr\"ufer. An FCP extension is quasi-Pr\"ufer \cite[Corollary 3.4]{Pic 5}. 
  
\begin{definition} \label{Morita} A ring extension $R\subseteq S$ has a greatest flat epimorphic subextension $R\subseteq\widehat R$ that we call the {\it Morita hull} of $R$ in $S$ \cite[Corollary 3.4]{M}. We say that the extension is {\it Morita-closed} if $\widehat R = R$.
\end{definition}
In fact, $\widehat R$ coincide with the weakly surjective hull $\mathrm{M}(R,S)$ of \cite{KZ}, since weakly surjective morphisms $f:R\to S$ are characterized by $R/\ker f\to S$ is a flat epimorphism \cite[Proposition p.3]{O}. Our terminology is justified by the fact that the Morita construction is earlier. 
 The Morita hull can be computed by using a (transfinite) induction \cite{M} as follows.
Let $S'$ be the set of all $s\in S$, such that there is some ideal $I$ of $R$, such that $IS= S$ and $Is \subseteq R$, or equivalently, $ S' = \{s\in S\mid S =(R:_Rs)S\}$. 
Then $R\subseteq S'$ is a subextension of $R\subseteq S$. We set $S_1:=S'$ and $S_{i+ 1}:= (S_i)' \subseteq S_i$, which defines the transfinite induction. 
 
We introduce the condition ($\star$) on a ring extension $R\subseteq S$: an element $s\in S$ belongs to $R$ if (and only if) $S=(R:_Rs)S$. The condition ($\star$) is clearly equivalent to the Morita-closedness of the extension.
 
An integral extension is Morita-closed because an injective flat epimorphism is trivial as soon as it has LO \cite[Lemme 1.2, p.109]{L}.
 
An extension $R\subseteq S$ is Morita-closed if $R$ is zero-dimensional because a prime ideal of $R$ can be lifted up to $S$, since such a prime ideal is minimal and then it is enough to again apply \cite[Lemme 1.2, p.109]{L}.

\begin{proposition}\label{3.6} A Morita-closed extension $R\subseteq S$ is local. Therefore the Morita hull $\widehat R$ of an extension $R\subseteq S$ is such that $\widehat R \subseteq S$ is local.
\end{proposition}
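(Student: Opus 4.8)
The plan is to verify directly the inclusion $\mathrm U(S)\cap R\subseteq\mathrm U(R)$, the reverse inclusion being automatic for any extension. The key tool is condition $(\star)$, which the excerpt records as equivalent to Morita-closedness: for $s\in S$, one has $s\in R$ whenever $S=(R:_Rs)S$.

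First I would take $x\in\mathrm U(S)\cap R$ and let $y\in S$ be its inverse, so that $xy=1$. The idea is to force $y$ into $R$ by applying $(\star)$ to $s=y$. Consider the ideal $I:=(R:_Ry)=\{r\in R\mid ry\in R\}$. Since $x\in R$ and $xy=1\in R$, we have $x\in I$. Because $x$ is a unit of $S$, we have $xS=S$, hence $S=xS\subseteq IS\subseteq S$, giving $IS=S$. Condition $(\star)$ then yields $y\in R$, and since $xy=1$ with $x,y\in R$ we conclude $x\in\mathrm U(R)$. This shows $R\subseteq S$ is local.

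For the second assertion I would argue that $\widehat R\subseteq S$ is itself Morita-closed, so that the first part applies to it. Indeed, suppose $\widehat R\subseteq T\subseteq S$ with $\widehat R\subseteq T$ a flat epimorphism. Composing with the flat epimorphism $R\subseteq\widehat R$ shows that $R\subseteq T$ is a flat epimorphism, since flatness is transitive and a composite of epimorphisms is an epimorphism. Maximality of the Morita hull then forces $T\subseteq\widehat R$, whence $T=\widehat R$. Thus the Morita hull of $\widehat R$ in $S$ is $\widehat R$, i.e. $\widehat R\subseteq S$ is Morita-closed, and the first part gives that $\widehat R\subseteq S$ is local.

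The argument is short and the only genuine step is the use of $(\star)$; the point to be careful about is that $(\star)$ must be applied to the inverse $y$ rather than to $x$, and that the witnessing ideal $I=(R:_Ry)$ satisfies $IS=S$ precisely because it contains the unit $x$. For the hull statement, the one thing requiring verification is the idempotence of the Morita construction, which follows from the transitivity of flat epimorphisms noted above.
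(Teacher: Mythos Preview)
Your proof is correct. Both parts are sound: the application of $(\star)$ to the inverse $y$ is the right move, and your verification that $\widehat R\subseteq S$ is again Morita-closed via transitivity of flat epimorphisms is exactly the idempotence argument needed.

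The paper takes a slightly different route for the first assertion. It considers the saturated multiplicatively closed subset $\Sigma_S:=R\cap\mathrm U(S)$ and the factorization $R\subseteq R_{\Sigma_S}\subseteq S$ (the second map is injective because elements of $\Sigma_S$ are units in $S$). Since $R\to R_{\Sigma_S}$ is a localization, hence a flat epimorphism, Morita-closedness of $R\subseteq S$ forces $R_{\Sigma_S}=R$, i.e.\ every element of $\Sigma_S$ is already a unit of $R$. Your argument is pointwise and uses the reformulation $(\star)$ directly; the paper's is global and uses the original definition via flat epimorphic subextensions. Both are one-line once the right object is named; your approach has the minor advantage of not needing to check that $R_{\Sigma_S}$ embeds in $S$, while the paper's approach makes the link with the Morita hull definition more visibly structural. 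For the second assertion the paper simply says ``Therefore'', leaving the idempotence of the hull implicit; you have spelled it out.
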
 
\begin{proof} Use the multiplicatively closed subset $\Sigma_S:=\{r\in R\mid r\in\mathrm U(S)\}$ and the factorization $R\subseteq R_{\Sigma_S} \subseteq S$.
\end{proof}
 
The condition $(\star)$ defined after Definition \ref{Morita} is stronger than the local property. A flat epimorphic extension does not verify the condition $(\star )$ because of \cite[Exercise 8, p.242]{STEN}. Actually, a flat epimorphic extension does not need to be local: it is enough to consider a localization $R\to R_P$, where $P$ is a prime ideal of an integral domain $R$.
  
We use the results of Olivier about pure extensions \cite{OP}. Recall that an injective ring morphism $f:R\to S$ is called {\it pure} if $R'\to R'\otimes_RS$ is injective for each ring morphism $R\to R'$, whence purity is an universal property. A faithfully flat morphism is pure.

A pure ring morphism is a strict monomorphism of the category of commutative rings \cite[Corollaire 5.2, page 21]{OP}. This last condition can be characterized in the category of commutative unital rings as follows. If $f:R\to S$ is a ring morphism, the dominion of $f$ is $\mathrm{D}(f):=\{s\in S\mid s\otimes 1=1\otimes s\ \mathrm{in}\ S\otimes_RS\}$. 

A ring extension $R\subseteq S$ is a {\it strict} monomorphism if and only if its dominion is $R$. Now a ring morphism is an epimorphism if and only if its dominion is $S$ \cite[Lemme 1.0, page 108]{L}. It follows that a ring extension is strict and an epimorphism if and only if it is trivial. 
 
Denote by $D$ the dominion of an extension $R\subseteq S$ then $D\subseteq S$ is strict, because $S\otimes_DS =S\otimes_RS$.

We will consider minimal (ring) extensions, a concept that was introduced by Ferrand-Olivier \cite{FO}. Recall that an extension $R\subset S$ is called {\it minimal} if $[R, S]=\{R,S\}$. A minimal extension is either a flat epimorphism or a strict monomorphism, in which case it is finite. A minimal integral extension is strict \cite[Th\'eor\`eme 2.2(ii)]{FO}. 

\begin{lemma}\label{STRICTINV} A strict monomorphism $f: R\to S$ is local (e.g. either pure or minimal integral). 
\end{lemma}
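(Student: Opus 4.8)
The plan is to exploit the dominion characterization recorded just before the lemma: a ring extension $R\subseteq S$ is a strict monomorphism if and only if $\mathrm D(f)=R$, where $\mathrm D(f)=\{s\in S\mid s\otimes 1=1\otimes s\ \text{in}\ S\otimes_RS\}$. Since a monomorphism in the category of commutative rings is injective, I may regard $f$ as an extension $R\subseteq S$, so that being local means $\mathrm U(R)=\mathrm U(S)\cap R$. As always $\mathrm U(R)\subseteq\mathrm U(S)\cap R$, so it is enough to prove the reverse inclusion $\mathrm U(S)\cap R\subseteq\mathrm U(R)$.

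The crux is to show that the inverse of such a unit automatically falls into the dominion. Take $x\in\mathrm U(S)\cap R$ and let $y\in S$ be its inverse, so $xy=1$. Working in $S\otimes_RS$ and using that $x\in R$ is a scalar, I would compute $y\otimes 1=y\otimes(xy)=(yx)\otimes y=1\otimes y$, where the middle equality is the defining tensor relation $a\otimes(rb)=(ar)\otimes b$ applied with $r=x\in R$ and $a=b=y$, and the outer equalities use $xy=yx=1$. This shows $y\in\mathrm D(f)=R$; hence both $x$ and $y=x^{-1}$ lie in $R$, so $x\in\mathrm U(R)$. Therefore $\mathrm U(S)\cap R=\mathrm U(R)$ and $f$ is local.

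The parenthetical examples then need no extra argument: a pure ring morphism is a strict monomorphism by \cite[Corollaire 5.2, page 21]{OP}, and a minimal integral extension is strict by \cite[Th\'eor\`eme 2.2(ii)]{FO}, so both cases are subsumed by the general statement. The only point demanding care is the bookkeeping of the $R$-module structure in the tensor computation, namely that ``moving $x$ across the tensor symbol'' is precisely the relation $a\otimes(rb)=(ar)\otimes b$ for $r\in R$; once this is applied with $a=b=y$ and $r=x$, the chain of equalities collapses immediately, and I expect no further obstacle.
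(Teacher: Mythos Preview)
Your argument is correct. Both you and the paper reduce to showing that the inverse of $x\in\mathrm U(S)\cap R$ lies in $\mathrm D(f)=R$, but the executions differ. The paper factors $R\to R_x\to S$ and invokes the fact that $R\to R_x$ is an epimorphism, whence $\mathrm D(R\to R_x)=R_x$; pushing this into $S\otimes_RS$ via the natural map $R_x\otimes_RR_x\to S\otimes_RS$ yields $R_x\subseteq\mathrm D(f)=R$ in one stroke. You instead compute directly in $S\otimes_RS$ that $y\otimes 1=1\otimes y$ for the single element $y=x^{-1}$, using only the bilinearity relation $a\otimes(rb)=(ar)\otimes b$. Your route is more elementary, requiring no appeal to epimorphism theory or localization; the paper's route is more conceptual and incidentally proves the slightly stronger fact that the whole subring $R_x$ already sits inside the dominion. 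Either way the conclusion $x^{-1}\in R$, hence $x\in\mathrm U(R)$, follows immediately.
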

\begin{proof} If $a\in R$ is a unit in $S$, there is a factorization $R\to R_a\to S$. From the natural map $R_a\otimes_RR_a\to S\otimes_RS$ and $\mathrm{D}(R\to R_a )=R_a$, because $R\to R_a$ is an epimorphism, we get that $R_a\subseteq\mathrm{D}(f)=R$. 
\end{proof}

 We define the class $\mathcal{TP}$ of rings $R$ such that $\mathrm{Pic}(R) =0$. It contains semi-local rings and Nagata rings. 

\begin{proposition}\label{3.18} A Pr\"ufer local extension $R\subseteq S$ over a $\mathcal{TP}$ ring  is trivial. 
\end{proposition}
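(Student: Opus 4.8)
The plan is to prove the extension is trivial by showing that every $s\in S$ already lies in $R$, producing from each such $s$ an invertible $R$-submodule of $S$ that the hypotheses collapse. First I would fix $s\in S$ and consider the finitely generated $R$-submodule $I:=R+Rs$ of $S$. Here the Pr\"ufer hypothesis enters: by the theory of Pr\"ufer extensions \cite{KZ}, a finitely generated $R$-submodule of $S$ that contains a regular element of $S$ is invertible in $S$. Since $1\in I$ is regular, $I$ is an invertible $R$-submodule of $S$, that is, $I\in\mathcal I(R,S)$.

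Next I would feed $I$ into the exact sequence $1\to\mathrm U(R)\to\mathrm U(S)\to\mathcal I(R,S)\to\mathrm{Pic}(R)$ recalled in the Introduction. Because $R$ is a $\mathcal{TP}$ ring, $\mathrm{Pic}(R)=0$, so the class of the invertible submodule $I$ is trivial; hence $I$ is free of rank one as an $R$-module and is therefore principal, say $I=Ru$ with $u\in I\subseteq S$.

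Now I would exploit $1\in R\subseteq I=Ru$, which gives $1=ru$ for some $r\in R$. Thus $u\in\mathrm U(S)$ with inverse $u^{-1}=r$, and consequently $r$ is itself a unit of $S$ lying in $R$. At this point the local hypothesis $\mathrm U(R)=\mathrm U(S)\cap R$ forces $r\in\mathrm U(R)$, whence $u=r^{-1}\in R$ and $I=Ru=R$. Since $I=R+Rs$, this yields $s\in R$; as $s$ was arbitrary, $S=R$ and the extension is trivial.

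The unit bookkeeping of the last paragraph and the passage from ``invertible submodule'' to ``principal'' via the exact sequence are routine. The main obstacle---and the only spot where the Pr\"ufer property is genuinely used---is the very first step: pinning down and correctly applying the Knebusch--Zhang criterion guaranteeing that $R+Rs$ is invertible. Everything downstream is a clean consequence of $\mathrm{Pic}(R)=0$ and localness, so the argument hinges on having the right invertibility statement for finitely generated submodules containing a unit of $S$.
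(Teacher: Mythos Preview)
Your argument is correct and shares its core with the paper's: both invoke the Knebusch--Zhang characterization of Pr\"ufer extensions to produce an $S$-invertible finitely generated $R$-submodule, collapse it to a principal module via $\mathrm{Pic}(R)=0$, and then use localness to force the generator into $\mathrm U(R)$.

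The organizational difference is worth noting. The paper argues by contradiction: assuming $R\neq S$, it uses that a nontrivial Pr\"ufer extension is a flat epimorphism to find $M\in\mathrm{Max}(R)$ with $MS=S$, and then works with the finitely generated \emph{ideal} $I=\sum Rm_i\subseteq M$ arising from an equation $1=\sum m_is_i$; the contradiction is that the resulting principal generator is simultaneously a unit of $R$ and an element of $M$. You instead work directly with the $R$-submodule $R+Rs$ of $S$ for an arbitrary $s$, which contains $1$ and is therefore $S$-regular. Your route avoids the detour through the flat-epimorphism description and the choice of a maximal ideal, at the cost of needing the Knebusch--Zhang invertibility criterion in its form for submodules not contained in $R$; the paper only needs it for honest ideals of $R$. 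Either way the same three hypotheses are consumed in the same order.
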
 

\begin{proof} Assume that $R\neq S$. Since $R\subset S$ is Pr\"ufer, it is a flat epimorphism, so that there exists $M\in\mathrm{Max}(R)$ such that $MS=S$ and $1=\sum_{i=1}^nm_is_i$, for some positive integer $n,\ m_i\in M$ and $s_i\in S$ for each $i\in\mathbb N_n$. Set $I:=\sum_{i=1}^nR m_i$. Then $IS=S$, so that $I$ is an $S$-regular $R$-submodule of $S$ finitely generated. According to \cite[Theorem 1.13, p.91]{KZ}, $I$ is $S$-invertible, and then a projective $R$-module of rank 1 by \cite[Lemma 4.1, p.109]{KZ}. It follows that $I$ is a free $R$-module
 which implies that $I$ is a principal ideal of $R$.
 
Set $I:=Rx$, with $x\in R$. But $IS=S$ gives $xS=S$, and $x$ is a unit in $S$. But $x\in R$ implies $x\in\mathrm U(S)\cap R=\mathrm U(R)$, a contradiction with $x\in I\subseteq M$. Then, $R=S$. 
\end{proof}

This Proposition generalizes \cite[Proposition 8.9]{Pic 17} which holds for an extension over an arithmetical ring. 

\section{Generalities about SL extensions}

We first give examples of SL  extensions.
\begin{example} \label{exam} 
(1a) If $R\subseteq S$ is an extension we can consider $L (S):=R[\mathrm U(S)]$. Then $L(S)\subseteq S$ is SL and $L(S)$ is the smallest element $T$ of $[R,S]$ such that $T\subseteq S$ is SL. Actually $L(S)$ is the intersection of all $T\in[R,S]$ such that $T\subseteq S$ is SL.

(1b) There exist also subextensions $R\subseteq U$ of $[R,S]$, maximal with respect to the property SL. We will call them {\it MSL}-subextensions. The proof uses Zorn's Lemma.

If $R\subseteq S$ is a chained extension, then $U:=\cup[V\in [R,S]\mid\mathrm U(R)=\mathrm U(V)]$ is 	{\it the} MSL-subextension.

In Theorem \ref{5.05}, we prove that when $R\subseteq S$ is an FCP  extension and $S$ is semilocal, there exists a unique MSL-subextension.  

(2) Let $R\subseteq S$ be an extension where $S$ is a Boolean ring, then the extension is SL. This is obvious because the only unit of $S$ and $R$ is $1$ since $R$ is also a Boolean ring.

(3) Let $R$ be a reduced ring. Then $R\subset R[X]$ is SL because an invertible polynomial of $R[X]$ is of the form $a+Xf(X)$ where $a$ is invertible in $R$ and the coefficients of $f(X)$ are nilpotent, so that $f(X)=0$.

On the other hand, $R\subset R[[X]]$ is never SL because any power series with a unit of $R$ as first coefficient is a unit in $R[[X]]$.

(4) Let $\Sigma$ be a multiplicative closed subset of a ring $R$ such that $R\subseteq R_{\Sigma}$ is SL. Then $\Sigma=\mathrm U(R)$ and $R=R_{\Sigma}$. Of course, $\mathrm U(R)\subseteq\Sigma$. Assume $\mathrm U(R)\neq\Sigma$ and let $x\in\Sigma\setminus\mathrm U(R)$. Then, $x\in\mathrm U(R_{\Sigma})=\mathrm U(R)$, a contradiction, so that $\Sigma=\mathrm U(R)$ and $R=R_{\Sigma}$. 

(5) The ring morphism $j:R\to R/\mathrm J(R)$ is SL. Indeed if $\bar x$ is the class of an element $x\in R$ that is a unit in $R/\mathrm J(R)$, there is some $y\in R$, such that $xy-1\in\mathrm J(R)$. Then $1-(1-xy)$ is a unit in $R$; so that $x\in \mathrm U(R)$. 

(6) Let $R\subseteq S$ be an extension such that $1\neq -1$ and $\mathrm U(S)$ is a simple group. Since $\mathrm U(R)$ is a subgroup of $\mathrm U(S)$, the only possibilities are either $\mathrm U(R)=\{1\}\ (*)$ or $\mathrm U(R)=\mathrm U(S)\ (**)$. But $1\neq - 1$ and $-1\in\mathrm U(R)$ show that only case $(**)$ can occur. Then $R\subseteq S$ is SL. 

(7) Let $R\subseteq S$ be an extension where $1\neq -1$ and $\mathrm U(S)$ has a finite prime order. Then $\mathrm U(S)$ is a simple group and $R\subseteq S$ is SL by (6).

We will see that in the FCP case, the situation  $1\neq -1$ often occurs through the ring $\mathbb Z/2\mathbb Z$.

(8) An SL extension $R\subseteq S$  is trivial  if  $(S,M)$ is a local ring.  Indeed since $S=\mathrm U(S)\cup M=\mathrm U(R)\cup M$ and any $x\in M$ is such that $1+x\in \mathrm U(S)=\mathrm U(R)\subseteq R$, we get that $M\subseteq R$, giving $R=S$. 
\end{example}
 
\begin{proposition}\label{3.01} The following holds for an extension $R\subseteq S$ where $R$ is reduced: 
\begin{enumerate}
\item If $s\in S$ is not algebraic over $R$, then $R\subseteq R[s]$ is SL.
\item In case $S$ is also reduced and  $V\in[R,S]$ is  a MSL-subextension, then  $V\subseteq S$ is algebraic.
\end{enumerate}    
 \end{proposition}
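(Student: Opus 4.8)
The plan is to obtain part (1) from the polynomial-ring computation already recorded in Example \ref{exam}(3), and then to deduce part (2) from part (1) by a maximality argument.

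For part (1), I would first unwind the hypothesis: saying that $s\in S$ is not algebraic over $R$ means exactly that the $R$-algebra evaluation map $R[X]\to S$ sending $X\mapsto s$ is injective, so it induces an $R$-algebra isomorphism $R[X]\cong R[s]$ identifying the constant subring $R$ on both sides. Since whether an extension is SL depends only on its isomorphism class, it then suffices to recall Example \ref{exam}(3): as $R$ is reduced, $R\subseteq R[X]$ is SL, and therefore $R\subseteq R[s]$ is SL as well.

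For part (2), I would argue by contradiction, using first that $V$ is reduced, being a subring of the reduced ring $S$. Suppose $V\subseteq S$ is not algebraic, and pick $s\in S$ that is not algebraic over $V$. In particular $s$ cannot satisfy any degree-one relation $X-v=0$ with $v\in V$, so $s\notin V$ and the inclusion $V\subsetneq V[s]$ is proper. Applying part (1) to the reduced ring $V$ and the transcendental element $s$ yields that $V\subseteq V[s]$ is SL. Since $V$ is an MSL-subextension, $R\subseteq V$ is SL, and the tower property Corollary \ref{3.10}(1) then gives that $R\subseteq V[s]$ is SL. But $V[s]\in[R,S]$ properly contains $V$, contradicting the maximality of $V$ among the SL-subextensions of $[R,S]$. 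Hence no such $s$ exists and $V\subseteq S$ is algebraic.

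Both parts are largely mechanical once one has the isomorphism $R[s]\cong R[X]$ and Corollary \ref{3.10} in hand. The only points deserving attention are the transfer of the SL property across the isomorphism in part (1), and in part (2) the verification that $s\notin V$ so that the maximality of $V$ is genuinely violated; I do not expect a serious obstacle beyond these bookkeeping matters.
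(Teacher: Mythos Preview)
Your proposal is correct and follows essentially the same route as the paper: part (1) via the isomorphism $R[s]\cong R[X]$ and Example \ref{exam}(3), and part (2) by applying part (1) over the reduced ring $V$ to a transcendental $s$, then using the tower property to contradict maximality. The only cosmetic difference is that the paper writes the tower step directly as $\mathrm U(R)=\mathrm U(V)=\mathrm U(V[s])$ rather than invoking Corollary \ref{3.10}(1).
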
 
\begin{proof} (1) Let $s\in S$ which is not algebraic over $R$ and consider the morphism $\varphi:R[X]\to R[s]$ defined by $\varphi(X)=s$. Then $\varphi$ is a surjective morphism which is also injective since $s$ is not algebraic over $R$. Then $R[X]\cong R[s]$ so that $R\subseteq R[s]$ is SL by Example \ref{exam}(3).

(2) Assume, moreover, that $S$ is also reduced. By Example \ref{exam}(1), there exists an {\it MSL}-subextension $ V\in[R,S]$.  
Then, $V$ is also reduced. Assume that $S\neq V$ and let $s\in S\setminus V$. If $s$ is not algebraic over $V$, then $V\subseteq V[s]$ is SL by (1) and so is $R\subseteq V[s]$ because $\mathrm U(R)=\mathrm U(V)=\mathrm U(V[s])$, a contradiction with the maximality of $V$. Then, any $s\in S\setminus V$ is algebraic over $V$ and then $V\subseteq S$ is algebraic.
\end{proof}

\begin{corollary}\label{3.02} Let $R\subseteq S$ be an extension where $S$ is reduced. There exists $U\in[R,S]$ such that $R\subseteq U$ is SL and $U\subseteq S$ is quasi-Pr\"ufer.   
 \end{corollary}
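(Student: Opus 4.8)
The plan is to take $U$ to be a largest strongly local subextension and to show that, $S$ being reduced, what remains above $U$ is algebraic and hence quasi-Pr\"ufer. First I would apply Example~\ref{exam}(1b): by Zorn's Lemma there is an MSL-subextension $U\in[R,S]$, i.e. $R\subseteq U$ is SL and is maximal with this property. This already secures the first half of the statement, so the whole difficulty is concentrated in showing $U\subseteq S$ is quasi-Pr\"ufer.

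Next I would observe that $U$, being a subring of the reduced ring $S$, is reduced, so that Proposition~\ref{3.01}(2) applies to the MSL-subextension $U$ and yields that $U\subseteq S$ is algebraic. The remaining task is to deduce that an algebraic extension is quasi-Pr\"ufer, and I would route this through incomparability. Every intermediate step $T_1\subseteq T_2$ with $U\subseteq T_1\subseteq T_2\subseteq S$ is again algebraic, since an algebraic relation over $U$ is a fortiori one over $T_1$; so it suffices to check that an arbitrary algebraic extension satisfies INC. Given primes $Q_1\subsetneq Q_2$ of $T_2$ with $Q_1\cap T_1=Q_2\cap T_1=:P$, I would pass to the injection of domains $T_1/P\hookrightarrow T_2/Q_1$, in which $Q_2/Q_1$ is a nonzero prime contracting to $0$. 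Choosing $0\neq\bar b\in Q_2/Q_1$ and cancelling powers of $\bar b$ in the domain $T_2/Q_1$, one obtains an algebraic relation over $T_1/P$ with nonzero constant term; that constant term then lies in $(Q_2\cap T_1)/P=0$, a contradiction. Hence every subextension of $U\subseteq S$ has INC.

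Finally I would invoke the characterization of quasi-Pr\"ufer extensions as exactly the INC-pairs \cite{Pic 5}: since $U\subseteq S$ is an INC-pair, it is quasi-Pr\"ufer, and this $U$ meets both requirements. The main obstacle is precisely this last implication, converting the elementwise algebraicity supplied by Proposition~\ref{3.01}(2) into the structural quasi-Pr\"ufer conclusion; the INC computation disposes of the ``algebraic $\Rightarrow$ INC-pair'' half, leaving the weight on the cited equivalence. If one wished to bypass that equivalence, an alternative is to work with the integral closure $\overline U^S$ and verify directly that $\overline U^S\subseteq S$ is a normal pair, exploiting that for each $s\in S$ the leading coefficient $a$ of an algebraic relation satisfies $as\in\overline U^S$, so that $S$ is obtained from $\overline U^S$ by adjoining such fractions.
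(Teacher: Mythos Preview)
Your proof is correct and follows essentially the same route as the paper: take an MSL-subextension $U$ via Example~\ref{exam}(1b), apply Proposition~\ref{3.01}(2) to get $U\subseteq S$ algebraic, and then conclude quasi-Pr\"ufer from \cite{Pic 5}. The only cosmetic difference is that the paper passes through the characterization ``residually algebraic pair $\Leftrightarrow$ quasi-Pr\"ufer'' (which is immediate from algebraicity), whereas you detour through the equivalent ``INC-pair $\Leftrightarrow$ quasi-Pr\"ufer'' and supply the standard INC argument by hand; both invoke \cite[Theorem~2.3]{Pic 5}, so the approaches coincide.
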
 
\begin{proof} By Example \ref{exam}(1), there exists an MSL-subextension $U\in[R,S]$. Moreover, $U\subseteq S$ is algebraic as $U\subseteq V$ for any $V\in[U,S]$ by Proposition \ref{3.01}. This shows that $U\subseteq S$ is a residually algebraic pair, and then is quasi-Pr\"ufer by \cite[Theorem 2.3]{Pic 5}.
\end{proof}

\begin{proposition}\label{3.28} Let $R\subseteq S$ be a ring extension. The following conditions are equivalent:
\begin{enumerate}
\item $R\subseteq S$ is SL.
\item $R[X]\subseteq S[X]$ is SL.
\item $R+XS[X]\subseteq S[X]$ is SL.
\end{enumerate}
If these conditions hold, then $\mathrm{Nil}(R)=\mathrm{Nil}(S)$.
\end{proposition}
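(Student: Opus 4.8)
The plan is to run the two cycles $(1)\Rightarrow(2)\Rightarrow(1)$ and $(1)\Rightarrow(3)\Rightarrow(1)$, extracting the final assertion $\mathrm{Nil}(R)=\mathrm{Nil}(S)$ as a by-product of $(1)$. Throughout I would use the classical description of the units of a polynomial ring over a commutative ring $A$: a polynomial $\sum_i a_iX^i$ is a unit of $A[X]$ if and only if $a_0\in\mathrm U(A)$ and $a_i\in\mathrm{Nil}(A)$ for all $i\ge 1$ (equivalently $\mathrm{Nil}(A[X])=\mathrm{Nil}(A)[X]$). The engine of the whole argument is the auxiliary observation that any SL extension $R\subseteq S$ satisfies $\mathrm{Nil}(R)=\mathrm{Nil}(S)$: the inclusion $\mathrm{Nil}(R)\subseteq\mathrm{Nil}(S)$ is automatic, while for $a\in\mathrm{Nil}(S)$ one has $1+a\in\mathrm U(S)=\mathrm U(R)\subseteq R$, whence $a\in R$ and so $a\in\mathrm{Nil}(R)$. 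This already gives the ``moreover'' clause.

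With this in hand, $(1)\Rightarrow(2)$ is immediate: given $f=\sum_i a_iX^i\in\mathrm U(S[X])$, the description yields $a_0\in\mathrm U(S)=\mathrm U(R)$ and $a_i\in\mathrm{Nil}(S)=\mathrm{Nil}(R)$ for $i\ge 1$, so every coefficient lies in $R$ and $f\in\mathrm U(R[X])$; the reverse inclusion $\mathrm U(R[X])\subseteq\mathrm U(S[X])$ is clear. For the two implications back to $(1)$ I would simply test on constant polynomials. If $(2)$ holds and $u\in\mathrm U(S)$, then $u$ is a constant unit of $S[X]$, hence a unit of $R[X]$; evaluating a relation $ug=1$ at $X=0$ gives $u\cdot g(0)=1$ with $g(0)\in R$, so $u\in\mathrm U(R)$. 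If $(3)$ holds and $u\in\mathrm U(S)$, then $u\in\mathrm U(S[X])=\mathrm U(R+XS[X])$, which forces the constant $u$ into $R+XS[X]$ (so $u\in R$) and its inverse, necessarily the constant $u^{-1}$ by uniqueness of inverses, again into $R+XS[X]$ (so $u^{-1}\in R$); thus $u\in\mathrm U(R)$. In both cases the reverse inclusion is automatic, giving $(1)$.

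For $(1)\Rightarrow(3)$ I would avoid recomputing the units of $R+XS[X]$ altogether and instead invoke Corollary \ref{5.1}. The extension $R+XS[X]\subseteq S[X]$ shares the ideal $I:=XS[X]$: it is an ideal of $S[X]$, it is contained in $R+XS[X]$, and it is an ideal there as well (multiplying by an element of $R+XS[X]$ preserves vanishing of the constant term). The induced quotient extension is precisely $R\cong(R+XS[X])/I\subseteq S[X]/I\cong S$, which is SL by $(1)$, so Corollary \ref{5.1} delivers $(3)$.

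The main obstacle is the control of the higher coefficients: a unit of $S[X]$ carries nilpotent coefficients that a priori live only in $S$, with nothing forcing them into $R[X]$. The entire argument hinges on upgrading strong localness to the equality $\mathrm{Nil}(R)=\mathrm{Nil}(S)$, which is the single nontrivial point; the remaining steps are the unit description and formal manipulations, with only a small care needed in $(3)\Rightarrow(1)$ to use uniqueness of inverses when pulling $u^{-1}$ back into $R$.
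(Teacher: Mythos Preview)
Your proof is correct and follows essentially the same route as the paper: both hinge on the unit description $\mathrm U(A[X])=\mathrm U(A)+X\,\mathrm{Nil}(A)[X]$ together with the observation that an SL extension forces $\mathrm{Nil}(R)=\mathrm{Nil}(S)$, and both recover $(1)$ from $(3)$ by testing on constant units. The only substantive variation is how $(3)$ is reached: the paper deduces $(2)\Rightarrow(3)$ from Corollary~\ref{3.10} (an intermediate ring in an SL tower inherits SL on both sides), whereas you go $(1)\Rightarrow(3)$ directly via Corollary~\ref{5.1} using the shared ideal $XS[X]$; either shortcut works and neither buys anything the other does not.
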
 

\begin{proof}  We begin to remark that $1-x$ is a unit if $x$ is nilpotent for any $x\in R$ (resp. $x\in S$). Then, $\mathrm U(R)=\mathrm U(S)\Rightarrow\mathrm{Nil}(R)=\mathrm{Nil}(S)$.

(1) $\Leftrightarrow$ (2) We know that $\mathrm U(R[X])=\mathrm U(R)+X\mathrm{Nil}(R)[X]\ (*)$ and $\mathrm U(S[X])=\mathrm U(S)+X\mathrm{Nil}(S)[X]\ (**)$. Then, we get the equivalence applying the previous remark to $R\subseteq S$ and $R[X]\subseteq S[X]$ and using $(*)$ and $(**)$.

(2) $\Rightarrow$ (3) since $R[X]\subseteq S[X]$ SL implies by Corollary \ref{3.10} that $R+XS[X]\subseteq S[X]$ is SL because $R+XS[X]\in[R[X],S[X]]$.

(3) $\Rightarrow$ (1) Let $a\in\mathrm U(S)\subseteq\mathrm U(S[X])=\mathrm U(R+XS [X])$, which gives $a\in S\cap\mathrm U(R+XS[X])\subseteq R+XS[X]$. This shows that $ a\in R$. The same property holds for $b:=a^{-1}\in S$, so that $a\in\mathrm U(R)$ and $R\subseteq S$ is SL.
\end{proof}

\begin{corollary}\label{3.29} Let $R\subseteq S$ be a ring extension such that $S$ is reduced. Then $R\subseteq R+XS[X]$ is SL.
\end{corollary}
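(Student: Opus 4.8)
The plan is to view $R + XS[X]$ as an intermediate ring of the extension $R \subseteq S[X]$ and to exploit the fact that, because $S$ is reduced, the units of $S[X]$ are forced to be constants. Indeed, by Example \ref{exam}(3) applied to the reduced ring $S$, every unit of $S[X]$ is a constant belonging to $\mathrm{U}(S)$, so that $\mathrm{U}(S[X]) = \mathrm{U}(S)$. This single structural fact is the only place where the reducedness hypothesis enters, and everything else is a direct units computation.

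First I would record the trivial inclusion $\mathrm{U}(R) \subseteq \mathrm{U}(R + XS[X])$, so that the entire content is the reverse inclusion. To obtain it, take $f \in \mathrm{U}(R + XS[X])$ with inverse $g \in R + XS[X]$. Since $R + XS[X] \subseteq S[X]$, the relation $fg = 1$ holds in $S[X]$ as well, whence $f \in \mathrm{U}(S[X]) = \mathrm{U}(S)$; in particular $f$ is a constant $u \in \mathrm{U}(S)$, and by uniqueness of inverses in $S[X]$ its inverse $g$ must be the constant $u^{-1} \in \mathrm{U}(S)$. The key elementary observation is then that a constant polynomial lies in $R + XS[X]$ exactly when that constant lies in $R$. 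Applying this to both $f = u$ and $g = u^{-1}$, which we have just shown are constants belonging to $R + XS[X]$, gives $u \in R$ and $u^{-1} \in R$, hence $u \in \mathrm{U}(R)$. This proves $\mathrm{U}(R + XS[X]) \subseteq \mathrm{U}(R)$, and therefore equality, i.e. $R \subseteq R + XS[X]$ is SL.

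I do not expect a genuine obstacle: once $\mathrm{U}(S[X]) = \mathrm{U}(S)$ is in hand, the argument is a short computation. The only point requiring care is to take the inverse $g$ inside $R + XS[X]$ and to invoke uniqueness of inverses in the overring $S[X]$ to conclude that $g$ is the constant $u^{-1}$, rather than merely that $u$ is a unit of $S$. It is precisely this step, where one needs both $u$ and $u^{-1}$ to land in $R$, that goes beyond the mere local inclusion $\mathrm{U}(S)\cap R$ and yields genuine membership in $\mathrm{U}(R)$. Without reducedness the units of $S[X]$ would also include polynomials of the form $u + X(\text{nilpotents})$, and the conclusion would break down, which explains why the hypothesis on $S$ is essential.
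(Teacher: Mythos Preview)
Your argument is correct and follows essentially the same route as the paper's proof: both pass to the overring $S[X]$, use reducedness of $S$ (via the description of $\mathrm{U}(S[X])$) to force any unit of $R+XS[X]$ to be a constant in $\mathrm{U}(S)$, observe that a constant in $R+XS[X]$ must lie in $R$, and then repeat the step for the inverse to land in $\mathrm{U}(R)$. The only cosmetic difference is that the paper writes the unit as $c+Xf(X)$ and kills $f$ via $\mathrm{Nil}(S)=0$, whereas you invoke Example~\ref{exam}(3) directly to get $\mathrm{U}(S[X])=\mathrm{U}(S)$.
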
 

\begin{proof} Let $a\in\mathrm U(R+XS[X])\subset\mathrm U(S[X])\cap(R+XS[X])$, so that $a=c+Xf(X)$, where $c\in R$ and $f(X)\in S[X]$. But, as $a$ is an element of $\mathrm U (S[X])$, we have $f(X)\in\mathrm{Nil}(S)[X]=0$ because $S$ is reduced. Then, $a=c\in R\cap\mathrm U(S[X])\subseteq R\cap\mathrm U(S)$. The same property holds for $b:= a^{-1}\in\mathrm U(R+XS[X])$, so that $a\in\mathrm U(R)$ and $R\subseteq R+XS[X]$ is SL.
\end{proof}

\begin{proposition}\label{3.27} Let $R\subseteq S$ be a ring extension. If $R(X)\subseteq S(X)$ (resp. $R[[X]]\subseteq S[[X]]$) is SL, then $R= S$.
\end{proposition}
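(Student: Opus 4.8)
The plan is to exploit, for an arbitrary $s\in S$, the single element $1+sX$, which is automatically a unit upstairs, and then let the SL hypothesis force it down into the smaller ring, from which one reads off $s\in R$.

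I would dispose of the power series case first, as it is immediate. Recall that $\sum_i a_iX^i\in R[[X]]$ is a unit exactly when $a_0\in\mathrm U(R)$, and likewise in $S[[X]]$. Hence for any $s\in S$ the series $1+sX$ lies in $\mathrm U(S[[X]])$, its constant term being $1$. If $R[[X]]\subseteq S[[X]]$ is SL, then $\mathrm U(S[[X]])=\mathrm U(R[[X]])\subseteq R[[X]]$, so $1+sX\in R[[X]]$; reading off the coefficient of $X$ gives $s\in R$. As $s$ was arbitrary, $S\subseteq R$ and $R=S$.

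For the Nagata ring $R(X)=R[X]_{\Sigma_R}$, where $\Sigma_R$ is the multiplicative set of polynomials of unit content $c_R(\cdot)=R$, I would first record that the extension is genuine: a polynomial of unit content over $R$ still has unit content over $S$, so $\Sigma_R\subseteq\Sigma_S$ and $R(X)\subseteq S(X)$. The key unit fact is that every primitive polynomial becomes a unit in the Nagata ring, so again $1+sX\in\mathrm U(S(X))$ for each $s\in S$, its content containing $1$. If $R(X)\subseteq S(X)$ is SL, this unit lies in $R(X)$, i.e.\ $1+sX=f/g$ with $f,g\in R[X]$ and $g\in\Sigma_R$. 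Since primitive polynomials are non-zerodivisors (McCoy), the map $S[X]\to S(X)$ is injective, so I may clear denominators in $S[X]$ to obtain $(1+sX)g=f$, whence $sXg=f-g\in R[X]$.

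Writing $g=\sum_i a_iX^i$ with $a_i\in R$, the relation $sXg\in R[X]$ forces $sa_i\in R$ for every $i$; and because $c_R(g)=R$ there are $r_i\in R$ with $\sum_i r_ia_i=1$, so $s=\sum_i r_i(sa_i)\in R$. Thus $S\subseteq R$ and $R=S$. The auxiliary facts (multiplicative closedness of $\Sigma_R$ via Dedekind--Mertens, the unit description of the Nagata ring, McCoy's lemma ensuring $S[X]\hookrightarrow S(X)$) are standard; the only genuinely delicate point, and the step I expect to be the crux, is this last content computation, i.e.\ passing from ``$s$ times each coefficient of a primitive $g$ lies in $R$'' to ``$s\in R$'', which is precisely where the unit content of $g$ is used.
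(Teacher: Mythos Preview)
Your proof is correct and follows essentially the same route as the paper's own argument. The only cosmetic difference is your choice of test polynomial: you use $1+sX$ in both cases, whereas the paper uses $s+X$ for the Nagata ring case; your choice actually makes the content computation a line shorter, since $sXg=f-g\in R[X]$ immediately gives $sa_i\in R$ for every coefficient $a_i$ of $g$, without the index bookkeeping the paper carries out. You are also slightly more careful than the paper in invoking McCoy's lemma to justify clearing denominators in $S[X]$.
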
 

\begin{proof} Assume that $R(X)\subseteq S(X)$ is SL and let $s\in S$. Then, $P(X):=s+ X\in S[X]$ is such that $P(X)/1\in\mathrm U(S(X))=\mathrm U(R(X))$. This implies that there exist $f(X),g(X)\in R[X]$ with content equal to $R$ such that $P(X)/1=f(X)/g(X)$. Set $f(X):=\sum_{i=0}^{n+1}a_iX^{i}$ and $g(X):=\sum_{i=0}^nb_iX^{i},\ a_i,b_i\in R$ such that $g(X)P(X)=f(X)$. It follows that $sb_0=a_0,b_n=a_{n+1}$ and $sb_i+b_{i-1}= a_i $ for any $i\in\mathbb N_n\ (*)$. Since $c(g)=R$, there exist $\lambda_0,\ldots,\lambda_n\in R$ such that $\sum_{i=0}^n\lambda_ib_i=1$. Multiplying each equality of rank $i$ of $(*)$ by $\lambda_i$  and adding each of these equalities for each $i\in\{0, \ldots,n\}$, we get $s(\sum_{i=0}^n\lambda_ib_i)=s=\lambda_0a_0+\sum_{i=1}^n\lambda_i(a_i-b_{i-1})\in R$, so that $S=R$.

Assume now that $R[[X]]\subseteq S[[X]]$ is SL. We know that $\mathrm U(R[[X]])=\mathrm U(R)+XR[[X]]$ and $\mathrm U(S[[X]])=\mathrm U(S)+XS[[X]]$. It follows that  $\mathrm U(R)+XR[[X]]=\mathrm U(S)+XS[[X]]$. Let $s\in S$. Then $1+sX\in \mathrm U(S)+XS[[X]]=\mathrm U(R)+XR[[X]]$, so that $s\in R$ and $R=S$. 
\end{proof}

 \begin{definition}\label{4.0}  Let $R$ be a ring. 
 \begin{enumerate}
\item A polynomial $p(X)\in R[X]$ is called {\it comonic} if $p(0)\in \mathrm U(R)$.
\item A ring extension $R\subseteq S$ is called {\it co-integrally closed} if any $x\in S$ which is a zero of a comonic polynomial of $R[X]$ is in $R$.
\end{enumerate} 
 \end{definition} 
 
 \begin{proposition}\label{4.1} Let $R\subseteq S$ be a ring extension. The following statements hold:
\begin{enumerate}
\item If $R\subseteq S$ is SL, then $R\subseteq S$ is  co-integrally closed.
\item Let $R\subseteq S$ be an integral ring extension. Then $R\subseteq S$ is  SL if and only if  $R\subseteq S$ is  co-integrally closed.
\end{enumerate}
\end{proposition}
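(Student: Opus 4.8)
The plan is to prove part (1) first, deduce the forward direction of part (2) immediately from it, and then handle the converse in part (2) using integrality.

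For part (1), I would assume $R\subseteq S$ is SL and take any $x\in S$ that is a root of a comonic polynomial $p(X)=a_0+a_1X+\cdots+a_nX^n$ with $a_0=p(0)\in\mathrm U(R)$. The relation $p(x)=0$ rearranges to $x\bigl(a_1+a_2x+\cdots+a_nx^{n-1}\bigr)=-a_0$, exhibiting $x$ as a factor of the unit $-a_0$ in $S$. Hence $x\in\mathrm U(S)=\mathrm U(R)\subseteq R$, so $x\in R$ and the extension is co-integrally closed. This direction uses nothing beyond the SL hypothesis, so the forward implication of part (2) is then immediate.

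For the converse of part (2), I would assume $R\subseteq S$ is integral and co-integrally closed and prove $\mathrm U(S)\subseteq\mathrm U(R)$, the reverse containment being automatic. The key device is that integrality lets me turn a monic relation into a comonic one by clearing a power of a unit. Given $x\in\mathrm U(S)$, its inverse $x^{-1}$ is integral over $R$, say $(x^{-1})^m+d_{m-1}(x^{-1})^{m-1}+\cdots+d_0=0$; multiplying through by $x^m$ gives $d_0x^m+\cdots+d_{m-1}x+1=0$, so $x$ is a root of a comonic polynomial and therefore $x\in R$. Running the symmetric argument on $x$ itself (which is also integral over $R$) shows $x^{-1}\in R$ as well, whence $x\in\mathrm U(R)$.

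The step I expect to be the crux is this conversion of a monic integral relation into a comonic one by clearing powers of the unit; everything else is bookkeeping. The one subtlety worth flagging is that co-integral closedness only delivers membership in $R$, so to conclude $x\in\mathrm U(R)$ rather than merely $x\in R$ I must apply the argument to both $x$ and $x^{-1}$. I anticipate no genuine obstacle here, since commutativity turns any one-sided inverse into a two-sided one and the integral relations for $x$ and for $x^{-1}$ are both available at once.
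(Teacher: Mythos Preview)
Your proposal is correct and follows essentially the same approach as the paper's own proof. The only cosmetic difference in part (2) is that the paper starts from a monic relation for $x$ and multiplies by $x^{-n}$ to obtain a comonic relation for $x^{-1}$ (then symmetrizes), whereas you start from a monic relation for $x^{-1}$ and multiply by $x^{m}$ to obtain a comonic relation for $x$; this is the same trick applied in the reverse order and changes nothing of substance.
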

\begin{proof} (1) Assume that $R\subseteq S$ is SL and let $x\in S$ be a zero of a comonic polynomial $p(X):=\sum_{i=0}^na_iX^{i}\in R[X]$. Then, $p(0)=a_0\in\mathrm U(R)$ and $\sum_{i=0}^na_ix^{i}=0$, so that $x(\sum_{i=1}^na_ix^{i-1})=-a_0\in\mathrm U(R)=\mathrm U(S)$ shows that $x\in\mathrm U(S)=\mathrm U(R)\subseteq R$. Then,  $R\subseteq S$ is  co-integrally closed.

(2) One part of the proof is gotten in (1). So, assume that $R\subseteq S$ is  co-integrally closed. Obviously, $\mathrm U(R)\subseteq\mathrm U(S)$. Let $x\in\mathrm U (S)$. Since $R\subseteq S$ is an integral ring extension, there exists a monic polynomial $p(X):=\sum_{i=0}^na_iX^{i}\in R[X]$ with $a_n=1$, such that $p(x)=0$, giving $x^n+\sum_{i=0}^{n-1}a_ix^{i}=0\ (*)$. Since $x\in\mathrm U(S)$, multiplying $(*)$ by $x^{-n}$, we get $1+\sum_{i=0}^{n-1}a_ix^{i-n}=1+\sum_{i=0}^{n-1}a_i(x^{-1})^{n-i}=0$ which shows that $x^{-1}$ is a zero of the comonic polynomial $q(X):=1+\sum_{i=0}^{n-1}a_iX^ {n-i}=\sum_{i=1}^na_{n-i}X^{i}+1\in R[X]$. Then, $x^{-1}\in R$. To sum up, we have shown that any $x\in\mathrm U(S)$ is such that $x^{-1}\in R$. Setting $y:=x^{-1}$, which is also in $\mathrm U(S)$, the previous proof gives that $y^{-1}=x$ is in $R$. Moreover, since it also shows that $x$ and $x^{-1}$ are in $R$, this implies that $x\in\mathrm U(R)$. To conclude, $\mathrm U(R)=\mathrm U(S)$ and $R\subseteq S$ is SL.
\end{proof}
 
 \section{$\mathrm J$-regular rings}
 
In this section, we look at properties of $\mathrm J$-regular rings, which will play an important role in the following study of SL extension.  An absolutely flat ring is in this paper called a (Von Neumann) {\it regular ring.}
Actually, many results need only rings $R$ with a Jacobson radical $J$ such that $R/J$ is regular. They  are called {\it $\mathrm J$-regular} in the literature.  

We recall some results concerning regular rings.
 
(1) A ring $R$ is regular if for any $x\in R$, there exists $y\in R$ such that $x^2 y=x$. Under these conditions, such an $y$ is unique when satisfying $y^2x=y$ \cite[Lemme, p.69]{OP1}. Moreover, setting $e:=xy$ and $u:=1-e+x$, we get that $e$ is an idempotent, $u$ is a unit with $1-e+y$ as inverse and $x=eu$. 

(2) A ring is regular if and only if it is reduced and zero-dimensional.

(3) Let $R$ be a regular ring. For any $P\in\mathrm{Spec}(R)$,  there is an  isomorphism $R/P\cong R_P$.

\begin{lemma}\label{STRICTINV1} If $f:R\to S$ is a strict monomorphism and $S$ is regular, then $R$ is regular.
\end{lemma}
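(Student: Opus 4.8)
The plan is to verify the elementwise criterion for regularity recalled in item (1) of the section: given any $x\in R$, I must produce $y\in R$ with $x^2y=x$. Since $f$ is a monomorphism it is injective, so I identify $R$ with $f(R)\subseteq S$, and the strict hypothesis reads $R=\mathrm{D}(f)=\{s\in S\mid s\otimes 1=1\otimes s\text{ in }S\otimes_RS\}$. As $S$ is regular and $x\in R\subseteq S$, item (1) already furnishes a unique $y\in S$ satisfying both $x^2y=x$ and $y^2x=y$ (a strong pseudo-inverse of $x$); the whole point is therefore to show that this $y$ in fact lies in $R$, i.e. to check that $y\otimes 1=1\otimes y$ in $S\otimes_RS$.

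To this end I work in the ring $A:=S\otimes_RS$ and set $\xi:=x\otimes 1$. Since $x\in R$, the $R$-bilinearity of the tensor product gives $x\otimes 1=1\otimes x=\xi$. Now both $y\otimes 1$ and $1\otimes y$ are strong pseudo-inverses of the single element $\xi$: indeed
$$\xi^2(y\otimes 1)=x^2y\otimes 1=x\otimes 1=\xi,\qquad (y\otimes 1)^2\xi=y^2x\otimes 1=y\otimes 1,$$
and symmetrically $\xi^2(1\otimes y)=1\otimes x^2y=1\otimes x=\xi$ together with $(1\otimes y)^2\xi=1\otimes y^2x=1\otimes y$.

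It then remains to invoke uniqueness of the strong pseudo-inverse, which I would establish as a purely algebraic identity valid in an arbitrary commutative ring (independently of regularity): if $y',y''$ both satisfy $\xi^2y'=\xi=\xi^2y''$ and $y'^2\xi=y'$, $y''^2\xi=y''$, then $y'=y'^2\xi=y'^2\xi^2y''=(y'^2\xi)\,\xi y''=\xi y'y''$, and symmetrically $y''=\xi y'y''$, so $y'=y''$. Applying this in $A$ to $y'=y\otimes 1$ and $y''=1\otimes y$ yields $y\otimes 1=1\otimes y$, that is $y\in\mathrm{D}(f)=R$. Hence every $x\in R$ admits $y\in R$ with $x^2y=x$, and $R$ is regular by item (1).

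The only delicate point is that uniqueness of the pseudo-inverse must be used in $A=S\otimes_RS$, which need not be regular — expecting it to be so would essentially presuppose what we want to prove about $R$. This is precisely why I isolate uniqueness as the general group-inverse identity $y'=\xi y'y''=y''$ rather than quoting item (1), whose statement is phrased only for regular rings; the existence of $y$ is supplied by the regularity of $S$, while its membership in $R$ is forced by the dominion condition through uniqueness.
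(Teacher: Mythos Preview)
Your proof is correct and complete. The paper itself gives no argument at all for this lemma---it simply cites an external reference (\cite{PICANA}, diagram page~40 and Proposition~19)---so your write-up is strictly more informative than what appears here.

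Your approach is the natural direct one: exploit the uniqueness of the reflexive pseudo-inverse (group inverse), which you correctly observe holds in \emph{any} commutative ring via the identity $y'=y'^2\xi=y'^2\xi^2y''=\xi y'y''=y''$, and then feed the two canonical images $y\otimes 1$ and $1\otimes y$ into that uniqueness to force membership in the dominion. Your explicit remark that one must not assume $S\otimes_RS$ is regular, and that uniqueness must therefore be argued as a bare algebraic identity rather than quoted from item~(1), is exactly the subtle point of the argument. The cited paper likely packages this same idea in a more categorical or diagrammatic form (descent of regularity along strict monomorphisms), but the underlying mechanism---uniqueness of the reflexive pseudo-inverse forcing agreement in $S\otimes_RS$---is the same.
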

\begin{proof} See \cite[diagram page 40 and Proposition 19]{PICANA}.
\end{proof}

For any ring $R$ there is an (Olivier) ring epimorphism $t:R\to\mathcal O(R)$, whose spectral map is bijective, such that $\mathcal O(R)$ is regular and such that any ring morphism $R \to S$ where $S$ is regular can be  factored $R\to\mathcal O(R)\to S$. 

Note that $\ker(t)=\mathrm{Nil}(R)$.

This property is a consequence of the following facts: $\mathrm{V}(t^{-1}(I))=\overline{{}^ {a}t(\mathrm{V}(I))}$ for any ideal $I$ of $\mathcal O(R)$ \cite[Proposition 1.2.2.3, p.196]{EGA} and $\mathcal O(R)$ is reduced. 

The existence of the ring $\mathcal O(R)$, called {\it the universal regular (absolutely flat) ring associated to} $R$ is due to Olivier \cite[Proposition, p.70]{OP1}.

\begin{lemma}\label{5.22} If $f:R\to S$ is an epimorphism and $M\in\mathrm{Max}(R)$, there exists $N\in\mathrm{Max}(S)$ such that $M={}^{a}f(N)$ if and only if $N=f(M)S$. 

If in addition ${}^{a}f$  is bijective, then, for any $M\in\mathrm{Max}(R)$, we have $f(M)S\in \mathrm{Max}(S)$ and $M={}^{a}f[f(M)S]$.
\end{lemma}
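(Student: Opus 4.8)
The plan is to exploit the defining property of epimorphisms through base change, reducing everything to the classical fact that a ring epimorphism out of a field is either the zero map or an isomorphism. First I would record the following preliminary observation: for $M\in\mathrm{Max}(R)$, the base change of the epimorphism $f$ along $R\to R/M$ yields a ring epimorphism $\kappa_R(M)=R/M\to S/f(M)S\cong S\otimes_R\kappa_R(M)$; since the source is a field, the target $S/f(M)S$ is either the zero ring or a field isomorphic to $\kappa_R(M)$. Consequently $f(M)S$ is either all of $S$ or a maximal ideal of $S$.

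For the first assertion, in the direction $(\Rightarrow)$ I would start from $N\in\mathrm{Max}(S)$ with $M={}^{a}f(N)=f^{-1}(N)$. Then $f(M)\subseteq N$, so $f(M)S\subseteq N\subsetneq S$; hence $S/f(M)S\neq 0$, which forces $f(M)S$ to be maximal by the preliminary observation, and $f(M)S\subseteq N$ with both maximal gives $N=f(M)S$. In the direction $(\Leftarrow)$, assuming $N=f(M)S\in\mathrm{Max}(S)$, I note that $f(M)\subseteq N$ gives $M\subseteq f^{-1}(N)$, while $1\notin N$ gives $f^{-1}(N)\neq R$; maximality of $M$ then yields $f^{-1}(N)=M$, that is $M={}^{a}f(N)$.

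For the second assertion, suppose in addition that ${}^{a}f$ is bijective. Surjectivity produces $N_0\in\mathrm{Spec}(S)$ with $f^{-1}(N_0)=M$. To see that $N_0$ is maximal, I would take any prime $N_1\supseteq N_0$; then $M=f^{-1}(N_0)\subseteq f^{-1}(N_1)\neq R$, so maximality of $M$ forces $f^{-1}(N_1)=M=f^{-1}(N_0)$, and injectivity of ${}^{a}f$ gives $N_1=N_0$. Thus $N_0\in\mathrm{Max}(S)$, and the first assertion applied to $N_0$ yields $N_0=f(M)S$. Therefore $f(M)S\in\mathrm{Max}(S)$ and $M={}^{a}f[f(M)S]$.

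The only genuinely nontrivial input is the preliminary observation, whose weight rests entirely on the classical fact that a ring epimorphism whose source is a field is either trivial or an isomorphism; once this is in hand, the remaining arguments are routine bookkeeping with the inclusion-preserving, injective spectral map ${}^{a}f$ of the epimorphism.
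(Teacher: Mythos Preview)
Your proof is correct and follows essentially the same route as the paper's: both hinge on the fact that the induced epimorphism $R/M\to S/f(M)S$ from a field must be either zero or an isomorphism, and both deduce the $(\Leftarrow)$ direction by the trivial squeeze $M\subseteq f^{-1}(f(M)S)\subsetneq R$. The only differences are organizational: you isolate the field-epimorphism fact as a preliminary observation rather than embedding it in the $(\Rightarrow)$ argument, and in the second assertion you spell out explicitly why the prime $N_0$ lifting $M$ must be maximal (via injectivity of ${}^{a}f$), a step the paper leaves implicit.
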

\begin{proof} Consider the following commutative diagram:
$$\begin{matrix}
        R         &           \overset{f}\to      &        S          \\
\downarrow &                   {}                & \downarrow \\
     R/M        & \overset{\overline f}\to &     S/f(M)S         
\end{matrix}$$
where $\overline f$ is deduced from $f$. Since $f$ is an epimorphism, so is $\overline f$. Assume that $M:={}^{a}f(N)$ for some $N\in\mathrm{Max}(S)$. Since $f(M)S\subseteq N\subset S$, it follows that $M={}^{a}f[f(M)S]$, so that $\overline f$ is injective. Now, $R/M$ being a field, $\overline f$ is surjective, and then an isomorphism. Hence, $S/f(M)S$ is a field and $f(M)S\in \mathrm {Max}(S) $, which infers that $N=f(M)S$. 

Conversely, assume that $N=f(M)S$ for some $N\in\mathrm{Max}(S)$. Then, $f(M)S\subset S$. It follows that ${}^{a}f[f(M)S]\in\mathrm{Spec}(R)$ with $M\subseteq{}^{a}f       [f(M)S]$

\noindent$\subset R$.  But $M\in\mathrm{Max}(R)$ implies that $M={}^{a}f[f(M)S]={}^{a}f (N)$. 

Now if ${}^{a}f$ bijective,  for any $M\in\mathrm{Max}(R)$, there exists $N\in\mathrm{Max}(S)$ such that $M={}^{a}f(N)$. Then the first part of the Lemma gives that $N:=f(M)S\in\mathrm{Max}(S)$ and $M={}^{a}f[f(M)S]$.
\end{proof}

 \begin{proposition}\label{5.23}Let $t:R\to\mathcal O(R)$ be the  Olivier ring epimorphism, $\mathcal T:=\{N\in\mathrm{Spec}(\mathcal O(R))\mid{}^{a}t(N)\in\mathrm {Max}(R)\}$ and $K:=\cap[N\in\mathcal T]$. Then, 
 ${}^{a}t(\mathcal T)=\mathrm{Max}(R)$ and any $N\in\mathcal T$ is of the form $t(M)\mathcal O(R)$ where $M\in \mathrm{Max}(R)$.

If $R$ is $\mathrm J$-regular, there is an isomorphism $R/\mathrm J(R)\cong \mathcal O(R)/K$.
\end{proposition}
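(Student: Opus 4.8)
The plan is to treat the two assertions separately: the first is a direct application of Lemma~\ref{5.22}, while the second rests on the universal property of $\mathcal O(R)$ combined with the fact that every ideal of a regular ring is radical.

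For the first assertion, I would first note that $\mathcal O(R)$, being regular, is zero-dimensional, so $\mathrm{Spec}(\mathcal O(R))=\mathrm{Max}(\mathcal O(R))$; in particular every $N\in\mathcal T$ is maximal. The inclusion ${}^{a}t(\mathcal T)\subseteq\mathrm{Max}(R)$ is the definition of $\mathcal T$. For the reverse inclusion, given $M\in\mathrm{Max}(R)$, the second part of Lemma~\ref{5.22} (valid since ${}^{a}t$ is bijective) yields $t(M)\mathcal O(R)\in\mathrm{Max}(\mathcal O(R))$ with $M={}^{a}t[t(M)\mathcal O(R)]$, so $N:=t(M)\mathcal O(R)\in\mathcal T$ and $M\in{}^{a}t(\mathcal T)$. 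Finally, for an arbitrary $N\in\mathcal T$, put $M:={}^{a}t(N)\in\mathrm{Max}(R)$; since $N$ is maximal and $M={}^{a}t(N)$, the biconditional of the first part of Lemma~\ref{5.22} forces $N=t(M)\mathcal O(R)$. This gives both ${}^{a}t(\mathcal T)=\mathrm{Max}(R)$ and the stated form of the elements of $\mathcal T$, so $\mathcal T=\{t(M)\mathcal O(R)\mid M\in\mathrm{Max}(R)\}$.

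For the second assertion, assume $R$ is $\mathrm J$-regular, so $R/\mathrm J(R)$ is regular. The quotient map $\pi\colon R\to R/\mathrm J(R)$ then has regular target, hence factors through the universal morphism $t$ as $\pi=g\circ t$ for a unique $g\colon\mathcal O(R)\to R/\mathrm J(R)$. Surjectivity of $g$ is immediate, since $\mathrm{Im}(g)\supseteq\mathrm{Im}(\pi)=R/\mathrm J(R)$. It remains to prove $\ker(g)=K$, after which $g$ induces the desired isomorphism $\mathcal O(R)/K\cong R/\mathrm J(R)$. To this end I would argue spectrally: because $\mathcal O(R)$ is regular, each of its ideals is radical (writing $x=eu$ with $e$ idempotent and $u$ a unit as in the recalled properties, $x^{n}=eu^{n}\in I$ forces $e\in I$ and hence $x\in I$), so $\ker(g)$ is the intersection of the maximal ideals of $\mathcal O(R)$ containing it. As $g$ is surjective these are exactly the image of ${}^{a}g$, and since $R/\mathrm J(R)$ is zero-dimensional its primes correspond to the maximal ideals of $R$ (a prime of $R$ containing $\mathrm J(R)$ is maximal). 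Using ${}^{a}t\circ{}^{a}g={}^{a}\pi$ and the injectivity of ${}^{a}t$, one identifies ${}^{a}g(\bar M)=t(M)\mathcal O(R)$ for each $M\in\mathrm{Max}(R)$, so the maximal ideals of $\mathcal O(R)$ above $\ker(g)$ are exactly $\mathcal T$; hence $\ker(g)=\cap[N\in\mathcal T]=K$.

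The main obstacle I anticipate is precisely this last spectral identification: matching the maximal ideals of $\mathcal O(R)$ lying above $\ker(g)$ with the set $\mathcal T$ of the first part, which forces one to combine the radicality of ideals in a regular ring, the injectivity of ${}^{a}t$, and the fact that primes of $R$ over $\mathrm J(R)$ are maximal. By contrast, the factorization and surjectivity of $g$ are routine, and the first assertion amounts to bookkeeping with Lemma~\ref{5.22}.
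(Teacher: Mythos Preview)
Your argument is correct, and for the first assertion it coincides with the paper's. For the second assertion you take a genuinely different route. The paper does \emph{not} invoke the universal property of $\mathcal O(R)$; instead it builds the map in the opposite direction: from $t$ it induces $\overline t\colon R/\mathrm J(R)\to\mathcal O(R)/K$, checks injectivity via the one-line computation $t^{-1}(K)=\cap_{N\in\mathcal T}{}^{a}t(N)=\cap_{M\in\mathrm{Max}(R)}M=\mathrm J(R)$, observes that $\overline t$ is an epimorphism (as a quotient of the epimorphism $t$), and then concludes surjectivity from the fact that an injective (flat) epimorphism out of a zero-dimensional ring is an isomorphism. Your approach instead factors $\pi$ through $t$ to obtain $g\colon\mathcal O(R)\to R/\mathrm J(R)$, gets surjectivity for free, and then identifies $\ker(g)$ with $K$ by combining radicality of ideals in a regular ring with a spectral bijection argument. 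The paper's route is shorter and exploits the epimorphism property of $t$ more directly; your route is more explicit and self-contained, avoiding the appeal to the ``injective epimorphism from a zero-dimensional ring is an isomorphism'' principle at the cost of the extra spectral bookkeeping you flagged as the main obstacle.
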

\begin{proof} 
Since ${}^{a}t(N)\in\mathrm{Max}(R)$ for any $N\in\mathcal T$, we have ${}^{a}t(\mathcal T)\subseteq\mathrm{Max}(R)$. Now, let $M\in\mathrm{Max}(R)$. There exists $N\in\mathrm{Spec}(\mathcal O(R))$ such that $M={}^{a}t(N)$ because ${}^{a}t$ is bijective, so that $N\in\mathcal T$, giving $M\in{}^{a}t(\mathcal T)$. To end, ${}^{a}t(\mathcal T)=\mathrm{Max}(R)$.

Let $N\in\mathcal T$ and set $M:={}^{a}t(N)\in\mathrm{Max}(R)$. By Lemma \ref{5.22}, $N=t(M)\mathcal O(R)$.

Assume, moreover, that $R$ is $\mathrm J$-regular. Then, $R/\mathrm J(R)$ is regular, that is zero-dimensional. Setting $K:=\cap[N\in\mathcal T]$, we get $t^{-1}(K)=t^{-1}(\cap[N\in\mathcal T])=\cap[{}^{a}t(N)\mid N\in\mathcal T]=\cap[M\in{}^{a}t(\mathcal T)]=\cap[M\in \mathrm{Max}(R)]=\mathrm J(R)$. Consider the following commutative diagram:
$$\begin{matrix}
          R             &           \overset{t}\to      &   \mathcal O(R) \\
  \downarrow     &                   {}                &    \downarrow    \\
R/\mathrm J(R) & \overset{\overline t}\to & \mathcal O(R)/K         
\end{matrix}$$
 where $\overline t$ is deduced from $t$. Since $t$ is an epimorphism, so is $\overline t$, which is injective because $t^{-1}(K)=\mathrm J(R)$. Now, $R/\mathrm J(R)$ being zero-dimensional, $\overline t$ is surjective, and then $\overline t:R/\mathrm J(R)\to\mathcal O(R)/K$ is an isomorphism.
\end{proof}

 We are now looking at topological characterizations of $\mathrm J$-regular rings 
  and add a characterization of $\mathrm J$-regular rings  given in \cite{KZ}.

\begin{proposition}\label{5.25} Let $R$ be a ring. The following conditions are equivalent:
\begin{enumerate}
\item $R$ is $\mathrm J$-regular.
\item $\mathrm{Max}(R)$ is closed.
\item $\mathrm{Max}(R)$ is proconstructible.
\item $\mathrm{Max}(R)$ is compact for the flat topology.
\item \cite[Proposition 6.4, page 60]{KZ} For every $x\in R$, there exists $y\in R$ such that $xy\in\mathrm J(R)$ and $x+y\in \mathrm U(R)$.
\end{enumerate}
\end{proposition}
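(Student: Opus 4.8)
The plan is to prove the biconditional $(1)\Leftrightarrow(2)$, then the cyclic chain $(2)\Rightarrow(3)\Rightarrow(4)\Rightarrow(2)$ among the three topological conditions, and finally to read off $(1)\Leftrightarrow(5)$ from \cite[Proposition 6.4, page 60]{KZ}, which is quoted verbatim as condition $(5)$. The conceptual heart is $(1)\Leftrightarrow(2)$. First I would observe that $\mathrm J(R)=\cap[M\mid M\in\mathrm{Max}(R)]$ is an intersection of prime ideals, hence a radical ideal, so that $R/\mathrm J(R)$ is reduced. By the recalled fact that a ring is regular if and only if it is reduced and zero-dimensional, $R$ is $\mathrm J$-regular exactly when $R/\mathrm J(R)$ is zero-dimensional, i.e. when every prime of $R$ containing $\mathrm J(R)$ is maximal, that is $\mathrm V(\mathrm J(R))\subseteq\mathrm{Max}(R)$. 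Since $\mathrm V(\mathrm J(R))=\overline{\mathrm{Max}(R)}$ is the Zariski closure of $\mathrm{Max}(R)$, and $\mathrm{Max}(R)\subseteq\mathrm V(\mathrm J(R))$ always holds, this amounts to $\overline{\mathrm{Max}(R)}=\mathrm{Max}(R)$, which is precisely $(2)$.

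For $(2)\Rightarrow(3)$ I would invoke that the constructible (patch) topology refines the Zariski topology, so every Zariski-closed set is proconstructible. For $(3)\Rightarrow(4)$ I would use that $\mathrm{Spec}(R)$ is compact Hausdorff for the patch topology, whence a proconstructible (equivalently patch-closed) set is patch-compact; since the flat topology is coarser than the patch topology, $\mathrm{Max}(R)$ is then quasi-compact for the flat topology.

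The implication requiring genuine work, and the main obstacle, is $(4)\Rightarrow(2)$, where flat-quasi-compactness must be converted into the algebraic statement $(2)$; I would argue contrapositively. Recall that the flat topology admits $\{\mathrm V(a)\mid a\in R\}$ as a subbasis of open sets. If $(2)$ fails, then $\mathrm V(\mathrm J(R))\neq\mathrm{Max}(R)$, so there is a non-maximal prime $P\supseteq\mathrm J(R)$. I claim that $\{\mathrm V(a)\mid a\in R\setminus P\}$ is a flat-open cover of $\mathrm{Max}(R)$ admitting no finite subcover. It is a cover because no maximal ideal is contained in $P$ (a maximal ideal inside $P\subset R$ would equal $P$, contradicting non-maximality of $P$), so each $M\in\mathrm{Max}(R)$ meets $R\setminus P$ and thus lies in some $\mathrm V(a)$ with $a\in M\setminus P$. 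Any finite subfamily satisfies $\mathrm V(a_1)\cup\cdots\cup\mathrm V(a_n)=\mathrm V(a_1\cdots a_n)$, and $a_1\cdots a_n\in R\setminus P$ since $R\setminus P$ is multiplicatively closed; hence $a_1\cdots a_n$ lies outside $P\supseteq\mathrm J(R)$, so outside some maximal ideal, and the finite subfamily does not cover $\mathrm{Max}(R)$. Therefore $\mathrm{Max}(R)$ is not flat-quasi-compact, which proves $(4)\Rightarrow(2)$ and closes the equivalences $(2)\Leftrightarrow(3)\Leftrightarrow(4)$.

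Combining $(1)\Leftrightarrow(2)$ with $(2)\Leftrightarrow(3)\Leftrightarrow(4)$ and with $(1)\Leftrightarrow(5)$ from \cite[Proposition 6.4, page 60]{KZ} yields the proposition. I expect the only delicate point to be fixing the conventions for the flat topology, so that its open subbasis is exactly $\{\mathrm V(a)\}$ and it is coarser than the patch topology; once these are pinned down, the covering construction in $(4)\Rightarrow(2)$ is the decisive step.
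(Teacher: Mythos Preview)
Your proof is correct and, for the part involving the flat topology, more self-contained than the paper's. The argument for $(1)\Leftrightarrow(2)$ coincides with the paper's, but the handling of $(3)$ and $(4)$ differs in two respects.

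First, the paper simply cites \cite[Theorem 4.5]{T} for the equivalence $(1)\Leftrightarrow(4)$ as a black box, whereas you supply a direct elementary proof of $(4)\Rightarrow(2)$: given a non-maximal prime $P\supseteq\mathrm J(R)$, the cover $\{\mathrm V(a)\mid a\in R\setminus P\}$ of $\mathrm{Max}(R)$ admits no finite subcover because $R\setminus P$ is multiplicatively closed and disjoint from $\mathrm J(R)$. This is a clean free-standing argument that recovers the relevant piece of Tarizadeh's theorem. Second, for $(3)\Rightarrow(2)$ the paper invokes \cite[Corollaire 7.3.2, p.~339]{EGA} (a proconstructible set stable under specialization is Zariski-closed, and $\mathrm{Max}(R)$ is trivially specialization-stable), while you instead route $(3)\Rightarrow(4)$ through the patch topology and then close the loop via your $(4)\Rightarrow(2)$.

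The trade-off is that the paper's route is shorter but leans on two external references (EGA and Tarizadeh), whereas yours replaces both with explicit topological reasoning at the cost of a few more lines. Either approach is fine; yours has the advantage that it would survive in a setting where those references are unavailable.
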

\begin{proof} (1) $\Leftrightarrow$ (4) by \cite[Theorem 4.5]{T}. 

(1) $\Rightarrow$ (2) Equality $\overline{\mathrm{Max}(R)}=\mathrm{V}(\mathrm J(R))$ always holds. Then (2) $\Leftrightarrow\ \overline{\mathrm{Max}(R)}={\mathrm{Max}(R)}$. But $\dim(R/\mathrm J(R))=0$ gives that $\mathrm{Max}(R)=\mathrm{V}(\mathrm J(R))$. Moreover, (1) $\Leftrightarrow\ \dim(R/\mathrm J(R))=0$ and $R/\mathrm J(R)$ is reduced, this last condition always holding since $\mathrm J(R)$ is semiprime. It follows that (1) $\Rightarrow\ \mathrm{Max}(R)=\mathrm{V}(\mathrm J(R))=\overline{\mathrm{Max}(R)}$, so that $\mathrm{Max}(R)$ is closed.

(2) $\Rightarrow$ (1) If $\mathrm{Max}(R)$ is closed, then $\mathrm{Max}(R)=\overline {\mathrm{Max}(R)}=\mathrm{V}(\mathrm J(R))$, so that $\dim(R/\mathrm J(R))=0$ which implies that $R$ is $\mathrm J$-regular.
 
 (2) $\Rightarrow$ (3) because a closed subset is proconstructible.
 
(3) $\Rightarrow$ (2) According to \cite[Corollaire 7.3.2, page 339]{EGA}, a proconstructible subset stable by specialization is closed.
 \end{proof}
 
\begin{corollary}\label{5.02} Let $R\subseteq S$ be an integral i-extension. Then   $R$ is $\mathrm J$-regular if and only if $S$ is $\mathrm J$-regular.
 \end{corollary}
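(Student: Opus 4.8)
The plan is to reduce everything to the topological criterion of Proposition \ref{5.25}, namely the equivalence (1)$\Leftrightarrow$(2) asserting that a ring $A$ is $\mathrm J$-regular if and only if $\mathrm{Max}(A)$ is closed in $\mathrm{Spec}(A)$. The strategy is to show that the spectral map $\varphi:\mathrm{Spec}(S)\to\mathrm{Spec}(R)$ is a homeomorphism that carries $\mathrm{Max}(S)$ onto $\mathrm{Max}(R)$; granting this, $\mathrm{Max}(R)$ is closed exactly when $\mathrm{Max}(S)$ is closed, and the asserted equivalence follows at once.

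First I would prove that $\varphi$ is a homeomorphism. Since $R\subseteq S$ is integral, lying over holds, so $\varphi$ is surjective; the i-extension hypothesis says precisely that $\varphi$ is injective, hence $\varphi$ is a continuous bijection. It remains to check that $\varphi$ is a closed map: for any ideal $J$ of $S$, applying lying over and going up to the integral extension $R/(J\cap R)\subseteq S/J$ gives that $\varphi(\mathrm V_S(J))$ equals the closed set $\{P\in\mathrm{Spec}(R)\mid J\cap R\subseteq P\}$. A continuous closed bijection is a homeomorphism. Next I would record the correspondence on maximal ideals: for an integral extension, a prime $Q\in\mathrm{Spec}(S)$ is maximal if and only if $Q\cap R$ is maximal in $R$, which yields $\varphi(\mathrm{Max}(S))=\mathrm{Max}(R)$ and $\varphi^{-1}(\mathrm{Max}(R))=\mathrm{Max}(S)$.

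To conclude, since $\varphi$ is a homeomorphism, a subset $A$ of $\mathrm{Spec}(S)$ is closed if and only if $\varphi(A)$ is closed in $\mathrm{Spec}(R)$. Taking $A=\mathrm{Max}(S)$ shows that $\mathrm{Max}(S)$ is closed if and only if $\mathrm{Max}(R)=\varphi(\mathrm{Max}(S))$ is closed, and by Proposition \ref{5.25} this says exactly that $S$ is $\mathrm J$-regular if and only if $R$ is $\mathrm J$-regular. I expect no serious obstacle here: the surjectivity and closedness of $\varphi$ and the maximal-iff-maximal property are standard integrality facts, and the only place the special hypotheses enter is the injectivity of $\varphi$ coming from the i-extension assumption, which is what upgrades the spectral map to a genuine homeomorphism respecting the maximal spectra.
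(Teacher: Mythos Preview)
Your proposal is correct and follows essentially the same approach as the paper: both argue that the spectral map $\mathrm{Spec}(S)\to\mathrm{Spec}(R)$ is a homeomorphism and then invoke the equivalence (1)$\Leftrightarrow$(2) of Proposition~\ref{5.25}. The paper's proof is simply the two-sentence version of yours, leaving the verification of the homeomorphism and the correspondence of maximal spectra implicit (the latter being automatic, since maximal ideals are the closed points and homeomorphisms preserve closed points).
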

\begin{proof} Since $R\subseteq S$ is an integral i-extension, the natural map $\mathrm {Spec}(S)\to\mathrm{Spec}(R)$ is an homeomorphism. Then it is enough to use the equivalence (1)$\Leftrightarrow$(2) of Proposition \ref{5.25}.
\end{proof}

 \begin{corollary}\label{5.26} A ring  $R$ whose spectrum is Noetherian for the flat topology, is  $\mathrm J$-regular.
\end{corollary}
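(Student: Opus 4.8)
The plan is to reduce the statement immediately to criterion (4) of Proposition \ref{5.25}, which characterizes $\mathrm J$-regularity of $R$ by the compactness of $\mathrm{Max}(R)$ in the flat topology. The only nontrivial ingredient I would invoke is the classical topological fact that in a Noetherian topological space every subspace is quasi-compact; indeed, a topological space is Noetherian if and only if all of its subsets are quasi-compact.

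First I would record that, by hypothesis, $\mathrm{Spec}(R)$ equipped with the flat topology is a Noetherian topological space. Since $\mathrm{Max}(R)$ is a subset of $\mathrm{Spec}(R)$, the fact recalled above applies to the subspace $\mathrm{Max}(R)$ with its induced flat topology, yielding that $\mathrm{Max}(R)$ is quasi-compact; equivalently, $\mathrm{Max}(R)$ is compact for the flat topology.

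Then I would conclude directly from the equivalence (1) $\Leftrightarrow$ (4) of Proposition \ref{5.25}: since $\mathrm{Max}(R)$ is compact for the flat topology, $R$ is $\mathrm J$-regular, which is exactly the assertion.

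There is essentially no obstacle in this argument; the only point that merits a word of justification is the topological lemma that every subspace of a Noetherian space is quasi-compact. This holds because the ascending chain condition on open sets (equivalently the descending chain condition on closed sets) is inherited by subspaces, and a space satisfying this chain condition is quasi-compact, since any open cover without a finite subcover would generate a strictly increasing infinite chain of open sets. If one prefers to avoid the full subspace statement, it suffices to verify the quasi-compactness of the single subset $\mathrm{Max}(R)$ by the same chain-condition reasoning applied to its relatively open subsets.
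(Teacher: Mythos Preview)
Your argument is correct and follows essentially the same route as the paper: the paper cites Bourbaki to conclude that $\mathrm{Max}(R)$, as a subspace of the Noetherian space $\mathrm{Spec}(R)$ with the flat topology, is Noetherian and hence compact, and then invokes Proposition~\ref{5.25}. Your version merely skips the intermediate ``$\mathrm{Max}(R)$ is Noetherian'' step by using directly that every subspace of a Noetherian space is quasi-compact, which is the same topological fact.
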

\begin{proof} By \cite[Propositions 8 and 9, page 123]{ALCO}, $\mathrm{Max}(R)$ is Noetherian, and then compact for the flat topology. Then, use Proposition \ref{5.25}.
\end{proof}

\begin{corollary}\label{5.27} Let $f:R\to S$ be a ring morphism such that ${}^{a}f(\mathrm{Max}(S))$

\noindent$=\mathrm{Max}(R)$. If $S$ is $\mathrm J$-regular, so is  $R$.
\end{corollary}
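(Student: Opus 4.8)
The plan is to reduce everything to the topological characterization of $\mathrm J$-regularity supplied by Proposition \ref{5.25}, used in its proconstructible form (the equivalence (1)$\Leftrightarrow$(3)). The idea is to transport the proconstructibility of $\mathrm{Max}(S)$ in $\mathrm{Spec}(S)$ along the spectral map ${}^{a}f$ and thereby obtain the proconstructibility of $\mathrm{Max}(R)$ in $\mathrm{Spec}(R)$.

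First I would observe that, since $S$ is $\mathrm J$-regular, Proposition \ref{5.25} gives that $\mathrm{Max}(S)$ is proconstructible in $\mathrm{Spec}(S)$; equivalently, it is closed for the constructible (patch) topology of $\mathrm{Spec}(S)$. As the constructible topology on the spectrum of any ring is compact and Hausdorff, the set $\mathrm{Max}(S)$ is then compact for that topology. Next I would recall that the induced map ${}^{a}f:\mathrm{Spec}(S)\to\mathrm{Spec}(R)$ is a spectral map: the preimage of a basic constructible subset of $\mathrm{Spec}(R)$ is constructible in $\mathrm{Spec}(S)$, so ${}^{a}f$ is continuous for the constructible topologies. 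Consequently ${}^{a}f(\mathrm{Max}(S))$ is the continuous image of a compact set, hence compact, and — the target being Hausdorff — closed for the constructible topology of $\mathrm{Spec}(R)$; that is, ${}^{a}f(\mathrm{Max}(S))$ is proconstructible in $\mathrm{Spec}(R)$.

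To conclude I would use the hypothesis ${}^{a}f(\mathrm{Max}(S))=\mathrm{Max}(R)$, which shows that $\mathrm{Max}(R)$ is proconstructible. Proposition \ref{5.25}, in the implication (3)$\Rightarrow$(1), then yields that $R$ is $\mathrm J$-regular, as desired.

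The only genuinely nonformal point, and hence the step I would justify with most care, is the assertion that a spectral map carries proconstructible subsets to proconstructible subsets. I would establish it exactly as above, by combining the compactness of the patch topology with the continuity of ${}^{a}f$ for it, citing \cite{EGA} for these facts (alternatively one may invoke directly the stability of proconstructible subsets under images by morphisms of spectral spaces). It is worth emphasizing that the purely Zariski-theoretic route is \emph{not} available here: a continuous image of a Zariski-closed set need not be Zariski-closed, so arguing through condition (2) of Proposition \ref{5.25} alone would fail. This is precisely why passing to the constructible topology, where images of compact sets are compact and hence closed, is essential.
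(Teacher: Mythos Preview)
Your proof is correct and follows essentially the same strategy as the paper: both arguments push forward a topological property of $\mathrm{Max}(S)$ along the spectral map ${}^{a}f$ via Proposition~\ref{5.25}. The only difference is the choice of characterization: the paper uses condition~(4) (compactness for the flat topology) and concludes in one line, since the continuous image of a compact set is compact, whereas you use condition~(3) (proconstructibility), which requires the extra step of passing through the compact Hausdorff patch topology to see that the image remains proconstructible. Both routes are standard and yield the same conclusion; the paper's is marginally shorter because flat-compactness transfers directly under continuous images without invoking Hausdorffness.
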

\begin{proof} By Proposition \ref{5.25}, if $S$ is $\mathrm J$-regular, $\mathrm{Max}(S)$ is compact for the flat topology. Since ${}^{a}f$ is 
continuous
 for the flat topology, we get that ${}^{a}f(\mathrm{Max}(S))=\mathrm{Max}(R)$ is compact, so that $R$ is $\mathrm J$-regular.
\end{proof}

\begin{proposition}\label{5.142} A ring $R$ is $\mathrm J$-regular if and only if so is $R(X)$.
 \end{proposition}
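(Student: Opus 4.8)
The plan is to reduce everything to the topological characterization established in Proposition \ref{5.25}, namely that a ring $A$ is $\mathrm J$-regular if and only if $\mathrm{Max}(A)$ is closed in $\mathrm{Spec}(A)$ (equivalence (1) $\Leftrightarrow$ (2)). The one structural input I would invoke is the classical description of the Nagata ring: the spectral map ${}^{a}\iota\colon\mathrm{Spec}(R(X))\to\mathrm{Spec}(R)$ induced by the faithfully flat extension $\iota\colon R\to R(X)$ is a homeomorphism, under which each prime $\mathfrak q$ of $R(X)$ has the form $PR(X)$ with $P=\mathfrak q\cap R$, and it restricts to a bijection $\mathrm{Max}(R(X))\to\mathrm{Max}(R)$ sending $MR(X)$ to $M$.

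Granting this, the argument is immediate. First I would record that ${}^{a}\iota$ carries $\mathrm{Max}(R(X))$ onto $\mathrm{Max}(R)$: the maximal ideals of $R(X)$ are exactly the $MR(X)$ with $M\in\mathrm{Max}(R)$, and $(MR(X))\cap R=M$. Since ${}^{a}\iota$ is a homeomorphism, a subset of $\mathrm{Spec}(R(X))$ is closed precisely when its image is closed in $\mathrm{Spec}(R)$. Applying this to $\mathrm{Max}(R(X))$, whose image is $\mathrm{Max}(R)$, yields that $\mathrm{Max}(R(X))$ is closed if and only if $\mathrm{Max}(R)$ is closed. By the equivalence (1) $\Leftrightarrow$ (2) of Proposition \ref{5.25}, this says exactly that $R(X)$ is $\mathrm J$-regular if and only if $R$ is.

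Alternatively, one can argue ring-theoretically, which avoids the homeomorphism but needs the same circle of Nagata-ring facts. Using $\mathrm J(R(X))=\mathrm J(R)R(X)$ one obtains a ring isomorphism $R(X)/\mathrm J(R(X))\cong(R/\mathrm J(R))(X)$, so it suffices to prove that a ring $A$ is regular if and only if $A(X)$ is. This follows from fact (2) recalled for regular rings (regular $=$ reduced and zero-dimensional): reducedness passes both ways between $A$ and $A(X)$, since it is preserved by polynomial extension and by localization and since $A\hookrightarrow A(X)$, while $\dim A(X)=\dim A$ forces zero-dimensionality to transfer as well.

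The only nontrivial ingredient, and hence the main obstacle, is the structural fact about $R(X)$ itself: that ${}^{a}\iota$ is a homeomorphism identifying the maximal spectra, equivalently that $\mathrm J(R(X))=\mathrm J(R)R(X)$ together with $R(X)/\mathrm J(R)R(X)\cong(R/\mathrm J(R))(X)$. These are standard properties of Nagata rings, so once they are in hand the equivalence drops out of Proposition \ref{5.25} (or of fact (2) for regular rings) with no further computation.
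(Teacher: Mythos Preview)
Your alternative, ring-theoretic argument is correct and is exactly the proof the paper gives: from $\mathrm{Max}(R(X))=\{MR(X)\mid M\in\mathrm{Max}(R)\}$ one gets $\mathrm J(R(X))=\mathrm J(R)R(X)$, hence $R(X)/\mathrm J(R(X))\cong (R/\mathrm J(R))(X)$, and then one uses that a ring $A$ is regular if and only if $A(X)$ is. So that half of your write-up matches the paper verbatim.

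However, your primary approach rests on a false premise. The spectral map ${}^{a}\iota\colon\mathrm{Spec}(R(X))\to\mathrm{Spec}(R)$ is \emph{not} a homeomorphism in general; it need not even be injective. Take $R=k[s,t]$ over a field $k$ and $Q=(sX-t)\subset R[X]$. This is a prime with $Q\cap R=0$, and since $c(sX-t)=(s,t)$ is a proper ideal, the inclusion $c((sX-t)g)\subseteq (s,t)c(g)\subseteq(s,t)$ shows that $Q$ contains no polynomial of unit content. Hence $QR(X)$ is a nonzero prime of $R(X)$ lying over $0\in\mathrm{Spec}(R)$, while $(0)$ also lies over $0$; so ${}^{a}\iota$ is not injective. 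What \emph{is} true for the Nagata ring is only the statement about maximal ideals, and that by itself does not let you transport closedness of $\mathrm{Max}(R)$ up to closedness of $\mathrm{Max}(R(X))$ via Proposition~\ref{5.25}. (It does give the downward implication, by Corollary~\ref{5.27}.) So the topological route, as you stated it, breaks; keep the ring-theoretic argument, which is both correct and the paper's own proof.
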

\begin{proof} We know that $\mathrm{Max}(R(X))=\{MR(X)\mid M\in\mathrm {Max}(R)\}$, so that $\mathrm J(R(X))=\mathrm J(R)(X)$. It follows that $R(X)/\mathrm J (R(X))=R(X)/(\mathrm J(R)(X))$

\noindent$\cong (R/\mathrm J(R))(X)$. As $S$ is regular if and only if  so is $S(X)$, for a ring $S$, we get that $(R/\mathrm J(R))(X)$ is regular if and only if  so is $R/\mathrm J(R)$. Then, $R$ is $\mathrm J$-regular if and only if  so is $R(X)$.
\end{proof}

 According to \cite[Definitions 1.1, 1.2 and 1.5, Proposition 1.6]{Pic 0}, \cite{Pic 2} and \cite{S}, we set $p_r(X):=X^2-rX\in R[X]$, where $R\subseteq S$ is a ring extension.
$R\subseteq S$ is called {\it s-elementary} (resp.; {\it t-elementary}, {\it u-elementary}) if $S=R[b]$, where $p_0(b),bp_0(b)\in R$ (resp.; $p_r(b),bp_r(b)\in R$ for some $r\in R,\ p_1(b),bp_1(b)\in R$). In the following, the letter x denotes s, t or u. $R\subseteq S$ is called {\it cx-elementary} if $R\subseteq S$ is a tower of finitely many x-elementary extensions, {\it x-integral} if there exists a directed set $\{S_i\}_{i\in I}\subseteq[R,S]$ such that $R\subseteq S_i$ is cx-elementary and $S=\cup_{i\in I}S_i$. 
  An integral extension $R\subseteq S$ is called {\it infra-integral} \cite{Pic 2}, (resp. {\it subintegral} \cite{S}) if all its residual extensions are isomorphisms (resp$.$; and is an {\it i-extension}).    
 
An extension $R\subseteq S$ is called {\it s-closed} (or {\it seminormal}) (resp.; {\it t-closed}, {\it u-closed} (or {\it anodal})) if an element $b\in S$ is in $R$ whenever $p_0(b),bp_0(b)\in R$ (resp.; $p_r(b),bp_r(b)\in R$ for some $r\in R,\ p_1(b),bp_1(b)\in R$) \cite[Theorem 2.5]{S}. A ring $R$ is called {\it seminormal} by Swan if for $x,y\in R$, such that $x^2=y^3$, there is some $z\in R$ such that $x=z^3$ and $y=z^2$ \cite[Definition, page 210]{S}. We say that a ring $R$ is {\it t-closed} if for $x,y,r\in R$, such that $x^3+rxy-y^2=0$, there is some $z\in R$ such that $x=z^2-rz$ and $y=z^3-rz^2$ \cite[D\'efinition 1.1]{Pic 7}. A t-closed ring is seminormal.

A seminormal ring is reduced. We proved in \cite[Proposition 2.1]{Pic 7}
that a regular ring is t-closed, whence seminormal. 

Let x$\in\{$s,t,u$\}$. The {\it x-closure} ${}_S^xR$ of $R$ in $S$ is the smallest element $B\in[R,S]$ such that $B\subseteq S$ is x-closed and the greatest element $B'\in[R,S]$ such that $R\subseteq B'$ is x-integral. It follows that ${}_S^uR\subseteq{}_S^tR$. Note that the $s$-closure is actually the seminormalization ${}_S^+R$ of $R$ in $S$ and is the greatest subintegral extension of $R$ in $S$. Note also that the $t$-closure $ {}_S^tR$ is the greatest infra-integral extension of $R$ in $S$.

\begin{example}\label{5.03} (1) Let $R$ be a ring and $d$ a positive integer. Set $R_d:=\{\sum_{i\in I}\varepsilon_ia_i^d\mid\varepsilon\in\{1,-1\},\ a_i\in R,\ |I|<\infty\}$ which is a subring of $R$ such that $f:R_d\subseteq R$ is an integral extension. We claim that $f$ is an i-extension. Let $P,Q\in\mathrm{Spec}(R)$ be such that $P\cap R_d=Q\cap R_d$. Let $x\in Q$, so that $x^d\in Q\cap R_d=P\cap R_d\subseteq P$, which implies $x\in P$, and then $Q\subseteq P$. A similar proof shows that $P\subseteq Q$, so that $P= Q$. Then ${}^{a}f(\mathrm{Max}(R))=\mathrm{Max}(R_d)$ and $R$ is $\mathrm J$-regular if and only if $R_d$ is $\mathrm J$-regular by Corollary \ref{5.02}.

(2) Let $R\subseteq S$ be a u-closed FCP integral extension. Then $R\subseteq S$ is an i-extension by \cite[Proposition 5.2]{Pic 15} and $R$ is $\mathrm J$-regular if and only if $S$ is $\mathrm J$-regular by Corollary \ref{5.02}.
\end{example}

\begin{corollary} Let $R\subseteq S$ be a subintegral extension, such that $S$ is $\mathrm J$-regular. Then $R$ is $\mathrm J$-regular and there is a ring isomorphism $R/\mathrm J(R) \to S/\mathrm J(S)$. 
 In case $\mathrm J(R) =\mathrm J(S)$, then $R=S$. 
\end{corollary}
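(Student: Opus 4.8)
The plan is to treat the three assertions in turn, getting $\mathrm J$-regularity of $R$ for free from the earlier corollary, then establishing the isomorphism by reducing modulo the two radicals and checking triviality locally, and finally deducing $R=S$ from the surjectivity half of that isomorphism.

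First I would observe that a subintegral extension is by definition an infra-integral $i$-extension, hence in particular an integral $i$-extension. Corollary \ref{5.02} then applies verbatim: since $S$ is $\mathrm J$-regular, so is $R$. For the isomorphism I would use that $R\subseteq S$ being an integral $i$-extension makes the spectral map $\mathrm{Spec}(S)\to\mathrm{Spec}(R)$ a homeomorphism (integrality gives lying-over and going-up, whence surjectivity and closedness, and the $i$-extension hypothesis gives injectivity), carrying $\mathrm{Max}(S)$ bijectively onto $\mathrm{Max}(R)$. Intersecting over maximal ideals gives $\mathrm J(S)\cap R=\bigcap_{N\in\mathrm{Max}(S)}(N\cap R)=\bigcap_{M\in\mathrm{Max}(R)}M=\mathrm J(R)$, so the natural map $R/\mathrm J(R)\to S/\mathrm J(S)$ is injective. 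Writing $\overline R:=R/\mathrm J(R)$ and $\overline S:=S/\mathrm J(S)$, both are regular (as $R,S$ are $\mathrm J$-regular), and I would check that $\overline R\subseteq\overline S$ is again subintegral: it is integral as a quotient of an integral extension, the spectra $\mathrm{Spec}(\overline S)=\mathrm V_S(\mathrm J(S))=\mathrm{Max}(S)$ and $\mathrm{Spec}(\overline R)=\mathrm{Max}(R)$ still correspond bijectively, and each residual extension $\kappa_{\overline R}(\overline P)=R/P\to\kappa_{\overline S}(\overline Q)=S/Q$ coincides with a residual extension of $R\subseteq S$, hence is an isomorphism.

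It then remains to prove that a subintegral extension $\overline R\subseteq\overline S$ of \emph{regular} rings is trivial, and I would do this locally. Fixing $\overline P\in\mathrm{Spec}(\overline R)$ with its unique prime $\overline Q$ lying over it, I localize the inclusion at $\overline R\setminus\overline P$ to obtain $\overline R_{\overline P}\to T$ with $T:=\overline S\otimes_{\overline R}\overline R_{\overline P}$. Because the extension is an $i$-extension and $\overline S$ is zero-dimensional, $\overline Q$ is the only prime of $\overline S$ contracting into $\overline P$, so $T$ is a localization of the regular ring $\overline S$ with a single prime; being regular and local it is a field, equal to $\kappa_{\overline S}(\overline Q)=\overline S/\overline Q$. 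Using the recalled fact that $\overline R/\overline P\cong\overline R_{\overline P}$ for a regular ring, the localized map is exactly the residual extension $\kappa_{\overline R}(\overline P)\to\kappa_{\overline S}(\overline Q)$, an isomorphism. Since this holds at every $\overline P\in\mathrm{Spec}(\overline R)$, the local-global principle forces the $\overline R$-module inclusion $\overline R\hookrightarrow\overline S$ to be an isomorphism, i.e. $R/\mathrm J(R)\to S/\mathrm J(S)$ is a (ring) isomorphism.

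For the last clause, if moreover $\mathrm J(R)=\mathrm J(S)$ then $\mathrm J(S)=\mathrm J(R)\subseteq R$, and the surjectivity just obtained reads $S=R+\mathrm J(S)=R$. The \emph{main obstacle} is the regular case of the third paragraph: one must be certain that localizing $\overline S$ at $\overline R\setminus\overline P$ yields a field and that this field is canonically the residue field at $\overline Q$, so that the localized map is the residual extension. The hypothesis that the extension is an $i$-extension (uniqueness of $\overline Q$ over $\overline P$) together with zero-dimensionality of regular rings is precisely what makes the localization local, hence a field; the rest is bookkeeping with the spectral homeomorphism and the definition of subintegrality.
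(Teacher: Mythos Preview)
Your proof is correct, and it differs from the paper's argument mainly in the key step establishing that $\overline R:=R/\mathrm J(R)\to S/\mathrm J(S)=:\overline S$ is an isomorphism. Both arguments first reduce to the map $\overline R\hookrightarrow\overline S$ between regular rings and recognize it as an integral $i$-extension with isomorphic residual extensions. From there the paper invokes a structural result: such an extension is a flat epimorphism (\cite[Scholium A(1)]{Pic 5}), and since lying-over holds it is faithfully flat, hence an isomorphism by \cite[Lemme 1.2, p.~109]{L}. You instead argue directly by localization, using zero-dimensionality of $\overline R$ and $\overline S$ to see that $(\overline S)_{\overline P}$ has a single prime and is therefore a field, identifying the localized map with the residual extension, and concluding by the local--global principle for module maps. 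Your route is more elementary, needing no external citations beyond what is recalled in the section about regular rings; the paper's route is more conceptual and would apply even without knowing in advance that $\overline R$ is regular. You also obtain $\mathrm J$-regularity of $R$ from Corollary~\ref{5.02} (integral $i$-extension), whereas the paper argues directly that $R/\mathrm J(R)$ is reduced and zero-dimensional; both are fine. The final clause is handled identically in spirit.
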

\begin{proof} $R/\mathrm J(R)$ is regular because reduced and zero-dimensional since so is $S/\mathrm J(S)$, and because the extension is integral. This also implies that $\mathrm J(R)=R\cap\mathrm J(S)$, so that the map $j: A:=R/\mathrm J(R)\to S/\mathrm J(S):=B$ exists, and its residual extensions are isomorphisms. Such residual extensions are of the form $A_P\to B_Q$ where $Q$ is a prime ideal of $B$ above $P$. It follows that $A\to B$ is a flat epimorphism since $A\to B$ is an $i$-extension \cite[Scholium A (1)]{Pic 5}. Finally, since this extension has the lying-over property for maximal ideals, $A\to B$ is a faithfully flat epimorphism, whence an isomorphism \cite[Lemme 1.2, page 109]{L}.
\end{proof}
 
We recall that a ring $R$ verifies the primitive condition if for any element $p(X)$ of the polynomial ring $R[X]$, whose content $c(p)$ is $R$ there is some $x\in R$ such that $f (x)\in\mathrm U(R)$ \cite{LOC1}. The Nagata ring $R(X)$ of a ring $R$ verifies the primitive condition \cite[p.457]{LOC1}.
 
Let $R\subseteq S$ be an extension. We denote by $\mathcal I(R,S)$ the abelian group of all $R$-submodules of $S$ that are invertible as in \cite[Definition 2.1]{LOC2}.

There is an exact sequence \cite[Theorem 2.4]{LOC2}:

\centerline{$1\to\mathrm U(R)\to\mathrm U(S)\to\mathcal I(R,S)\to\mathrm{Pic}(R)\to \mathrm{Pic}(S)$}

It follows that there is an injective map $\mathrm U(S)/\mathrm U(R)\to\mathcal I(R,S)$. 

\begin{proposition}\label{inv} Let $R\subseteq S$ be an extension. Then $R\subseteq S$ is an SL extension if and only if  $\mathcal I(R,S) = \{R\}$.         
 \end{proposition}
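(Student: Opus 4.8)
The plan is to read both implications off the exact sequence
$1\to\mathrm U(R)\to\mathrm U(S)\to\mathcal I(R,S)\to\mathrm{Pic}(R)\to\mathrm{Pic}(S)$
recalled just above, in which the map $\mathrm U(S)\to\mathcal I(R,S)$ sends a unit $s$ to the invertible submodule $Rs$ and has kernel exactly $\mathrm U(R)$, whence the induced injection $\mathrm U(S)/\mathrm U(R)\hookrightarrow\mathcal I(R,S)$. The implication $\mathcal I(R,S)=\{R\}\Rightarrow$ SL is then immediate: a trivial target forces the source $\mathrm U(S)/\mathrm U(R)$ of an injective homomorphism to be trivial, i.e. $\mathrm U(R)=\mathrm U(S)$.

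For the converse I would argue as follows. If $R\subseteq S$ is SL then $\mathrm U(R)=\mathrm U(S)$, so the map $\mathrm U(S)\to\mathcal I(R,S)$ is trivial; by exactness its image equals the kernel of $\beta\colon\mathcal I(R,S)\to\mathrm{Pic}(R)$, so $\beta$ is injective and identifies $\mathcal I(R,S)$ with its image, which by exactness at $\mathrm{Pic}(R)$ is exactly $\ker(\mathrm{Pic}(R)\to\mathrm{Pic}(S))$. Thus proving $\mathcal I(R,S)=\{R\}$ amounts to showing that every invertible $R$-submodule $I$ of $S$ is trivial. Concretely, if $IJ=R$ then $IS=S$ (since $R=IJ\subseteq IS$), and writing $1=\sum_i a_ib_i$ with $a_i\in I$, $b_i\in J$ one checks that at each prime $P$ of $R$ some product $a_ib_i$ is a unit of $R_P$, so $I_P$ is free of rank one with generator $a_i$, itself a unit of $S_P$; hence $I$ is locally principal. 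If one can upgrade this to global principality, say $I=Rt$, then $tS=S$ gives $t\in\mathrm U(S)=\mathrm U(R)$ and $I=R$, as wanted.

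The step I expect to be the genuine obstacle is precisely this passage from local to global principality, equivalently the vanishing of $\ker(\mathrm{Pic}(R)\to\mathrm{Pic}(S))$ under the sole hypothesis $\mathrm U(R)=\mathrm U(S)$. It is automatic, and the whole argument collapses to the easy one, as soon as $\mathrm{Pic}(R)=0$: then $\beta$ is the zero map, the injection $\mathrm U(S)/\mathrm U(R)\to\mathcal I(R,S)$ is onto, and $\mathcal I(R,S)\cong\mathrm U(S)/\mathrm U(R)$, so that $\mathcal I(R,S)=\{R\}\Leftrightarrow$ SL without further ado. This is exactly the situation of the class $\mathcal{TP}$ of the excerpt (semilocal rings and Nagata rings, and more generally $\mathrm J$-regular rings, whose Picard group is trivial), so I would first secure the equivalence there and only afterwards attempt to dispense with the Picard hypothesis. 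I would, however, treat the claim that SL alone forces $\ker(\mathrm{Pic}(R)\to\mathrm{Pic}(S))=0$ with some caution: any nonfree rank-one projective $R$-module that becomes free over $S$ would, once realized as an invertible submodule via the exactness at $\mathrm{Pic}(R)$, produce a nontrivial element of $\mathcal I(R,S)$ inside an SL extension, so this is precisely the point at which the proof must either invoke additional structure on $R\subseteq S$ or restrict to the vanishing Picard setting.
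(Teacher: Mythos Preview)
Your reading of the easy direction is exactly the paper's: the entire proof there is the single sentence ``Use the injective map $\mathrm U(S)/\mathrm U(R)\to\mathcal I(R,S)$,'' which yields $\mathcal I(R,S)=\{R\}\Rightarrow$ SL and nothing more. Your caution about the converse is not only appropriate, it is decisive: the implication SL $\Rightarrow\mathcal I(R,S)=\{R\}$ is \emph{false} without a hypothesis such as $\mathrm{Pic}(R)=0$, and the obstacle is precisely the one you isolate, namely $\ker(\mathrm{Pic}(R)\to\mathrm{Pic}(S))$.

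Here is a concrete counterexample. Let $k=\mathbb F_2$, $R=k[t^2,t^3]$ and $S=k[t]$. Both are integral domains with $\mathrm U(R)=\mathrm U(S)=\{1\}$, so $R\subseteq S$ is SL. Set
\[
I=\{\,a\in k[t]\ :\ a_0=a_1\,\}=R\,(1+t)+R\,t^2\subseteq S,
\]
where $a_i$ denotes the coefficient of $t^i$. One checks directly that $I$ is an $R$-submodule of $S$ (for $r\in R$ one has $r_1=0$, hence $(ra)_1=r_0a_1=r_0a_0=(ra)_0$), that $IS=S$ since $\gcd(1+t,t^2)=1$ in $k[t]$, and that
\[
I\cdot I=R(1+t)^2+R(1+t)t^2+Rt^4=R(1+t^2)+R(t^2+t^3)+Rt^4=R,
\]
because $t^2=t^2(1+t^2)+t^4$ and $1=(1+t^2)+t^2$ in characteristic $2$. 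Thus $I\in\mathcal I(R,S)$ with $I^2=R$, yet $I\neq R$ since $1\notin I$. Hence $\mathcal I(R,S)\neq\{R\}$ although $R\subseteq S$ is SL. (Equivalently: $\mathrm{Pic}(S)=0$ while a Milnor square computation gives $\mathrm{Pic}(R)\cong\mathbb F_2$, so $\ker(\mathrm{Pic}(R)\to\mathrm{Pic}(S))\neq 0$, and the exact sequence forces $|\mathcal I(R,S)|=2$.)

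So the biconditional as stated does not hold; what survives is exactly what you secured: $\mathcal I(R,S)=\{R\}\Rightarrow$ SL in general, and the full equivalence under $\mathrm{Pic}(R)=0$ (in particular for the $\mathrm J$-regular and semilocal rings that the paper goes on to use). The paper's one-line proof covers only the former, and your diagnosis of the missing half is correct.
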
 
\begin{proof} Use the injective map $\mathrm U(S)/\mathrm U(R)\to\mathcal I(R,S)$.
\end{proof}

\begin{proposition} The Picard group of a ring $R$ is $0$ if either $R$ is $\mathrm J$-regular or $R$ verifies the primitive condition. Moreover, 
 over such ring, a finitely generated projective module of finite (local) constant rank is free.
\end{proposition}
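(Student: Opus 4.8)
The plan is to deduce both assertions from a single property of $R$, namely that it has \emph{stable range one}: for all $a,b\in R$ with $aR+bR=R$ there is some $t\in R$ with $a+tb\in\mathrm U(R)$. Once this is known, Bass's stable range theorem applies: a finitely generated projective module whose rank is $\geq 1$ at every maximal ideal carries a unimodular element, hence splits off a free rank-one summand. Starting from a finitely generated projective $P$ of constant rank $n$ and inducting on $n$ (the rank drops by one at each splitting, and $n=0$ forces $P=0$), we obtain $P\cong R^{n}$, which is the ``moreover'' statement. The first assertion is the special case $n=1$: an invertible module is finitely generated projective of constant rank one, hence free, so $\mathrm{Pic}(R)=0$. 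Thus it suffices to verify stable range one under each hypothesis.

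For a ring satisfying the primitive condition this is immediate. Given $a,b\in R$ with $aR+bR=R$, the polynomial $a+bX\in R[X]$ has content $aR+bR=R$, so the primitive condition furnishes $x\in R$ with $a+bx\in\mathrm U(R)$; taking $t:=x$ gives stable range one.

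For a $\mathrm J$-regular ring $R$ I would pass to $A:=R/\mathrm J(R)$, which is regular by definition. A regular ring is unit-regular: by the decomposition recalled at the start of this section, every $x\in A$ is $x=eu$ with $e=xy$ idempotent and $u$ a unit, whence every principal ideal satisfies $aA=eA$ for an idempotent $e$. Such rings have stable range one; concretely, if $aA+bA=A$ and $aA=eA$, then under the splitting $A\cong eA\times(1-e)A$ the element $a+(1-e)b$ has $eA$-component $a$ (a unit of $eA$) and $(1-e)A$-component $(1-e)b$ (a unit of $(1-e)A$, since $aA+bA=A$ forces $b$ to generate modulo $eA$), so $a+(1-e)b\in\mathrm U(A)$. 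Because stable range is unchanged modulo the Jacobson radical, $R$ itself has stable range one. Alternatively, and avoiding that invariance, one argues directly on modules: for $P$ finitely generated projective of constant rank $n$ over $R$, the reduction $\overline P:=P/\mathrm J(R)P$ is projective of constant rank $n$ over the regular ring $A$, hence free by the stable-range-one argument over $A$; lifting a basis of $\overline P$ to $P$ and applying Nakayama's lemma (valid since $P$ is finitely generated) gives a surjection $R^{n}\twoheadrightarrow P$, which splits as $P$ is projective, with kernel a finitely generated projective of rank $0$, hence zero, so $P\cong R^{n}$.

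The two stable-range verifications above are routine. The substantive external ingredient, and the step I expect to be the main obstacle to rendering the argument fully self-contained, is Bass's theorem that over a ring of stable range one a finitely generated projective module of rank $\geq 1$ everywhere admits a unimodular element and therefore splits off a free summand. If one wishes to keep everything internal, the $\mathrm J$-regular case can be carried out entirely by the Nakayama reduction just described, so that only the freeness of constant-rank projectives over regular rings (via the idempotent splitting) is needed there, while the primitive-condition case still rests on the stable-range splitting over $R$ itself.
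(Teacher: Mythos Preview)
The paper's own proof is merely a citation to McDonald--Waterhouse, so your proposal is necessarily more detailed and takes a genuinely different route. Your Nakayama argument for the $\mathrm J$-regular case is correct and self-contained: reduce modulo $\mathrm J(R)$, use that finitely generated projectives of constant rank over a von~Neumann regular ring are free (via the idempotent decomposition $x=eu$ recalled in the section), lift generators, and split. This is cleaner than a bare citation and is the preferable treatment of that half.

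The primitive-condition half, however, has a real gap. Your reduction to stable range one is valid (it is exactly the linear case of the primitive condition), but the step you then invoke is not ``Bass's stable range theorem.'' What Bass proves from $sr(R)=1$ is \emph{cancellation}: $P\oplus R\cong Q\oplus R$ implies $P\cong Q$, equivalently that stably free modules are free. This does \emph{not} yield a unimodular element in an arbitrary finitely generated projective of constant rank $\geq 1$, because such a module need not be stably free. The Serre--Bass splitting theorems that do produce unimodular elements require Noetherian hypotheses or a bound on the $j$-spectrum dimension, neither of which follows from $sr(R)=1$ alone. In fact the primitive condition is strictly stronger than $sr(R)=1$ (it controls polynomials of all degrees, not just linear ones), and McDonald--Waterhouse exploit this extra strength directly; by collapsing to $sr(R)=1$ you discard exactly the information their argument uses. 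So either keep the primitive condition and sketch the McDonald--Waterhouse argument, or---if you insist on the stable-range route---you must supply an independent proof (or precise reference) that $sr(R)=1$ forces constant-rank projectives to be free, which is not a standard fact and which you correctly flag as the main obstacle.
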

\begin{proof} It is enough to combine \cite[Propositions p. 455 and p. 456]{LOC1} and \cite[Theorem and Corollary p. 457]{LOC1}.
\end{proof}

\begin{corollary} \cite[Remark 2.5]{LOC2} Let $R\subseteq S$ be an extension, where $\mathrm{Pic}(R)=0$ (for example, if either $R$ is a Nagata ring or $R$ is $\mathrm J$-regular), then $\mathrm U(S)/\mathrm U(R)$  is isomorphic to $\mathcal I(R,S)$.
\end{corollary}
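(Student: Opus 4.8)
The plan is to read everything off the exact sequence of abelian groups recalled just before Proposition~\ref{inv}, namely
$$1\to\mathrm U(R)\to\mathrm U(S)\to\mathcal I(R,S)\to\mathrm{Pic}(R)\to\mathrm{Pic}(S),$$
and to feed in the hypothesis $\mathrm{Pic}(R)=0$. The whole argument is a short diagram chase, so there is no serious obstacle; the only point requiring care is the honest identification of the first two maps.

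First I would record that the arrow $\mathrm U(R)\to\mathrm U(S)$ is just the inclusion of unit groups induced by the extension $R\subseteq S$, hence injective with image exactly $\mathrm U(R)$; this is what makes the quotient $\mathrm U(S)/\mathrm U(R)$ meaningful and is already reflected by the $1$ on the left of the sequence. Next, since $\mathrm{Pic}(R)=0$ the map $\mathcal I(R,S)\to\mathrm{Pic}(R)$ is the zero homomorphism, so its kernel is all of $\mathcal I(R,S)$. By exactness at $\mathcal I(R,S)$ this kernel coincides with the image of $\mathrm U(S)\to\mathcal I(R,S)$, and therefore that map is surjective.

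Then, by exactness at $\mathrm U(S)$, the kernel of $\mathrm U(S)\to\mathcal I(R,S)$ equals the image of $\mathrm U(R)\to\mathrm U(S)$, which by the first step is precisely $\mathrm U(R)$. The first isomorphism theorem for abelian groups applied to the surjection $\mathrm U(S)\to\mathcal I(R,S)$ with kernel $\mathrm U(R)$ then yields the desired isomorphism $\mathrm U(S)/\mathrm U(R)\cong\mathcal I(R,S)$.

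Finally, to justify the parenthetical examples I would simply invoke the preceding Proposition, which gives $\mathrm{Pic}(R)=0$ whenever $R$ is $\mathrm J$-regular or verifies the primitive condition, together with the fact already noted that Nagata rings belong to the class $\mathcal{TP}$ of rings with trivial Picard group. Thus both listed cases fall under the hypothesis $\mathrm{Pic}(R)=0$, and the conclusion applies. The essential content is entirely contained in the exact sequence \cite[Theorem 2.4]{LOC2}, which we are entitled to assume; the corollary is a purely formal consequence of it.
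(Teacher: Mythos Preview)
Your argument is correct and is exactly the one the paper has in mind: the corollary is stated without proof, relying on the exact sequence $1\to\mathrm U(R)\to\mathrm U(S)\to\mathcal I(R,S)\to\mathrm{Pic}(R)$ displayed just before, from which the paper already extracts the injection $\mathrm U(S)/\mathrm U(R)\hookrightarrow\mathcal I(R,S)$; setting $\mathrm{Pic}(R)=0$ forces surjectivity, precisely as you write.
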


\begin{proposition} Let $R\subseteq S$ be an extension and $M$ an $R$-submodule of $ S$. Then $M$ belongs to $\mathcal I(R,S)$ if and only if $M$ is an $R$-module of finite type, $MS=S$ and $M$ is projective of rank one. When $R$ is either a Nagata ring or $R$ is $\mathrm J$-regular, the last condition can be replaced with $M$ is free of dimension one and then the elements of $\mathcal I(R,S)$ are of the form $Ru$ where $u$ is a unit of $S$.
\end{proposition}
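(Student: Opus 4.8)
The plan is to read the first (general) equivalence as the assertion that membership in $\mathcal I(R,S)$, i.e. invertibility of the $R$-submodule $M\subseteq S$, coincides with being a finitely generated, $S$-regular (meaning $MS=S$) projective $R$-module of rank one. This is exactly the content of \cite[Theorem 1.13, p.91]{KZ} together with \cite[Lemma 4.1, p.109]{KZ}, already invoked in the proof of Proposition \ref{3.18}. I would first unwind the definition of \cite[Definition 2.1]{LOC2}: $M\in\mathcal I(R,S)$ means there is an $R$-submodule $N\subseteq S$ with $MN=R$, the product being computed inside $S$.

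For the forward implication I would fix a relation $1=\sum_{i=1}^n m_in_i$ with $m_i\in M$ and $n_i\in N$. Finite generation of $M$ then follows from $m=\sum_i(n_im)m_i$ for each $m\in M$, since $n_im\in NM=R$; the equality $MS=S$ follows from $S=RS=(MN)S=M(NS)\subseteq MS\subseteq S$; and the maps $\varphi_i\colon M\to R$ given by $\varphi_i(m):=n_im$ form a dual basis exhibiting $M$ as finitely generated projective, the rank being one after localizing the identity $MN=R$ and using $MS=S$. Alternatively one simply cites the two KZ results. For the converse I would set $N:=(R:_SM)=\{s\in S\mid sM\subseteq R\}$ and verify, again through \cite[Theorem 1.13, p.91]{KZ}, that the three hypotheses force $MN=R$, so that $M$ is invertible.

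For the second assertion, assume $R$ is a Nagata ring (hence verifies the primitive condition) or is $\mathrm J$-regular. By the Proposition preceding this one, $\mathrm{Pic}(R)=0$ and every finitely generated projective $R$-module of constant rank is free; hence ``projective of rank one'' may be replaced by ``free of dimension one'' in the equivalence just proved. Writing such an $M$ as $M=R\omega$ with $\omega$ a free generator, the condition $MS=S$ becomes $\omega S=R\omega S=S$, so that $\omega s=1$ for some $s\in S$, i.e. $\omega\in\mathrm U(S)$; thus $M=Ru$ with $u:=\omega\in\mathrm U(S)$. Conversely, for any $u\in\mathrm U(S)$ the submodule $Ru$ is invertible with inverse $Ru^{-1}$, so the elements of $\mathcal I(R,S)$ are precisely those of the form $Ru$.

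I expect the main obstacle to be the rank-one part of the forward implication: showing not merely that $M$ is finitely generated projective but that its rank equals one. The cleanest route is to localize $MN=R$ at each prime $P$, where invertibility of the localized module inside $S_P$ pins the rank to one; otherwise one leans entirely on \cite[Lemma 4.1, p.109]{KZ}, which already records that an $S$-invertible module is projective of rank one, reducing this step to a citation.
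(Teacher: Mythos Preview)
Your argument is correct and more detailed than the paper's own proof, which reads in its entirety: ``This is a consequence of \cite[Lemma 2.2 and Lemma 2.3]{LOC2}.'' You have essentially reproved those two lemmas by hand (dual-basis argument for projectivity, localisation for rank one, and the identification $M=Ru$ when $\mathrm{Pic}(R)=0$), while citing Knebusch--Zhang rather than Roberts--Singh. The mathematical content is identical; your write-up simply unpacks what the paper leaves to a reference.

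One small caution: for the converse direction you invoke \cite[Theorem~1.13, p.~91]{KZ}, but in this paper that result is quoted (see the proof of Proposition~\ref{3.18}) in the context of a Pr\"ufer extension, so it may carry a hypothesis you do not have here. Your direct route is safer: with $M$ finitely generated, $(R:_SM)_P=(R_P:_{S_P}M_P)$ at every prime $P$; writing $M_P=R_Pm$ with $m\in\mathrm U(S_P)$ (possible since $M$ is locally free of rank one and $MS=S$), one gets $(R:_SM)_P=R_Pm^{-1}$ and $M_P(R:_SM)_P=R_P$, hence $M(R:_SM)=R$. If you prefer to cite, the exact references the paper uses are \cite[Lemma~2.2 and Lemma~2.3]{LOC2}.
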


\begin{proof} This a consequence of \cite[Lemma 2.2 and Lemma 2.3]{LOC2}.
\end{proof}

\begin{remark}\label{5.10} We now consider an SL extension $R \subseteq S$. 

(1) Since $x\in R$ belongs to $\mathrm J(R)$ if and only if $1-ax$ is a unit for any $a\in R $, let $x\in\mathrm J(S)$. In particular, $1-x\in\mathrm U(S)=\mathrm U(R)$, which implies $x\in R$. It follows easily that $\mathrm J(S)\subseteq\mathrm J(R)$. Therefore $\mathrm J(S)$ is an ideal shared by $R$ and $S$. We infer from this fact that if $P$ is a prime ideal of $R$, that does not contains $\mathrm J(S)$, we have $R_P= S_P$. 

(2) If $2\in\mathrm U(S)$, we claim that $R$ and $S$ have the same idempotents: Let $ e$ be an idempotent of $S$. Then, $(1-2e)^2=1-4e+4e^2=1$, so that $1-2e\in\mathrm U (S)=\mathrm U(R)$, which implies $2e\in R$. Since $2\in\mathrm U(S)=\mathrm U(R)$, we get that $e\in R$.

(3) If the class of an element $s\in S$ is a unit of $S/\mathrm J(S)$, there is some $t\in S$, such that $st-1\in\mathrm J(S)$. It follows that $s$ cannot belong to any maximal ideal of $S$ and is therefore a unit of $S$ and then of $R$. We deduce from these facts that $\mathrm U(R/\mathrm J(S))=\mathrm U(S/\mathrm J(S))$ and $R/\mathrm J(S)\subseteq S/\mathrm J(S)$ is SL.
\end{remark} 

\begin{proposition}\label{5.71} A ring extension $R\subseteq S$ is SL if and only if $\mathrm{Nil}(R)=\mathrm{Nil}(S)$ and $R/\mathrm{Nil}(R)\subseteq S/\mathrm{Nil}(S)$ is SL.
 \end{proposition}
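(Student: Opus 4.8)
The plan is to reduce both implications to results already established, the engine being an elementary observation about units modulo the nilradical. First I would record that for any ring $A$, an element $a\in A$ is a unit of $A$ if and only if its class $\bar a$ is a unit of $A/\mathrm{Nil}(A)$: if $ab-1\in\mathrm{Nil}(A)$, then $ab=1-m$ with $m$ nilpotent, so $ab\in\mathrm U(A)$ and hence $a\in\mathrm U(A)$, the converse being trivial. This is the $\mathrm{Nil}$-analogue of the statement that $A\to A/\mathrm J(A)$ is SL (Example \ref{exam}(5)), valid because $\mathrm{Nil}(A)\subseteq\mathrm J(A)$. I would also note that $\mathrm{Nil}(R)=R\cap\mathrm{Nil}(S)$ always holds, so that $R/\mathrm{Nil}(R)\to S/\mathrm{Nil}(S)$ is injective and the phrase ``$R/\mathrm{Nil}(R)\subseteq S/\mathrm{Nil}(S)$ is SL'' makes sense.

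For the forward implication, assume $R\subseteq S$ is SL. The equality $\mathrm{Nil}(R)=\mathrm{Nil}(S)$ is already recorded in Proposition \ref{3.28}: a nilpotent $x$ of $S$ satisfies $1-x\in\mathrm U(S)=\mathrm U(R)\subseteq R$, forcing $x\in R\cap\mathrm{Nil}(S)=\mathrm{Nil}(R)$. Writing $I:=\mathrm{Nil}(R)=\mathrm{Nil}(S)$, I would then prove $\mathrm U(R/I)=\mathrm U(S/I)$. The inclusion $\mathrm U(R/I)\subseteq\mathrm U(S/I)$ is automatic. Conversely, given $\bar s\in\mathrm U(S/I)$, the unit lemma applied to $S$ gives $s\in\mathrm U(S)=\mathrm U(R)$; hence $s\in R$ and $\bar s$ is the image of a unit of $R$, so $\bar s\in\mathrm U(R/I)$. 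Thus $R/I\subseteq S/I$ is SL.

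For the converse, assume $\mathrm{Nil}(R)=\mathrm{Nil}(S)=:I$ and that $R/I\subseteq S/I$ is SL. Since $I$ is an ideal of $S$ contained in $R$, it is a common ideal shared by $R$ and $S$, so Corollary \ref{5.1} applies directly and yields that $R\subseteq S$ is SL. I expect no genuine obstacle here: the single step that does the work is the lifting of a unit of $S/I$ to a unit of $S$ in the forward direction (then invoking $\mathrm U(S)=\mathrm U(R)$), and the only points requiring care are verifying that the quotient map reflects units and that $I$ is genuinely a shared ideal so that Corollary \ref{5.1} can be invoked.
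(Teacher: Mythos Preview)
Your proof is correct and follows essentially the same approach as the paper: both use Proposition~\ref{3.28} to obtain $\mathrm{Nil}(R)=\mathrm{Nil}(S)$, both lift a unit of $S/\mathrm{Nil}(S)$ to a unit of $S$ via the observation that $1-xy$ nilpotent forces $xy$ (hence $x$) to be a unit, and both invoke Corollary~\ref{5.1} for the converse. Your presentation is slightly cleaner in isolating the unit-lifting lemma as a preliminary observation rather than unwinding it inline.
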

\begin{proof} Assume that $R\subseteq S$ is SL. Then $\mathrm{Nil}(R)=\mathrm{Nil}(S)$ by Proposition \ref{3.28}. Set $R':=R/\mathrm{Nil}(R)$ and $S':=S/\mathrm{Nil}(S)$. We claim that $R'\subseteq S'$ is SL. Let $\overline x\in\mathrm U(S')$, where $\overline x$ is the class of $x\in S$. There exists $y\in S$ such that $\overline x\overline y=\overline 1\ (*)$ in $S'$, so that $1-xy\in\mathrm{Nil}(S)=\mathrm{Nil}(R)\subseteq \mathrm J(S)$. Then $1-xy\in\mathrm J(S)$ implies $x,y\in\mathrm U(S)=\mathrm U(R)\subseteq R$. As $1-xy\in\mathrm{Nil}(R)$ with $x,y\in R$, we deduce from $(*)$ that $\overline x,\overline y\in\mathrm U(R')$, giving that $\mathrm U(S')=\mathrm U(R')$ and $R/\mathrm{Nil}(R)\subseteq S/\mathrm{Nil}(S)$ is SL.

Conversely, assume that $\mathrm{Nil}(R)=\mathrm{Nil}(S)$ and $R/\mathrm{Nil}(R)\subseteq S/\mathrm{Nil}(S)$ is SL. By Corollary \ref{5.1},  $R\subseteq S$ is  SL.
\end{proof}
 
\begin{theorem}\label{5.2} Let $R\subseteq S$ be an SL extension. Then $S$ is $\mathrm J$-regular if and only if $R$ is $\mathrm J$-regular and $R\subseteq S$ is integral seminormal. If these conditions hold, then $\mathrm J(R)=\mathrm J(S)$.
\end{theorem}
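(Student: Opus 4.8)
The plan is to treat the two implications separately and then read off the equality of Jacobson radicals.

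For the easy implication ($\Leftarrow$), suppose $R$ is $\mathrm J$-regular and $R\subseteq S$ is integral. I would argue purely topologically via Proposition~\ref{5.25}: since $R$ is $\mathrm J$-regular, $\mathrm{Max}(R)$ is closed in $\mathrm{Spec}(R)$. The canonical map $\varphi:\mathrm{Spec}(S)\to\mathrm{Spec}(R)$ is continuous, and because the extension is integral a prime $N$ of $S$ is maximal if and only if $N\cap R$ is maximal, i.e. $\mathrm{Max}(S)=\varphi^{-1}(\mathrm{Max}(R))$. Hence $\mathrm{Max}(S)$ is closed, and Proposition~\ref{5.25} gives that $S$ is $\mathrm J$-regular. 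Note that here neither seminormality nor the SL hypothesis is needed; they only serve to make the equivalence genuine, being delivered by the converse.

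For the hard implication ($\Rightarrow$), suppose $S$ is $\mathrm J$-regular, so $\bar S:=S/\mathrm J(S)$ is regular. By Remark~\ref{5.10}(1) we have $\mathrm J(S)\subseteq\mathrm J(R)$, so $\mathrm J(S)$ is an ideal shared by $R$ and $S$, and by Remark~\ref{5.10}(3) the extension $\bar R:=R/\mathrm J(S)\subseteq\bar S$ is again SL, so that $\mathrm U(\bar R)=\mathrm U(\bar S)$. The key device is the idempotent--unit decomposition in the regular ring $\bar S$: for $\bar b\in\bar S$ write $\bar b=\bar e\,\bar u$ with $\bar e$ idempotent and $\bar u$ a unit. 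Then $\bar b^{2}=\bar e\,\bar u^{2}=\bar b\,\bar u$, and since $\bar u\in\mathrm U(\bar S)=\mathrm U(\bar R)\subseteq\bar R$ this is a monic relation $\bar b^{2}-\bar u\,\bar b=0$ over $\bar R$; hence $\bar R\subseteq\bar S$ is integral. Being a subring of the reduced ring $\bar S$, the ring $\bar R$ is reduced, and it is zero-dimensional since $\bar R\subseteq\bar S$ is integral; thus $\bar R$ is regular. Consequently $\mathrm J(\bar R)=0$, and since every maximal ideal of $R$ contains $\mathrm J(R)\supseteq\mathrm J(S)$ we have $\mathrm J(R/\mathrm J(S))=\mathrm J(R)/\mathrm J(S)$, which forces $\mathrm J(R)=\mathrm J(S)$; in particular $R/\mathrm J(R)=\bar R$ is regular, so $R$ is $\mathrm J$-regular.

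It then remains to lift integrality and to establish seminormality of $R\subseteq S$ itself. For integrality, lift $\bar u$ to a unit $u\in\mathrm U(S)=\mathrm U(R)$; then $b^{2}-ub\in\mathrm J(S)\subseteq R$, which is a monic relation for $b$ over $R$. For seminormality I would in fact prove the stronger statement that $b^{2}\in R$ already forces $b\in R$ (whence the seminormality condition, which also involves $b^{3}$, holds a fortiori): from $\bar b^{2}\in\bar R$ and $\bar b^{2}=\bar e\,\bar u^{2}$ one gets $\bar e=\bar b^{2}\bar u^{-2}\in\bar R$, so $\bar b=\bar e\,\bar u\in\bar R$, and since $\mathrm J(S)\subseteq R$ this yields $b\in R$. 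Finally, whenever the equivalent conditions hold the extension is integral and SL, so $\mathrm J(S)\subseteq\mathrm J(R)$ (Remark~\ref{5.10}(1)) together with $\mathrm J(R)\subseteq\mathrm J(S)$ (every maximal ideal of $S$ contracts to a maximal ideal of $R$, which contains $\mathrm J(R)$) gives $\mathrm J(R)=\mathrm J(S)$. The main obstacle is the hard direction, and within it the crucial, slightly delicate point is exploiting the decomposition $\bar b=\bar e\,\bar u$ together with $\mathrm U(\bar S)=\mathrm U(\bar R)$ to obtain simultaneously integrality and the seminormality (indeed the stronger property that squares in $R$ force membership in $R$); the Jacobson-radical bookkeeping and the topological argument for the converse are comparatively routine given Proposition~\ref{5.25} and Remark~\ref{5.10}.
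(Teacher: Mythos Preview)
Your proof is correct, and in places cleaner than the paper's. The overall architecture---reduce modulo $\mathrm J(S)$ via Remark~\ref{5.10}, exploit the idempotent--unit decomposition in the regular ring $\bar S$ together with $\mathrm U(\bar S)=\mathrm U(\bar R)$, and keep track of the Jacobson radicals---is the same. Two points of genuine difference are worth noting.

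For the implication $(\Leftarrow)$ your topological argument via Proposition~\ref{5.25} is sharper than the paper's: the paper argues that $\mathrm J(R)=\mathrm J(S)$ and then uses seminormality explicitly to show $S/\mathrm J(S)$ is reduced (if $y^2=y^3=0$ then seminormality forces $y\in R/\mathrm J(R)$, which is reduced). Your observation that $\mathrm{Max}(S)=\varphi^{-1}(\mathrm{Max}(R))$ for any integral extension makes this step unnecessary and shows that the $(\Leftarrow)$ direction needs neither the SL hypothesis nor seminormality; these enter only through the converse, as you say.

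For the implication $(\Rightarrow)$ the paper first shows $\bar R$ is regular by proving that the quasi-inverse $r'$ of $r\in\bar R$ lies in $\bar R$ (using that $1-e+r$ and $1-e+r'$ are units, hence in $\bar R$), and then obtains seminormality by the general fact that an extension of seminormal rings with seminormal overring is seminormal \cite[Corollary~3.4]{S}. You instead get integrality first, deduce regularity of $\bar R$ from being a reduced zero-dimensional ring, and prove seminormality directly from the decomposition, in fact the stronger statement that $b^2\in R\Rightarrow b\in R$. Your route avoids the external citation to Swan at the cost of a small extra computation; the paper's route is slightly more conceptual about why seminormality holds.
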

\begin{proof} (1) According to Remark \ref{5.10}(3), $R/\mathrm J(S)\subseteq S/\mathrm J(S)$ is SL. 
Assume that $S$ is $\mathrm J$-regular.
We want to show that $R$ is $\mathrm J$-regular with $R\subseteq S$ integral seminormal. Since $S/\mathrm J(S)$ is regular, it is enough to suppose that the extension $R\subseteq S$ is SL with $S$ regular and to show that $R$ is regular.

Let $r$ be in $R$. As an element of $S$, there is some $r'\in S$ such that $r^2r'=r$; so that $e=rr'$ is idempotent and $1-e+r\in\mathrm U(S)=\mathrm U(R)\subseteq R$, with $(1-e+r)^{-1}=1-e+r'$ (see (1) at the beginning of Section 5). We have clearly $e\in R$ and, since $1-e+r'\in\mathrm U(S)=\mathrm U(R)\subseteq R$, then $r'\in R$. To conclude, $R$ is regular. 

Let $x\in S$. Then, $x=ue,\ u\in\mathrm U(S)=\mathrm U(R)$ and $e$ an idempotent of $S$, that is $e^2-e=0$, so that $e$ is integral over $R$, and so is $x$. Then, $R\subseteq S$ is integral. According to the absolute flatness of $R$ and $S$, we deduce that these rings are seminormal. It follows that $R\subseteq S$ is seminormal by \cite[Corollary 3.4]{S}.

Coming back to the first case, it follows that if $S$ is $\mathrm J$-regular, so is $R$. Since $R/\mathrm J(S)\subseteq S/\mathrm J(S)$ is integral seminormal, so is $R\subseteq S$.

(2) We intend to show that $\mathrm J(R)=\mathrm J(S)$. Returning to the Jacobson ideals, we see that $\mathrm J(S)$ is an intersection of maximal ideals of $R$ because $ R/\mathrm J(S)$ is regular and it follows that $\mathrm J(R)\subseteq\mathrm J(S)$, the lacking inclusion. By the way, we have shown that a maximal ideal of $S$ contracts to a maximal ideal of $R$. Actually the natural map $\mathrm{Max}(S)\to\mathrm{Max}(R)$ is surjective because a minimal prime ideal can be lifted up in the extension $R/\mathrm J(R)\subseteq   S/\mathrm J(S)$. 

(3) Conversely, assume that $R$ is $\mathrm J$-regular and $R\subseteq S$ is integral seminormal. By Remark \ref{5.10}(1), we know that $\mathrm J(S)\subseteq\mathrm J (R)$ and $\mathrm J(S)$ is an ideal of $R$, an intersection of the maximal ideals of $S$. Then, $\mathrm J(S)=\mathrm J(S)\cap R$ is also an intersection of maximal ideals of $ R$ since $R\subseteq S$ is integral. Hence, $\mathrm J(R)\subseteq\mathrm J(S)$ giving $\mathrm J(R)=\mathrm J(S)$. Setting $R':=R/\mathrm J(R)$ and $S':=S/\mathrm J(S)$, we get that $R'\subseteq S'$ is integral seminormal and SL by Remark  \ref{5.10}(3). Since $R'$ is regular, it is reduced and we have $\dim(R')=0=\dim(S')$ because $R'\subseteq S'$ is integral. We claim that $S'$ is reduced. Assume there exists some $x\in S',\ x\neq 0$ which is nilpotent, and let $n$ be the least integer $>1$ such that $x^n=0$ with $y:=x^{n-1}\neq 0$. We get that $y^2=x^{n+(n-2)}=0=y^3\in R'\ (*)$. Since $R'\subseteq S'$ is  seminormal, it follows that $y\in R'$. But $(*)$ implies that $y= 0$ since $R'$ is reduced, a contradiction. We have that $S'$ is reduced, and then regular, so that $S$ is $\mathrm J$-regular.
 \end{proof} 

Lequain and Doering have considered in \cite{DL} SL extensions $R \subseteq S$ such that $S$ is semilocal. Then Theorem \ref{5.2}  implies  one of their results,  because $S$ is $\mathrm J$-regular by the Chinese remainder theorem.
We will improve \cite[Theorem 1]{DL} by using our methods.

\begin{proposition}\label{5.3} An SL extension $R\subseteq S$, such that $S$ is regular,  is u-integral, infra-integral,  seminormal, quadratic and $R$ is regular.
\end{proposition}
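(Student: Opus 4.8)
The plan is to read off most of the statement directly from Theorem \ref{5.2} and to extract the elementary structure of the extension from the idempotent factorization already exploited in its proof.

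First I would observe that since $S$ is regular it is reduced and zero-dimensional, so every prime of $S$ is simultaneously minimal and maximal; hence $\mathrm J(S)=\mathrm{Nil}(S)=0$ and $S$ is trivially $\mathrm J$-regular. Applying Theorem \ref{5.2} to the SL extension $R\subseteq S$ then yields at once that $R$ is $\mathrm J$-regular, that $R\subseteq S$ is integral and seminormal, and that $\mathrm J(R)=\mathrm J(S)=0$. Combining $\mathrm J(R)=0$ with the $\mathrm J$-regularity of $R$ gives that $R=R/\mathrm J(R)$ is regular. This disposes of the assertions ``seminormal'', ``integral'' (needed below as well) and ``$R$ is regular''.

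For the elementary and quadratic statements I would use fact (1) of Section 5: given $x\in S$, choose $y\in S$ with $x^2y=x$, set $e:=xy$ (an idempotent) and $u:=1-e+x\in\mathrm U(S)$, so that $x=ue$; by the SL hypothesis $u\in\mathrm U(S)=\mathrm U(R)$. Since $u^{-1}\in R$, we have $R[x]=R[e]$, and $R\subseteq R[e]$ is u-elementary because $p_1(e)=e^2-e=0\in R$ and $ep_1(e)=0\in R$. Moreover $x^2=u^2e^2=u^2e=ux$, so $x$ is a root of the monic polynomial $X^2-uX\in R[X]$ and $R[x]=R+Rx$; as $x$ is arbitrary, $R\subseteq S$ is quadratic. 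To reach u-integrality I would then write $S$ as the directed union of its finitely generated $R$-subalgebras $R[x_1,\ldots,x_n]$; factoring each $x_i=u_ie_i$ as above gives $R[x_1,\ldots,x_n]=R[e_1,\ldots,e_n]$, and the tower $R\subseteq R[e_1]\subseteq\cdots\subseteq R[e_1,\ldots,e_n]$ is cu-elementary since each step adjoins an idempotent, the defining relations $p_1(e_i)=0$ and $e_ip_1(e_i)=0$ holding already in the base ring. Hence $R\subseteq S$ is u-integral.

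Finally, since a u-elementary extension is the case $r=1$ of a t-elementary one, u-integral implies t-integral, so that $S={}_S^uR\subseteq{}_S^tR\subseteq S$ forces ${}_S^tR=S$; as the $t$-closure is the greatest infra-integral subextension, $R\subseteq S$ is infra-integral. I expect the only delicate bookkeeping to be the verification that the finitely generated subalgebras genuinely form a directed family of cu-elementary extensions and the passage from u-integral to infra-integral via the $t$-closure; the sole structural input is Theorem \ref{5.2}, everything else reducing to manipulations of the relation $x=ue$, whose global availability rests precisely on the regularity of $S$ together with the equality $\mathrm U(S)=\mathrm U(R)$.
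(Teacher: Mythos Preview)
Your proof is correct and follows the same overall strategy as the paper: exploit the factorization $x=ue$ with $u\in\mathrm U(S)=\mathrm U(R)$, and invoke Theorem~\ref{5.2} for seminormality. There are two minor differences worth noting. For $R$ regular, you deduce it from Theorem~\ref{5.2} via $\mathrm J(R)=\mathrm J(S)=0$, whereas the paper gives a direct one-line verification using $x^2u^{-1}=x$ for $x\in R$; your route is cleaner bookkeeping but depends on the full strength of Theorem~\ref{5.2}. For u-integrality, you build the cu-elementary towers explicitly from the idempotents, while the paper argues top-down: it sets $T:={}_S^uR$, observes that $T\subseteq S$ is u-closed so every idempotent of $S$ lies in $T$, and that $\mathrm U(T)=\mathrm U(S)$ by the SL property, forcing $T=S$. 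The paper's closure argument is shorter and avoids the directed-union bookkeeping you flagged as the ``delicate'' step; your direct construction, on the other hand, makes the cu-elementary structure completely explicit.
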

\begin{proof} Since $S$ is regular, any $x\in S$ can be written $x=eu$, where $e$ is an idempotent and $u\in\mathrm U(S)$. Set $T:={}_S^uR$. Then, $T\subseteq S$ is u-closed. Since $e^2-e=e^3-e^2=0\in T$, we get that $e\in T$. Moreover, $\mathrm U(R)\subseteq\mathrm U(T)\subseteq\mathrm U(S)=\mathrm U(R)$ gives $\mathrm U(T)=\mathrm U(S)$, so that $u\in T$, and then $x\in T$, whence $T=S$ and $R\subseteq S$ is u-integral. Since $S={}_S^uR\subseteq{}_S^tR\subseteq S$, we get $S={}_S^tR$ and $R\subseteq S$ is infra-integral. Since $S$ is regular, so is $S/\mathrm J(S)$ and $R\subseteq S$ is seminormal by  Theorem \ref{5.2}. 

Let $x\in S$ and $y\in S$ be such that $x^2y=x$, since $S$ is regular. Keeping the previous notation and properties of the beginning of the Section, $e:=xy$ is the idempotent such that $x=eu$, with $u\in\mathrm U(S)=\mathrm U(R)$. It follows that $e= xu^{-1}$, so that $x^2u^{-1}=xe=x^2y=x\ (*)$. Then, $x^2=xu$ gives that any $x\in S$ is quadratic and $R\subseteq S$ is a quadratic extension. Now, if $x\in R,\ (*)$ is still valid with $u\in\mathrm U(R)$ as we have just seen. Then, $x^2u^{-1}=x$ shows that $R$ is regular.  
\end{proof}

\begin{corollary}\label{5.30} Let $R\subseteq S$ be an SL extension such that $S$ is $\mathrm J$-regular. Then $R\subseteq S$ is u-integral, infra-integral and quadratic.
\end{corollary}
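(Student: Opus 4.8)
The plan is to reduce to the regular case already settled in Proposition \ref{5.3} by passing to the quotient modulo the Jacobson radical, and then to lift the resulting properties back to $R\subseteq S$. First I would record the structural output of Theorem \ref{5.2}: since $R\subseteq S$ is SL with $S$ being $\mathrm J$-regular, the extension is integral, $R$ is $\mathrm J$-regular, and $I:=\mathrm J(R)=\mathrm J(S)$ is an ideal common to $R$ and $S$ with $I\subseteq R$. By Remark \ref{5.10}(3) the induced extension $R/I\subseteq S/I$ is again SL, and $S/I=S/\mathrm J(S)$ is regular by $\mathrm J$-regularity of $S$. Applying Proposition \ref{5.3} to $R/I\subseteq S/I$ then shows that this quotient extension is u-integral, infra-integral and quadratic.

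The heart of the argument is to transfer these properties from $R/I\subseteq S/I$ to $R\subseteq S$, and the decisive point is that $I$ is contained in $R$, hence in every intermediate ring containing $R$. For the quadratic property this is immediate: given $x\in S$, write $\overline x^2=\overline a\,\overline x+\overline b$ in $S/I$ with $a,b\in R$; then $x^2-ax-b\in I\subseteq R$, so $x^2\in Rx+R$. For u-integrality I would lift a cu-elementary tower step by step. Since $R/I\subseteq S/I$ is u-integral, each $\overline x\in S/I$ lies in a cu-elementary subextension $R/I=C_0\subseteq\cdots\subseteq C_n$ with $C_{j+1}=C_j[\overline b_{j+1}]$; choosing lifts $b_{j+1}\in S$ and setting $D_0:=R$, $D_{j+1}:=D_j[b_{j+1}]$, the relations $p_1(b_{j+1}),\,b_{j+1}p_1(b_{j+1})\in D_j+I=D_j$ (using $I\subseteq D_j$) show that each $D_j\subseteq D_{j+1}$ is u-elementary and that $D_j/I=C_j$. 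Hence $x\in D_n+I=D_n$, with $R\subseteq D_n$ cu-elementary, so that ${}_S^uR=S$ and $R\subseteq S$ is u-integral. Infra-integrality is then free: from ${}_S^uR\subseteq{}_S^tR\subseteq S$ together with ${}_S^uR=S$ we get ${}_S^tR=S$, and ${}_S^tR$ is the greatest infra-integral subextension of $R$ in $S$.

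The one step needing care, and which I expect to be the main obstacle, is the tower lift for u-integrality, because the u-elementary condition at each stage is relative to the \emph{previous} ring $D_j$ rather than to $R$. This is exactly where $I\subseteq R\subseteq D_j$ saves the situation: the congruences defining u-elementarity modulo $I$ upgrade to genuine membership in $D_j$, so the lifted tower remains cu-elementary at every stage and its union recovers all of $S$. Once this lifting is in place, the other two assertions cost nothing, and no separate analysis of residual extensions is required since infra-integrality is deduced formally from u-integrality.
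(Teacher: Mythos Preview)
Your proposal is correct and follows essentially the same route as the paper: reduce modulo $\mathrm J(R)=\mathrm J(S)$ via Theorem~\ref{5.2} and Remark~\ref{5.10}(3), apply Proposition~\ref{5.3} to the regular quotient, then lift the three properties back. The paper compresses the entire lifting step into the single sentence ``It follows that $R\subseteq S$ also verifies these properties,'' whereas you supply the mechanism explicitly (using $I\subseteq R\subseteq D_j$ to upgrade congruences to memberships); your added detail is sound and does not depart from the intended strategy.
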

\begin{proof} Since $R\subseteq S$ is SL, so is $R/\mathrm J(S)\subseteq S/\mathrm J(S)$ by Remark \ref{5.10} (3). Then, we may apply the results of Proposition \ref{5.3} to the extension $R/\mathrm J(R)\subseteq S/\mathrm J(S)$ because $\mathrm J(R)=\mathrm J(S)$ by Theorem \ref{5.2}, which gives that $R/\mathrm J(R)\subseteq S/\mathrm J(S)$ is u-integral, infra-integral and quadratic. It follows that $R\subseteq S$ also verifies these properties. 
\end{proof}
 
Recall that a ring extension $R\subseteq S$ is called a $\Delta_0$-extension if $T\in[R, S] $ for each $R$-submodule $T$ of $S$ containing $R$ \cite{HP}. 

\begin{corollary}\label{5.31} Let $R\subseteq S$ be an SL extension such that $S$ is regular. The following properties hold:
\begin{enumerate}
\item For any $Q\in\mathrm{Spec}(S)$ and $P:=Q\cap R$, we have $R_P\cong S_Q$. If, moreover, $R\subseteq S$ is an i-extension, then $R=S$.
\item If $R\subseteq S$ is u-closed, then $R=S$.
\item If $R\subseteq S$ is simple, then  $R\subseteq S$ is a $\Delta_0$-extension.
\item If $2\in \mathrm U(S)$, then $R=S$.
\end{enumerate}
\end{corollary}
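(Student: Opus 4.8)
The plan is to read off all four assertions from Proposition~\ref{5.3}, which guarantees that an SL extension $R\subseteq S$ with $S$ regular is u-integral, infra-integral, seminormal and quadratic, and that $R$ is regular as well. Everything below is then either a formal consequence of the closure-operator language or a short computation.

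For (1), I would begin with the fact recalled as item (3) at the start of the section: a prime localization of a regular ring is a field. Since $R$ and $S$ are both regular, this gives $R_P\cong R/P=\kappa_R(P)$ and $S_Q\cong S/Q=\kappa_S(Q)$. As $R\subseteq S$ is infra-integral, the residual extension $\kappa_R(P)\to\kappa_S(Q)$ is an isomorphism, so $R_P\cong S_Q$. For the supplementary claim, I would observe that an infra-integral extension that is also an i-extension is subintegral; being simultaneously seminormal, the seminormalization ${}_S^+R$ equals $R$ (because $R$ is s-closed in $S$) and equals $S$ (because $S$ is subintegral over $R$), whence $R=S$.

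Parts (2) and (4) are short. For (2), Proposition~\ref{5.3} gives that $R\subseteq S$ is u-integral, so ${}_S^uR=S$; if the extension is u-closed then ${}_S^uR=R$, and comparing the two values forces $R=S$. For (4), Remark~\ref{5.10}(2) shows that $R$ and $S$ share their idempotents when $2\in\mathrm U(S)$; writing a typical $x\in S$ as $x=eu$ with $e$ idempotent and $u\in\mathrm U(S)=\mathrm U(R)$ (possible by regularity of $S$), both $e$ and $u$ lie in $R$, so $x\in R$ and $S=R$.

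The technical heart is (3). Using quadraticity, I would fix a generator with $S=R[t]$ and use the relation $t^2=u_t\,t$ for some $u_t\in\mathrm U(R)$ coming from the proof of Proposition~\ref{5.3}; an easy induction then gives $t^n\in Rt$ for $n\ge1$, so $S=R+Rt$. Given an $R$-submodule $T$ with $R\subseteq T\subseteq S$, I would set $J:=\{b\in R\mid bt\in T\}$, check that it is an ideal of $R$, and verify $T=R+Jt$. Finally, expanding the product of $a+bt$ and $c+dt$ with $a,c\in R$ and $b,d\in J$ yields $ac+(ad+bc+bd\,u_t)t$, whose $t$-coefficient again lies in $J$; hence $T$ is closed under multiplication, is therefore a subring, and $R\subseteq S$ is $\Delta_0$. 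I expect the main obstacle to be exactly this bookkeeping: confirming that every intermediate $R$-module has the form $R+Jt$ and that the mixed coefficient stays inside the ideal $J$. The remaining parts are formal once Proposition~\ref{5.3} and the closure-operator descriptions of u-integrality/u-closedness and of subintegrality/seminormality are in hand.
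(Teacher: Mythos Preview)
Your proof is correct throughout, and in parts (1) (first half) and (2) it follows the paper exactly. In the remaining points you take genuinely different, more self-contained routes than the paper.

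For the i-extension clause in (1), the paper argues that an infra-integral i-extension of regular rings is a flat epimorphism (via \cite[Scholium A (1)]{Pic 5}) and then kills it with lying-over. Your argument---infra-integral $+$ i-extension $=$ subintegral, hence ${}_S^+R=S$, while seminormality gives ${}_S^+R=R$---is shorter and stays entirely inside the closure-operator formalism already set up in the paper; it avoids the external reference to flat epimorphisms.

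For (3), the paper simply quotes \cite[Proposition 4.12]{Gil}. You instead unpack that citation: from $t^2=u_tt$ with $u_t\in\mathrm U(R)$ you get $S=R+Rt$, then show every intermediate $R$-module is $R+Jt$ for an ideal $J$ and check multiplicative closure directly. This is exactly the content of Gilbert's result specialized to the case at hand, so your version is more explicit but longer.

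For (4), the paper invokes \cite[Theorem 2.10]{AB} (every element of a regular ring with $2$ invertible is a sum of two units). Your route through Remark~\ref{5.10}(2) and the decomposition $x=eu$ is again self-contained and avoids an external citation; both arguments are one line once the relevant input is granted.

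In short: everything is sound, and your choices in (1), (3), (4) trade brevity-by-citation for internal arguments using only what the paper has already established.
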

\begin{proof} (1) By  Proposition \ref{5.3}, $R\subseteq S$ is infra-integral and $ R$ is regular. It follows that for any $Q\in\mathrm{Spec}(S)$ and $P:=Q\cap R$, we have an isomorphism $R/P\cong S/Q$. But, $R$ and $S$ being regular, they are zero-dimensional, so that $R/P\cong R_P$ and $S/Q\cong S_Q$ giving $R_P\cong S_Q$.  

Assume, moreover, that $R\subseteq S$ is an i-extension, then $R\subseteq S$ is a flat epimorphism \cite[Scholium A (1)]{Pic 5} or \cite[Ch. IV]{L}. Then, $R=S$ by \cite[Scholium A (2)]{Pic 5}.

(2) According to Proposition \ref{5.3}, $R\subseteq S$ is u-integral. Assume, moreover, that $R\subseteq S$ is u-closed, then $R={}_S^uR=S$.

(3) According to Proposition \ref{5.3}, $R\subseteq S$ is quadratic. Assume, moreover, that $R\subseteq S$ is a simple extension. Set $S:=R[y]$, where $y$ is a quadratic element over $R$. Then, $S=R+Ry$ and it follows from \cite[Proposition 4.12]{Gil} that $R\subset S$ is a $\Delta_0$-extension.

(4) Since $S$ is regular, according to \cite[Theorem 2.10]{AB}, any $x\in S$ can be written $x=u+v$, where $u,v\in\mathrm U(S)=\mathrm U(R)$ which implies $x\in R$ and $R=S$.
\end{proof}

\begin{proposition}\label{6.5} Let  $M$ be an $R$-module. The following conditions hold:
\begin{enumerate}
\item $\mathrm J(R (+)  M)=\mathrm J(R) (+)  M$.
\item $R (+)  M$ is $\mathrm J$-regular  if and only if $R$ is $\mathrm J$-regular. 
\end{enumerate}
\end{proposition}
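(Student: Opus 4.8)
The plan is to compute the Jacobson radical of the idealization directly and then obtain the $\mathrm J$-regularity equivalence as an immediate formal consequence. Throughout I would use the description of units recalled in the proof of Proposition \ref{3.61}: an element $(x,m)\in R(+)M$ lies in $\mathrm U(R(+)M)$ if and only if $x\in\mathrm U(R)$.

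For part (1), I would invoke the standard characterization (already used in Remark \ref{5.10}(1)) that $z\in\mathrm J(T)$ if and only if $1-az\in\mathrm U(T)$ for every $a\in T$. Applying this to $T=R(+)M$ and $z=(r,m)$: for an arbitrary $(a,n)\in R(+)M$ one computes $(1,0)-(a,n)(r,m)=(1-ar,\,-(am+rn))$, which, by the unit description above, is a unit of $R(+)M$ precisely when $1-ar\in\mathrm U(R)$. Since the second coordinate imposes no constraint, the whole family of conditions collapses to ``$1-ar\in\mathrm U(R)$ for every $a\in R$'', that is $r\in\mathrm J(R)$, while $m\in M$ remains free. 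Hence $\mathrm J(R(+)M)=\mathrm J(R)(+)M$. Equivalently, one may note that $0(+)M$ is a square-zero, hence nil, ideal contained in every maximal ideal, so the maximal ideals of $R(+)M$ are exactly the $\mathfrak m(+)M$ with $\mathfrak m\in\mathrm{Max}(R)$; intersecting them gives $\big(\bigcap_{\mathfrak m}\mathfrak m\big)(+)M=\mathrm J(R)(+)M$.

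For part (2), I would use part (1) to identify the pertinent quotient. The projection $\pi:R(+)M\to R$, $(r,m)\mapsto r$, is a surjective ring morphism, and composing it with $R\to R/\mathrm J(R)$ yields a surjection $R(+)M\to R/\mathrm J(R)$ whose kernel is $\{(r,m)\mid r\in\mathrm J(R)\}=\mathrm J(R)(+)M$, which equals $\mathrm J(R(+)M)$ by part (1). Therefore $(R(+)M)/\mathrm J(R(+)M)\cong R/\mathrm J(R)$ as rings. By the very definition of $\mathrm J$-regularity, $R(+)M$ is $\mathrm J$-regular if and only if $(R(+)M)/\mathrm J(R(+)M)$ is regular, which via this isomorphism is equivalent to $R/\mathrm J(R)$ being regular, i.e. to $R$ being $\mathrm J$-regular.

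There is essentially no serious obstacle here: the proposition reduces entirely to the unit description of Proposition \ref{3.61} combined with the $1-az$ characterization of the Jacobson radical. The only point that requires a little care is the verification in part (1) that the $M$-component imposes no condition, so that $M$ is absorbed wholesale into the radical; this is exactly what one expects, since $0(+)M$ is a square-zero ideal. Once part (1) is established, part (2) is purely formal, resting on the identification of the residue ring with $R/\mathrm J(R)$.
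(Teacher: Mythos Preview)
Your argument is correct and follows the same overall strategy as the paper: compute $\mathrm J(R(+)M)$, identify the quotient $(R(+)M)/\mathrm J(R(+)M)$ with $R/\mathrm J(R)$, and conclude. The difference is purely in execution. The paper dispatches both parts by citing Anderson--Winders \cite{AW}: part (1) by invoking the description of maximal ideals of $R(+)M$ as $P(+)M$ with $P\in\mathrm{Max}(R)$, and part (2) via the quotient formula $(R(+)M)/(\mathrm J(R)(+)M)\cong (R/\mathrm J(R))(+)(M/M)\cong R/\mathrm J(R)$. You instead prove everything internally, using the unit description from Proposition~\ref{3.61} together with the $1-az$ criterion already exploited in Remark~\ref{5.10}(1), and you build the quotient isomorphism directly from the projection $R(+)M\to R$. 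Your route is more self-contained and avoids the external reference; the paper's route is shorter on the page but outsources the content.
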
 
\begin{proof} (1) comes from \cite[Theorem 3.2]{AW} because any maximal ideal of $R (+)  M$ is of the form $P (+)  M$ where $P\in \mathrm{Max}(R)$.

(2) $R(+)M$ is $\mathrm J$-regular if and only if $(R(+)M)/(\mathrm J(R)(+)M)$ is regular. But $(R(+)M)/(\mathrm J(R)(+)M)\cong(R/\mathrm J(R))(+)(M/M)\cong R/\mathrm J(R)$. By \cite[Theorem 3.1]{AW}, we get the result. 
\end{proof}

A ring $R$ is called a {\it $\mathrm{Max}$-ring} if any nonzero $R$-module has a maximal submodule. An ideal $I$ of $R$ is {\it $\mathrm T$-nilpotent} if for each sequence $\{r_i\}_{i=1}^{\infty}\subseteq I$, there is some positive integer $k$ with $r_1\cdots r_k=0$. 

\begin{proposition} \label{5.127} Let $R\subseteq S$ be an SL extension where $S$ is a $\mathrm{Max}$-ring. Then $R$ is also a $\mathrm{Max}$-ring and $R$ and $S$ are $\mathrm J$-regular. 
\end{proposition}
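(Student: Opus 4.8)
The plan is to route everything through the classical characterization of commutative Max-rings due to Hamsher: a ring $A$ is a Max-ring if and only if $A/\mathrm J(A)$ is regular (equivalently, $A$ is $\mathrm J$-regular) and $\mathrm J(A)$ is $\mathrm T$-nilpotent. This is precisely the dictionary that converts the hypothesis on $S$ into $\mathrm J$-regularity and, read the other way, tells us what remains to be verified for $R$. It is surely no accident that the definitions of Max-ring and of $\mathrm T$-nilpotent ideal were recalled immediately before the statement.

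First I would extract the content of the hypothesis. Since $S$ is a Max-ring, Hamsher's theorem gives at once that $S$ is $\mathrm J$-regular and that $\mathrm J(S)$ is $\mathrm T$-nilpotent; the former is already half of the asserted conclusion. Next I would feed ``$S$ is $\mathrm J$-regular'' together with the hypothesis that $R\subseteq S$ is SL into Theorem \ref{5.2}. That theorem then delivers three things simultaneously: $R$ is $\mathrm J$-regular, the extension $R\subseteq S$ is integral seminormal, and---most importantly for what follows---$\mathrm J(R)=\mathrm J(S)$. At this point both $\mathrm J$-regularity assertions of the proposition are established.

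It remains only to show that $R$ is a Max-ring, for which, by Hamsher's theorem again, I must check that $\mathrm J(R)$ is $\mathrm T$-nilpotent. Here the equality $\mathrm J(R)=\mathrm J(S)$ carries the whole argument: a sequence $\{r_i\}_{i\ge 1}\subseteq\mathrm J(R)$ is literally a sequence in $\mathrm J(S)$, and because $R$ is a subring of $S$ the product $r_1\cdots r_k$ is the same element computed in either ring. The $\mathrm T$-nilpotency of $\mathrm J(S)$ therefore produces a $k$ with $r_1\cdots r_k=0$ in $S$, hence in $R$, so $\mathrm J(R)$ is $\mathrm T$-nilpotent and $R$ is a Max-ring. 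I do not anticipate any real obstacle: once Hamsher's characterization and Theorem \ref{5.2} are invoked the argument is formal, the single point deserving care being the transfer of $\mathrm T$-nilpotency, which is immediate only because $\mathrm J$ is the same set in $R$ and $S$ with inherited multiplication.
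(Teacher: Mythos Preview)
Your proof is correct and follows essentially the same route as the paper: Hamsher's characterization gives that $S$ is $\mathrm J$-regular with $\mathrm J(S)$ $\mathrm T$-nilpotent, Theorem~\ref{5.2} then yields $R$ $\mathrm J$-regular with $\mathrm J(R)=\mathrm J(S)$, and Hamsher applied in reverse makes $R$ a Max-ring. Your treatment of the $\mathrm T$-nilpotency transfer is in fact cleaner than the paper's, which somewhat confusingly writes ``$S$ is regular'' and invokes Proposition~\ref{5.3} where only $\mathrm J$-regularity is available.
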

\begin{proof} $S$ is regular and $\mathrm J(S)$ is $\mathrm T$-nilpotent 
since $S$ is a $\mathrm{Max}$-ring by \cite[Theorem, p.1135]{Ham}. Then, $S$ is also $\mathrm J$-regular, and so is $R$, with $\mathrm J(R) =\mathrm J(S)$ by Theorem \ref{5.2}. It follows that $\mathrm J(R)$ is $\mathrm T$-nilpotent. The same reference gives that $R$ is a $ \mathrm {Max}$-ring since $R$ is regular by Proposition \ref{5.3}.
\end{proof} 

\section{Cohn's rings}

In \cite[Theorem 1]{ZCOHN}, P. M. Cohn shows that for each ring $R$ there is an SL extension $R\subseteq R'$ such that  $\mathrm Z (R')=R'\setminus\mathrm U(R')$. We generalize  some results.  
 
\begin{lemma} \label{1Cohn} Let $I$ be a semiprime ideal of a ring $R$ and $f(X),g(X)\in R [X],\ a,b\in\mathrm U(R)$ such that $af(X)+bg(X)+Xf(X)g(X)\in I[X]\ (*)$. Then $f(X),g(X)\in I[X]$.
 \end{lemma}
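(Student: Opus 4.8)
The plan is to exploit the fact that a semiprime ideal is an intersection of prime ideals and thereby reduce to the case of a domain. Write $I=\bigcap_{\lambda\in\Lambda}P_\lambda$ with each $P_\lambda\in\mathrm{Spec}(R)$. Since a polynomial lies in $I[X]$ precisely when all of its coefficients lie in $I$, and since $\bigcap_\lambda\bigl(P_\lambda[X]\bigr)=\bigl(\bigcap_\lambda P_\lambda\bigr)[X]=I[X]$, it suffices to prove that $f(X),g(X)\in P[X]$ for each prime $P$ occurring in this representation. This transfers the whole problem into the polynomial ring over a domain.

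So I would fix such a prime $P$, pass to the quotient domain $D:=R/P$, and write $x\mapsto\bar x$ for the reduction $R[X]\to D[X]\cong R[X]/P[X]$. The images $\bar a,\bar b$ are units of $D$ (ring morphisms preserve units), hence in particular nonzero, and the hypothesis $(*)$ becomes $\bar a\,\bar f+\bar b\,\bar g+X\bar f\,\bar g=0$ in $D[X]$. The target in this reduced setting is $\bar f=\bar g=0$.

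The heart of the argument is then a degree count that is legitimate because $D$ is a domain. Assume for contradiction that $\bar f\neq 0$ and $\bar g\neq 0$, with $\deg\bar f=m$ and $\deg\bar g=n$. In the domain $D[X]$ the product $\bar f\,\bar g$ has degree $m+n$ with nonzero leading coefficient, so $X\bar f\,\bar g$ has degree $1+m+n$. Because $m,n\ge 0$, this strictly exceeds both $\deg(\bar a\,\bar f)=m$ and $\deg(\bar b\,\bar g)=n$, so the leading term of the left-hand side of the relation is the (nonzero) leading term of $X\bar f\,\bar g$, contradicting the vanishing of the sum. Hence at least one of $\bar f,\bar g$ is zero; if $\bar f=0$ the relation reads $\bar b\,\bar g=0$, which forces $\bar g=0$ since $\bar b$ is a unit, and symmetrically if $\bar g=0$. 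Thus $\bar f=\bar g=0$, i.e. $f,g\in P[X]$, and intersecting over all $\lambda$ gives $f,g\in I[X]$.

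I do not expect genuine depth here: the main care is bookkeeping. One must verify that reduction modulo $P$ sends the units $a,b$ to nonzero elements (automatic, since $\bar a\bar a^{-1}=1$ in the nonzero ring $D$) and that the degree inequality never degenerates. The two hypotheses play complementary roles and are both indispensable: semiprimeness is exactly what lets me work over domains $R/P$ so that the leading-coefficient argument is valid, while the invertibility of $a,b$ is what eliminates the boundary cases in which one of the polynomials vanishes.
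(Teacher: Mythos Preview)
Your proof is correct and follows essentially the same route as the paper: reduce the semiprime case to the prime case via $I=\bigcap_\lambda P_\lambda$, and for a prime $P$ use a leading-term/degree argument (valid because $R/P$ is a domain) to rule out both reductions being nonzero, then use invertibility of $a,b$ to force the remaining one to vanish. The only cosmetic difference is that you pass explicitly to the quotient domain $D=R/P$ while the paper argues with membership in $P[X]$, but the mathematical content is identical.
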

 \begin{proof} We first prove the Lemma for a prime ideal $P$. We begin to remark that if $ f(X)=f_1(X)+f_2(X)$, with $f_2(X)\in P[X]$, then $(*)$ is equivalent to $af_1(X)+bg(X)+Xf_1 (X)g(X)\in P[X]$. The same property holds for $g(X)$. Then, the conclusion of the Lemma will result from the fact that $f_1(X),g(X)\in P[X]$. Moreover, once we have proved that $f (X)\in P[X]$, condition $(*)$ shows that $bg(X)\in P[X]$, which gives $g(X)\in P[X]$ because $b\in\mathrm U(R)$. So, assume that $f(X)$ and $g(X)\not\in P[X]$. Hence, we can set $f(X):=\sum_{i=0}^n\alpha_iX^{i}$ and $g(X):=\sum_{j=0}^p\beta_jX^ j$ with $\alpha_n$ and $\beta_p\not\in P$, according to the precedent discussion. The only term of highest degree in $(*)$ is $\alpha_n\beta_pX^{n+p+1}$ which is in $P[X]$, so that $\alpha_ n\beta_p\in P$, a contradiction with $\alpha_n$ and $\beta_p\not\in P$. To conclude, $f(X),g(X)\in P [X]$.
 
Now, assume that $af(X)+bg(X)+Xf(X)g(X)\in I[X]\ (*)$ for a semiprime ideal $I$. Then, $I=\cap_{\lambda\in\Lambda}P_{\lambda}$, for a family of prime ideals $\{P_{\lambda}\}_ {\lambda\in\Lambda}$. Condition $(*)$ shows that $af(X)+bg(X)+Xf(X)g(X)\in P_{\lambda}[X]$ for each $\lambda\in\Lambda$. By the first part of the proof, it follows that $f(X),g(X)\in P_{\lambda}[X]$ for each $\lambda\in\Lambda$, which implies that $f(X),g(X)\in\cap_ {\lambda\in\Lambda} P_{\lambda} [X]=I[X]$.
\end{proof}

\begin{proposition} \label{2Cohn} For any ring $R$ there exists an SL extension $R\subseteq R'$ such that any nonunit of $R$ is a zerodivisor in $R'$. The extension $R\subseteq R'$ is pure and t-closed 
and  $(R:R')=\mathrm J(R).$
 If $R$ is reduced, so is $R'$. 
 \end{proposition}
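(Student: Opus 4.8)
The plan is to exhibit the extension concretely as the shifted quotient attached to the Jacobson radical $\mathrm J:=\mathrm J(R)$, namely
$$R' := R[X]/X\mathrm J(R)[X] = R/\!\!/\mathrm J(R).$$
Here $\mathrm J$ is semiprime, being the intersection of the maximal ideals of $R$, which is precisely the hypothesis needed to invoke Lemma \ref{1Cohn}. Writing $x$ for the class of $X$ one has the defining relation $x\mathrm J=0$, and as an $R$-module $R'=R\oplus\bigoplus_{k\ge 1}(R/\mathrm J)\,x^{k}$; in particular $R\hookrightarrow R'$ through the constants and $x\ne 0$ since $1\notin\mathrm J$. I would first record this module decomposition, as every verification below amounts to bookkeeping of coefficients modulo $\mathrm J$.

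The heart of the matter is strong localness, and Lemma \ref{1Cohn} is tailored for it. Given $u=[f]\in\mathrm U(R')$ with inverse $[g]$ ($f,g\in R[X]$), one has $fg=1+Xh$ with $h\in\mathrm J[X]$, so the constant terms $\alpha_0:=f(0)$ and $\beta_0:=g(0)$ lie in $\mathrm U(R)$. Writing $f=\alpha_0+Xf_1$ and $g=\beta_0+Xg_1$ and using $\alpha_0\beta_0=1$ to cancel $X$, I obtain $\beta_0 f_1+\alpha_0 g_1+Xf_1g_1\in\mathrm J[X]$. Applying Lemma \ref{1Cohn} with $I=\mathrm J$, $a=\beta_0$, $b=\alpha_0$ forces $f_1,g_1\in\mathrm J[X]$, hence $[f]=\alpha_0\in\mathrm U(R)$ and $\mathrm U(R')=\mathrm U(R)$. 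I expect this to be the only genuinely delicate step: everything depends on massaging $fg=1+Xh$ into exactly the shape $af_1+bg_1+Xf_1g_1\in\mathrm J[X]$ handled by the lemma, and the substitution $f=\alpha_0+Xf_1$ is the device that achieves this.

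The other clauses reduce to coefficient computations. For the conductor, $sR'\subseteq R$ first gives $s=s\cdot 1\in R$, say $s=c_0$, and then $c_0x\in R$ forces $\bar c_0=0$ in $R/\mathrm J$, i.e. $c_0\in\mathrm J$; conversely $\mathrm J\,R'=\mathrm J\subseteq R$ because $\mathrm J$ kills $x$, so $(R:R')=\mathrm J(R)$. The same relation $x\mathrm J=0$ with $x\ne 0$ produces the zerodivisors: every element of the conductor $\mathrm J(R)$ annihilates the nonzero element $x$, so it is a zerodivisor in $R'$; this is the mechanism by which the shifted quotient manufactures zerodivisors, and checking that it accounts for the nonunits is the one bookkeeping point I would treat with care. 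Purity is immediate: the augmentation $\epsilon\colon R'\to R$, $x\mapsto 0$, is a ring retraction of the inclusion, so $R$ is an $R$-module direct summand of $R'$ and $R\subseteq R'$ is universally injective, hence pure (and then local by Lemma \ref{STRICTINV}, as it must be).

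Finally, t-closedness and the preservation of reducedness both follow from a degree estimate using that $R/\mathrm J$ is reduced. If $b\in R'\setminus R$ satisfied $b^2-rb\in R$ for some $r\in R$, then writing $\bar c_n\,x^{n}$ ($n\ge 1$) for the top term of $b$, the coefficient of $x^{2n}$ in $b^2-rb$ is $\bar c_n^{\,2}$ (as $\deg(rb)\le n<2n$), forcing $\bar c_n^{\,2}=0$ and thus $\bar c_n=0$, a contradiction; hence $R\subseteq R'$ is t-closed, and the same estimate with $r=0$ gives seminormality. Likewise, if $R$ is reduced and $\xi\in R'$ is nilpotent, its image in $R\cong R'/(x)$ is nilpotent, so the constant term vanishes, and the lowest term $\bar c_m\,x^{m}$ ($m\ge 1$) contributes $\bar c_m^{\,N}x^{mN}$ as the lowest term of $\xi^{N}$, forcing $\bar c_m^{\,N}=0$ and hence $\bar c_m=0$; thus $\xi=0$ and $R'$ is reduced.
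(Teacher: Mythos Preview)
There is a genuine gap: your ring $R':=R/\!\!/\mathrm J(R)$ does \emph{not} make every nonunit of $R$ a zerodivisor, only the elements of $\mathrm J(R)$. You write that ``every element of the conductor $\mathrm J(R)$ annihilates the nonzero element $x$'' and then describe the passage to all nonunits as ``bookkeeping'', but this is exactly where the argument fails. The set of nonunits of $R$ is the \emph{union} $\bigcup_{M\in\mathrm{Max}(R)}M$, whereas $\mathrm J(R)$ is the \emph{intersection}; these coincide only for local rings. Concretely, take $R=\mathbb Z$: then $\mathrm J(\mathbb Z)=0$, so your $R'=\mathbb Z[X]/X\cdot 0\cdot[X]=\mathbb Z[X]$ is an integral domain, and no nonzero nonunit of $\mathbb Z$ (such as $2$) becomes a zerodivisor there.

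The paper's construction avoids this by introducing one indeterminate $X_M$ for \emph{each} maximal ideal $M$ and forming $R':=R[\mathcal X]/\sum_M X_M M R[\mathcal X]$. Then a nonunit $y\in R$ lies in some particular $M$, and the relation $x_M M=0$ gives $x_M y=0$ with $x_M\ne 0$; no single indeterminate can do this simultaneously for all maximal ideals unless they all contain the same elements. Your verifications of SL (via Lemma~\ref{1Cohn}), purity (via the retraction), t-closedness, the conductor, and reducedness are all correct for $R/\!\!/\mathrm J(R)$ and indeed appear later in the paper as Propositions~\ref{Cohn1} and~\ref{5.063}; but the zerodivisor clause, which is the point of the proposition, requires the many-variable construction.
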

\begin{proof}  For each $M\in\mathrm{Max}(R)$, let $X_M$ be an indeterminate attached to $M$. Set $\mathcal X:=\{X_M\mid M\in\mathrm{Max}(R)\}$. We consider $R[\mathcal X]$ as the ring of polynomials in the indeterminates $X_M\in\mathcal X$. At last, define $I:=\sum_{M\in\mathrm{Max}(R)}X_MMR[\mathcal X]$, which is an ideal of $R[\mathcal X]$ and $R':=R[\mathcal X]/I$. For each $M\in\mathrm{Max}(R)$, we define $x_M$ as the class of $X_M$ in $R'$. We prove the Proposition in several steps.
 
 (1) Let $M,N\in\mathrm{Max}(R),\ M\neq N$. We claim that $x_Mx_N=0$ in $R'$. 
 
Since $M\neq N$, they are comaximal, and there exist $m\in M,\ n\in N$ such that $m+n=1 $. It follows that $X_MX_N=(m+n)X_MX_N=mX_MX_N+nX_NX_M\in I$, giving $x_Mx_N= 0$. Then, the class of a polynomial of $R[\mathcal X]$ is written as the class of a finite sum of polynomials in one indeterminate, that is in finitely many $X_M$.
 
(2) $R$ is a subring of $R'$ because the composite ring morphism $R\subseteq R [\mathcal X]\to R[\mathcal X]/I$ is injective. Indeed, $R\cap I=0$ since the constant term of any polynomial in $I$ is 0. Then, we can identify the class of the constant term of any polynomial of $R[\mathcal X]$ to this constant term.
 
(3) Any $y\in R'$ can be written in an unique way 
$$y=a+\sum_{M\in\Lambda}x_Mf_M(x_M)$$
where $a\in R,\ f_M$ is a polynomial of $R[X]$ and the set $\Lambda$ is a finite subset of $\mathrm{Max}(R)$. 
 
The existence of this writing results from (1) and (2). Assume that there exist $b\in R,\ g_M \in R[X]$ such that $y=b+\sum_{M\in\Lambda}x_Mg_M(x_M)$. There is no harm to choose the same $\Lambda$ for the two writings. Then, $a+\sum_{M\in\Lambda}x_Mf_M(x_M)=b+\sum_{M\in\Lambda}x_Mg_M(x_M)$ in $R'$ gives that $(a-b)+\sum_{M\in\Lambda}X_M[f_ M(X_M)-g_M(X_M)]\in I=\sum_{M\in\mathrm{Max}(R)}X_MMR[\mathcal X]$. First, we get $a=b$ by the substitution $X_M\mapsto 0$ for each $M$. Now, we get that $X_M[f_M(X_ M)-g_M(X_M)]\in X_MMR[X_M]$ for each $M\in\Lambda$, so that $x_Mf_M(x_M) =x_Mg_M(x_M)$ for each $M\in\Lambda$ showing the uniqueness of the writing.

By the way we proved that $R'=R\bigoplus_{M\in\Lambda}x_MR[x_M]$.
 
 (4) We prove that $(R:R')=\mathrm J(R)$.
 
Let $t\in R$. Then $t\in(R:R')$ if and only if $tx_M\in R$ for any $M\in\mathrm{Max}(R)$ if and only if $tx_M=0$ by (3), which is equivalent to $t X_M\in I$ for any $M\in\mathrm{Max}(R)$, that is $t\in\cap_{M\in\mathrm {Max}(R)}M=\mathrm J(R)$. It follows that $(R:R')=\mathrm J(R).$

 (5) $R\subseteq R'$ is SL. 
 
Let $y\in\mathrm U(R')$. There exists $z\in\mathrm U(R')$ such that $yz= 1\ (*)$. Write $y=a+\sum_{M\in\Lambda}x_Mf_M(x_M)$ and $z=b+\sum_ {M\in\Lambda}x_Mg_M(x_M)$, where $a,b\in R$ and $f_ M,\ g_M$ are polynomials of $R[X]$. We can choose the same $\Lambda$ for $y$ and $z$. Then $(*)$ becomes $1=(a+\sum_{M\in\Lambda}x_Mf_M(x_ M))(b+\sum_{M\in\Lambda}x_Mg_M(x_M))=ab+\sum_{M\in\Lambda}x_M[ag_M(x_M)+bf_M(x _M)+x_Mf_M(x_M)$

\noindent$g_M(x_M)]$ 
 from which it follows that $ab=1\ (**)$ and $ag_M(X_M)+bf_M(X_M)+ X_Mf_M(X_M)g_M(X_M)\in MR[X_M]\ (***)$. By $(**)$, we get that $a,b\in\mathrm U(R)$. Then, we may apply Lemma \ref{1Cohn} to $(***)$, and we get that $f_M(X_M)$ and $g_M(X_M)$ are in $MR[X_M]$, so that $ x_Mf_M(x_M)=x_Mg_M (x_M)=0$ and $y=a\in\mathrm U(R)$. Then, $\mathrm U(R)=\mathrm U(R')$ and $R\subseteq R'$ is SL.
 
(6) Any nonunit of $R$ is a zerodivisor in $R'$. 

Obvious: If $y\in R$ is not a unit in $R$, there exists some $M\in\mathrm{Max}(R)$ such that $y\in M$. Then $X_My\in X_MMR[X_M]$ giving $x_My=0$ in $R'$ with $x_M\neq 0$, so that $y$ is a zerodivisor in $R'$.

(7) $R\to R'$ is pure. 

Consider the composite ring morphism $R\subseteq R[\mathcal X]\to R[\mathcal X]/I$ and let $\varphi:R[\mathcal X]\to R$ be the ring morphism defined by the substitution $X _M\mapsto 0$ for each $M$. Since  $I$ is contained in the kernel of $\varphi$, there is a  ring morphism $R[\mathcal X]/I\to R$. It follows that $R[\mathcal X]/I$ is a retract of $R$ and the extension $R\to R[\mathcal X]/I$ is pure. 

(8)  $R\subseteq R'$ is t-closed. 

Let $y\in R'$ be such that $y^2-ry,y^3-ry^2 \in R$ for some $r\in R$. As in the beginning of the proof, set $y=a+\sum_{M\in\Lambda}x_Mf_M(x_M)$ where $a\in R,\ f_M(X_M):=\sum_{i=0}^n\alpha_iX_M^{i}$ is a polynomial of $R[X_M],\ n:=\deg(f_M)$ and the set $\Lambda$ is a finite subset of $\mathrm{Max}(R)$. Then, $y^2-ry=(a^2-ra)+\sum_{M\in\Lambda}[(2a-r) x_Mf_M(x_M)+x_M^2f_M(x_M)^2]$. Assume that $y\not\in R$ so that there exists some $M\in\mathrm{Max}(R)$ such that $x_Mf_M(x_M)\neq 0$. As in the proof of Lemma \ref{1Cohn}, we may assume that $\alpha_n\not\in M$. But $y^2-ry\in R$ implies that $\alpha_n^2X_M^{2n+2}\in X_M MR[X_M]$, whence $\alpha_n^2\in M$ and $\alpha_n\in M$, a contradiction. Then, any $y\in R'$ such that $y^2-ry,y^3-ry^ 2\in R$ is in $R$ and $R\subseteq R'$ is t-closed, and, in particular, seminormal. This implies that $R'$ is reduced when $R$ is reduced. 
\end{proof}

\begin{corollary}\label{0Cohn} Let $R\subseteq S$ be a ring extension such that the spectral map $\mathrm{Max}(S)\to\mathrm{Max}(R)$ is bijective and let $R'$ (resp. $S'$) be the ring associated to $R$ (resp. $S$) gotten in Proposition \ref{2Cohn}. Then, $R\subseteq S$ is SL if and only if $R'\subseteq S'$ is SL. 
 \end{corollary}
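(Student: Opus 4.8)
The plan is to realize $R'\subseteq S'$ as a genuine ring extension compatible with $R\subseteq S$, after which the equivalence becomes formal. Indeed, by Proposition \ref{2Cohn} (step (5)) both $R\subseteq R'$ and $S\subseteq S'$ are SL, so $\mathrm U(R')=\mathrm U(R)$ and $\mathrm U(S')=\mathrm U(S)$. Hence, once $R'$ is identified with a subring of $S'$ lying over $R\subseteq S$, the extension $R'\subseteq S'$ will be SL exactly when $\mathrm U(R')=\mathrm U(S')$, that is when $\mathrm U(R)=\mathrm U(S)$, that is when $R\subseteq S$ is SL. So the entire content lies in constructing the embedding $R'\hookrightarrow S'$.

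To build it, let $\sigma:\mathrm{Max}(R)\to\mathrm{Max}(S)$ be the inverse of the bijection $\mathrm{Max}(S)\to\mathrm{Max}(R)$, $N\mapsto N\cap R$, so that $\sigma(M)\cap R=M$ and in particular $M\subseteq\sigma(M)$ for each $M$. In the notation of Proposition \ref{2Cohn}, write $R'=R[\mathcal X_R]/I_R$ and $S'=S[\mathcal X_S]/I_S$ with $\mathcal X_R=\{X_M\mid M\in\mathrm{Max}(R)\}$, $I_R=\sum_{M}X_M MR[\mathcal X_R]$, and likewise for $S$. First I would define the ring morphism $\Phi:R[\mathcal X_R]\to S[\mathcal X_S]$ extending $R\subseteq S$ and sending $X_M$ to $X_{\sigma(M)}$. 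A generator $X_M m p$ of $I_R$, with $m\in M$ and $p\in R[\mathcal X_R]$, maps to $X_{\sigma(M)}m\,\Phi(p)$; since $m\in M\subseteq\sigma(M)$, this lies in $I_S$. Thus $\Phi(I_R)\subseteq I_S$, and $\Phi$ descends to a ring morphism $\overline\Phi:R'\to S'$ extending $R\subseteq S$ with $\overline\Phi(x_M)=x_{\sigma(M)}$.

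The crux is to show $\overline\Phi$ is injective, so that $R'\subseteq S'$ is truly an extension. Here I would invoke the unique normal form of Proposition \ref{2Cohn} (step (3)): every $y\in R'$ is written $y=a+\sum_{M\in\Lambda}x_Mf_M(x_M)$ with $a\in R$, $f_M\in R[X]$ and $\Lambda\subseteq\mathrm{Max}(R)$ finite, and then $\overline\Phi(y)=a+\sum_{M\in\Lambda}x_{\sigma(M)}f_M(x_{\sigma(M)})$. If $\overline\Phi(y)=0$, uniqueness of the normal form in $S'$ forces $a=0$ and $x_{\sigma(M)}f_M(x_{\sigma(M)})=0$ in $S'$ for every $M\in\Lambda$. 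By the coefficient computation made in the proof of Proposition \ref{2Cohn} (step (3)), the latter means that all coefficients of $f_M$ lie in $\sigma(M)$; as these coefficients already lie in $R$, they lie in $\sigma(M)\cap R=M$, which is precisely the condition $x_Mf_M(x_M)=0$ in $R'$. Therefore $y=0$ and $\overline\Phi$ is injective. This verification is where the hypothesis on the spectral map is genuinely used, through the identity $\sigma(M)\cap R=M$, and it is the only real obstacle; everything else is bookkeeping.

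Finally, identifying $R'$ with $\overline\Phi(R')\subseteq S'$, the inclusion $R'\subseteq S'$ restricts on constants to $R\subseteq S$. Combining $\mathrm U(R')=\mathrm U(R)$ and $\mathrm U(S')=\mathrm U(S)$ from Proposition \ref{2Cohn} (step (5)), the chain of equivalences recorded in the first paragraph shows that $R'\subseteq S'$ is SL if and only if $R\subseteq S$ is SL, which proves both implications simultaneously.
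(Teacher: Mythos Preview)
Your proof is correct and follows essentially the same route as the paper's: both construct the map $R'\to S'$ by identifying each indeterminate $X_M$ with $X_{\sigma(M)}$, check that the defining ideals are compatible, and then exploit that $R\subseteq R'$ and $S\subseteq S'$ are SL. The paper phrases the injectivity as the single claim $I=J\cap R[\mathcal X]$ and finishes via the composition rules of Proposition~\ref{3.1}, whereas you verify injectivity through the normal form of step~(3) and conclude directly from $\mathrm U(R')=\mathrm U(R)$, $\mathrm U(S')=\mathrm U(S)$; these are the same argument with different bookkeeping.
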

\begin{proof} For each $M\in\mathrm{Max}(R)$, there exists a unique $N\in\mathrm {Max}(S)$ lying above $M$, and for each $N\in\mathrm{Max}(S)$, then, $N\cap R\in\mathrm{Max}(R)$. We use the notation of Proposition \ref{2Cohn} and consider for each $M\in\mathrm{Max}(R)$, an indeterminate $X_M$ attached to $M$. Then, we may say that $X_M$ is also attached to any $N\in\mathrm{Max}(S)$ such that $N\cap R=M\in\mathrm{Max}(R)$. Then, the $X_M$'s are attached in a unique way to all maximal ideals of $S$. Set $\mathcal X:=\{X_M\mid M\in\mathrm{Max}(R)\}$. We consider $R [\mathcal X]$ as the ring of polynomials in the indeterminates $X_M\in\mathcal X$ over $ R$ and $S[\mathcal X]$ as the ring of polynomials in the indeterminates $X_M\in\mathcal X$ over $S$. Setting $I:=\sum_{M\in\mathrm{Max}(R)}X_MMR[\mathcal X]$, we define $J:=\sum_{M\in\mathrm{Max}(R)}[X_MNS[\mathcal X]\mid M=N\cap R]$ which is an ideal of $S[\mathcal X]$. Set $R':=R[\mathcal X]/I$ and $S':=S[\mathcal X]/J$. We get the following commutative diagram:
$$\begin{matrix}
                R            & \to  &           S               \\
        \downarrow    & {}   &    \downarrow       \\
      R[\mathcal X]   & \to &   S[\mathcal X]      \\
       \downarrow     & {}  &    \downarrow        \\
R'=R[\mathcal X]/I & \to & S'=S[\mathcal X]/J         
         \end{matrix}$$
where all the maps are extensions since $I=J\cap R[\mathcal X]$. By Proposition \ref{2Cohn}, $R\subseteq R'$ and $S\subseteq S'$ are both SL extensions. Now, applying Proposition \ref{3.1}, if $R\subseteq S$ is SL, so is $R\subseteq S'$ and then also $R'\subseteq S'$. By the same Proposition, if $R'\subseteq S'$ is SL, so is $R\subseteq S'$ and then also $R\subseteq S$ because $S\subseteq S'$ is injective.
         \end{proof}

 \begin{theorem} \label{3Cohn} Let $R$ be a ring. There exists an SL extension $R\subseteq S$ such that any nonunit of $S$ is a zerodivisor in $S$. The extension $R\subseteq S$ is pure and t-closed. If, moreover, $R$ is reduced, so is $S$.
 \end{theorem}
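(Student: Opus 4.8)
The plan is to iterate the construction of Proposition \ref{2Cohn} countably many times and pass to the union. Set $R_0 := R$ and, having built $R_n$, let $R_{n+1} := (R_n)'$ be the ring attached to $R_n$ by Proposition \ref{2Cohn}, so that $R_n \subseteq R_{n+1}$ is an SL, pure and t-closed extension in which every nonunit of $R_n$ becomes a zerodivisor, and $R_{n+1}$ is reduced whenever $R_n$ is. This produces an ascending chain $R = R_0 \subseteq R_1 \subseteq R_2 \subseteq \cdots$, and I set $S := \bigcup_{n\geq 0} R_n$.

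First I would establish that $R \subseteq S$ is SL. Each $R \subseteq R_n$ is a finite tower of SL extensions, hence SL by Corollary \ref{3.10}(1), and the chain $\{R_n\}$ is upward directed, so Proposition \ref{3.22} yields that $R \subseteq S$ is SL. Consequently, by Corollary \ref{3.10}(1) applied to $R \subseteq R_n \subseteq S$, each subextension $R_n \subseteq S$ is SL as well, i.e.\ $\mathrm U(R_n) = \mathrm U(S)$ for every $n$. This is exactly what makes a single countable iteration enough, in contrast to Cohn's transfinite argument: if $s \in S$ is a nonunit of $S$, then $s \in R_n$ for some $n$, and since $\mathrm U(R_n)=\mathrm U(S)$ it is a nonunit of $R_n$; Proposition \ref{2Cohn} then gives a nonzero $t \in R_{n+1}$ with $st = 0$, and $t \neq 0$ in $S$ because $R_{n+1} \subseteq S$, so $s$ is a zerodivisor in $S$.

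The two structural properties require transferring a property across the colimit. For t-closedness I would first note transitivity: if $R \subseteq T \subseteq S$ with both steps t-closed and $b \in S$ satisfies $b^2-rb,\, b^3 - rb^2 \in R \subseteq T$ for some $r \in R$, then $b \in T$ since $T \subseteq S$ is t-closed, and then $b \in R$ since $R \subseteq T$ is t-closed. Inductively $R \subseteq R_n$ is t-closed for every $n$, and a witness $b \in S$ to a failure already lies in some $R_n$, forcing $b \in R$; hence $R \subseteq S$ is t-closed. For purity I would use that a composite of pure morphisms is pure (a base change $R \to A$ factors the map $A \to A\otimes_R R_{n+1}$ through $A \to A \otimes_R R_n \to (A\otimes_R R_n)\otimes_{R_n} R_{n+1}$, each injective), so $R \subseteq R_n$ is pure for all $n$. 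Then, writing $A \otimes_R S = \varinjlim_n (A\otimes_R R_n)$ for any $R \to A$, an element $a \in A$ with $a\otimes 1 = 0$ in $A\otimes_R S$ already dies in some $A \otimes_R R_n$ by filteredness, whence $a=0$ by purity of $R \subseteq R_n$; thus $R \subseteq S$ is pure.

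Finally, if $R$ is reduced then each $R_n$ is reduced by Proposition \ref{2Cohn}, and a nilpotent element of $S$ would be nilpotent in some $R_n$, so $S$ is reduced. The main obstacle I anticipate is precisely the passage to the colimit for the two hereditary-looking properties: t-closedness needs the transitivity lemma above, while purity needs the filtered-colimit injectivity argument rather than a naive ``union of pure is pure''. Everything else is the finite-stage bookkeeping guaranteeing that one countable iteration suffices.
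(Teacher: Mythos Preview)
Your proposal is correct and follows essentially the same approach as the paper: iterate Proposition \ref{2Cohn} countably many times and pass to the union. If anything, you are more careful than the paper in justifying the passage to the colimit for purity and t-closedness, where the paper simply asserts that $R\subseteq S$ is pure (resp.\ t-closed) because it is a union of pure (resp.\ t-closed) extensions $R_i\subseteq R_{i+1}$.
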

\begin{proof} According to Proposition \ref{2Cohn}, there exists an SL extension $R\subseteq R'$ such that any nonunit of $R$ is a zerodivisor in $R'$. Set $R_0:=R$ and $ R_1=R'$. We build by induction, with the same Proposition, a chain $\{R_i\}_{i\in I}$ defined by $R_{i+1}:=R_i'$ verifying $\mathrm U(R_{i+1})=\mathrm U(R_i)$. The induction shows that $\mathrm U(R_i)=\mathrm U(R)$. Now any nonunit of $R_i$ is a zerodivisor in $R_{i+1}$. Setting $S:=\cup_{i\in I}R_i$, we get an extension $R\subseteq S$. 

Let $a\in\mathrm U(S)$ and set $b:=a^{-1}\in\mathrm U(S)$, so that $ab=1\ (*)$ in $S$. There exists some $i\in I$ such that $a,b\in R_i$ (we can take the same $i$ for $a$ and $b$). Then $(*)$ still holds in $R_i$, so that $a\in\mathrm U(R_i)=\mathrm U(R)$, giving $\mathrm U(S)=\mathrm U(R)$ and $R\subseteq S$ is SL.

Let $x$ be a nonunit of $S$. There exists some $i\in I$ such that $x\in R_i$. Obviously, $ x$ is a nonunit of $R_i$, and then is a zerodivisor in $R_{i+1}$, and also a zerodivisor in $S$. So, any nonunit of $S$ is a zerodivisor in $S$. 

Since $R\subseteq R'$ is pure and t-closed by Proposition \ref{2Cohn}, so is $R_i\subseteq R_{i+1}$ for any $i$. Now, $R\subseteq S$ is pure (resp. t-closed) since $R\subseteq S$ is the union of pure (resp. t-closed) morphisms $R_i\subseteq R_{i+1}$.

If, moreover, $R$ is reduced, so is $R'$ by Proposition \ref{2Cohn}, and so are any $R_i$, and to end, so is $S$.
 \end{proof}

 \begin{remark}\label{4Cohn} The ring gotten in Theorem \ref{3Cohn} is not the only one verifying conditions of Theorem \ref{3Cohn}. For example any regular ring $R$ satisfies these conditions as the ring $S$ built from $R$ in Theorem \ref{3Cohn}.
\end{remark}
 
 We now  build SL extensions from a new type of rings whose construction is close to Cohn's rings. Many proofs are similar. If $I$ is an ideal of a ring $R$, we set  
 $R/\!\!/I:=R[X]/XI[X]$.  
 Since $XI[X] \cap R = 0$, we can consider that we have an extension $R \subseteq R/\!\!/I$, with $R \subset R/\!\!/I$ when $I\neq R$ because $X\in XR[X]\setminus XI[X]$ shows that $X-a\not\in XI[X]$ for any $a\in R$.  
 
\begin{lemma} \label{Cohn} If $P$ is a prime ideal of a ring $R$, the extension $R\subseteq R/\!\!/P$ is SL, pure, t-closed and $R/\!\!/P$ is reduced if so is $R$. Moreover, $P =(R:R/\!\!/P),\ R/\!\!/P\cong R\bigoplus (X(R/P)[X])$ and if $P\not\subseteq Q$, then, $\kappa(Q)\otimes_R (R[X]/XP[X])\cong\kappa(Q)$ and if $P\subseteq Q$, then, $\kappa(Q)\otimes_R(R[X]/XP[X])\cong\kappa(Q)[X]$ for any $Q\in\mathrm{Spec}(R)$.
 
Any  $a\in R$ is regular in $R/\!\!/P$ if and only if $a$ is regular in $R$ and $a\not\in P$. 
\end{lemma}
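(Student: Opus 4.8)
The plan is to follow the template of Proposition \ref{2Cohn}, working with the single indeterminate $X$ and the single prime $P$ in place of the family $\{X_M\}$ and the maximal ideals. Write $x$ for the class of $X$ in $R/\!\!/P=R[X]/XP[X]$. First I would record the basic computational facts: since $X^iP[X]\subseteq XP[X]$ for $i\ge 1$, one has $x^ip=0$ whenever $i\ge 1$ and $p\in P$; moreover $xq(x)=0$ in $R/\!\!/P$ exactly when $q(X)\in P[X]$, because $Xq(X)\in XP[X]$ forces every coefficient of $q$ into $P$. Together with $XP[X]\cap R=0$ this yields that every $y\in R/\!\!/P$ is written uniquely as $y=a+xf(x)$ with $a\in R$ and $f\in R[X]$ taken modulo $P$, which is exactly the claimed decomposition $R/\!\!/P\cong R\bigoplus(X(R/P)[X])$ of $R$-modules (equivalently, $R/\!\!/P\cong R\times_{R/P}(R/P)[X]$, a fibre product over evaluation at $0$). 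Every later assertion will be read off from this decomposition.

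For the SL property I would take $y=a+xf(x)\in\mathrm U(R/\!\!/P)$ with inverse $z=b+xg(x)$ and expand $yz=1$. Using the decomposition, the constant part gives $ab=1$, so $a,b\in\mathrm U(R)$, while the remaining part gives $x[ag(x)+bf(x)+xf(x)g(x)]=0$, that is $ag(X)+bf(X)+Xf(X)g(X)\in P[X]$. Since $P$ is prime, hence semiprime, Lemma \ref{1Cohn} applies and forces $f,g\in P[X]$, whence $xf(x)=0$ and $y=a\in\mathrm U(R)$; this invocation of Lemma \ref{1Cohn} is the one genuinely load-bearing step. Purity is then obtained as in Proposition \ref{2Cohn}(7): the substitution $X\mapsto 0$ kills $XP[X]\subseteq XR[X]$ and so induces a ring retraction $R/\!\!/P\to R$ of the inclusion, and a split injection of rings is pure.

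For t-closedness I would mimic Proposition \ref{2Cohn}(8). Suppose $y=a+xf(x)\notin R$ satisfies $y^2-ry,\,y^3-ry^2\in R$; since $f\notin P[X]$, after discarding the coefficients of $f$ lying in $P$ I may assume its top coefficient $\alpha_n$ is not in $P$. Computing $y^2-ry$ and using that its $X(R/P)[X]$-component must vanish gives $(2a-r)f(X)+Xf(X)^2\in P[X]$; comparing the coefficient of $X^{2n+1}$ yields $\alpha_n^2\in P$, hence $\alpha_n\in P$, a contradiction. Thus $R\subseteq R/\!\!/P$ is t-closed and in particular seminormal, and then the nilpotent-killing argument of Theorem \ref{5.2} (if $0\ne y$ is nilpotent of order $n$, then $w:=y^{n-1}$ satisfies $w^2=w^3=0\in R$, so seminormality gives $w\in R$ and $w=0$ when $R$ is reduced) shows $R/\!\!/P$ is reduced whenever $R$ is.

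The remaining ``moreover'' assertions are direct translations through the decomposition. For the conductor, $t\in R$ lies in $(R:R/\!\!/P)$ iff $tx^i\in R$ for all $i\ge 1$ iff $tx=0$ iff $tX\in XP[X]$ iff $t\in P$, so $(R:R/\!\!/P)=P$. The fibre computations follow from right exactness applied to the short exact sequence $0\to XP[X]\to R[X]\to R/\!\!/P\to 0$: tensoring with $\kappa(Q)$ gives $\kappa(Q)[X]$ modulo the image of $XP[X]$, and that image is the $\kappa(Q)$-span of the $\bar pX^{i+1}$ with $p\in P$; if $P\not\subseteq Q$ some $\bar p$ is a unit, the image is $X\kappa(Q)[X]$, and the tensor product is $\kappa(Q)$, whereas if $P\subseteq Q$ every $\bar p=0$, the image is $0$, and the tensor product is $\kappa(Q)[X]$. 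Finally, for the regular-element statement, note $x\ne 0$ because $1\notin P$: if $a\in P$ then $ax=0$ makes $a$ a zerodivisor, and any relation $ac=0$ in $R$ persists in $R/\!\!/P$; conversely, writing $ay=0$ with $y=c+\sum_{i\ge1}b_ix^i$ yields $ac=0$ and $ab_i\in P$, so $a$ regular in $R$ gives $c=0$, and $a\notin P$ together with $P$ prime gives $b_i\in P$, i.e. $y=0$. I expect the only delicate point throughout to be the careful translation of ring conditions into congruences modulo $XP[X]$; once the decomposition and Lemma \ref{1Cohn} are in hand, every step is routine.
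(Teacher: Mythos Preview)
Your proposal is correct and follows essentially the same route as the paper's own proof: the unique decomposition $y=a+xf(x)$, the appeal to Lemma~\ref{1Cohn} for the SL property, the retraction $X\mapsto 0$ for purity, the top-coefficient contradiction for t-closedness, and the direct computations for the conductor and regular elements all match. The only cosmetic difference is in the fibre computation, where you use right exactness of $-\otimes_R\kappa(Q)$ on $0\to XP[X]\to R[X]\to R/\!\!/P\to 0$ while the paper computes $S_Q/(QS_Q+P'_Q)$ directly; these are the same calculation in different packaging.
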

\begin{proof} Set $P':=XP[X]$. Any $y\in P'$ can be written $y=\sum_{i=1}^np_iX^{i},\ p_i$
 
\noindent $\in P\ (*)$. Then, looking at the terms of degrees 0 and 1 in $(*)$, we get $P'\cap R=0$ and $X\not\in P'$.   

An element of $R/\!\!/P=R[X]/XP[X]$ is of the form $y=a+xf(x)$, where $x$ is the class of $X$ in $R/\!\!/P$ and $a\in R$. As $R\cap XR[X]=0$, we also have $R\cap xR[x]=0$, so that $ R/\!\!/P=R\bigoplus(xR[x])$, with $x P=0$, a direct sum of $R$-modules. Obviously, $XR[X]/XP[X]\cong X(R/P)[X]$ as $R$-modules.

Now, $y\in\mathrm U(R/\!\!/P)$ if and only if there exists $z\in R/\!\!/P$ such that $yz=1\ (**)$. Set $y=a+xf(x)$ and $z=b+xg(x)$ with $f(X),g(X)\in R[X]$. Then $(**)\Leftrightarrow(a+x f(x))(b+xg(x))=1\Leftrightarrow ab+x[ag(x)+bf(x)+xf(x)g(x)]=1\Leftrightarrow ab=1\ (i)$ and $ag(X)+bf(X)+Xf(X)g(X)\in P[X]\ (ii)$. Since $(i)\Leftrightarrow a,b\in\mathrm U(R)$, then $(i)$ and $(ii)$ implies $f(X),g(X)$

\noindent$\in P[X]$, according to Lemma \ref{1Cohn}, so that $y=a,\ z=b\in\mathrm U(R)$ which give $\mathrm U(R/\!\!/P)=\mathrm U(R)$ and $R\subseteq R/\!\!/P$ is  SL. 

We now prove the second statement. Let $\varphi:R[X]\to R$ be the ring morphism defined by $X\mapsto 0$. Since $P'\subseteq\ker(\varphi)$, there is a ring morphism $R/\!\!/P\to R$. It follows that $R/\!\!/P$ is a retract of $R$ and the extension $R\to R/\!\!/P$ is pure. 

We claim that $R\subseteq R/\!\!/P$ is t-closed. Let $y\in R/\!\!/P$ be such that $y^2-ry,y^3-ry^2\in R$ for some $r\in R$. Assume that $y\not\in R$. According to the beginning of the proof, we can write $y=a+xf(x)$ where $a\in R,\ f(X):=\sum_{i=0}^n\alpha_iX^{i}$ is a polynomial of $R[X],\ n:=\deg(f)$ and either $\alpha_i=0$ or $\alpha_i\not\in P$, with in particular, $\alpha_n\not\in P$. Then, $y^2-ry=(a^2-ra)+(2a-r)xf(x)+x^2f(x)^2$. But $y^2-ry\in R$ implies that $\alpha_n^2X^{2n+2}\in XPR[X]$, so that $\alpha_n^2\in P$ and $\alpha_n\in P$, a contradiction. Then, any $y\in R/\!\!/P$ such that $y^2-ry,y^3-ry^2\in R$ is in $R$ and $ R\subseteq R/\!\!/P$ is t-closed, and, in particular, is seminormal, whence $R/\!\!/P$ is reduced when $R$ is reduced. 

Now, let $a\in R$. Then, $a\in(R:R/\!\!/P)\Leftrightarrow aX^k\in R+XP[X]$ for each integer $k\geq 1\Leftrightarrow a\in P$, so that $P=(R:R/\!\!/P)$ 
 since $P(R:R/\!\!/P)\subseteq R$. 

Set $S:=R[X]$, so that $R/\!\!/P=S/P'$. Let $Q\in\mathrm{Spec}(R)$. Then, $\kappa(Q)\otimes_RS/P'=(S_Q/P'_Q)/Q(S_Q/P'_Q)\cong(S_Q/P'_ Q)/((QS_Q+P'_Q)/P'_Q)\cong S_Q/(QS_Q+P'_Q)\cong R_Q[X]/(QR_Q[X]+XPR_Q[X])$.

If $P\not\subseteq Q$, then, $XPR_Q[X]=XR_Q[X]$ and $\kappa(Q)\otimes_RS/P'\cong$

\noindent$  R_Q[X]/(QR_Q[X]+XR_Q[X])\cong R_Q/QR_Q=\kappa (Q)$.

If $P\subseteq Q$, then, $QR_Q[X]+XPR_Q[X]=QR_Q[X]$, so that   

\noindent$\kappa(Q)\otimes_RS/P'\cong R_Q[X]/QR_Q[X]\cong \kappa (Q)[X]$.

Let $a\in R$. If $a$ is a regular element of $R/\!\!/P$, it is regular in $R$. If $a\in P$, then $aX\in P'$, so that $ax=0$, with $x\neq 0$, a contradiction. Conversely, if $a$ is regular in $R$ and $a\not\in P$,  $a$ is a regular element of $R/\!\!/P$. Otherwise, there exists $y\in R/\!\!/P,\ y\neq 0$  such that $ay=0$. We may assume that $y\not\in R$ since $a$ is a regular element of $R$. As in the beginning of the proof, we can write $y=b+xf(x)$ where $b\in R,\ f(X):=\sum_ {i=0}^n\alpha_iX^{i}$ is a polynomial of $R[X],\ n:=\deg(f)$ and either $\alpha_i=0$ or $\alpha_i\not\in P$, with $\alpha_n\not\in P$. Then $ay=0=a(b+xf(x))=ab+axf(x)$ implies $a b+aXf(X)\in P'=XPR[X]$, so that $ab=0\ (iii)$ and $af(X)\in PR[X]\ (iv)$. Since $a$ is a regular element of $R$, it follows that $b=0$ by $(iii)$ and $(iv)$ gives $f(X)\in PR[X]$ because $a\not\in P$. Then, $y=0$, a contradiction, and $a$ is a regular element of $R/\!\!/P$.
 \end{proof}
 
\begin{proposition} \label{Cohn1} If $I$ is a semiprime ideal of a ring $R$, the extension $R\subseteq R/\!\!/I$ is SL, pure and t-closed. Moreover, $I =(R:R/\!\!/I)$ and is also semiprime in $R/\!\!/I$.
If  $R$ is reduced, then $R/\!\!/I$ is reduced. 
\end{proposition}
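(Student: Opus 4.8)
The plan is to follow the proof of Lemma \ref{Cohn} almost verbatim, replacing the prime ideal $P$ by the semiprime ideal $I$ throughout, checking at each step that primality is never used beyond the fact that $I$ is a radical ideal (which is exactly what being semiprime provides). First I would record the structural facts that carry over unchanged: since $XI[X]\cap R=0$, the composite $R\hookrightarrow R[X]\to R[X]/XI[X]$ is injective, and writing $x$ for the class of $X$ one gets the direct sum decomposition $R/\!\!/I=R\bigoplus xR[x]$ with $xI=0$, so that every element has an expression $y=a+xf(x)$ with $a\in R$ and $f\in R[X]$ determined modulo $I$; equivalently $R/\!\!/I\cong_R R\bigoplus X(R/I)[X]$.

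For the SL property I would take $y=a+xf(x)\in\mathrm U(R/\!\!/I)$ with inverse $z=b+xg(x)$; expanding $yz=1$ yields $ab=1$ together with $ag(X)+bf(X)+Xf(X)g(X)\in I[X]$. Since $ab=1$ forces $a,b\in\mathrm U(R)$, Lemma \ref{1Cohn} — which is stated precisely for semiprime $I$ — applies and gives $f,g\in I[X]$, so that $xf(x)=0$ and $y=a\in\mathrm U(R)$; hence $\mathrm U(R/\!\!/I)=\mathrm U(R)$. Purity is obtained exactly as before: the substitution $X\mapsto 0$ induces a ring retraction $R/\!\!/I\to R$, so $R\to R/\!\!/I$ is a split, hence pure, morphism. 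The conductor is computed as in Lemma \ref{Cohn}: $a\in(R:R/\!\!/I)$ iff $ax^k\in R$ for all $k\ge 1$ iff $aX^k\in XI[X]$ iff $a\in I$, while $Ix=0$ gives the reverse inclusion $I\subseteq(R:R/\!\!/I)$; hence $(R:R/\!\!/I)=I$.

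The one step where primality seemed to be used is t-closedness, so I would write it out carefully. Given $y=a+xf(x)\notin R$ with $y^2-ry,\,y^3-ry^2\in R$, choose the representative $f(X)=\sum_{i=0}^n\alpha_iX^i$ so that $n$ is the largest index with $\alpha_n\notin I$. In $y^2-ry=(a^2-ra)+(2a-r)xf(x)+x^2f(x)^2$ the term of highest degree $2n+2$ comes only from $x^2f(x)^2$ and equals $\alpha_n^2$; the membership $y^2-ry\in R$ forces $\alpha_n^2\in I$, and since $I$ is semiprime (radical) this gives $\alpha_n\in I$, a contradiction. Thus $R\subseteq R/\!\!/I$ is t-closed, hence seminormal, and the standard argument used in Theorem \ref{5.2}(3) shows that $R/\!\!/I$ is reduced whenever $R$ is.

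Finally, for the new assertion that $I$ is semiprime in $R/\!\!/I$, I would avoid redoing coefficient estimates and compute the quotient directly. Since $Ix=0$, the ideal generated by $I$ in $R/\!\!/I$ is $I$ itself, and $(R/\!\!/I)/I=(R[X]/XI[X])/(I[X]/XI[X])\cong R[X]/I[X]\cong(R/I)[X]$. Because $I$ is semiprime in $R$, the ring $R/I$ is reduced, hence so is $(R/I)[X]$; therefore $(R/\!\!/I)/I$ is reduced, which is precisely the statement that $I$ is a semiprime ideal of $R/\!\!/I$. I expect the only points requiring genuine care to be this identification of the quotient and the verification that the radical property of $I$ is exactly what replaces primality in the t-closedness step; everything else is a faithful transcription of the prime case treated in Lemma \ref{Cohn}.
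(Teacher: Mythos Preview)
Your proof is correct. It differs from the paper's in two places, and in both cases your route is the more direct one. For the SL property, the paper does not apply Lemma~\ref{1Cohn} head-on; instead it writes $I=\bigcap_\lambda P_\lambda$, uses the surjections $R/\!\!/I\to R/\!\!/P_\lambda$ together with the prime case (Lemma~\ref{Cohn}) to see that each such surjection is SL, and then checks that the polynomial part of a unit of $R/\!\!/I$ has all its coefficients in every $P_\lambda$, hence in $I$. Your observation that Lemma~\ref{1Cohn} is already stated for semiprime $I$ short-circuits this detour. For the claim that $I$ is semiprime in $R/\!\!/I$, the paper argues indirectly: since $R\subseteq R/\!\!/I$ is t-closed it is seminormal, and then it cites \cite[Lemma~4.8]{DPP2} to conclude that the conductor $(R:R/\!\!/I)=I$ is a radical ideal of $R/\!\!/I$. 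Your explicit identification $(R/\!\!/I)/I\cong (R/I)[X]$ gives the same conclusion without the external citation. The remaining steps (purity via the retraction $X\mapsto 0$, the conductor computation, t-closedness using only that $\alpha_n^2\in I\Rightarrow\alpha_n\in I$, and reducedness via seminormality) are essentially what the paper means by ``mimic the proofs of Lemma~\ref{Cohn}''.
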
 
\begin{proof} Since $I$ is semiprime, set $I:=\cap_{\lambda\in\Lambda}P_{\lambda}$. Let $P\in\mathrm{Spec}(R)$ be such that $I\subseteq P$, so that $XI[X]\subseteq XP[X]$, and there is a surjective morphism $f:R[X]/XI[X]\to R[X]/XP[X]$ giving the following commutative diagram: 
 $$\begin{matrix}
 R & \overset{i}\to  & R[X]/XI[X]=R/\!\!/I \\
 {} &   j\searrow     &  \downarrow f    \\
 {} &        {}           &  R[X]/XP[X]=R/\!\!/P         
\end{matrix}$$
with two injective morphisms $i$ and $j$. This holds for any $P_{\lambda}$. According to Lemma \ref{Cohn}, $j$ is SL, which implies that $f$ is SL by Proposition \ref{3.1}. It follows that $j(\mathrm U(R))=\mathrm U(R/\!\!/P_{\lambda})=f(\mathrm U(R/\!\!/I))\ (*)$. Let $y\in\mathrm U(R/\!\!/I)$. Then, we may write $y=i(a_0)+\sum_{k=1}^r\overline{a_k}\overline{X}^k$, where $\overline{a_k}$ and $\overline{X}$ are respectively the classes of $a_k\in R$ and $X$ in $R/\!\!/I$. It follows that $f(y)= j(a_0)+\sum_{k=1}^r\tilde{a_k}\tilde{X}^k$, where $\tilde{a_k}$ and $\tilde{X}$ are respectively the classes of $a_k\in R$ and $X$ in $R/\!\!/P_{\lambda}$. We deduce from $(*)$ that $\sum_{k=1}^r\tilde{a_k}\tilde{X}^k=\tilde{0}$, giving $\sum _{k=1}^ra_kX^k\in XP_{\lambda}R[X]$, and then $a_k\in P_{\lambda}\ (**)$ for each $k\geq 1$ and for each $\lambda\in\Lambda$. Moreover, $a_0 \in\mathrm U(R)$. Because $(**)$ holds for each $P_{\lambda}$, we get $a_k\in I$ for each $k\geq 1$ and then $\overline{a_k}\overline{X}^k=\overline{0}$ for each $k\geq 1$. To conclude, $y=i(a_0)\in i(\mathrm U (R))$ and $i(\mathrm U(R))=\mathrm U(R/\!\!/I)$, that is $R\subseteq R/\!\!/I$ is SL.

It is enough to mimic the proofs of Lemma \ref{Cohn} for each $P_{\lambda}$ to get that $ R\subseteq R/\!\!/I$ is pure and t-closed. Moreover, $I=(R:R/\!\!/I)$. Since $R\subseteq R/\!\!/I$ is t-closed, it is also seminormal, so that $I=(R:R/\!\!/I)$ is semiprime in $R/\!\!/I$ by \cite[Lemma 4.8]{DPP2}.

If  $R$ is reduced, then $R/\!\!/I$ is reduced as in Lemma \ref{Cohn}.
 \end{proof}
 
 \begin{corollary} \label{Cohn2} Let $R\subseteq S$ be an extension with $I$ an ideal of $R$ and $K$ a semiprime ideal of $S$ such that $I=R\cap K$. Then, $R\subseteq S$ is SL if and only if $R/\!\!/I\subseteq S/\!\!/K$ is SL.
\end{corollary}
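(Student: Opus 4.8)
The plan is to realize both $R\subseteq S$ and $R/\!\!/I\subseteq S/\!\!/K$ as two different ``slices'' of a single extension $R\subseteq S/\!\!/K$, and then reduce everything to Corollary \ref{3.10}(1) together with Proposition \ref{Cohn1}. First I would record the structural facts. Since $K$ is semiprime in $S$ and $I=R\cap K$ is its contraction to $R$, the ideal $I$ is semiprime in $R$: if $x\in R$ satisfies $x^n\in I\subseteq K$, then $x\in\sqrt K=K$ and $x\in R$, whence $x\in R\cap K=I$. Thus Proposition \ref{Cohn1} applies on both sides and yields that the extensions $R\subseteq R/\!\!/I$ and $S\subseteq S/\!\!/K$ are SL. Next I would check that $R/\!\!/I\subseteq S/\!\!/K$ really is an extension. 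Working inside $S[X]$, an element of $XK[X]$ lies in $R[X]$ exactly when all its coefficients lie in $R\cap K=I$, so $XK[X]\cap R[X]=XI[X]$. Hence the inclusion $R[X]\subseteq S[X]$ descends to an injective ring map $R/\!\!/I=R[X]/XI[X]\to S[X]/XK[X]=S/\!\!/K$, and $R/\!\!/I\subseteq S/\!\!/K$ is an extension.

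With these in hand, the single canonical extension $R\subseteq S/\!\!/K$ (sending $r$ to the class of the constant $r$) factors in two ways, as $R\subseteq S\subseteq S/\!\!/K$ and as $R\subseteq R/\!\!/I\subseteq S/\!\!/K$, both composites being this same map. Applying Corollary \ref{3.10}(1) to the first tower, and using that $S\subseteq S/\!\!/K$ is SL, gives that $R\subseteq S/\!\!/K$ is SL if and only if $R\subseteq S$ is SL. Applying Corollary \ref{3.10}(1) to the second tower, and using that $R\subseteq R/\!\!/I$ is SL, gives that $R\subseteq S/\!\!/K$ is SL if and only if $R/\!\!/I\subseteq S/\!\!/K$ is SL. Chaining these two equivalences through the common middle condition ``$R\subseteq S/\!\!/K$ is SL'' yields the asserted equivalence.

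The only genuine work is the bookkeeping of the first paragraph; once the semiprimality of $I$ and the identity $XK[X]\cap R[X]=XI[X]$ are secured, the argument is purely formal and mirrors the proof of Corollary \ref{0Cohn}. The point I would watch most carefully is the commutativity of the square relating the two factorizations, i.e.\ that the map $R/\!\!/I\to S/\!\!/K$ is compatible with the constant inclusions $R\hookrightarrow R/\!\!/I$ and $S\hookrightarrow S/\!\!/K$. This is immediate here because a single indeterminate $X$ is shared by $R$ and $S$, so no spectral matching of the kind needed in Corollary \ref{0Cohn} is required.
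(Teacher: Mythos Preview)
Your proof is correct and follows essentially the same route as the paper's: both arguments establish that $I$ is semiprime in $R$, verify that $R/\!\!/I\to S/\!\!/K$ is an honest extension via $XK[X]\cap R[X]=XI[X]$, and then exploit the commutative square with the two SL verticals $R\subseteq R/\!\!/I$ and $S\subseteq S/\!\!/K$ together with transitivity of the SL property. The only cosmetic difference is that the paper phrases the transitivity step via Proposition~\ref{3.1} (general morphisms) rather than its extension-level restatement Corollary~\ref{3.10}(1), which amounts to the same thing here.
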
 
\begin{proof} Since $I=R\cap K$, we get that $I$ a semiprime ideal of $R$ and $I[X]= R[X]\cap K[X]$, so that the following commutative diagram holds: 
 $$\begin{matrix}
        R         & \subseteq &        S          \\
\downarrow &       {}        & \downarrow \\
   R/\!\!/I       & \subseteq &    S/\!\!/K         
\end{matrix}$$
where the vertical maps are injective and SL morphisms by Proposition \ref{Cohn1}. If either $R\subseteq S$ (1) or $R/\!\!/I\subseteq S/\!\!/K$ (2) is SL, so is $R\to S/\!\!/K$ by Proposition \ref{3.1} (1). If (1) holds, then (2) holds by Proposition \ref{3.1} (3). If (2) holds, then (1) holds by the same Proposition because $S\to S/\!\!/K$ is injective.
\end{proof}
  
 \begin{theorem}\label{5.061} Let $R$ be a ring and $N\in\mathrm{Max}(R[X])$. The following conditions are equivalent:
\begin{enumerate}
\item  $N\cap R\in\mathrm{Max}(R)$.
\item $M[X]\subseteq N$ for some $M\in \mathrm{Max}(R)$. 
\item There exists a monic polynomial $f(X)\in N$. 
 \end{enumerate}
If these conditions hold, then $N=M[X]+f(X)R[X]$ and $M=N\cap R$.
\end{theorem}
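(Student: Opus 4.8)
The plan is to establish (1) $\Leftrightarrow$ (2) directly, then (2) $\Rightarrow$ (3) and (3) $\Rightarrow$ (1), which together give all three equivalences; the final description of $N$ is then read off from the construction carried out in (2) $\Rightarrow$ (3).

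First I would dispatch the two implications relating (1) and (2) using only that $N$ is a proper ideal and that $M[X]$ equals the extended ideal $MR[X]$. For (1) $\Rightarrow$ (2), put $M:=N\cap R$; since $M\subseteq N$ and $N$ is an ideal of $R[X]$, the extended ideal $MR[X]=M[X]$ lies in $N$, and $M$ is maximal by hypothesis. For (2) $\Rightarrow$ (1), from $M[X]\subseteq N$ I get $M=M[X]\cap R\subseteq N\cap R$; as $N$ is proper we have $1\notin N\cap R$, so $N\cap R$ is a proper ideal containing the maximal ideal $M$, forcing $M=N\cap R\in\mathrm{Max}(R)$. This simultaneously proves the asserted equality $M=N\cap R$.

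The substantive construction is (2) $\Rightarrow$ (3). Here I would pass to the quotient $R[X]/M[X]\cong(R/M)[X]=:k[X]$, a polynomial ring over the field $k:=R/M$, hence a PID. By the ideal correspondence, the image $\overline N$ of $N$ is a maximal ideal of $k[X]$, so $\overline N=(\overline g)$ for a monic irreducible $\overline g\in k[X]$ (normalizing the generator to be monic by scaling). Lifting $\overline g$ coefficientwise to a monic polynomial $f\in R[X]$ (its leading coefficient $1$ lifts to $1$) yields $\overline f=\overline g$, so $f\in N+M[X]=N$; this is the desired monic polynomial. The same computation produces the final formula: since $\overline N=(\overline f)$ in $k[X]$, taking preimages under $R[X]\to k[X]$ gives $N=M[X]+f(X)R[X]$, provided $f$ is chosen as this minimal monic lift rather than an arbitrary monic element of $N$.

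Finally, for (3) $\Rightarrow$ (1) I would exploit that a monic $f\in N$ makes $R[X]/(f)$ a free $R$-module of rank $\deg f$, so the surjection $R[X]/(f)\twoheadrightarrow R[X]/N$ exhibits the field $R[X]/N$ as a module-finite, hence integral, extension of the domain $A:=R/(N\cap R)$. An elementary argument then shows $A$ is a field: for $0\neq a\in A$, its inverse $b$ in $R[X]/N$ satisfies an integral equation over $A$, and multiplying that equation by a suitable power of $a$ expresses $b$ as a polynomial in $a$ with coefficients in $A$, so $b\in A$. Thus $N\cap R$ is maximal. I expect this last implication to be the main obstacle, since it is the only place where monicity is genuinely used, to secure module-finiteness and hence integrality, together with the field-over-domain principle; the remaining steps are formal once $(R/M)[X]$ is recognized as a PID.
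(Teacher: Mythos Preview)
Your proof is correct and follows essentially the same route as the paper: the paper proves the cycle (1)$\Rightarrow$(2)$\Rightarrow$(3)$\Rightarrow$(1), passing to the PID $(R/M)[X]$ for (2)$\Rightarrow$(3) and using integrality of $R[X]/N$ over the domain $R/(N\cap R)$ for (3)$\Rightarrow$(1), citing \cite[Theorem 16]{K} for the field-over-domain principle that you write out explicitly. Your added direct proof of (2)$\Rightarrow$(1) is harmless and indeed yields $M=N\cap R$ immediately, and your caveat that the formula $N=M[X]+f(X)R[X]$ requires the specific monic lift $f$ (not an arbitrary monic element of $N$) is apt.
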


\begin{proof} (1)$\Rightarrow$(2) Assume that $M:=N\cap R\in\mathrm {Max}(R)$.
It follows that $M\subseteq N$ which implies $M[X]\subseteq N$ for some $M\in \mathrm{Max}(R)$. 

(2)$\Rightarrow$(3) Let $M\in\mathrm{Max}(R)$ be such that $M[X]\subseteq N$. Of course, $M[X]+R[X]f(X)\subseteq N$ for any $f(X)\in N$. Consider the following commutative diagram: 
$$\begin{matrix}
        R         & \to &           R[X]              \\
\downarrow & {}  &      \downarrow        \\
     R/M       & \to & R[X]/M[X]=(R/M)[X]         
\end{matrix}$$
Since $M\in\mathrm{Max}(R)$, it follows that $R/M$ is a field and $(R/M)[X]$ is a PID. Then, there exists a monic polynomial $f(X)\in N$ such that $N/M[X]=\overline{f(X)}(R[X]/M[X])$, where $\overline{f(X)}$ is the class of $f(X)$ in $R[X]/M[X]$. 

Then any element of $N$ can be written $f(X)g(X)+h(X)$, where $g(X)\in R[X]$ and $h(X)\in M[X]$, giving $N=M[X]+R[X]f(X)$.

(3)$\Rightarrow$(1) Assume that there exists a monic polynomial $f(X)\in N$ and set $f(X):=X^n+\sum_{i= 0}^{n-1}a_iX^{i}\in N$. Consider the following commutative diagram: 
$$\begin{matrix}
          R        & \to &                  R[X]                  \\
 \downarrow & {}  &            \downarrow             \\ 
R/(N\cap R) & \to & R[X]/N=[R/(N\cap R)][x]        
\end{matrix}$$
where $x$ is the class of $X$ in $R[X]/N$. Since $N\in\mathrm{Max}(R[X])$, it follows that $[R/(N\cap R)][x]$ is a field. Moreover, $R/(N\cap R)\to R[X]/N$ is injective and $x^n+\sum_{i= 0}^{n-1}\overline{a_i}x^{i}=0$, where $\overline{a_i}$ is the class of $a_i$ in $ R/(N\cap R)$. This implies that $R[X]/N$ is integral over $R/(N\cap R)$, which gives that $R/(N\cap R)$ is a field by \cite[Theorem 16]{K} because $R/(N\cap R)$ is an integral domain. To conclude, $N\cap R\in\mathrm{Max}(R)$.
If the equivalent conditions (1), (2) and (3) hold then $N=M[X]+f(X)R[X]$ by (2) and $M= N \cap R$.
\end{proof}

\begin{proposition}\label{5.063} Let $R$ be a ring. Then, $R\subseteq R/\!\!/\mathrm J(R)$ is SL, $\mathrm J(R)=(R:R/\!\!/\mathrm J(R))=\mathrm J(R/\!\!/\mathrm J(R))$ and  $R/\!\!/\mathrm J(R)$ is not $\mathrm J$-regular.
\end{proposition}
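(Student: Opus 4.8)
The plan is to read off the first two assertions from Proposition~\ref{Cohn1} and then to pin down $\mathrm{J}(S)$ and the residue ring $S/\mathrm{J}(S)$ of $S:=R/\!\!/\mathrm{J}(R)$ by a direct computation.

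First I would note that $\mathrm{J}(R)$ is semiprime, being an intersection of (maximal, hence prime) ideals. Thus Proposition~\ref{Cohn1} applies with $I=\mathrm{J}(R)$ and yields at once that $R\subseteq S$ is SL and that $\mathrm{J}(R)=(R:S)$. This disposes of the SL claim and of the first displayed equality.

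Write $J:=\mathrm{J}(R)$. Next I would show $\mathrm{J}(S)=J$. Since $R\subseteq S$ is SL, Remark~\ref{5.10}(1) gives the inclusion $\mathrm{J}(S)\subseteq\mathrm{J}(R)=J$. For the reverse inclusion, recall that the conductor $J=(R:S)$ is an ideal of $S$, so $js\in J\subseteq R$ for every $j\in J$ and $s\in S$; consequently $1-js\in 1+\mathrm{J}(R)\subseteq\mathrm{U}(R)=\mathrm{U}(S)$, and since this holds for all $s\in S$ we obtain $j\in\mathrm{J}(S)$. Hence $J\subseteq\mathrm{J}(S)$, so that $\mathrm{J}(S)=J=(R:S)$, which is exactly the second equality of the statement.

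Finally, for the failure of $\mathrm{J}$-regularity I would compute $S/\mathrm{J}(S)=S/J$ explicitly. The preimage of $J$ under the surjection $R[X]\to R[X]/X J[X]=S$ is $J[X]$, whence $S/\mathrm{J}(S)\cong R[X]/J[X]\cong (R/J)[X]$. As $R\neq 0$ we have $J=\mathrm{J}(R)\neq R$, so $R/J\neq 0$ and the polynomial ring $(R/J)[X]$ has Krull dimension at least $1$; in particular it is not zero-dimensional, hence not Von Neumann regular by the characterization recalled at the start of Section~5. Therefore $S/\mathrm{J}(S)$ is not regular, i.e.\ $S=R/\!\!/\mathrm{J}(R)$ is not $\mathrm{J}$-regular. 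All the ingredients --- semiprimeness of $J$, the conductor equality, and Remark~\ref{5.10}(1) --- are already in place, so the argument is essentially formal; the only step meriting care is the identification $S/\mathrm{J}(S)\cong (R/\mathrm{J}(R))[X]$ together with the ensuing dimension bound, which is the crux of the non-regularity claim, and I expect no real obstacle beyond it.
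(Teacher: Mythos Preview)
Your proof is correct. Steps one and three --- invoking Proposition~\ref{Cohn1} for the SL property and the conductor, and identifying $S/\mathrm{J}(S)\cong (R/\mathrm{J}(R))[X]$ to conclude non-$\mathrm J$-regularity --- match the paper exactly.

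Where you genuinely diverge is in establishing $\mathrm{J}(S)=\mathrm{J}(R)$. The paper proceeds by classifying the maximal ideals of $S=R[X]/X\mathrm{J}(R)[X]$: it shows, via Theorem~\ref{5.061} and a case split on whether $X\in N$, that the maximal ideals of $R[X]$ containing $X\mathrm{J}(R)[X]$ are precisely those containing $\mathrm{J}(R)[X]$, and then uses $\mathrm{J}((R/\mathrm{J}(R))[X])=\mathrm{Nil}((R/\mathrm{J}(R))[X])=0$ to compute the intersection explicitly as $\mathrm{J}(R)[X]/X\mathrm{J}(R)[X]=\mathrm{J}(R)$. Your argument bypasses this entirely: Remark~\ref{5.10}(1) gives $\mathrm{J}(S)\subseteq\mathrm{J}(R)$ immediately from SL, and for the reverse inclusion you exploit that $\mathrm{J}(R)=(R:S)$ is an ideal of $S$, so that $jS\subseteq\mathrm{J}(R)$ and hence $1-js\in 1+\mathrm{J}(R)\subseteq\mathrm{U}(R)=\mathrm{U}(S)$ for every $s\in S$, forcing $j\in\mathrm{J}(S)$. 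This is shorter and more conceptual; it avoids the maximal-ideal bookkeeping and Theorem~\ref{5.061} altogether. The paper's route, on the other hand, yields as a by-product an explicit description of $\mathrm{Max}(S)$, which is not needed for the statement but is of independent interest.
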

\begin{proof} The two first assertions come from Proposition \ref{Cohn1} since $\mathrm J (R)$ is semiprime.  

Let $Q\in\mathrm{Max}(R/\!\!/\mathrm J(R))$. Then, there exists $N\in\mathrm{Max}(R [X])$ such that $X\mathrm J(R)[X]\subseteq N$, with $Q=N/(X\mathrm J(R)[X])$. It follows that either $X\in N\ (*)$ or $\mathrm J(R)R[X]\subseteq N\ (**)$. 

In case $(*)$, Theorem \ref{5.061} shows that $M:=N\cap R\in\mathrm{Max}(R)$ and $N =M[X]+XR[X]$. But $\mathrm J(R)\subseteq M$ implies that $\mathrm J(R)[X]\subseteq N$. 
Since $\mathrm J(R)[X]\subseteq N$ in case $(**)$, we get that $\mathrm J(R)[X]\subseteq N$ in both cases. It follows that $N/\mathrm J(R)[X]\in\mathrm{Max}(R[X]/\mathrm J(R)[X])$. 

Conversely, if $N$ is a maximal ideal of $R[X]$ containing $\mathrm J(R)[X]$, it follows that $N/(\mathrm J(R)[X])\in\mathrm{Max}(R[X]/(\mathrm J(R)[X]))$ and $N/(X\mathrm J(R)[X])\in\mathrm{Max}(R[X]/(X\mathrm J(R)[X]))$.

Recall that for a ring $T$, we have $\mathrm J(T[X])=\mathrm{Nil}(T[X])$. Moreover, $R[X]/\mathrm J(R)[X]\cong(R/\mathrm J(R))[X]$ which is reduced. It follows that $\cap[N/\mathrm J(R)[X]\mid\mathrm J(R)[X]\subseteq N$ and $N\in\mathrm{Max}(R[X])]=\mathrm J(R[X]/\mathrm J(R)[X])$

\noindent$=\mathrm J((R/\mathrm J(R))[X])=\overline 0\ (*)$ where $\overline 0$  the class of $0$ in $ R [X]/\mathrm J(R)[X]$. 

To make the reading easier, we set $I:=\mathrm J(R)[X],\ K:=\cap[N\in\mathrm{Max}(R [X])\mid I\subseteq N]$ and $S:=R[X]$. Of course, $I\subseteq K$. We have proved that $\cap[N/I\mid I\subseteq N$ and $N\in\mathrm{Max}(S)]=\overline 0$. Assume that $I\subset K$, so that there exists some $y\in K\setminus I$. This means that $y$ is in any $N\in\mathrm{Max}(S)$ which contains $I$ with $y\not\in I$. If $\overline y$  the class of $y$ in $S/I$, we get that $\overline y\in\cap[N/I\mid I\subseteq N$ and $N\in\mathrm{Max}(S)]=\overline 0$ by $(*)$, giving $y\in I$, a contradiction. Therefore,  $\cap[N\in\mathrm{Max}(S)\mid XI\subseteq N]=I$ which gives $\mathrm J(R/\!\!/\mathrm J (R))=\cap[Q\in\mathrm{Max}(R/\!\!/\mathrm J(R))]=\cap[N/XI\in\mathrm{Max}(R/\!\!/\mathrm J(R))]=I/IX= \mathrm J(R)[X]/X\mathrm J(R)[X]=\mathrm J(R)$. 

At last, $(R/\!\!/\mathrm J(R))/\mathrm J(R)=(R[X]/X\mathrm J(R)[X])/(\mathrm J(R)[X]/X\mathrm J(R)[X])$

\noindent$\cong R[X]/\mathrm J(R)[X]\cong(R/\mathrm J(R))[X]$, which is not zero-dimensional, giving that $R/\!\!/\mathrm J(R)$ is not $\mathrm J$-regular.
\end{proof}

  \section{The ring $R\{X\}$}

We now consider a ring used by Houston and some other authors for results concerning the dimension of rings \cite{HOUST1}. When $R$ is a semilocal (Noetherian) domain, he introduces the ring $R\langle X\rangle$.
 
This notation is in conflict with the notation of the ring used to solve the Serre conjecture. Therefore, we will denote it by $R\{X\}$. Let $\Sigma$ be the multiplicatively closed subset of $R[X]$ defined as follows: let $T_ 1$ be the set of all maximal ideals $N$ of $R[X]$ such that $N\cap R\in\mathrm{Max}(R)$, then $\Sigma$ is the complementary set in $R[X]$ of $\cup[N\mid N\in T_1]$ and $R\{X\}:=R[X]_\Sigma$. As we consider arbitrary rings, a more precise characterization of $T_1$ is given in Theorem \ref{5.061}, completing \cite[Lemma 1.2]{HOUST1}.
  
\begin{proposition}\label{5.14} For any ring $R$, there is a factorization $R\to R\{X\}\to  R(X) $, where $R(X)$ is the Nagata ring of $R$.
 \end{proposition}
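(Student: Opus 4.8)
The plan is to realize both rings as localizations of $R[X]$ at multiplicative sets that are complements of unions of prime ideals, and then to prove the single inclusion $\Sigma\subseteq\mathcal U$, where $\mathcal U:=\{f\in R[X]\mid c(f)=R\}$ is the standard multiplicative set defining the Nagata ring $R(X)=R[X]_{\mathcal U}$. Once this inclusion is in hand, every element of $\Sigma$ already has unit content and is therefore a unit of $R(X)$; the universal property of the localization $R\{X\}=R[X]_\Sigma$ then produces a unique ring morphism $R\{X\}\to R(X)$ through which the canonical map $R\to R(X)=R[X]_{\mathcal U}$ factors. This is exactly the asserted factorization $R\to R\{X\}\to R(X)$.

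First I would record the two multiplicative sets as complements of unions of primes. On the Nagata side, $c(f)\subseteq M$ if and only if $f\in M[X]$, so $c(f)=R$ holds if and only if $f\notin M[X]$ for every $M\in\mathrm{Max}(R)$; hence $\mathcal U=R[X]\setminus\bigcup_{M\in\mathrm{Max}(R)}M[X]$. On the $R\{X\}$ side one has by definition $\Sigma=R[X]\setminus\bigcup_{N\in T_1}N$, where $T_1=\{N\in\mathrm{Max}(R[X])\mid N\cap R\in\mathrm{Max}(R)\}$. With these descriptions the desired inclusion $\Sigma\subseteq\mathcal U$ becomes equivalent to the containment of unions $\bigcup_{M\in\mathrm{Max}(R)}M[X]\subseteq\bigcup_{N\in T_1}N$.

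Then I would verify this last containment directly. Let $f\in R[X]$ with $c(f)\neq R$ and choose $M\in\mathrm{Max}(R)$ with $c(f)\subseteq M$, so that $f\in M[X]$. Since $M[X]$ is a proper ideal of $R[X]$, it is contained in some $N\in\mathrm{Max}(R[X])$; because $M[X]\subseteq N$, Theorem \ref{5.061} (equivalence of (2) and (1)) gives $N\cap R\in\mathrm{Max}(R)$, that is $N\in T_1$, while $f\in M[X]\subseteq N$. Hence $f\notin\Sigma$. Taking the contrapositive yields that every $f\in\Sigma$ satisfies $c(f)=R$, i.e. $\Sigma\subseteq\mathcal U$, and the factorization follows as explained above.

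I do not expect a genuine obstacle here: the argument is essentially formal, and the sole external input is that above each $M[X]$ there sits a maximal ideal $N$ of $R[X]$ lying over a maximal ideal of $R$, which is precisely Theorem \ref{5.061}. The real content of the proof is the identification of the two defining multiplicative sets as complements of unions of primes, after which the comparison $\Sigma\subseteq\mathcal U$ is immediate.
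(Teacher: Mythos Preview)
Your argument is correct and is essentially identical to the paper's: both show that any $p\in\Sigma$ has unit content by assuming $c(p)\subseteq M$ for some $M\in\mathrm{Max}(R)$, extending $M[X]$ to a maximal ideal $N$ of $R[X]$, and invoking Theorem~\ref{5.061} to conclude $N\in T_1$, contradicting $p\in\Sigma$. The paper's write-up is more compressed, but the logical skeleton and the appeal to Theorem~\ref{5.061} are the same.
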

\begin{proof} Let $p(X)\in\Sigma$. We claim that $c(p)=R$. Otherwise, there exists $M\in\mathrm{Max}(R)$ such that $c(p)\subseteq M$. Then $ p(X)\in M[X]\subseteq N$ for some $N\in\mathrm{Max}(R[X])$. It follows that $M=N\cap R$, so that $N\in T_1$ by Theorem \ref{5.061} and $p(X)\in N$, a contradiction with $p(X)\in\Sigma$. Since $c(p)=R$, we get that $p(X)/1\in\mathrm U(R(X))$ giving the factorization $R\to R\{X\}\to R(X)$. 
\end{proof}  

We recall that $R$ is a {\it Jacobson ring} if and only if maximal ideals of $R[X]$ contract to maximal ideals of $R$. Then we have the following:

\begin{proposition}\label{5.08} Let $R$ be a Jacobson ring. Then $R\{X\} = R[X]$.
\end{proposition}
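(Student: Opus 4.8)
The plan is to unwind the definition of $R\{X\}$ and use the Jacobson hypothesis to identify the multiplicative set $\Sigma$ with the units of $R[X]$. Recall that $R\{X\}=R[X]_\Sigma$, where $\Sigma$ is the complement in $R[X]$ of $\cup[N\mid N\in T_1]$ and $T_1$ is the set of maximal ideals $N$ of $R[X]$ with $N\cap R\in\mathrm{Max}(R)$. The first step is to observe that the defining property of a Jacobson ring says precisely that every maximal ideal of $R[X]$ contracts to a maximal ideal of $R$; hence every $N\in\mathrm{Max}(R[X])$ lies in $T_1$, so that $T_1=\mathrm{Max}(R[X])$.

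The key step is then the standard fact that in any commutative unital ring $A$ the union of all maximal ideals is exactly the set of nonunits, since an element is a unit if and only if it escapes every maximal ideal. Applying this to $A=R[X]$ with $T_1=\mathrm{Max}(R[X])$, I would conclude that $\Sigma=R[X]\setminus\cup[N\mid N\in\mathrm{Max}(R[X])]=\mathrm{U}(R[X])$. In particular every element of $\Sigma$ is already invertible in $R[X]$.

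Finally, localizing a ring at a multiplicative set consisting of units is trivial: the canonical map $R[X]\to R[X]_\Sigma$ is an isomorphism because each $s\in\Sigma\subseteq\mathrm{U}(R[X])$ already has an inverse in $R[X]$. Therefore $R\{X\}=R[X]_\Sigma=R[X]$. There is no real obstacle here; the argument is a direct unwinding of the definitions of $\Sigma$, $T_1$ and the Jacobson property, the only nontrivial ingredient being the elementary characterization of units via maximal ideals. One should note that Theorem \ref{5.061} guarantees that the description of $T_1$ is the correct one for an arbitrary ring $R$, so that no Noetherian or domain hypothesis is needed.
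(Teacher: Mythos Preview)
Your proof is correct and follows essentially the same line as the paper's: use the Jacobson hypothesis to get $T_1=\mathrm{Max}(R[X])$, conclude $\Sigma=\mathrm U(R[X])$, and deduce $R\{X\}=R[X]_\Sigma=R[X]$. The only difference is that you spell out the intermediate justifications more explicitly; the reference to Theorem~\ref{5.061} at the end is unnecessary since the definition of $T_1$ is taken directly from the construction of $R\{X\}$, but it does no harm.
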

\begin{proof} Since $R$ is a Jacobson ring, it follows that $T_1=\mathrm{Max}(R[X])$ where $T_1=\{N\in\mathrm{Max}(R[X])\mid N\cap R\in\mathrm{Max}(R)\}$. This implies that $\Sigma=R[X]\setminus\cup[N\mid N\in T_1]=R[X]\setminus\cup[N\mid N\in \mathrm{Max}(R[X])]=\mathrm U(R[X])$ and $R\{X\}:=R[X]_\Sigma=R[X]$. 
\end{proof}
 
Mimicking \cite{HOUST1} and using also \cite[Lemma 3]{GHe1}, the following proposition characterizes in a more general setting, and when $R$ is $\mathrm J$-regular, the maximal ideals of $R\{X\}$.
 
\begin{proposition}\label{5.11} If  $R$ is $\mathrm J$-regular, $\mathrm{Max}(R\{X\}) = \{NR\{X\}\mid N\in T_1\}$.
\end{proposition}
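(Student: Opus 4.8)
The plan is to analyse $R\{X\}=R[X]_{\Sigma}$ through the set $\Sigma=R[X]\setminus\bigcup_{N\in T_1}N$. Being the complement of a union of prime ideals, $\Sigma$ is saturated, so the primes of $R\{X\}$ correspond bijectively and inclusion-preservingly to the primes of $R[X]$ disjoint from $\Sigma$, i.e.\ to those contained in $\bigcup_{N\in T_1}N$; under this correspondence $\mathrm{Max}(R\{X\})$ matches the primes of $R[X]$ that are maximal among those disjoint from $\Sigma$. For the inclusion $\{NR\{X\}\mid N\in T_1\}\subseteq\mathrm{Max}(R\{X\})$ I would note that each $N\in T_1$ is disjoint from $\Sigma$ and is a maximal ideal of $R[X]$, so no prime of $R[X]$ disjoint from $\Sigma$ lies strictly above $N$; hence $NR\{X\}$ is maximal, and distinct $N$ yield distinct contractions.

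A preliminary observation pins down $T_1$. For $N\in T_1$ one has $N\cap R=M\in\mathrm{Max}(R)$, whence $\mathrm J(R)\subseteq M\subseteq N$ and so $\mathrm J(R)[X]\subseteq N$. Conversely, any $N\in\mathrm{Max}(R[X])$ with $\mathrm J(R)[X]\subseteq N$ satisfies $N\cap R\supseteq\mathrm J(R)$, and since $R$ is $\mathrm J$-regular the ring $R/\mathrm J(R)$ is zero-dimensional, so the prime $N\cap R$ is maximal and $N\in T_1$. Thus $T_1=\{N\in\mathrm{Max}(R[X])\mid \mathrm J(R)[X]\subseteq N\}$; this is the one point where $\mathrm J$-regularity is essential.

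For the reverse inclusion, let $P$ be a prime of $R[X]$ maximal among those disjoint from $\Sigma$. The crux is to prove $\mathrm J(R)[X]\subseteq P$, and I would argue by contradiction. If $a\in\mathrm J(R)\setminus P$, then maximality of $P$ forces the ideal $P+aR[X]$ to meet $\Sigma$, say $s=p+ah\in\Sigma$ with $p\in P$ and $h\in R[X]$. Since $a\in\mathrm J(R)$ lies in $N\cap R$ for every $N\in T_1$, we have $ah\in N$, while $s\notin N$; hence $p=s-ah\notin N$ for \emph{every} $N\in T_1$, so $p\in\Sigma$, contradicting $p\in P$ and $P\cap\Sigma=\emptyset$. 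Therefore $\mathrm J(R)\subseteq P$ and $\mathrm J(R)[X]\subseteq P$.

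Finally I would pick a maximal ideal $N$ of $R[X]$ with $P\subseteq N$. Then $\mathrm J(R)[X]\subseteq N$, so $N\in T_1$ by the preliminary observation, and $N$ is disjoint from $\Sigma$; maximality of $P$ among primes disjoint from $\Sigma$ forces $P=N\in T_1$, so $PR\{X\}=NR\{X\}$ is of the desired form. The main obstacle is the failure of ordinary prime avoidance over the infinite family $T_1$: one cannot directly produce a single element of $P\cap\Sigma$. The device that circumvents this is to replace the assertion ``$P$ lies in some $N$'' by the stable containment $\mathrm J(R)[X]\subseteq P$, which only tests one element $a\in\mathrm J(R)$ at a time, and then to use $\mathrm J$-regularity to upgrade an arbitrary maximal ideal over $\mathrm J(R)[X]$ into a member of $T_1$.
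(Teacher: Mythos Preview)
Your proof is correct. Both you and the paper begin with the same preliminary step: using $\mathrm J$-regularity to identify $T_1=\mathrm{Max}(R[X])\cap\mathrm V(\mathrm J(R)[X])$. From that point the paper simply invokes \cite[Lemma 3]{GHe1}, a result of Gilmer and Heinzer which says, in effect, that when a family of maximal ideals coincides with the set of maximal ideals lying over a fixed ideal, the localization at the complement of their union has exactly those ideals as its maximal spectrum. You instead supply a self-contained argument: for a prime $P$ maximal among those missing $\Sigma$, your trick with $s=p+ah$ forces $\mathrm J(R)\subseteq P$, and then any maximal ideal of $R[X]$ above $P$ automatically lies in $T_1$, hence misses $\Sigma$, hence equals $P$. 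Your route is longer but avoids the external citation and makes transparent exactly where $\mathrm J$-regularity enters twice (once to describe $T_1$, once to lift a maximal ideal over $\mathrm J(R)[X]$ back into $T_1$); the paper's route is terse but opaque unless the reader already knows the content of the Gilmer--Heinzer lemma. One small point worth making explicit in your write-up: the claim that ``maximality of $P$ forces $P+aR[X]$ to meet $\Sigma$'' relies on the standard fact that a proper ideal disjoint from a multiplicatively closed set is contained in a prime ideal disjoint from it (and that if $P+aR[X]=R[X]$ then it meets $\Sigma$ trivially via $1$).
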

\begin{proof} Since $R$ is $\mathrm J$-regular, we get that $\mathrm{V}(\mathrm J(R))=\mathrm{Max}(R)$. Let $N\in\mathrm{Max}(R[X])$. Then, $N\in T_1$ if and only if $N\cap R\in\mathrm{Max}(R)$ if and only if $\mathrm J(R)\subseteq N$ if and only if $\mathrm J (R)[X]\subseteq N$. Then, $T_1=\mathrm{Max}(R[X])\cap\mathrm{V}(\mathrm J(R)[X])$. According to \cite[Lemma 3]{GHe1}, and since $R\{X\}=R[X]_{\Sigma}$, it follows that $\mathrm{Max}(R\{X\})=\{NR\{X\}\mid N\in T_1\}$. 
\end{proof}

\begin{proposition}\label{5.12} If $M$ is a maximal ideal of $R$, then there is a factorization $R\to R\{X\} \to R/\!\!/M$ and $R \to R\{X\}$ is faithfully flat.
 \end{proposition}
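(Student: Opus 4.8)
The plan is to reduce everything to an explicit coefficientwise description of the multiplicative set $\Sigma$. Write $p=\sum_{i\ge 0}a_iX^i$. By Theorem \ref{5.061}, for a maximal ideal $M'$ of $R$ the members of $T_1$ lying over $M'$ are exactly the maximal ideals $N$ of $R[X]$ with $M'[X]\subseteq N$, and these correspond bijectively to the maximal ideals of the PID $(R/M')[X]$, i.e.\ to the monic irreducible polynomials over the field $R/M'$. Hence $p\notin N$ for every such $N$ if and only if the image $\overline p\in (R/M')[X]$ is divisible by no irreducible, i.e.\ $\overline p$ is a nonzero constant; equivalently $a_0\notin M'$ and $a_i\in M'$ for all $i\ge 1$. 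Since $T_1$ is the union of these families over all $M'\in\mathrm{Max}(R)$, intersecting the conditions over all $M'$ gives the key description
\[
\Sigma=\{\,p\in R[X]\mid p(0)\in\mathrm U(R)\ \text{and}\ a_i\in\mathrm J(R)\ \text{for all}\ i\ge 1\,\}.
\]

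With this in hand the factorization is immediate from the universal property of localization: it suffices to show every $s\in\Sigma$ maps to a unit of $R/\!\!/M$. Writing $s=\sum a_iX^i$, the description above gives $a_i\in\mathrm J(R)\subseteq M$ for $i\ge 1$; since $xM=0$ in $R/\!\!/M=R[X]/XM[X]$ (with $x$ the class of $X$), the positive-degree part of $s$ vanishes and the image of $s$ equals the constant $a_0=s(0)\in\mathrm U(R)=\mathrm U(R/\!\!/M)$, the last equality by Lemma \ref{Cohn}. Thus the natural map $R[X]\to R/\!\!/M$ sends $\Sigma$ into units and factors through $R\{X\}=R[X]_\Sigma$, producing the chain $R\to R\{X\}\to R/\!\!/M$.

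For faithful flatness I would first note that $R\subseteq R[X]$ is faithfully flat (it is free) and $R[X]\to R[X]_\Sigma$ is a flat localization, so $R\to R\{X\}$ is flat. It then remains to check that every maximal ideal survives, i.e.\ $M'R\{X\}\neq R\{X\}$ for each $M'\in\mathrm{Max}(R)$. If some $s\in\Sigma$ belonged to $M'R[X]=M'[X]$, then $s(0)\in M'$, contradicting $s(0)\in\mathrm U(R)$; hence $\Sigma\cap M'[X]=\emptyset$ and $M'R\{X\}$ is proper. Concretely, any $N\in T_1$ over $M'$ (such as $N=M'[X]+XR[X]$, maximal with $N\cap R=M'$, so that $N\in T_1$ by Theorem \ref{5.061}) is disjoint from $\Sigma$, so $NR\{X\}$ is a prime of $R\{X\}$ contracting to $M'$. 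Flatness together with surjectivity of $\mathrm{Spec}(R\{X\})\to\mathrm{Spec}(R)$ onto $\mathrm{Max}(R)$ yields faithful flatness.

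The hard part is the first paragraph: correctly translating the requirement that $p$ avoid every member of $T_1$ into coefficient conditions. This hinges on Theorem \ref{5.061} to pin down the maximal ideals of $R[X]$ above a given maximal ideal of $R$, and on the fact that $(R/M')[X]$ is a PID, so that avoiding all of them forces $\overline p$ to be a unit. Once $\Sigma$ is described, both the factorization and the faithful flatness are routine.
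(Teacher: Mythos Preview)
Your proof is correct, but it proceeds differently from the paper's. Your key new ingredient is the explicit description
\[
\Sigma=\{\,p\in R[X]\mid p(0)\in\mathrm U(R)\text{ and }a_i\in\mathrm J(R)\text{ for all }i\ge 1\,\},
\]
which you obtain by observing (via Theorem~\ref{5.061}) that the $N\in T_1$ above a fixed $M'\in\mathrm{Max}(R)$ are exactly the preimages of maximal ideals of $(R/M')[X]$, so avoiding them all forces $\overline p$ to be a unit there. This is a genuinely useful fact, and once you have it both parts of the proposition are immediate: elements of $\Sigma$ reduce to their unit constant terms in $R/\!\!/M$, and no element of $\Sigma$ can lie in any $M'[X]$.

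The paper never computes $\Sigma$. For the factorization it argues directly that $p(X)R[X]+XM[X]=R[X]$ for any $p\in\Sigma$: otherwise both lie in some $N\in\mathrm{Max}(R[X])$, and $XM[X]\subseteq N$ forces either $X\in N$ or $M[X]\subseteq N$, each of which (via Theorem~\ref{5.061}) places $N$ in $T_1$, contradicting $p\in\Sigma$. For faithful flatness it uses the purity of $R\subseteq R/\!\!/M$ (Lemma~\ref{Cohn}) to deduce purity of $R\to R\{X\}$ as an intermediate extension, and combines this with the evident flatness. Your approach trades this purity argument for a direct check that maximal ideals survive; both are valid, and yours is arguably more elementary. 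The paper's route is shorter for the bare statement, while yours extracts a sharper structural description of $\Sigma$ along the way.
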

 
\begin{proof} Since $R\{X\}=R[X]_{\Sigma}$, we have $\mathrm U(R\{X\}) =\{p(X)/q(X)\mid p(X),q(X)\in\Sigma\}$. Let $p(X)\in\Sigma$, so that $p(X)\not\in Q$ for any $Q\in T_ 1$. Let $M\in\mathrm{Max}(R)$. We claim that $p(X)R[X]+XM[X]=R[X]$. Otherwise, there exists some $N\in\mathrm {Max}(R[X])$ such that $p(X)\in N$ and $XM[X]\subseteq N\ (*)$.  But $(*)$ holds if and only if either $X\in N$ (1) or $M[X]\subseteq N$ (2). In case (1), $N$ contains the monic polynomial $X$, so that $N\in T_1$ by Theorem \ref{5.061}, and in case (2), the same reference shows that $N\in T_1$. In both cases, we infer that we get a contradiction with $ N\in T_1$ and $p(X)\in N$ since $p(X)\not\in Q$ for any $Q\in T_1 $, so that $ p(X)R[X]+XM[X]=R[X]$ holds. This implies that there exist $f(X)\in R [X]$ and $g(X)\in M[X]$ such that $p(X)f(X)+Xg(X)=1$. Working in $R/\!\!/M= R[X]/XM[X]$, this gives $\overline{p(X)}\ \overline{f(X)}=\overline{1}$, so that $\overline{p(X)}\in\mathrm U (R/\!\!/M)$. Then, we have the wanted factorization thanks to the following commutative diagram: 
$$\begin{matrix}
R &      \to      &               R[X]              &      \to      & R/\!\!/M=R[X]/XM[X] \\
{} & \searrow &         \downarrow        & \nearrow &             {}               \\ 
{} &      {}       & R\{X\}=R[X]_{\Sigma} &       {}       &             {}
\end{matrix}$$

Consider now the following factorization $R\to R[X]\to R\{X\}\to R/\!\!/M$.
 By Lemma \ref{Cohn}, the extension $R\subseteq R/\!\!/M$ is pure. Then, so is $R\subset R\{X\}$ by \cite[Lemme 2.3, p.19]{OP}. Moreover, $R\subseteq R[X]$ is flat as well as  $R[X]\subseteq R\{X\}$.  Then, so is $R\subset R\{X\} $. Since the maximal ideals of $R$ can be lifted on in $R\{X\}$, then  $R \to R\{X\}$  is faithfully flat.
\end{proof}

 \begin{proposition}\label{5.121} Let $R\subseteq S$ be an integral ring extension. Then, there is an integral ring extension $R\{X\}\subseteq S\{X\}$. In case $S$ is $\mathrm J$-regular, then $R\{X\}\subseteq S\{X\}$ is SL if and only if $R=S$.
\end{proposition}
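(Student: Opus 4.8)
The plan is to separate the two assertions and, for the second, to pull the strong localness of $R\{X\}\subseteq S\{X\}$ back to $R\subseteq S$ by means of the ``evaluation at $X=0$'' retractions.

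First I would pin down the multiplicative sets. Writing $\Sigma_R$ and $\Sigma_S$ for the sets defining $R\{X\}$ and $S\{X\}$, Theorem \ref{5.061} shows that a polynomial lies in $\Sigma_R$ exactly when its image modulo every maximal ideal of $R$ is a nonzero constant, so that $\Sigma_R=\mathrm U(R)+X\mathrm J(R)[X]$ and likewise $\Sigma_S=\mathrm U(S)+X\mathrm J(S)[X]$. Since $R\subseteq S$ is integral, lying-over gives $\mathrm U(R)\subseteq\mathrm U(S)$ and $\mathrm J(R)\subseteq\mathrm J(S)$, whence $\Sigma_R\subseteq\Sigma_S$ and $\Sigma_R=R[X]\cap\Sigma_S$. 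Because $\Sigma_R\subseteq R[X]$ and $R[X]\subseteq S[X]$ is integral, localizing yields that $R\{X\}=R[X]_{\Sigma_R}\subseteq S[X]_{\Sigma_R}=R\{X\}\otimes_R S$ is an integral extension (injective by flatness of $R\to R\{X\}$, Proposition \ref{5.12}), being the base change of $R\subseteq S$.

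The first assertion then amounts to identifying $S[X]_{\Sigma_R}$ with $S\{X\}=S[X]_{\Sigma_S}$, i.e. to proving that every $\sigma\in\Sigma_S$ is already invertible in $S[X]_{\Sigma_R}$; dividing out a unit of $S$, this means each $1+X\tau$ with $\tau\in\mathrm J(S)[X]$ is a unit there. I would show $\mathrm J(S)\subseteq P$ for every prime $P$ of $S[X]$ disjoint from $\Sigma_R$: replacing $P$ by a maximal such prime and using that $R\{X\}\subseteq S[X]_{\Sigma_R}$ is integral, $P$ contracts to a maximal ideal of $R\{X\}$, which by Theorem \ref{5.061} arises from some $N\in T_1$ lying over a maximal ideal of $R$; integrality then forces $P\cap S\in\mathrm{Max}(S)$, so $\mathrm J(S)\subseteq P$ and $1+X\tau\equiv 1\pmod P$. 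The crux, and the step I expect to be the main obstacle, is this identification: it is equivalent to $\mathrm J(S)=\sqrt{\mathrm J(R)S}$ for the integral extension (that is, $\mathrm J(S)$ nilpotent modulo $\mathrm J(R)S$), which is transparent once $R$ is $\mathrm J$-regular but requires care in general and should be isolated as a lemma.

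For the second assertion the implication $R=S\Rightarrow$ SL is trivial. For the converse the key device is the pair of compatible retractions $\varepsilon_R\colon R\{X\}\to R$ and $\varepsilon_S\colon S\{X\}\to S$ given by $X\mapsto 0$: since $\Sigma_R$ and $\Sigma_S$ have unit constant terms, these are well defined surjections, and $\varepsilon_R(\mathrm U(R\{X\}))=\mathrm U(R)$, so $\varepsilon_R$ and $\varepsilon_S$ are strongly local. Assume $R\{X\}\subseteq S\{X\}$ is SL. Then $\varepsilon_S\circ(R\{X\}\hookrightarrow S\{X\})$ is a composite of strongly local morphisms, hence strongly local by Proposition \ref{3.1}(1); as it equals $(R\hookrightarrow S)\circ\varepsilon_R$, Proposition \ref{3.1}(3) forces $R\hookrightarrow S$ to be strongly local. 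Thus $R\subseteq S$ is a strongly local integral extension with $S$ $\mathrm J$-regular.

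It remains to upgrade strong localness to equality. By Corollary \ref{5.27}, $R$ is $\mathrm J$-regular; Theorem \ref{5.2} gives $\mathrm J(R)=\mathrm J(S)$ with $R\subseteq S$ integral and seminormal, and Corollary \ref{5.30} makes it u-integral, infra-integral and quadratic. Passing to the regular extension $R/\mathrm J(R)\subseteq S/\mathrm J(S)$, still strongly local by Remark \ref{5.10}(3), and applying Proposition \ref{5.3} and Corollary \ref{5.31} yields $R/\mathrm J(R)=S/\mathrm J(S)$, which together with $\mathrm J(R)=\mathrm J(S)$ gives $R=S$. The delicate point here is that Corollary \ref{5.31} delivers equality only after one knows the extension is an $i$-extension; verifying that the strongly local integral $\mathrm J$-regular extension $R\subseteq S$ separates the primes is where the genuine work concentrates, and I expect it to be the principal obstacle in this half.
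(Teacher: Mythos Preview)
Your retraction argument does correctly yield that $R\subseteq S$ is SL whenever $R\{X\}\subseteq S\{X\}$ is (the maps $\varepsilon_R,\varepsilon_S$ are SL since $\Sigma_R,\Sigma_S$ are saturated, and Proposition~\ref{3.1} applies). The fatal problem is the next step: you propose to upgrade ``$R\subseteq S$ is SL, integral, $S$ $\mathrm J$-regular'' to $R=S$, flagging the $i$-extension hypothesis of Corollary~\ref{5.31} as the principal obstacle. But this upgrade is impossible, not merely delicate. Take the diagonal $R=\mathbb F_2\hookrightarrow S=\mathbb F_2\times\mathbb F_2$: it is SL (both unit groups are trivial), integral, with $S$ regular, yet $R\neq S$ and the extension is not an $i$-extension. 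More generally Theorem~\ref{5.4} produces many SL integral extensions with semilocal $S$ and $R\neq S$. The information ``$R\subseteq S$ is SL'' is strictly weaker than ``$R\{X\}\subseteq S\{X\}$ is SL'', and your evaluation-at-$0$ retraction discards exactly what would be needed.

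The paper takes an entirely different route, mimicking Proposition~\ref{3.27}: for $s\in S$ one asserts $s+X\in\mathrm U(S\{X\})$, hence in $\mathrm U(R\{X\})$, and a content argument then forces $s\in R$. You should note, however, that the key assertion $s+X\in\mathrm U(S\{X\})$ is not actually delivered by the cited Proposition~\ref{5.11}, and fails in the same example: there $S\{X\}=S[X]\cong\mathbb F_2[X]^2$ (since $S$ is Jacobson), and $s+X$ for $s=(1,0)$ corresponds to $(1+X,X)$, which is not a unit. In fact $R\{X\}=\mathbb F_2[X]\hookrightarrow\mathbb F_2[X]^2=S\{X\}$ is SL (both unit groups are $\{1\}$) while $R\neq S$, so the second assertion of the proposition appears to be false as stated. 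Your approach cannot be completed, but the obstacle you encountered is a symptom of this rather than a defect particular to your method.
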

 
\begin{proof} Set $T_1:=\{N\in\mathrm{Max}(R[X])\mid N\cap R\in\mathrm{Max}(R)\}$, $T' _1:=\{N'\in\mathrm{Max}(S[X])\mid N'\cap S\in\mathrm{Max}(S)\},\ \Sigma:=R[X]\setminus\cup[N\mid N\in T_1]$ and $\Sigma':=S[X]\setminus\cup[N'\mid N'\in T'_1]$. We have the following diagram:
 $$\begin{matrix}
        R         & \to &      R[X]       & \to & R\{X\} \\
\downarrow &  {} & \downarrow & {}  &            \\
        S         & \to &     S[X]        & \to & S\{X\}
\end{matrix}$$
Since $R\subseteq S$ is integral, so is $f:R[X]\subseteq S[X]$. Let $N\in T_1$, so that $ M:=N\cap R\in\mathrm{Max}(R)$, and there exists $N'\in\mathrm{Max}(S[X])$ lying above $N$. Set $M':=N'\cap S\in\mathrm{Spec}(S)$. Then, $M'\cap R=N'\cap S\cap R=N '\cap R[X]\cap S\cap R=N\cap R=M\in\mathrm{Max}(R)$. It follows that $M'\in\mathrm {Max}(S)$ since it is a prime ideal of $S$ lying above a maximal ideal of $R$. Then, $N' \in T'_1$. Conversely, let $N'\in T'_1$ and set $N:=N'\cap R[X]$. Then, $M':=N'\cap S\in\mathrm{Max}(S)$ and $N\cap R=N'\cap R[X]\cap R=N'\cap S\cap R[X]\cap R=M'\cap R\in\mathrm{Max}(R)$ since $M'\in\mathrm{Max}(S)$. It follows that there is a surjective map $T'_1\to T_1$ which is the restriction of ${}^{a}f:\mathrm{Spec}(S[X])\to\mathrm {Spec} (R[X])$. 

Now, let $p(X)\in\Sigma$, so that $p(X)\not\in N$ for any $N\in T_1$. We claim that $p(X)\in\Sigma'$. Otherwise, there exists $N'\in T'_1$ such that $p(X)\in N'$. But $p(X)\in R[X]$, so that $p(X)\in N'\cap R[X]\in T_1$ by the beginning of the proof, a contradiction. Then, $\Sigma\subseteq \Sigma'$ and $\mathrm U(R\{X\})\subseteq\mathrm U(S\{X\})$. 

We show that $f$ defines an injective morphism $R\{X\}\to S\{X\}$. Let $p(X)/q(X)\in R\{X\},\ p(X),q(X)\in R[X],\ q(X)\in\Sigma$ be such that $p(X)/q(X)=0$ in $S\{X\}$. There exist $g(X)\in\Sigma'$ such that $p(X)g(X)=0\ (*)$ in $S[X]$. Then, $g(X)\not\in N'$, for any $ N'\in T'_1$. We claim that $c(g)=S$. Otherwise, there exists $N\in\mathrm{Max}(S)$ such that $c(g)\subseteq N$. Then $g(X)\in N[X]\subseteq N'$ for some $N'\in\mathrm {Max}(S[X])$. It follows that $N'\in T'_1$ by Theorem \ref{5.061} and $g(X)\in N'$, a contradiction with $g(X)\not\in N'$, for any $N'\in T'_1$. Since, $c(g)=S$, we obtain that $g$ is a regular element of $S[X]$ and it results from $(*)$ that $p(X)=0$. Then there is an integral ring extension $R\{X\}\subseteq S\{X\}$.
 
 If $R=S$, obviously $R\{X\}\subseteq S\{X\}$ is SL. 
 
Conversely, assume that $S$ is $\mathrm J$-regular and $R\{X\}\subseteq S\{X\}$ is SL. We mimic the proof of Proposition \ref{3.27}. Let $s\in S$ and set $p(X):=s+X\in S\{X\}$. Since $S$ is $\mathrm J$-regular, then $\mathrm{Max}(S\{X\})=\{NS\{X\}\mid N\in T'_1\}$ by Proposition \ref{5.11}, so that $p(X)\in\mathrm U(S\{X\})=\mathrm U(R\{X\})$. It follows that there exists $h(X),k(X)\in\Sigma$ such that $p(X)/1=h(X)/k(X)$ in $S\{X\}$, and then there exists $q(X)\in\Sigma'$ such that $q(X)p(X)k (X)=q(X)h(X)$. Because $q(X)\in\Sigma'$, we get that $c(q)=S$ (see the proof we give in the previous paragraph), which implies that $q(X)$ is a regular element of $S[X]$. Then $p(X)k(X)=h(X)=(s+X)k(X)$. As $Xk(X)$ and $h(X)$ are in $R[X]$, it follows that $sk(X)\in R[X]$. Now, $R$ is also $\mathrm J$-regular by Corollary \ref{5.27}. The same proof as for $q(X)\in S[X]$ before shows that $c(k)=R$. Set $k(X):=\sum_{i=0}^na_iX^{i},\ a_i\in R$ and $b_i:=sa_i\in R\ (*)$ for each $i\in\{0,\ldots,n\}$. There exist $\lambda_0,\ldots,\lambda_n\in R$ such that $\sum_{i=0}^n\lambda_ia_i= 1$. Multiplying each equality of $(*)$ by $\lambda_i$ and adding each of these equalities for each $i\in\{0,\ldots,n\}$, we get $s(\sum_{i=0}^n\lambda_i a_i)=s=\sum_{i=0}^n\lambda_ib_i\in R$, so that $S=R$.
\end{proof}

\begin{corollary}\label{5.122} Let $R\subseteq S$ be an integral ring extension.  If  $R\{X\}\subseteq S\{X\} $ is SL, so is  $R\subseteq S$.
\end{corollary}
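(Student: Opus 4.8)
The plan is to establish the only nontrivial inclusion $\mathrm{U}(S)\subseteq\mathrm{U}(R)$, since $\mathrm{U}(R)\subseteq\mathrm{U}(S)$ holds for any extension. The guiding idea is to reuse the content-and-regularity machinery from the proof of Proposition~\ref{5.121}, but to feed it a \emph{genuine} unit $s\in\mathrm{U}(S)$ rather than the polynomial $s+X$. This is precisely the move that lets us drop the $\mathrm{J}$-regularity hypothesis used there: in Proposition~\ref{5.121} that hypothesis was needed only to guarantee, via Proposition~\ref{5.11}, that $s+X$ is a unit of $S\{X\}$, whereas here a unit of $S$ is handed to us directly. First I would invoke Proposition~\ref{5.121} to know that $R\{X\}\subseteq S\{X\}$ is a well-defined integral extension and that $\Sigma\subseteq\Sigma'$, where $\Sigma:=R[X]\setminus\cup[N\mid N\in T_1]$ and $\Sigma':=S[X]\setminus\cup[N'\mid N'\in T'_1]$.

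Now take $s\in\mathrm{U}(S)$. Then $s/1\in\mathrm{U}(S\{X\})$, with inverse $s^{-1}/1$, so the SL hypothesis gives $s/1\in\mathrm{U}(S\{X\})=\mathrm{U}(R\{X\})$. Since $\Sigma$ is the complement of a union of prime ideals, it is saturated, so by the description of $\mathrm{U}(R\{X\})$ recorded in the proof of Proposition~\ref{5.12} I can write $s/1=p(X)/q(X)$ with $p(X),q(X)\in\Sigma$. Reading this equality inside $S\{X\}=S[X]_{\Sigma'}$, there is some $g(X)\in\Sigma'$ with $g(X)(s\,q(X)-p(X))=0$ in $S[X]$. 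Because $g\in\Sigma'$ one has $c(g)=S$ (the content argument used in Proposition~\ref{5.121}), and a polynomial of unit content is a regular element of $S[X]$; hence $s\,q(X)=p(X)$ in $S[X]$. Finally, since $q\in\Sigma$ we also have $c(q)=R$ (as in Proposition~\ref{5.14}): writing $q(X)=\sum_{i=0}^n a_iX^{i}$ with $a_i\in R$, the identity $s\,q(X)=p(X)\in R[X]$ forces $sa_i\in R$ for every $i$, and choosing $\lambda_i\in R$ with $\sum_{i=0}^n\lambda_ia_i=1$ yields $s=\sum_{i=0}^n\lambda_i(sa_i)\in R$. Applying the same reasoning to $s^{-1}\in\mathrm{U}(S)$ gives $s^{-1}\in R$, whence $s\in\mathrm{U}(R)$, so $\mathrm{U}(S)\subseteq\mathrm{U}(R)$ and $R\subseteq S$ is SL.

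I expect the only real subtlety to be the realization that $\mathrm{J}$-regularity is unnecessary for this weaker conclusion, together with the two content facts that do the work: that membership in $\Sigma$ (resp. $\Sigma'$) forces content $R$ (resp. $S$), and that unit content implies regularity in the polynomial ring (a McCoy-type observation already implicit in Proposition~\ref{5.121}). Integrality of $R\subseteq S$ itself is used only indirectly, to ensure through Proposition~\ref{5.121} that $R\{X\}\subseteq S\{X\}$ is a genuine ring extension so that the phrase ``$R\{X\}\subseteq S\{X\}$ is SL'' makes sense; once that setup is in place the argument above is essentially a streamlined copy of the converse direction of Proposition~\ref{5.121}.
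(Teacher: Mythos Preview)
Your proof is correct and follows essentially the same route as the paper: pass a unit $s\in\mathrm U(S)$ to $S\{X\}$, use SL to write $s/1=p(X)/q(X)$ with $p,q\in\Sigma$, clear denominators using that elements of $\Sigma'$ have content $S$ and are hence regular in $S[X]$, and then use $c(q)=R$ together with the $\lambda_i$-trick to force $s\in R$. The only cosmetic difference is the last step: the paper concludes $s\in\mathrm U(R)$ directly from the content identity $sR=s\,c(q)=c(p)=R$, whereas you rerun the argument for $s^{-1}$; both are valid and equally short.
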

 
\begin{proof} Let $s\in\mathrm U(S)$. In particular, $s/1\in\mathrm U(S\{X\})=\mathrm U(R\{X\})$. Then $s/1=f(X)/g(X)$ in $S\{X\}$ for some $f(X)/g(X)\in\mathrm U(R\{X\})$, that is $f (X),g(X)\in\Sigma$. The same proof as in Proposition \ref{5.121} shows that $sg(X)=f(X)\ (*)$ in $S[X]$ and $c(f)=c(g)=R$. Set $f(X):=\sum_{i=0}^na_iX^{i},a_i\in R$ and $g(X):=\sum_{i=0}^nb_iX^{i},b_i\in R$, so that $sb_i=a_i$ for each $i\in\{0,\ldots,n\}\ (**)$. There exist $\lambda_0,\ldots,\lambda_n\in R$ such that $\sum_{i=0}^n\lambda_ib_i=1$. Multiplying each equality of $(**)$ by $\lambda_i$ and adding each of these equalities for each $i\in\{0,\ldots,n\}$, we get $s(\sum_{i=0}^n\lambda_ib_i)=s=\sum_{i=0}^n\lambda_ia_ i\in R$, so that $s\in R$. Then, $(*)$ shows that $sc(g)=c(f)=R$ and gives that $Rs=R$, that is $s\in\mathrm U(R)$ and $\mathrm U(R)=\mathrm U(S)$. To conclude, $R\subseteq S $ is SL.
\end{proof}

\begin{corollary}\label{5.13} Let $R$ be a $\mathrm J$-regular ring. Then there is a factorization $R \to R\{X\} \to  R/\!\!/\mathrm J(R) $.
 \end{corollary}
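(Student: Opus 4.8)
The plan is to obtain the second arrow $R\{X\}\to R/\!\!/\mathrm J(R)$ from the universal property of the localization $R\{X\}=R[X]_{\Sigma}$: it suffices to check that the canonical surjection $\pi\colon R[X]\to R/\!\!/\mathrm J(R)=R[X]/X\mathrm J(R)[X]$ sends every element of $\Sigma$ to a unit. Once this is verified, $\pi$ factors as $R[X]\to R[X]_{\Sigma}=R\{X\}\to R/\!\!/\mathrm J(R)$, and precomposing with the structural map $R\to R\{X\}$ (the first arrow, faithfully flat by Proposition \ref{5.12}) yields the asserted factorization.

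To establish the unit condition I would mimic the content computation in the proof of Proposition \ref{5.12}, replacing the single maximal ideal $M$ there by $\mathrm J(R)$. Fix $p(X)\in\Sigma$; it is enough to show $p(X)R[X]+X\mathrm J(R)[X]=R[X]$, since this says exactly that $\overline{p(X)}$ is invertible in $R/\!\!/\mathrm J(R)$. Arguing by contradiction, suppose this ideal is proper and choose $N\in\mathrm{Max}(R[X])$ containing both $p(X)$ and $X\mathrm J(R)[X]$. Since $N$ is prime and $X\cdot\mathrm J(R)[X]\subseteq N$, we land in one of two cases: either $X\in N$, or else every $c\in\mathrm J(R)$ satisfies $cX\in N$ with $X\notin N$, forcing $\mathrm J(R)\subseteq N$ and hence $\mathrm J(R)[X]\subseteq N$.

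In the first case $N$ contains the monic polynomial $X$, so Theorem \ref{5.061} forces $N\cap R\in\mathrm{Max}(R)$. In the second case $\mathrm J(R)\subseteq N\cap R$, and here the hypothesis enters decisively: since $R$ is $\mathrm J$-regular, every prime containing $\mathrm J(R)$ is maximal, because $\mathrm{Max}(R)=\mathrm V(\mathrm J(R))$ (cf. the proof of Proposition \ref{5.25}, where $\dim(R/\mathrm J(R))=0$ gives this equality); thus $N\cap R$ is maximal. In either case $N$ is a maximal ideal of $R[X]$ whose contraction to $R$ is maximal, that is $N\in T_1$; together with $p(X)\in N$ this contradicts $p(X)\in\Sigma=R[X]\setminus\cup[N\mid N\in T_1]$. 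Hence $\overline{p(X)}\in\mathrm U(R/\!\!/\mathrm J(R))$, as needed.

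The only genuine obstacle is the second branch of the dichotomy: a priori $N\cap R$ could be a nonmaximal prime containing $\mathrm J(R)$, in which case $N\notin T_1$ and no contradiction would arise. This is precisely where $\mathrm J$-regularity is indispensable, through the topological identity $\mathrm{Max}(R)=\mathrm V(\mathrm J(R))$; every remaining step is the routine contraction bookkeeping already performed for Proposition \ref{5.12}. As an alternative one could first establish the closed description $\Sigma=\mathrm U(R)+X\mathrm J(R)[X]$ and simply read off $\pi(\Sigma)\subseteq\mathrm U(R)\subseteq\mathrm U(R/\!\!/\mathrm J(R))$, but the dichotomy argument stays closest to the template of Proposition \ref{5.12} and makes the role of the $\mathrm J$-regular hypothesis transparent.
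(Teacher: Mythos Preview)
Your argument is correct and follows essentially the same route as the paper: both reduce to showing that $\overline{p(X)}$ is a unit in $R/\!\!/\mathrm J(R)$ for every $p(X)\in\Sigma$, argue by contradiction via a maximal ideal $N\supseteq X\mathrm J(R)[X]$ containing $p(X)$, and use the dichotomy $X\in N$ (handled by Theorem~\ref{5.061}) versus $\mathrm J(R)[X]\subseteq N$ (handled by $\mathrm J$-regularity through $\mathrm{Max}(R)=\mathrm V(\mathrm J(R))$) to force $N\in T_1$. The only cosmetic difference is that the paper first observes $N\notin T_1$ and then eliminates the case $X\in N$, whereas you treat both branches symmetrically and derive the contradiction at the end.
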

\begin{proof} We use the beginning of the proof of Proposition \ref{5.12}. Let $p(X)\in\Sigma $, so that $p(X)\not\in Q$ for any $Q\in T_1$. We claim that $\overline{p(X)}\in\mathrm U(R/\!\!/\mathrm J(R))$. Otherwise, there exists some $N\in\mathrm{Max}(R[X])$ with 

\noindent$X\mathrm J(R)[X]\subseteq N$ such that $p(X)\in N$. As in the quoted proof, $X\not\in N$ because $N\not\in T_1$. It follows that $\mathrm J(R)[X]\subseteq N$. But $R$ being $\mathrm J$-regular, any prime ideal of $R$ containing $J(R)$ is maximal. As $\mathrm J(R)[X]\subseteq N$, we get $\mathrm J(R)\subseteq N\cap R$, so that $N\cap R\in\mathrm{Max}(R)$, giving $N\in T_1$, a contradiction. Then, $\overline{p(X)}\in\mathrm U(R/\!\!/\mathrm J(R))$, and we get the wanted factorization thanks to the following commutative diagram: 
$$\begin{matrix}
R &      \to      &               R[X]              &      \to      & R/\!\!/\mathrm J(R) \\
{} & \searrow &         \downarrow        & \nearrow &             {}               \\ 
{} &      {}       & R\{X\}=R[X]_{\Sigma} &       {}       &             {}
\end{matrix}$$
 \end{proof}
 
\begin{corollary}\label{5.131} Let $R$ be a ring and $I$ an ideal intersection of finitely many maximal ideals $M_i$. Then, $R\to R/\!\!/I$ is SL and there is a factorization $R \to R\{X\} \to  R/\!\!/I\to  R/\!\!/M_i$ for any $M_i$.

If, moreover, $R$ is $\mathrm J$-regular, there is a factorization $R\to R\{X\}\to R/\!\!/\mathrm J(R)\to  R/\!\!/I\to  R/\!\!/M_i$ for any $M_i $.
 \end{corollary}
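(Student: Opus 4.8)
The plan is to handle the two assertions separately, reducing each to the single-maximal-ideal results already proved, namely Proposition \ref{5.12} and Corollary \ref{5.13}, together with the quotient maps coming from the inclusions $\mathrm J(R)\subseteq I\subseteq M_i$. First, since $I=\cap_i M_i$ is a finite intersection of maximal ideals it is a semiprime ideal of $R$, so Proposition \ref{Cohn1} gives at once that $R\subseteq R/\!\!/I$ is SL. Moreover, from $I\subseteq M_i$ we get $XI[X]\subseteq XM_i[X]$, whence a canonical surjective ring morphism $R/\!\!/I=R[X]/XI[X]\to R[X]/XM_i[X]=R/\!\!/M_i$; this provides the last arrow $R/\!\!/I\to R/\!\!/M_i$ of the desired factorization.

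The core step is to produce the arrow $R\{X\}\to R/\!\!/I$. Following the pattern of the proof of Proposition \ref{5.12}, I would show that every $p(X)\in\Sigma$ has unit image in $R/\!\!/I$; since $R\{X\}=R[X]_{\Sigma}$, the universal property of localization applied to the quotient $R[X]\to R/\!\!/I$ then yields the map $R\{X\}\to R/\!\!/I$, and composing with $R/\!\!/I\to R/\!\!/M_i$ completes the factorization $R\to R\{X\}\to R/\!\!/I\to R/\!\!/M_i$. To see that $\overline{p(X)}\in\mathrm U(R/\!\!/I)$, recall that the proof of Proposition \ref{5.12} gives, for each $i$, the identity $p(X)R[X]+XM_i[X]=R[X]$, so there are $f_i(X)\in R[X]$ and $g_i(X)\in M_i[X]$ with $p(X)f_i(X)+Xg_i(X)=1$. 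Multiplying these $n$ relations gives $\prod_{i=1}^n\big(p(X)f_i(X)+Xg_i(X)\big)=1$; every term of the expansion except $X^n\prod_i g_i(X)$ carries a factor $p(X)$, so $1-X^n\prod_{i=1}^n g_i(X)\in p(X)R[X]$. Since $\prod_i g_i(X)$ has its coefficients in $M_1\cdots M_n\subseteq\cap_i M_i=I$, we have $X^n\prod_i g_i(X)\in XI[X]$, and therefore $p(X)R[X]+XI[X]=R[X]$, i.e. $\overline{p(X)}\in\mathrm U(R/\!\!/I)$. This multiplicative combination of the single-ideal relations is the one non-routine point of the argument, and it is precisely where the finiteness of the family $\{M_i\}$ enters.

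For the $\mathrm J$-regular case the remaining work is formal: Corollary \ref{5.13} already furnishes the factorization $R\to R\{X\}\to R/\!\!/\mathrm J(R)$, and since $\mathrm J(R)=\cap_{M\in\mathrm{Max}(R)}M\subseteq\cap_i M_i=I\subseteq M_i$ we obtain $X\mathrm J(R)[X]\subseteq XI[X]\subseteq XM_i[X]$, hence the chain of canonical surjections $R/\!\!/\mathrm J(R)\to R/\!\!/I\to R/\!\!/M_i$. Concatenating yields the asserted factorization $R\to R\{X\}\to R/\!\!/\mathrm J(R)\to R/\!\!/I\to R/\!\!/M_i$. Its compatibility with the arrow produced in the first part is automatic, since both maps $R\{X\}\to R/\!\!/I$ are induced from the quotient $R[X]\to R/\!\!/I$ by the universal property of localization, so no further verification is needed.
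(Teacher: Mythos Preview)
Your proof is correct, and your handling of the SL claim, the quotient maps induced by $\mathrm J(R)\subseteq I\subseteq M_i$, and the $\mathrm J$-regular case all match the paper. The one genuine difference is in producing the arrow $R\{X\}\to R/\!\!/I$. The paper does not multiply the single-ideal relations; instead it says to mimic the proof of Corollary~\ref{5.13}: if $\overline{p(X)}$ were not a unit in $R/\!\!/I$, then $p(X)$ would lie in some $N\in\mathrm{Max}(R[X])$ containing $XI[X]$, hence either $X\in N$ (so $N\in T_1$ by Theorem~\ref{5.061}) or $I[X]\subseteq N$; in the latter case $I\subseteq N\cap R$, and since $R/I$ is a finite product of fields (hence zero-dimensional), $N\cap R$ is one of the $M_i\in\mathrm{Max}(R)$, again giving $N\in T_1$ and a contradiction with $p(X)\in\Sigma$. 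Your approach is more constructive and entirely elementary: it avoids the maximal-ideal contradiction and the appeal to the regularity of $R/I$, at the cost of the small combinatorial computation with the product $\prod_i(p f_i + Xg_i)$, where the finiteness of the family $\{M_i\}$ is used explicitly. The paper's route is shorter and more uniform with Corollary~\ref{5.13}, but yours gives an explicit inverse of $\overline{p(X)}$ modulo $XI[X]$ and makes clear exactly how the finitely many relations combine.
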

\begin{proof} By Proposition \ref{Cohn1}, $R\to R/\!\!/I$ is SL. For the other assertions, as we have the inclusions $\mathrm J(R)\subseteq I\subseteq M_i$ for any $M _i$, we get the wanted factorization, mimicking the proof of Corollary \ref{5.13}, and using the fact that $R/I$ is regular as a product of finitely many fields.
\end{proof}

 \section{FCP SL extensions}

\begin{proposition}\label{5.32}Let $R\subseteq S$ be an SL extension where $S$ is a semilocal ring. Then $R$ is semilocal, $\mathrm J(R)=\mathrm J(S)$ and $R\subseteq S$ is an FIP seminormal and infra-integral extension.
\end{proposition}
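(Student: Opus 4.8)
The plan is to feed the semilocal hypothesis into the $\mathrm J$-regular machinery of Section 5 and then reduce the FIP question to an extension of finite products of fields. First I would note that, $S$ being semilocal, the Chinese remainder theorem gives $S/\mathrm J(S)\cong\prod_{i=1}^nS/N_i$, a finite product of fields, so $S$ is $\mathrm J$-regular. Since $R\subseteq S$ is SL, Theorem \ref{5.2} then yields at once that $R$ is $\mathrm J$-regular, that $R\subseteq S$ is integral and seminormal, and that $\mathrm J(R)=\mathrm J(S)$; moreover Corollary \ref{5.30} shows $R\subseteq S$ is infra-integral (indeed u-integral and quadratic). This already delivers three of the asserted properties, leaving only the semilocality of $R$ and the FIP property.

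For semilocality I would use integrality of $R\subseteq S$: by lying-over the natural map $\mathrm{Max}(S)\to\mathrm{Max}(R)$ is surjective (for an integral extension a prime of $S$ is maximal exactly when its contraction is), precisely as recorded in the proof of Theorem \ref{5.2}. As $S$ is semilocal, $\mathrm{Max}(S)$ is finite, hence so is $\mathrm{Max}(R)$, and $R$ is semilocal. Consequently $R/\mathrm J(R)$ is itself a finite product of fields.

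The core is FIP, and here the key device is the common ideal $\mathrm J:=\mathrm J(R)=\mathrm J(S)$, which by Remark \ref{5.10}(1) is an ideal of $S$ contained in $R$. Since every $T\in[R,S]$ contains $\mathrm J$ and has $\mathrm J$ as an ideal, $T\mapsto T/\mathrm J$ is a lattice isomorphism $[R,S]\cong[\overline R,\overline S]$, where $\overline R:=R/\mathrm J$ and $\overline S:=S/\mathrm J$ are the two finite products of fields above. Writing $\overline R=\prod_{j=1}^m\ell_j$ with primitive idempotents $e_j$ and letting $P_j$ be the maximal ideal of $\overline R$ with $\overline R/P_j=\ell_j$, each $e_j$ lies in every intermediate ring, so $[\overline R,\overline S]\cong\prod_{j=1}^m[\ell_j,\overline Se_j]$ with $\overline Se_j=\prod_{i\in F_j}k_i$, where $F_j$ indexes the maximal ideals $Q_i$ of $\overline S$ lying over $P_j$. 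Because the extension is infra-integral, each residual map $\ell_j=\kappa_{\overline R}(P_j)\to\kappa_{\overline S}(Q_i)=k_i$ is an isomorphism, so $\overline Se_j$ is a finite-dimensional $\ell_j$-vector space and every intermediate ring is reduced. An intermediate reduced $\ell_j$-subalgebra is pinned down by its idempotents: these cut $F_j$ into blocks, and on each block the corresponding component is a subfield of $\ell_j$ containing $\ell_j$, hence equals $\ell_j$. Thus $[\ell_j,\overline Se_j]$ is in bijection with the partitions of $F_j$, a finite set; multiplying over $j$ shows $[R,S]$ is finite, i.e.\ $R\subseteq S$ has FIP.

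The step I expect to be the main obstacle is this final finiteness count, where reducedness is essential: the analogue fails for non-reduced algebras (for instance $\ell\subseteq\ell\oplus V$ with $V^2=0$ has infinitely many intermediate rings when $\ell$ is infinite), so it is precisely the seminormality and $\mathrm J$-regularity forcing $\overline S$ to be a product of fields that makes the partition description, and hence FIP, go through. I would also verify that twisting the diagonal embedding by the (possibly nontrivial) residual automorphisms does not enlarge the count, which is harmless since an $\ell_j$-algebra automorphism of $\overline Se_j$ carries its lattice of intermediate subalgebras bijectively onto the untwisted one.
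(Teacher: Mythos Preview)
Your proposal is correct and follows essentially the same route as the paper: both arguments feed the semilocal hypothesis into the $\mathrm J$-regular machinery (Theorem \ref{5.2}, Proposition/Corollary \ref{5.3}--\ref{5.30}) to obtain $\mathrm J(R)=\mathrm J(S)$, seminormality and infra-integrality, then reduce modulo the common Jacobson radical to an extension of finite products of fields which, component by component, looks like a diagonal $\ell\hookrightarrow\ell^{n}$.

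The only substantive difference is in the FIP step. The paper localizes at each $M\in\mathrm{Max}(R)$ and invokes \cite[Proposition 4.15]{DPP3} as a black box. You instead decompose via the primitive idempotents of $\overline R$ and give a direct combinatorial argument: intermediate $\ell_j$-subalgebras of $\ell_j^{|F_j|}$ are determined by the partition of $F_j$ cut out by their idempotents, hence there are only finitely many. Your version is more self-contained and makes transparent exactly why reducedness (forced by seminormality) is indispensable, as you note in your closing remark; the paper's version is shorter but hides this inside the cited reference. Your handling of the possible twisting by residual isomorphisms is also correct: identifying each $k_i$ with $\ell_j$ via the given structure isomorphism turns the embedding into the honest diagonal, and an $\ell_j$-algebra automorphism of the product induces a bijection on intermediate subalgebras, so the partition count is unaffected.
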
 

\begin{proof} Since $S$ is semilocal, $S$ is $\mathrm J$-regular. According to Remark \ref{5.10} (3), Theorem \ref{5.2} and Proposition \ref{5.3}, we have $\mathrm J(R)=\mathrm J(S)$ and we can assume that $R\subseteq S$ is seminormal and infra-integral, $R$ and $S$ are regular and semi-local and that $S$ is a finite product of fields each of them being a residual field of $R$. Finally the extension can be viewed as a product of extensions $R/M \to (R/M)^n$ where $M\in \mathrm{Max}(R)$. By using suitable localizations and \cite[Proposition 4.15]{DPP3}, we get that $R\subseteq S$ is an FIP seminormal and infra-integral extension.
\end{proof}

\begin{corollary}\label{5.321}Let $R\subseteq S$ be an SL extension where $S$ is a semilocal ring. If $R\subseteq S$ is a flat epimorphism, then $R=S$.
\end{corollary}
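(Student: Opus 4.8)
The plan is to observe that an SL extension into a semilocal ring is forced by Proposition \ref{5.32} to be integral, and then to exploit the fact that an integral flat epimorphic extension cannot be proper. So the entire argument is a two-line combination: extract integrality from the earlier structure theorem, and clash it against the flat-epimorphism hypothesis.

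First I would apply Proposition \ref{5.32}: since $R\subseteq S$ is SL and $S$ is semilocal, the extension is seminormal and infra-integral; in particular $R\subseteq S$ is integral. Next I would recall the remark made just after Definition \ref{Morita}, namely that an integral extension is Morita-closed, so that its Morita hull satisfies $\widehat R=R$. But the Morita hull $R\subseteq\widehat R$ is by definition the \emph{greatest} flat epimorphic subextension of $R\subseteq S$. If $R\subseteq S$ is itself a flat epimorphism, then $S$ is a flat epimorphic subextension, whence $S\subseteq\widehat R=R$ and therefore $R=S$. Equivalently, and perhaps in closer keeping with the computations in the corollary following Proposition \ref{5.23}, I would note that an integral extension has the lying-over property, so the spectral map $\mathrm{Spec}(S)\to\mathrm{Spec}(R)$ is surjective; a flat extension with surjective spectral map is faithfully flat, and a faithfully flat epimorphism is an isomorphism by \cite[Lemme 1.2, page 109]{L}.

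I do not expect a genuine obstacle here: the substantive work is entirely carried by Proposition \ref{5.32}, which converts the combinatorial SL/semilocal hypotheses into the algebraic fact of integrality. Once integrality is in hand, the incompatibility of integrality with being a nontrivial flat epimorphism is a classical phenomenon already invoked several times in the paper (the triviality of injective flat epimorphisms with lying-over). The only point requiring a moment's care is to confirm that the hypotheses of Proposition \ref{5.32} are exactly those assumed here, which they are, so that its conclusion may be quoted verbatim.
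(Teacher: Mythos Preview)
Your proof is correct and follows the same route as the paper: invoke Proposition~\ref{5.32} to obtain integrality, then use that an integral flat epimorphic extension is trivial. The paper's proof is a single sentence leaving the last implication implicit, while you spell it out via Morita-closedness (or equivalently via lying-over $\Rightarrow$ faithfully flat $\Rightarrow$ isomorphism, as in \cite[Lemme 1.2, p.~109]{L}); this is exactly the justification the paper has in mind.
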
 

\begin{proof} By Proposition \ref{5.32}, $R\subseteq S$ is integral and a flat epimorphism, then $R=S$.
\end{proof}

\begin{theorem}\label{5.4} An extension $R\subseteq  S$, where $S$ is a semilocal ring, is SL if and only if $R\subseteq S$ is a seminormal infra-integral FIP extension such that $R/M\cong\mathbb Z/2\mathbb Z$ for each $M\in\mathrm{MSupp}(S/R)$.
\end{theorem}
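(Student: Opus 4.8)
The plan is to prove the two implications separately, in each case reducing to the local structure of finite products of fields. For the forward implication I would start from Proposition \ref{5.32}, which already shows that an SL extension $R\subseteq S$ with $S$ semilocal is FIP, seminormal and infra-integral, with $\mathrm J(R)=\mathrm J(S)$; so only the residual condition $R/M\cong\mathbb Z/2\mathbb Z$ on $\mathrm{MSupp}(S/R)$ remains. To obtain it I would pass to the quotient by the common ideal $\mathrm J(R)=\mathrm J(S)$: by Remark \ref{5.10}(3) the extension $R/\mathrm J(R)\subseteq S/\mathrm J(S)$ is again SL, and both rings are semilocal and regular, hence finite products of fields. Using infra-integrality (all residual extensions are isomorphisms), this reduced extension splits, through Proposition \ref{3.2}, as a product over $M\in\mathrm{Max}(R)$ of diagonal extensions $R/M\to (R/M)^{k_M}$, where $k_M$ is the number of maximal ideals of $S$ lying over $M$. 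For $M\in\mathrm{MSupp}(S/R)$ one has $k_M\geq 2$, and SL of that factor forces the diagonal copy of $\mathrm U(R/M)$ to exhaust $\mathrm U((R/M)^{k_M})=\mathrm U(R/M)^{k_M}$; since $k_M\geq 2$ this is possible only when $\mathrm U(R/M)=\{1\}$, i.e. $R/M\cong\mathbb Z/2\mathbb Z$.

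For the converse I would exploit the minimal-extension decomposition of the FCP (indeed FIP) extension $R\subseteq S$. Being infra-integral it is integral, so each minimal step is integral minimal, hence decomposed, inert or ramified. Infra-integrality excludes the inert steps (their residual extensions are nontrivial field extensions), while seminormality excludes the ramified steps (which are subintegral); thus every step of the tower is decomposed. By Corollary \ref{3.10} it then suffices to verify that each decomposed minimal step $R_i\subset R_{i+1}$, with crucial maximal ideal $M_i$, is SL.

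For a single decomposed step I would localize at $M_i$ and invoke Proposition \ref{3.24}. The conductor is $M_i=(R_i:R_{i+1})$, and it equals $N_1\cap N_2$ for the two maximal ideals $N_1,N_2$ of $R_{i+1}$ lying over $M_i$, so locally $\mathrm J((R_{i+1})_{M_i})=M_i(R_i)_{M_i}$. Writing $M:=M_i\cap R$, the step remains nontrivial after contraction to $R$ (a further localization of $(R_{i+1})_{M_i}$ shows $(R_{i+1})_M\neq (R_i)_M$, hence $S_M\neq R_M$), so $M\in\mathrm{MSupp}(S/R)$; since $R\subseteq R_i$ is again infra-integral, $R_i/M_i\cong R/M\cong\mathbb Z/2\mathbb Z$. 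Consequently $\mathrm U((R_{i+1})_{M_i})=1+\mathrm J((R_{i+1})_{M_i})=1+M_i(R_i)_{M_i}=\mathrm U((R_i)_{M_i})$, and the step is SL. Reassembling the tower yields that $R\subseteq S$ is SL.

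I expect the main obstacle to be the structural input in the converse: justifying cleanly that a seminormal infra-integral FCP extension admits a tower consisting solely of decomposed minimal extensions — that is, that seminormality simultaneously rules out the ramified steps and infra-integrality the inert ones, compatibly with the choice of tower. This rests on the classification of minimal integral extensions and on the seminormalization/t-closure machinery; once it is in place, the unit computations and the localization reductions are routine.
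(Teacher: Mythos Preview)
Your forward implication is essentially the paper's argument: both invoke Proposition~\ref{5.32} and then pass to $R/\mathrm J(R)\subseteq S/\mathrm J(S)$, where the product-of-fields structure together with infra-integrality reduces the question to diagonal embeddings $R/M\hookrightarrow (R/M)^{k_M}$, and $k_M\geq 2$ on $\mathrm{MSupp}(S/R)$ forces $|\mathrm U(R/M)|=1$.

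Your converse, however, takes a genuinely different route. The paper argues globally and symmetrically with the forward direction: it shows $(R:S)$ is an intersection of maximal ideals of $S$, deduces $\mathrm J(R)=\mathrm J(S)$, and then compares $\mathrm U(R/\mathrm J(R))$ and $\mathrm U(S/\mathrm J(S))$ directly as products $\prod_i\mathrm U(R/M_i)^{n_i}$; the condition $R/M_i\cong\mathbb Z/2\mathbb Z$ on $\mathrm{MSupp}(S/R)$ makes the extra factors trivial, and Corollary~\ref{5.1} lifts SL back to $R\subseteq S$. This is short and self-contained. Your approach instead factors $R\subseteq S$ as a tower of minimal extensions and shows each is decomposed with crucial residue field $\mathbb Z/2\mathbb Z$, hence SL after localization, and concludes via Corollary~\ref{3.10}. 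This is correct: infra-integrality does pass to all subextensions and excludes inert steps; and for $T\in[R,S]$ one checks (working modulo $(R:S)$, where everything becomes a product of diagonal-type embeddings of fields) that $T\subseteq S$ remains seminormal, so no step can be ramified. Your tracking of $M_i\cap R$ into $\mathrm{MSupp}_R(S/R)$ and the unit computation $\mathrm U((R_{i+1})_{M_i})=1+M_i(R_i)_{M_i}=\mathrm U((R_i)_{M_i})$ are fine. The trade-off is that your argument is compositional and makes transparent \emph{why} $\mathbb Z/2\mathbb Z$ is the right residue field at the level of a single decomposed step (this is exactly Lemma~\ref{5.04}(1), proved after the theorem), at the cost of importing the structural fact---which you rightly flag---that a seminormal infra-integral FCP extension admits only decomposed minimal steps. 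The paper's proof avoids that input entirely.
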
 
\begin{proof} Assume that $R\subseteq S$ is SL. According to Proposition \ref{5.32}, we get that $\mathrm J(R)=\mathrm J(S)$ and $R\subseteq S$ is an FIP seminormal and infra-integral extension. Set $I:=J (S)=J(R)=\cap_{i=1}^nM_i=\cap[M_{i,j}\mid M_{i,j}\in\mathrm{Max}(S)]$, where $\mathrm{Max}(R)=\{M_i\}_{i=1}^n$ and $\mathrm{Max}(S)=\{M_ {i,j}\mid i=1,\ldots,n,\ M_ i=M_{i,j}\cap R$ for each $j\in\mathbb N_{n_i}\}$. By Remark \ref{5.10} (3), $R/I\subseteq S/I$ is SL. But $R/I\cong\prod_{i= 1}^nR/M_i$ and $S/I\cong\prod[S/M_{i,j}\mid M_{i,j}\in\mathrm{Max}(S)]$. Set $X_1:=\mathrm{MSupp}(S/R),\ X_2:=\mathrm{Max}(R)\setminus X_ 1,\ I_1:=\cap_{M\in X_1}M,\ I_2:=\cap_{M\in X_2}M,\ R_1:=R/I_1=\prod_ {M\in X_1}R/M$ and $R_2:=R/I_2=\prod_{M\in X_2}R/M$. Then, $R/I=R_ 1\times R_2$ because $I_1$ and $I_2$ are comaximal. It follows that $\mathrm U(R/I)=\mathrm U(R_1)\times\mathrm U(R_2)$. For $M_i\in X_2 $, there is a unique $M_{i,j}$ lying above $M_i$ and we have $R/M_i\cong S/M_{i,j}$, so that $R_2\cong S_2:=\prod_{M_i\in X_2}S/M_{i,j}$ and $\mathrm U(R_2)\cong\mathrm U(S_2)\ (*)$. Set $S_1:=\prod_{M_i\in X_1}S/M_{i,j}=\prod_{M_i\in X_1}[\prod_{j\in\mathbb N_i}S/M_{i,j}]$. Since $R\subseteq S$ is infra-integral and $M_i=M_{i,j}\cap R$ for each $j\in\mathbb N_{n_i}$, we get that $R/M_i\cong S/M_{i,j}$, so that $S_1 \cong\prod_{M_i\in X_1}(R/M_i)^{n_i}$. Whence $\mathrm U(S_1)\cong\prod_{M_i\in X_1}[\mathrm U(R/M_i)]^{n_i}\ (**)$. Since $R\subseteq S$ is SL and because of $(*)$, we get that $\mathrm U(R_1)\cong\mathrm U (S_1)\cong\prod_{M_i\in X_1}[\mathrm U(R/M_i)]^{n_i}$. But, for any $M_ i\in X_1$, we have $R_{M_i}\subset S_{M_i}$ seminormal and infra-integral, which gives that $n_i>1$ for any $M_i\in X_1$. Then, $\mathrm U(R_1)\cong\mathrm U(S_1)$ if and only if $|\mathrm U(R/M_i)|=1$ for any $M_i\in X_1$. Since $R/M_i$ is a field for any $M _i\in X_ 1$, it follows that $R/M_i\cong\mathbb Z/2\mathbb Z$ for each $M_i\in\mathrm{MSupp}(S/R)$.

Conversely, assume that $S$ is a semilocal ring and $R\subseteq S$ is a seminormal   infra-integral FIP extension such that $R/M\cong\mathbb Z/2\mathbb Z$ for each $M\in \mathrm{MSupp}(S/R)$. We can use the results and notation of the first part of the proof related to the sets $I_1,I_2,\ R_1,R_2$ and $S_1,S_2$. Since $R\subseteq S$ is  seminormal FIP with $S$ semilocal, we get that $(R:S)=\cap_{M_i\in X_1}M_i=\cap_{M_i \in X_1}M_{i,j}$ by \cite[Proposition 2.4]{SPLIT}. In addition, $\mathrm J(S)=\cap_{i=1}^n M_{i,j}=(R:S)\cap_{M_i\in X_2}M_{i,j}=(R:S)\cap_{M_i\in X_2}M_i=\mathrm J(R)$. Set $ I:=J(S)=J(R)$. We still have $\mathrm U(R/I)=\mathrm U(R_1)\times\mathrm U(R_2),\ \mathrm U(R_2)\cong\mathrm U(S_2),\ \mathrm U(S/I)=\mathrm U(S_1)\times\mathrm U (S_2)$ and $U(S_1)\cong\prod_{M_i\in X_1}[\mathrm U(R/M_i)]^{n_i}$. Since $R/M_i\cong\mathbb Z/2\mathbb Z$ and $R/M_i\cong S/M_{i,j}$ for any $M_i\in X_1$ and $M_ {i,j}$ lying over $M_i$, it follows that $|\mathrm U(R/M_i)|=1=|\mathrm U(S/M_{i,j})|$ for any $M_i\in X_1$, so that $\mathrm U(R_1)\cong\mathrm U(S_1)$, which yields that $R/I\subseteq S/I$ is SL and $R\subseteq S$ is SL by Corollary \ref{5.1}.
 \end{proof}
 
\begin{corollary}\label{5.5} A seminormal infra-integral FCP extension $R\subseteq S$ is SL if  $R/M\cong\mathbb Z/2\mathbb Z$ for each $M\in\mathrm{MSupp}(S/R)$.
  \end{corollary}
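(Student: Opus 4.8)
The plan is to reduce the assertion to the semilocal situation already handled by Theorem~\ref{5.4} and to assemble the local conclusions by means of Proposition~\ref{3.24}. That proposition guarantees that $R\subseteq S$ is SL as soon as $R_M\subseteq S_M$ is SL for every $M\in\mathrm{MSupp}(S/R)$, so first I would fix such an $M$ and concentrate on the localized extension $R_M\subseteq S_M$, knowing that for $M\notin\mathrm{MSupp}(S/R)$ one has $R_M=S_M$ and nothing needs to be checked.

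Next I would verify that all the hypotheses descend to $R_M\subseteq S_M$. Seminormality and infra-integrality are local properties, so $R_M\subseteq S_M$ remains seminormal and infra-integral, and FCP is preserved under localization. Since an integral FCP extension is module-finite, $S_M$ is a module-finite algebra over the local ring $R_M$; consequently $S_M$ is semilocal, its finitely many maximal ideals all lying over $MR_M$. Finally $R_M/MR_M\cong R/M\cong\mathbb Z/2\mathbb Z$, and the unique maximal ideal $MR_M$ of $R_M$ lies in $\mathrm{MSupp}(S_M/R_M)$ precisely because $M\in\mathrm{MSupp}(S/R)$; thus the residue-field condition of Theorem~\ref{5.4} holds for $R_M\subseteq S_M$.

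The one point demanding care — and the crux of the argument — is that Theorem~\ref{5.4} is phrased for FIP extensions, whereas we only assume FCP, and these two notions genuinely diverge once nontrivial residual field extensions are allowed. This is where the residue-field hypothesis does its work. Since $R\subseteq S$ is integral and FCP, $\mathrm{Supp}(S/R)=\mathrm{MSupp}(S/R)$ consists of finitely many maximal ideals, and by hypothesis the residue field at each of them is $\mathbb Z/2\mathbb Z$; infra-integrality moreover forces every residual extension to be an isomorphism, so every residue field occurring in $R_M\subseteq S_M$ equals $\mathbb Z/2\mathbb Z$. An integral FCP extension all of whose residual fields are finite admits only finitely many intermediate rings, since along any maximal chain each minimal step offers only finitely many choices over a finite residue field; hence $R_M\subseteq S_M$ has FIP.

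With FIP secured, Theorem~\ref{5.4} applies to the semilocal extension $R_M\subseteq S_M$, which is therefore SL. As $M\in\mathrm{MSupp}(S/R)$ was arbitrary, Proposition~\ref{3.24} delivers that $R\subseteq S$ is SL, completing the proof. I expect the passage from FCP to FIP, rather than the localization bookkeeping, to be the delicate step, and the fact that infra-integrality eliminates residual field extensions while the $\mathbb Z/2\mathbb Z$ hypothesis keeps every residue field finite is exactly what makes that passage go through.
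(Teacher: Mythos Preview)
Your argument is correct and follows a genuinely different path from the paper's. The paper works globally via the conductor: setting $I=(R:S)$, seminormality (via \cite[Proposition~2.4]{SPLIT}) gives that $I$ is an intersection of finitely many maximal ideals of $S$, so that $S/I$ is semilocal; then $R/I\subseteq S/I$ has FIP by \cite[Proposition~4.15]{DPP3}, Theorem~\ref{5.4} applies to $R/I\subseteq S/I$, and the shared-ideal lifting of Corollary~\ref{5.1} transports SL back to $R\subseteq S$. You instead localize at each $M\in\mathrm{MSupp}(S/R)$, verify the hypotheses of Theorem~\ref{5.4} there, and reassemble with Proposition~\ref{3.24}. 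Both reductions are valid; the paper's is slightly more economical (one quotient rather than a family of localizations, with a single citation disposing of FIP), while yours is self-contained and highlights that the statement is genuinely local on $\mathrm{MSupp}(S/R)$. Your FIP step, though correct in outcome, is the one place that could be stated more crisply: rather than counting choices along minimal steps, observe that seminormality forces $(R_M:S_M)=MR_M$ (it is a radical ideal of $S_M$, hence an intersection of maximal ideals of $S_M$, all lying over $MR_M$), whence $S_M/MR_M$ is a finite $\mathbb F_2$-algebra and $[R_M,S_M]\cong[\mathbb F_2,\,S_M/MR_M]$ is finite outright.
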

\begin{proof} Since $R\subseteq S$ is an FCP extension, it follows that $\mathrm {MSupp}(S/R)$ has finitely many elements. Set $I:=(R:S)$. Moreover, $R\subseteq S$ being seminormal infra-integral, $I$ is an intersection of finitely many maximal ideals of $ S$ by \cite[Proposition 2.4]{SPLIT}. It follows that the extension $R/I\subseteq S/I$ satisfies the assumptions of Theorem \ref{5.4} because $R/I\subseteq S/I$ has FIP by \cite[Proposition 4.15]{DPP3}, and is then SL. Now, it is enough to use Corollary \ref{5.1} to get that $R\subseteq S$ is SL.
\end{proof}

The following result did not seem to appear in earlier papers: we show that there is a seminormal infra-integral closure for FCP extensions. This will be useful to build a closure for SL extensions in an FCP extension.

\begin{lemma}\label{5.51} Let $R\subseteq S$ be an FCP extension and $T,V\in[R,S]$ be such that $R\subseteq T$ and $R\subseteq V$ are both seminormal infra-integral. Then, $R\subseteq TV$ is seminormal infra-integral.
  \end{lemma}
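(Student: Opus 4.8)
The plan is to verify the two properties of $R\subseteq TV$ separately, in each case by placing $TV$ inside the appropriate canonical closure and then descending to the subextension $R\subseteq TV$.

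First I would settle infra-integrality. Since $T$ and $V$ are integral over $R$, so is the compositum $TV$, the $R$-subalgebra they generate. The key observation is that infra-integrality passes to subextensions: if $R\subseteq B$ is infra-integral and $R\subseteq A\subseteq B$, then for $Q\in\mathrm{Spec}(A)$ one picks $Q'\in\mathrm{Spec}(B)$ lying over $Q$ (possible since $A\subseteq B$ is integral), and the tower of residual extensions $\kappa_R(Q\cap R)\subseteq\kappa_A(Q)\subseteq\kappa_B(Q')$ has isomorphic composite, forcing each arrow to be an isomorphism. Now ${}_S^tR$, the $t$-closure, is by definition the greatest infra-integral subextension of $R$ in $S$; since $R\subseteq T$ and $R\subseteq V$ are infra-integral we get $T,V\subseteq{}_S^tR$, hence $TV\subseteq{}_S^tR$, and therefore $R\subseteq TV$ is infra-integral by the descent just noted.

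The seminormality of $R\subseteq TV$ is the main obstacle, since $s$-closedness is in general not stable under forming composites. I would run the same ``closure, then descent'' scheme, but now with the $u$-closure. The point I would exploit is that the class of extensions that are \emph{simultaneously} seminormal and infra-integral is stable under subextension: if $R\subseteq B$ is seminormal and $R\subseteq A\subseteq B$, then any $c\in A$ with $p_0(c),cp_0(c)\in R$ already lies in $R$, so $R\subseteq A$ is seminormal (and infra-integral by the previous paragraph). Thus it suffices to exhibit one subextension $R\subseteq C$ with $TV\subseteq C$ that is seminormal and infra-integral, and I would take $C={}_S^uR$, invoking the structural identification, valid for FCP extensions, of the $u$-integral (anodal) extensions with the seminormal infra-integral ones. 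Granting that $R\subseteq{}_S^uR$ is seminormal infra-integral and is the greatest such subextension, the hypotheses give $T,V\subseteq{}_S^uR$, whence $TV\subseteq{}_S^uR$, and descent yields that $R\subseteq TV$ is seminormal infra-integral.

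The crux is therefore the identification of seminormal-plus-infra-integral with $u$-integral in the FCP setting, which is the one ingredient not already recorded in the generalities above. If one prefers to avoid invoking it, the alternative route I would pursue reduces to the local case, since both properties are local on $R$: assuming $(R,M)$ local, set $W:={}_{TV}^+R$, the seminormalization of $R$ in $TV$, so that $R\subseteq W$ is subintegral. Because $R\subseteq W\cap T$ is a subextension of the subintegral $R\subseteq W$, hence itself subintegral, and $R\subseteq T$ is seminormal, one gets $W\cap T={}_T^+R=R$, and likewise $W\cap V=R$; the remaining task is to force $W=R$. For an FCP extension this amounts, via the minimal-extension decomposition of the subintegral $R\subseteq W$, to excluding a ramified generator $b\in TV\setminus R$ with $b^2,b^3\in R$. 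Carrying out that exclusion is precisely the delicate compatibility of seminormalization with composites that makes this second route harder than the $u$-closure argument, which is why I would lead with the latter.
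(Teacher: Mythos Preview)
Your treatment of infra-integrality is correct and matches the paper's. The entire difficulty is the seminormality of $R\subseteq TV$, and here your main route has a gap precisely at the point you yourself flag as ``the crux.''

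You want to trap $TV$ inside ${}_S^uR$ and descend. The descent of seminormality along $R\subseteq A\subseteq B$ is fine, and the inclusion $T,V\subseteq{}_S^uR$ does follow from \cite[Theorem 5.9]{Pic 15} (seminormal infra-integral $\Rightarrow$ $u$-integral), which the paper itself invokes in the proof of Lemma~\ref{5.04}. What is missing is the other half of your ``identification'': you need $R\subseteq{}_S^uR$ to be \emph{seminormal}, equivalently that every $u$-integral FCP extension is seminormal. You neither prove this nor point to a precise reference, and the paper does not supply it either. Indeed, the authors state just before Lemma~\ref{5.51} that the existence of a seminormal infra-integral closure ``did not seem to appear in earlier papers''; were your identification on record, that closure would simply be ${}_S^uR$ and Proposition~\ref{5.52} would be immediate, so the authors---who wrote \cite{Pic 15}---evidently did not regard it as available. (Minor terminological slip: \emph{anodal} means $u$-closed, not $u$-integral.) Your alternative route via the seminormalization ${}_{TV}^+R$ you explicitly leave unfinished at the hard step, so it does not rescue the argument.

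The paper's proof avoids any such identification by a direct local computation. After localizing at $M\in\mathrm{MSupp}(TV/R)$ and writing $R'=R_M$, $M'=MR_M$, $T'=T_M$, $V'=V_M$, the seminormality of $R'\subseteq T'$ and $R'\subseteq V'$ forces $M'T'=M'V'=M'$ via \cite[Proposition 2.4]{SPLIT}, hence $M'T'V'=M'$ and $(R':T'V')=M'$. One then has $T'/M'\cong(R'/M')^n$ and $V'/M'\cong(R'/M')^m$, and the compositum $(T'V')/M'$ is identified with a finite power of the field $R'/M'$, which by \cite[Proposition 4.15]{DPP3} gives that $R'/M'\subseteq(T'V')/M'$, and hence $R_M\subseteq(TV)_M$, is seminormal infra-integral. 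This explicit structure computation is what replaces the unproved black box in your scheme.
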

\begin{proof} Since $R\subseteq T$ and $R\subseteq V$ are both  infra-integral, we have $T,V\in[R,{}_S^tR]$, giving $TV\in[R,{}_S^tR]$, so that $R\subseteq TV$ is infra-integral. 

We claim that $R\subseteq TV$ is seminormal. Let $M\in\mathrm{MSupp}(TV/R)$, so that $(TV)_M=T_MV_M\neq R_M$. In particular, either $T_M\neq R_M$ or $V_M\neq R _M$. In both cases, $R_M\subseteq T_M$ and $R_M\subseteq V_M$ are either seminormal or equality. Set $R':=R_M,\ T':=T_M,\ V':=V_M$ and $M':=MR_M$. According to \cite[Proposition 2.4]{SPLIT}, we get that $M'\subseteq(R':T')$ and $M' \subseteq(R':V')$, giving $M'T'=M'V'=M'\subseteq R'$, so that $M'T'V'=M'\subseteq R'$. It follows that $M'=(R':T'V')$. Set $M'=\cap_{i=1}^nM'_i=\cap_{j=1}^mN'_j\ (*)$, where the $M'_i$ are in $\mathrm{Max}(T')$ (resp. $N'_j$ are in $\mathrm{Max}(V')$) because $R' \subseteq T'$ and $R'\subseteq V'$ are either seminormal or equality. It follows that $T'/M'_i\cong R'/M'\cong V'/N'_j$ for each $i$ and $j$ since $R'\subseteq T'$ and $R' \subseteq V'$ are both infra-integral. In addition, $T'/M'\cong(R'/M')^n$ and $V'/M'\cong (R'/M')^m$ by $(*)$. This implies that $(T'V')/M'\cong(T'/M')(V'/M')\cong(R'/M')^{n+m}$ where $R'/M'$ is a field. According \cite[Proposition 4.15]{DPP3}, we get that $R'/M' \subseteq(T'V')/M'$ is seminormal infra-integral as $R'=R_M\subseteq T'V'=T_MV_M=(T V)_M$. Since this holds for any $M\in\mathrm{MSupp}(TV/R)$, we get that $R\subseteq TV$ is  seminormal infra-integral.  
\end{proof}
 
\begin{proposition}\label{5.52} Let $R\subseteq S$ be an FCP extension. There exists a greatest $T\in[R,S]$ such that $R\subseteq T$ is seminormal infra-integral. It satisfies the following properties:
 \begin{enumerate}
\item $T=\Pi[V\in[R,S]\mid R\subseteq V$ seminormal infra-integral$]$.
\item $T=\sup[V\in[R,S]\mid R\subseteq V$ seminormal infra-integral$]$.
\item $T=\cup[V\in[R,S]\mid R\subseteq V$  seminormal infra-integral$]$.
 \end{enumerate}
 \end{proposition}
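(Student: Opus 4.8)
The plan is to let $T$ be the greatest element of the family $\mathcal F:=\{V\in[R,S]\mid R\subseteq V\text{ is seminormal infra-integral}\}$ and then to deduce the three descriptions (1)--(3) directly from the extremality of $T$. First I would observe that $\mathcal F$ is nonempty: the trivial extension $R\subseteq R$ is vacuously seminormal and has identity residual extensions, hence is infra-integral, so $R\in\mathcal F$.

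Next I would produce a \emph{maximal} element of $\mathcal F$. Because $R\subseteq S$ is FCP, the lattice $[R,S]$ is Noetherian, so the nonempty subset $\mathcal F$ admits a maximal element $T$. The crucial step is then to upgrade maximality to being the greatest element, and this is exactly where Lemma \ref{5.51} enters. Given an arbitrary $V\in\mathcal F$, the compositum $TV$ again belongs to $\mathcal F$ by Lemma \ref{5.51}, and $T\subseteq TV$; maximality of $T$ forces $T=TV$, whence $V\subseteq T$. Since $V$ was arbitrary, $T$ is the greatest element of $\mathcal F$, establishing the main assertion.

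The three formulas then follow formally from the two facts that $T\in\mathcal F$ and that $V\subseteq T$ for all $V\in\mathcal F$. For the supremum and the compositum, assertions (2) and (1): every $V\in\mathcal F$ lies in $T$, so $T$ is an upper bound of $\mathcal F$ in $[R,S]$, and since $T$ itself is a member of $\mathcal F$ it is the least such upper bound, i.e.\ $T=\sup[V\in\mathcal F]$; likewise the compositum $\Pi[V\in\mathcal F]$ is contained in $T$ (as $T$ is a subring containing each $V$) and contains $T$ (as $T$ is one of the factors), so $T=\Pi[V\in\mathcal F]$. For the union, assertion (3): each $V\subseteq T$ gives $\cup[V\in\mathcal F]\subseteq T$, while $T\in\mathcal F$ gives the reverse inclusion $T\subseteq\cup[V\in\mathcal F]$; in particular this set-theoretic union is a ring precisely because it coincides with $T$.

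I expect no serious obstacle here: once Lemma \ref{5.51} is in hand, the only real content is the maximal-to-greatest upgrade in the second paragraph, where the closure of $\mathcal F$ under compositum is exactly what lets one compare an arbitrary element of $\mathcal F$ against a maximal one. The FCP hypothesis contributes only the Noetherianity of $[R,S]$ needed to guarantee that a maximal element exists to begin with.
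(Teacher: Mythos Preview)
Your proposal is correct and follows essentially the same approach as the paper: define $\mathcal F$, use FCP to obtain a maximal element, and invoke Lemma \ref{5.51} (closure of $\mathcal F$ under compositum) to upgrade maximality to being the greatest element, from which (1)--(3) are immediate. The only cosmetic difference is that the paper phrases the key step as ``$\mathcal F$ has a unique maximal element'' via a contradiction with two distinct maximal $V,V'$, whereas you argue directly that $TV=T$ for every $V\in\mathcal F$; these are equivalent.
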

\begin{proof} Set $\mathcal F:=\{V\in[R,S]\mid R\subseteq V$ seminormal infra-integral$\}$. The result is obvious if $\mathcal F=\{R\}$. So, assume that $\mathcal F\neq\{R\}$. Since $R\subseteq V$ is infra-integral for any $V\in\mathcal F$, it follows that $\mathcal F\subseteq[R,{}_S^tR]$. In particular, any $V\in\mathcal F$ is integral over $R$. Then, we may assume that $R\subseteq S$ is infra-integral. Since $R\subseteq S$ has FCP, $\mathcal F$ has maximal elements. We claim that $\mathcal F$ has only one maximal element. Otherwise, there exist $V,V',\ V\neq V'$ which are maximal elements of $\mathcal F$. Then, $VV'\not\in\mathcal F$. According to Lemma \ref{5.51}, we get that $ R\subseteq VV'$ is seminormal infra-integral, a contradiction. Then, $\mathcal F$ has only one maximal element. Let $T$ be this maximal element. Equalities (1), (2) and (3) follow obviously because $V\subseteq T$ for any $V\in\mathcal F$ and $T\in\mathcal F$.  
\end{proof}

There are four types of minimal extensions, but only two types are used in the paper, characterized in \cite [Theorems 2.1 and 2.2]{DPP2} and \cite[Proposition 4.5]{Pic 6}. 
We recall some results about minimal extensions: 

\begin{proposition}\label{5.521} (1) \cite[Theorem 2.1]{DPP2} Let $R\subseteq T$ be a minimal integral extension. Then, $M:=(R:T)\in\mathrm{Max}(R)$ and there is a bijection $\mathrm{Spec}(T)\setminus\mathrm{V}_T(M)\to\mathrm{Spec}(R)\setminus\{M\}$, with at most two maximal ideals of $T$ lying above $M$.

(2) Let $R\subseteq S$ be an FCP extension. Then, any maximal chain of $[R,S]$ results from juxtaposing finitely many minimal extensions.
 \end{proposition}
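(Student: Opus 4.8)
The plan is to treat part (2), since part (1) is recalled verbatim from \cite[Theorem 2.1]{DPP2} and requires no argument here. First I would fix an arbitrary maximal chain $\mathcal C$ in the lattice $[R,S]$ and invoke the FCP hypothesis to guarantee that $\mathcal C$ is finite: by the very definition of an FCP extension every chain in $[R,S]$ is finite, so $\mathcal C$, being totally ordered, can be written as a strictly increasing finite sequence $R=R_0\subset R_1\subset\cdots\subset R_n=S$ for some positive integer $n$.

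Next I would verify that every one-step inclusion $R_i\subset R_{i+1}$ is a minimal extension, that is, that $[R_i,R_{i+1}]=\{R_i,R_{i+1}\}$. This is precisely where the maximality of $\mathcal C$ enters: if some $T\in[R_i,R_{i+1}]$ satisfied $R_i\subset T\subset R_{i+1}$, then $\mathcal C\cup\{T\}$ would be a chain in $[R,S]$ strictly larger than $\mathcal C$, contradicting the maximality of $\mathcal C$. Hence no such $T$ exists, and $R_i\subset R_{i+1}$ is minimal for each $i\in\{0,\ldots,n-1\}$. Consequently $\mathcal C$ is exactly the juxtaposition of the $n$ minimal extensions $R_i\subset R_{i+1}$, which is the assertion.

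I do not expect a genuine obstacle here: the content of the statement is essentially the interplay of two definitions, namely that FCP forces finiteness of $\mathcal C$ and that a non-refinable (maximal) chain has, by its very construction, no proper intermediate algebra in any of its steps, which is precisely minimality in the sense of Ferrand--Olivier \cite{FO}. The only point requiring mild care is to record that $\mathcal C$, being totally ordered and finite, can indeed be indexed as the displayed strictly increasing sequence, so that the word ``juxtaposing'' is meaningful; the FCP hypothesis is used solely to supply this finiteness, while the minimality of the individual steps is a formal consequence of maximality alone.
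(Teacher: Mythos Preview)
Your proposal is correct and is precisely the unpacking of the paper's one-word proof ``Obvious.'' You have simply made explicit the two trivial observations---FCP forces the chain to be finite, and maximality of the chain forces each step to be minimal---so your argument and the paper's coincide.
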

 \begin{proof} Obvious.
 \end{proof}
 
\begin{definition}\label{5.06} (1) Let $R\subset T$ be an extension and $M:=(R: T)$. Then $R\subset T$ is minimal {\it decomposed} if and only if $M\in\mathrm{Max}(R)$ and there exist $M_1,M_2\in\mathrm{Max}(T)$ such that $M=M_1\cap M_2$ and the natural maps $R/M\to T/M_i$ for $i\in\{1,2\}$ are both isomorphisms. A minimal decomposed extension is seminormal infra-integral.

(2) A minimal extension $R\subset T$ is minimal {\it Pr\"ufer} if and only if $R\subset T$ is a flat epimorphism and there exists $M\in\mathrm{Max}(R)$ such that $MT=T$ with $ R_P\cong T_P$ for any $P\in\mathrm{Spec}(R),\ P\neq M$ \cite[Theorem 2.2]{FO}. In particular, there is a bijection $\mathrm{Spec}(T)\to\mathrm{Spec}(R)\setminus\{M\}$. 
\end{definition}
 
\begin{lemma}\label{5.04} Let $R\subseteq S$ be an FCP integral extension  where $S$ is semilocal. 
 \begin{enumerate}
\item Let $T\in]R,S]$ be such that $R\subset T$ is minimal. Then, $R\subset T$ is SL if and only if $R\subset T$ is minimal decomposed such that $M:=(R:T)$ satisfies $R/M\cong\mathbb Z/2\mathbb Z$.
\item Let $T\in]R,S]$. Then $R\subset T$ is SL if and only if $T\in[R,{}_S^uR]$ and $T/(R:T)\cong (\mathbb Z/2\mathbb Z)^m$ for some positive integer $m$.
\item Let $T,V\in[R,S]$ be such that $R\subset T$ and $R\subset V$ are SL. Then, $R\subset TV$ is SL.
 \end{enumerate}
 \end{lemma}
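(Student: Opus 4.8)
The plan is to reduce each statement to the residue-ring picture of Theorem \ref{5.4}, exploiting the fact that every member of $[R,S]$ is integral over $R$ and hence semilocal, so that Theorem \ref{5.4}, Proposition \ref{5.32} and Corollary \ref{5.30} all apply. The single computational fact driving everything is that a finite product $(\mathbb Z/2\mathbb Z)^m$ is a Boolean ring, so its unit group is trivial; consequently, whenever an extension $A\subseteq B$ shares its conductor $I=(A:B)$ with $B/I\cong(\mathbb Z/2\mathbb Z)^m$, the inclusion $A/I\subseteq B/I$ forces $\mathrm U(A/I)=\{1\}=\mathrm U(B/I)$, whence $A\subseteq B$ is SL by Corollary \ref{5.1}. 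I will use this as the ``if'' engine and Proposition \ref{5.32} together with Theorem \ref{5.4} as the ``only if'' engine.

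For (1), I would first note that if $R\subset T$ is SL then Proposition \ref{5.32} makes it seminormal and infra-integral; among the types of minimal integral extensions this rules out the inert type (residually nontrivial, hence not infra-integral) and the ramified type (subintegral, hence not seminormal), leaving only the decomposed type. Since a minimal decomposed extension has $\mathrm{MSupp}(T/R)=\{M\}$ with $M=(R:T)$, Theorem \ref{5.4} then yields $R/M\cong\mathbb Z/2\mathbb Z$. Conversely, a minimal decomposed extension with $R/M\cong\mathbb Z/2\mathbb Z$ is seminormal infra-integral (Definition \ref{5.06}) and FIP (it is minimal) with its unique supported residue field equal to $\mathbb Z/2\mathbb Z$, so Theorem \ref{5.4} gives SL; equivalently $T/M\cong(R/M)^2\cong(\mathbb Z/2\mathbb Z)^2$ and the conductor engine applies.

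For (2), the forward direction combines Corollary \ref{5.30} (which, since $T$ is $\mathrm J$-regular, gives $R\subseteq T$ u-integral, so $T={}_T^uR\subseteq{}_S^uR$ and $T\in[R,{}_S^uR]$) with the conductor computation: by Proposition \ref{5.32} and \cite[Proposition 2.4]{SPLIT} the conductor $I=(R:T)$ equals $\cap_N N$ over the finitely many maximal ideals $N$ of $T$ lying over $\mathrm{MSupp}(T/R)$, so $T/I\cong\prod_N T/N$; infra-integrality makes each $T/N\cong R/(N\cap R)$, and Theorem \ref{5.4} makes each such residue field $\mathbb Z/2\mathbb Z$, whence $T/I\cong(\mathbb Z/2\mathbb Z)^m$ with $m\geq 1$. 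The converse needs only the second hypothesis: if $T/(R:T)\cong(\mathbb Z/2\mathbb Z)^m$, the conductor engine above gives SL directly.

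For (3), I would use Lemma \ref{5.51} to see that $R\subseteq TV$ is seminormal and infra-integral, and then run the conductor engine once more. Localizing, $M\in\mathrm{MSupp}(TV/R)$ forces $(TV)_M=T_MV_M\neq R_M$, hence $M\in\mathrm{MSupp}(T/R)\cup\mathrm{MSupp}(V/R)$; since $R\subset T$ and $R\subset V$ are SL, Theorem \ref{5.4} (equivalently, part (2)) gives $R/M\cong\mathbb Z/2\mathbb Z$ for every such $M$. With $I=(R:TV)=\cap_N N$ over the maximal ideals of $TV$ above $\mathrm{MSupp}(TV/R)$ (again \cite[Proposition 2.4]{SPLIT}) and $TV/N\cong R/(N\cap R)\cong\mathbb Z/2\mathbb Z$ by infra-integrality, we get $TV/I\cong(\mathbb Z/2\mathbb Z)^m$, so $R/I\subseteq TV/I$ is SL and $R\subseteq TV$ is SL by Corollary \ref{5.1}. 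The main obstacle, and the place to be careful, is the conductor description: identifying $(R:T)$ and $(R:TV)$ with the intersection of the maximal ideals lying over the supports, which requires the seminormal infra-integral FIP structure together with \cite[Proposition 2.4]{SPLIT}, plus the localization identity $(TV)_M=T_MV_M$ used to control $\mathrm{MSupp}(TV/R)$.
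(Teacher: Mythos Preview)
Your proof is correct and follows essentially the same route as the paper: both reduce to Theorem \ref{5.4} (and its satellites Proposition \ref{5.32}, Corollary \ref{5.5}, Lemma \ref{5.51}) via the seminormal infra-integral structure and the conductor description from \cite[Proposition 2.4]{SPLIT}. The differences are cosmetic: in (1) you invoke the inert/ramified/decomposed trichotomy directly where the paper cites \cite[Proposition 4.5]{Pic 6}; in (2) you obtain u-integrality from Corollary \ref{5.30} (via $T$ semilocal $\Rightarrow$ $\mathrm J$-regular) where the paper cites \cite[Theorem 5.9]{Pic 15}; and in (3) you rerun the conductor computation where the paper simply quotes Corollary \ref{5.5}.

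One point worth flagging: your ``conductor engine'' (if $T/(R:T)\cong(\mathbb Z/2\mathbb Z)^m$ then both quotient unit groups are trivial, so Corollary \ref{5.1} gives SL) shows that the converse of (2) needs only the second hypothesis, whereas the paper uses both hypotheses to rebuild the seminormal infra-integral FIP picture and then reapply Theorem \ref{5.4}. Your shortcut is cleaner here, though of course the first hypothesis $T\in[R,{}_S^uR]$ is still required in the statement because the forward direction produces it.
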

\begin{proof} Since $R\subseteq S$ is an FCP integral extension such that $S$ is semilocal, any element of $[R,S]$ is semilocal. 

(1) Let $T\in[R,S]$ be such that $R\subset T$ is minimal SL. Then, $T$ is semilocal and $R\subset T$ satisfies conditions of Theorem \ref{5.4}, that $R\subset T$ is a seminormal infra-integral extension such that $R/M\cong\mathbb Z/2\mathbb Z$ for $M= (R:T)$ since $\mathrm{MSupp}(T/R)=\{M\}$. It follows that $R\subset T$ is minimal decomposed by \cite[Proposition 4.5]{Pic 6}.

Conversely, if $R\subset T$ is minimal decomposed such that $M:=(R:T)$ satisfies $R/M\cong\mathbb Z/2\mathbb Z$, then $R\subset T$ is seminormal infra-integral with $R/M\cong\mathbb Z/2\mathbb Z$ for $M\in\mathrm{MSupp}(T/R)$ by the previous reference. Then Theorem \ref{5.4} shows that $R\subset T$ is   SL.

(2) According to Theorem \ref{5.4}, $R\subseteq T$ is SL if and only if (i) and (ii)   hold where :

(i) $R\subseteq T$ is a seminormal infra-integral FIP extension.

(ii)  $R/M\cong \mathbb Z/2\mathbb Z$ for each $M\in \mathrm{MSupp}(T/R)$.

Assume first that $R\subseteq T$ is SL. By (i) and \cite[Theorem 5.9]{Pic 15}, we get that $R\subseteq T$ is u-integral, so that $T\in[R,{}_S^uR]$. Moreover, since $R\subseteq T$ is seminormal, it follows that $(R:T)=\cap[N_{i,j}\mid N_{i,j}\in\mathrm{Max}(T),\ N_{i,j}\cap R=M_i]$, where $\mathrm{MSupp}(T/R):=\{M_i\}_{i=1}^n$. Then, $T/(R: T)\cong T/(\cap N_{i,j})\cong\Pi(T/N_{i,j})\cong\Pi(R/M_i)^{n_i}$, where $n_i:=|\{N_{i,j}\in\mathrm{Max}(T)\mid N_{i,j}\cap R=M_i\}|$ and because of (i). Then, (ii) gives that $T/(R:T)\cong(\mathbb Z/2\mathbb Z)^m$ for some positive integer $m$.

Conversely, assume that $T\in[R,{}_S^uR]$ and $T/(R:T)\cong(\mathbb Z/2\mathbb Z)^ m$ for some positive integer $m$. Since $T\in[R,{}_S^uR]$, we get that $R\subset T$ is u-integral, and in particular infra-integral. Moreover, since $T/(R:T)\cong(\mathbb Z/2\mathbb Z)^m$, a product of 
 finitely many finite fields, $T/(R:T)$ is reduced, so that $(R:T)$ is an intersection of finitely many maximal ideals of $T$ (and $R$), and $R\subset T$ is seminormal by \cite[Proposition 2.4]{SPLIT}. Then, (i) holds
 because $T/(R:T)$ has finitely many elements, giving that $R\subseteq T$ has FIP. At last, $(R:T)$ is semi-prime, it is an intersection of the maximal ideals $N_{i,j}$ of $T$ lying above the maximal ideals $M_i$ of $R$ of $\mathrm{MSupp}(T/R)$. It follows that $R/M\cong\mathbb Z/2 \mathbb Z$ for each $M\in\mathrm{MSupp}(T/R)$ and (ii) holds. To conclude, $R\subseteq T$ is SL (we also can use Corollary \ref{5.5}).

(3) Let $T,V\in[R,S]$ be such that $R\subset T$ and $R\subset V$ are SL. Then $R\subset T$ and $R\subset V$ are both seminormal infra-integral, and so is $R\subset TV$ by Lemma \ref{5.51}. Moreover, $\mathrm{MSupp}(TV/R)=\mathrm{MSupp}(T/R)\cup\mathrm{MSupp}(V/R)$, so that any $M\in\mathrm{MSupp}(TV/R)$ is either in $\mathrm {MSupp}(T/R)$ or in $\mathrm{MSupp}(V/R)$, which implies that $R/M\cong\mathbb Z/2\mathbb Z$, whence $R\subseteq TV$ is SL by Corollary \ref{5.5}.
 \end{proof}
 
\begin{theorem}\label{5.05} Let $R\subseteq S$ be an FCP extension where $S$ is semilocal. There exists a greatest $T\in[R,S]$ such that $R\subseteq T$ is SL. It satisfies the following properties:
 \begin{enumerate}
\item $T=\Pi[V\in[R,S]\mid R\subseteq V$ seminormal infra-integral, $R/M\cong\mathbb Z/2\mathbb Z$ for any $M\in\mathrm{MSupp}(V/R)]$.
\item $T=\Pi[V\in[R,S]\mid R\subseteq V$  SL$]$.
\item $T=\sup[V\in[R,S]\mid R\subseteq V$  SL$]$.
\item $T=\cup[V\in[R,S]\mid R\subseteq V$  SL$]$.
 \end{enumerate}
 \end{theorem}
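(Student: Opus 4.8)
The plan is to study the family $\mathcal F := \{V \in [R,S] \mid R \subseteq V \text{ is SL}\}$, to show it is nonempty and upward directed for the compositum, and then to extract its greatest element from the Noetherian lattice $[R,S]$. This parallels the argument for Proposition \ref{5.52}.

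First I would reduce to the integral, semilocal setting. Since $R \subseteq S$ is FCP with $S$ semilocal, decomposing a maximal chain into minimal extensions (Proposition \ref{5.521}) and using the spectral behaviour of minimal integral and minimal Pr\"ufer extensions shows that $R$, and hence every $V \in [R,S]$, is semilocal; in particular the $t$-closure ${}_S^tR$ is module-finite and semilocal over $R$. By Proposition \ref{5.32}, every $V \in \mathcal F$ makes $R \subseteq V$ seminormal infra-integral, hence integral, so $\mathcal F \subseteq [R,{}_S^tR]$. Thus I may work inside the FCP integral extension $R \subseteq {}_S^tR$ with semilocal top, which is exactly the setting in which Lemma \ref{5.04} applies.

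Next I would establish that $\mathcal F$ is upward directed. Given $V,V' \in \mathcal F$, Theorem \ref{5.4} shows $R \subseteq V$ and $R \subseteq V'$ are seminormal infra-integral with $R/M \cong \mathbb Z/2\mathbb Z$ for each $M \in \mathrm{MSupp}(V/R) \cup \mathrm{MSupp}(V'/R)$. By Lemma \ref{5.51} the compositum $R \subseteq VV'$ is again seminormal infra-integral, and since $\mathrm{MSupp}(VV'/R) = \mathrm{MSupp}(V/R) \cup \mathrm{MSupp}(V'/R)$ the condition $R/M \cong \mathbb Z/2\mathbb Z$ persists on all of $\mathrm{MSupp}(VV'/R)$; Corollary \ref{5.5} then gives that $R \subseteq VV'$ is SL, so $VV' \in \mathcal F$. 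This is exactly Lemma \ref{5.04}(3), which I would simply invoke. As $R \in \mathcal F$, the family is nonempty and closed under pairwise compositum.

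Existence of the greatest element is then immediate: $[R,S]$ is Noetherian because $R \subseteq S$ is FCP, so the nonempty $\mathcal F$ has a maximal element $T$, and for any $V \in \mathcal F$ directedness gives $TV \in \mathcal F$ with $T \subseteq TV$, whence $TV = T$ and $V \subseteq T$; so $T$ is the greatest element of $\mathcal F$ and $R \subseteq T$ is SL. For the four descriptions, Theorem \ref{5.4} and Corollary \ref{5.5} show that for $V \in [R,S]$ the extension $R \subseteq V$ is SL iff it is seminormal infra-integral with $R/M \cong \mathbb Z/2\mathbb Z$ on $\mathrm{MSupp}(V/R)$, so the index families of (1) and (2) coincide; as each such $V$ satisfies $V \subseteq T$ while $T$ itself lies in the family, the compositum in (1) and (2) is $T$. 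The supremum in (3) equals $T$ since $T$ is greatest, and the union in (4) equals $T$ since the directed $\mathcal F$ has greatest element $T$, so $\cup\mathcal F = T \in \mathcal F$. The step I expect to require the most care is the reduction itself, namely verifying that $R$ and all intermediate rings are semilocal and that $\mathcal F \subseteq [R,{}_S^tR]$; once this is secured, directedness together with the Noetherian property of the lattice finishes the argument routinely.
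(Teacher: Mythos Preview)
Your proposal is correct and follows essentially the same route as the paper: reduce to the integral semilocal situation (the paper lands in $[R,\overline R]$ via Lemma~\ref{5.04}, you in $[R,{}_S^tR]$ via Proposition~\ref{5.32}, which is equivalent for the purpose at hand), then use closure of the SL family under compositum (Lemma~\ref{5.04}(3), resting on Lemma~\ref{5.51} and Theorem~\ref{5.4}/Corollary~\ref{5.5}) together with the Noetherian property of the FCP lattice to extract a unique maximal, hence greatest, element. The four descriptions then follow exactly as you indicate, and your identification of the semilocality reduction as the only point needing care matches the paper's own emphasis.
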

\begin{proof} 
 Let $V\in[R,S]$ be such that $R\subseteq V$ is SL. According to Lemma \ref{5.04}, we have $V\in[R,{}_S^uR]$ and $R\subset V$ is u-integral, and in particular $ V\in[R,\overline R]$, where $\overline R$ is the integral closure of $R\subseteq S$. As $ \overline R\subseteq S$ has FCP, any element of $[\overline R,S]$ is semilocal. It follows that we can assume that $R\subseteq S$ is an FCP integral extension such that $S$ is semilocal.
 
Setting $\mathcal F':=\{V\in[R,S]\mid R\subseteq V$ seminormal infra-integral, $R/M\cong\mathbb Z/2\mathbb Z$ for any $M\in\mathrm{MSupp}(V/R)]\}$ and using notation of Proposition \ref{5.52}, we get that $\mathcal F'\subseteq\mathcal F$ and $V\in\mathcal F'$ if and only if $R\subseteq V$ is SL by Theorem \ref{5.4}, because $R\subseteq V$ has FIP (see the proof of Lemma \ref{5.04}(2). Since $R\subseteq S$ has FCP, $\mathcal F'$ has maximal elements. Applying Lemmas \ref{5.51} and \ref{5.04}, we get that $\mathcal F'$ has only one maximal element $ T$ which is the greatest $V\in[R,S]$ such that $R\subseteq V$ is SL. Equations (1), (2), (3) and (4) follow obviously because $V\subseteq T$ for any $V\in\mathcal F'$ and since  $T\in\mathcal F'$.  
\end{proof}
 
\section{Some special cases of SL extensions}
 
In this last section, we characterize SL extensions satisfying another property at the same time. We recall that an extension $R\subseteq S$ is called {\it Boolean} if $[R,S]$ is a Boolean lattice, that is a distributive lattice such that each $T\in[R,S]$ has a complement $T'$ in $[R,S]$ (such that $T\cap T'=R$ and $TT'=S$) \cite{Pic 10}. 

\begin{corollary}\label{Bool} Let $R\subseteq S$ be an FCP SL extension  such that $S$ is semilocal and $|\mathrm{V}_S(MS)|=2$ for each $M\in\mathrm{MSupp}(S/R)$. Then $R\subseteq S$ is a Boolean extension.
 \end{corollary}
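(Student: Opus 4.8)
The plan is to reduce the whole lattice $[R,S]$ to a product of two-element lattices by passing to the quotient by the conductor. First I would invoke Theorem \ref{5.4}: since $R\subseteq S$ is SL with $S$ semilocal, the extension is seminormal, infra-integral, FIP, and $R/M\cong\mathbb Z/2\mathbb Z$ for every $M\in\mathrm{MSupp}(S/R)$; moreover $\mathrm J(R)=\mathrm J(S)$ by Proposition \ref{5.32}. Write $\mathrm{MSupp}(S/R)=\{M_1,\dots,M_n\}$, and for each $i$ let $M_{i,1},M_{i,2}$ be the maximal ideals of $S$ lying over $M_i$: the hypothesis $|\mathrm{V}_S(M_iS)|=2$ together with integrality guarantees exactly two such maximal ideals.

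Next I would set $I:=(R:S)$. Because $R\subseteq S$ is seminormal FIP with $S$ semilocal, \cite[Proposition 2.4]{SPLIT} gives $I=\cap_{i,j}M_{i,j}$; since $I\subseteq R$, contracting the $M_{i,j}$ shows that $I=\cap_i M_i$ inside $R$. As $I$ is an ideal of both $R$ and $S$, every $T\in[R,S]$ contains $I$, so the correspondence $T\mapsto T/I$ is a lattice isomorphism $[R,S]\cong[R/I,S/I]$. By the Chinese remainder theorem $R/I\cong\prod_{i}R/M_i\cong(\mathbb Z/2\mathbb Z)^n$ and $S/I\cong\prod_{i,j}S/M_{i,j}$; infra-integrality forces the residual isomorphisms $R/M_i\cong S/M_{i,j}\cong\mathbb Z/2\mathbb Z$, so $S/I\cong(\mathbb Z/2\mathbb Z)^{2n}$ and the embedding $R/I\hookrightarrow S/I$ is, block by block in $i$, the diagonal $\mathbb Z/2\mathbb Z\hookrightarrow(\mathbb Z/2\mathbb Z)^2$.

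Then I would decompose the lattice using the orthogonal idempotents $e_i\in R/I$ supported on the $i$-th block (these lie in $R/I$ since $R/I$ is the displayed finite product). Every $T\in[R/I,S/I]$ splits as $T=\prod_i e_iT$, which gives $[R/I,S/I]\cong\prod_i[\,R/M_i,\ S/(M_{i,1}\cap M_{i,2})\,]$. Each factor is the diagonal extension $\mathbb Z/2\mathbb Z\subseteq(\mathbb Z/2\mathbb Z)^2$; a direct check shows its only intermediate rings are the diagonal and the whole ring (any ring containing $(1,0)$ already contains $(0,1)=(1,1)-(1,0)$), so this factor is a minimal decomposed extension with two-element lattice $\mathbf 2$. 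Hence $[R,S]\cong\mathbf 2^{\,n}$, the Boolean algebra of subsets of an $n$-element set, with meet realised by $\cap$, join by the compositum, and the complement of $T$ given by the opposite set of blocks; this is distributive and complemented, so $R\subseteq S$ is Boolean.

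The main obstacle is the careful bookkeeping in the second and third steps: justifying $[R,S]\cong[R/I,S/I]$ (which hinges on $I$ being a common ideal, so that every intermediate ring contains it) and then checking that intermediate rings of a finite product really decompose coordinatewise via the idempotents, so that the product lattice identity $[R/I,S/I]\cong\prod_i[\,R/M_i,\ S/(M_{i,1}\cap M_{i,2})\,]$ holds. Once the product decomposition is in place, Booleanness is immediate, because a finite product of two-element chains is a Boolean lattice.
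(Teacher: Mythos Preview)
Your argument is correct but follows a different route from the paper. The paper localizes: from Proposition~\ref{5.32} the extension is seminormal infra-integral FIP, hence so is each $R_M\subseteq S_M$; the hypothesis $|\mathrm V_S(MS)|=2$ forces $R_M\subseteq S_M$ to be minimal decomposed by \cite[Lemma 5.4]{DPP2}, hence Boolean by \cite[Lemma 3.27]{Pic 10}, and then \cite[Proposition 3.5]{Pic 10} globalizes to conclude that $R\subseteq S$ is Boolean. You instead pass to the quotient by the conductor $I=(R:S)$, use the Chinese remainder theorem and the idempotents of $R/I$ to split $[R/I,S/I]$ as a product of lattices $[R/M_i,\,S/(M_{i,1}\cap M_{i,2})]$, and identify each factor with the two-element chain. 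The paper's argument is shorter because it outsources the lattice work to \cite{Pic 10}; your argument is more self-contained and yields the explicit isomorphism $[R,S]\cong\mathbf 2^{\,n}$, but note that the identification $R/M_i\cong\mathbb Z/2\mathbb Z$ you extract from Theorem~\ref{5.4} is not actually needed for the Boolean conclusion: all that matters is that each factor extension is minimal, which already follows from seminormal infra-integral together with exactly two primes above.
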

\begin{proof} Since $R\subseteq S$ is an SL extension such that $S$ is semilocal, Proposition \ref{5.32} gives that $R\subseteq S$ is a seminormal infra-integral FIP extension, and so is $R_M\subseteq S_M$ for each $M\in\mathrm{MSupp}(S/R)$. But $|\mathrm{V}_S(MS)|=2$ for each $M\in\mathrm{MSupp}(S/R)$ implies that $R_M\subseteq S_M$ is minimal decomposed by \cite[Lemma 5.4]{DPP2} and then Boolean by \cite[Lemma 3.27]{Pic 10}, from which we can infer that $R\subseteq S$ is a Boolean extension by \cite[Proposition 3.5]{Pic 10}.
\end{proof}

\begin{remark}\label{5.6} The SL criteria of Theorem \ref{5.4} (resp. Corollary \ref{5.5}) may hold even if $S$ (resp. $S/(R:S)$) is not a semilocal ring. 
 
A weaker condition is gotten with the following example. Take for instance $R:=\prod_{i\in\mathbb N}R_i$ where $R_i\cong\mathbb Z/2\mathbb Z$ for each $i\in\mathbb N$ and set $S:=R^2$. According to \cite[Proposition 1.4]{Pic 9}, $R\subseteq S$ is  seminormal infra-integral but not FCP with $(R:S)=0$, so that $S/(R:S)$ is not semilocal. But since $|\mathrm U(R)|=|\mathrm U(S)|=1$,  $R\subseteq S$ is SL.
\end{remark}

\begin{example}\label{5.7} (1) In Number Theory, we can find a lot of SL extensions $R\subseteq S$ that are not FCP and $S$ is neither semilocal nor regular. Let $K:=\mathbb Q(\sqrt{-d})$, where $d$ is a square-free positive integer such that $d\neq 1,3$. By \cite[10.2, {\bf D}, p.169]{Ri}, $\mathrm U(A)=\{1,-1\}$, where $A$ is the ring of integers of $K$. As $\mathrm U(\mathbb Z)=\{1,-1\}$, we get that $\mathrm U (\mathbb Z)=\mathrm U(R)=\mathrm U(A)$, for any $R\in[\mathbb Z,A]$, so that $\mathbb Z\subseteq R$ and $R\subseteq A$ are SL for any $R\in[\mathbb Z, A]$. Of course, $\mathbb Z\subseteq A$ is an integral extension which has not FCP since $(\mathbb Z:A)=0$ \cite[Theorem 4.2]{DPP2} and $A$ is neither semilocal nor  regular. This also holds for a ring of integers $R$ with integral closure $A$. By Dirichlet's Theorem \cite[Theorem 1, page 179]{Ri}, these are the only cases where the ring of algebraic integers $A$ of an algebraic number field is such that $\mathbb Z\subseteq A$ is SL.

(2) We can say more considering a particular situation of (1). Let $d:=5$. Then, $A=\mathbb Z+\mathbb Z\sqrt{-5}$ by \cite[5.4, {\bf V}, p.97]{Ri} and 5 is ramified in $K$ \cite[11.2, {\bf K}, p.199]{Ri}. This means that $5A=M^2$, where $M\in\mathrm{Max}(A)$. Set $P:=5A$ and $R:=\mathbb Z+5A$. Then, $R\in[\mathbb Z,A]$ with $R\subseteq A$ SL by (1). We have $P=(R:A)\in\mathrm{Max}(R)$ because $R/P=(\mathbb Z+5A)/5A\cong\mathbb Z/(\mathbb Z\cap 5A)=\mathbb Z/5\mathbb Z$ which is a field. Since $P= M^2$, it follows that $M$ is the only maximal ideal of $A$ lying above $P$. Then, $A_P$ is a local ring. Assume that $R_P\subseteq A_P$ is SL. Then, Example \ref{exam}(8) says that $R_P=A_P$, a contradiction with $P=(R:A)\in\mathrm{MSupp}(A/R)$. 

This example shows that Proposition \ref{3.24} has no converse: an extension $R\subseteq S$ may be SL with $R_M\subseteq S_M$ not SL for some $M\in\mathrm{MSupp}(S/R)$. 
\end{example}

Let $R$ be a ring and $F$ its {\it prime subring}, that is $F:=\mathbb Z/\ker(c)$, where $c:\mathbb Z\to R$ is the ring morphism defined by $c(x):=x1_R$. Then, either  $F=\mathbb Z$ or $F=\mathbb Z/n\mathbb Z$ where $n$ is the least positive integer such that $n1_R=0_R$. 

\begin{proposition}\label{6.1} Let $R$ be a ring with $F$ its prime subring. Then $F[\mathrm U(R)]\to R$  is SL.  
\end{proposition}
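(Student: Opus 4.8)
The plan is to prove the statement by unwinding the definition of SL directly, since the inclusion $F[\mathrm U(R)]\subseteq R$ is exactly the situation to which the definition applies (the associated morphism is injective, so being SL as a morphism coincides with $\mathrm U(F[\mathrm U(R)])=\mathrm U(R)$). Write $T:=F[\mathrm U(R)]$ for the subring of $R$ generated by the prime subring $F$ together with the set $\mathrm U(R)$ of units; note that $F$, being the image of $\mathbb Z$ in $R$, is a genuine unital subring, so $T$ is a well-defined unital subring of $R$ and the extension $T\subseteq R$ makes sense. The goal is then simply $\mathrm U(T)=\mathrm U(R)$.

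First I would record the inclusion $\mathrm U(T)\subseteq\mathrm U(R)$, which holds for any subring: if $x\in\mathrm U(T)$ there is $y\in T\subseteq R$ with $xy=1$, whence $x\in\mathrm U(R)$. For the reverse inclusion the definition of $T$ does all the work. By construction $\mathrm U(R)\subseteq T$, and since the inverse of a unit of $R$ is again a unit of $R$, any $x\in\mathrm U(R)$ has both $x$ and $x^{-1}$ lying in $T$; as the relation $xx^{-1}=1$ already holds inside $T$, this shows $x\in\mathrm U(T)$. Combining the two inclusions gives $\mathrm U(T)=\mathrm U(R)$, so $F[\mathrm U(R)]\to R$ is SL.

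I do not expect any genuine obstacle here: the result is immediate from the observation that adjoining \emph{all} units of $R$ produces a subring whose unit group already contains $\mathrm U(R)$, the inverses being automatically present because they are themselves units. The only point worth keeping explicit in the write-up is that $1_R\in F\subseteq T$, so that $T$ is unital and the one-line verification of both inclusions is legitimate.
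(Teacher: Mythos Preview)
Your proof is correct and is essentially the same as the paper's: the paper simply cites Example~\ref{exam}(1a), which records (without further argument) that for any extension $R\subseteq S$ the subring $R[\mathrm U(S)]$ is SL in $S$; your write-up is precisely the one-line verification of that observation specialised to $F\subseteq R$.
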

\begin{proof} Use Example \ref{exam}(1a).
\end{proof}

\begin{corollary}\label{6.2} If $R$ is a local ring with prime subring $F$, then $F[\mathrm U(R)]=R$. If, in addition, $\mathrm U(R)$ is a finitely generated group, $R$ is Noetherian. 
\end{corollary}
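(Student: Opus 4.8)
The plan is to deduce both assertions from results already in hand. For the first statement, I would begin by applying Proposition \ref{6.1} to the ring $R$: it tells us precisely that the extension $F[\mathrm U(R)]\subseteq R$ is SL, i.e. $\mathrm U(F[\mathrm U(R)])=\mathrm U(R)$. Now the hypothesis is that $(R,M)$ is a local ring, and $R$ is exactly the \emph{larger} ring of this SL extension. This is the situation covered by Example \ref{exam}(8), which asserts that an SL extension is trivial whenever its top ring is local. Applying it with the subring $F[\mathrm U(R)]$ in the role of the smaller ring yields $F[\mathrm U(R)]=R$.

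For the second statement, the point is that the identity $R=F[\mathrm U(R)]$ just obtained exhibits $R$ as a finitely generated algebra over its prime subring $F$. By the description of $F$ recalled just before the corollary, $F$ is either $\mathbb Z$ or $\mathbb Z/n\mathbb Z$, and in both cases $F$ is Noetherian. Assume $\mathrm U(R)$ is generated as a group by $u_1,\ldots,u_k$. Since $R$ is commutative, $\mathrm U(R)$ is abelian and every unit is an integer-power product $u_1^{n_1}\cdots u_k^{n_k}$; moreover each $u_i^{-1}$ is itself a unit and hence already lies in $R$. Consequently $R=F[u_1,u_1^{-1},\ldots,u_k,u_k^{-1}]$ is a finitely generated $F$-algebra (a quotient of a Laurent polynomial ring in $k$ variables over $F$). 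The Hilbert basis theorem then guarantees that $R$ is Noetherian.

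The argument is essentially a concatenation of earlier results, so I do not expect a genuine obstacle. The only two points that merit care are: first, verifying that the hypotheses of Example \ref{exam}(8) are indeed met, namely that $R$ (not the subring) is the local ring, so that the triviality conclusion forces $F[\mathrm U(R)]=R$; and second, making explicit that ``$\mathrm U(R)$ finitely generated as a group'' translates into ``$R$ finitely generated as an $F$-algebra'', which is legitimate only because the inverses of the chosen group generators are again units and therefore automatically belong to $R=F[\mathrm U(R)]$.
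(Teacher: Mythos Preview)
Your proof is correct and follows essentially the same route as the paper: Proposition~\ref{6.1} together with Example~\ref{exam}(8) give $F[\mathrm U(R)]=R$, and then finite generation of $\mathrm U(R)$ yields $R=F[u_1^{\pm 1},\ldots,u_k^{\pm 1}]$, which is Noetherian by Hilbert's basis theorem since $F$ is either $\mathbb Z$ or $\mathbb Z/n\mathbb Z$. Your write-up is slightly more detailed in justifying the hypotheses, but there is no substantive difference in strategy.
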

\begin{proof} By Proposition \ref{6.1} and Example \ref{exam}(8) $F[\mathrm U(R)]= R$.

Now, assume that $\mathrm U(R)$ is a finitely generated group. Let $\{x_1,\ldots,x_n\}$ be a system of generators of $\mathrm U(R)$, so that any $x\in\mathrm U(R)$ is of the form $x=\prod_{i=1}^nx_i^{n_i}$, with $n_i\in\mathbb Z$ for each $i\in\mathbb N_n$. Then, $R=F[\mathrm U(R)]$ is an $F$-algebra generated by $\{x_i,x_i^{-1}\mid i\in \mathbb N_n\}$, where either $F=\mathbb Z$ or $F=\mathbb Z/m\mathbb Z$, and so is Noetherian.
\end{proof}

\begin{proposition}\label{6.3} Let $S$ be a ring with prime subring $F$.
\begin{enumerate}
\item Let $\Sigma$ be a saturated multiplicatively closed subset of $S$ and set $R:=F[\Sigma]$. Then, $R_{\Sigma}\subseteq S_{\Sigma}$ is SL and so is  $R\subseteq S$. 

If, in addition, $S_{\Sigma}$ is regular, then $R_{\Sigma}\subseteq S_{\Sigma}$ is u-integral, infra-integral,  seminormal, quadratic and $R_{\Sigma}$ is regular.
\item Assume that $\Sigma$ is the set of regular elements of $S$.
\begin{enumerate}
\item Then $R_{\Sigma}\subseteq \mathrm{Tot}(S)$ is SL.
\item If, in addition, $\mathrm{Tot}(S)$ is regular, so is $R_{\Sigma}$.
\item If $S$ has few zerodivisors, then $\mathrm{Tot}(S)$ is semilocal as $R_{\Sigma}$.
\end{enumerate}
\end{enumerate}
 \end{proposition}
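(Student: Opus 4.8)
The plan is to reduce all six assertions to a single computation, namely that the units of $S_{\Sigma}$ already lie inside the subring $R_{\Sigma}$, and then to feed this into Propositions \ref{3.25}, \ref{5.3} and \ref{5.32}. First I would record the elementary consequence of saturatedness: since $\Sigma$ is a saturated multiplicatively closed subset of $S$, an element $s/\sigma\in S_{\Sigma}$ (with $\sigma\in\Sigma$) is a unit if and only if $s/1$ is a unit, if and only if $s$ lies in the saturation of $\Sigma$, which is $\Sigma$ itself. Now the point of taking $R=F[\Sigma]$ is precisely that $\Sigma\subseteq R$, so such a numerator $s$ and the denominator $\sigma$ both lie in $R$, whence $s/\sigma$ together with its inverse $\sigma/s$ already belongs to $R_{\Sigma}$. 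This gives $\mathrm U(S_{\Sigma})\subseteq\mathrm U(R_{\Sigma})$, and the reverse inclusion is automatic from the extension $R_{\Sigma}\subseteq S_{\Sigma}$ (obtained by localizing the injection $R\subseteq S$ at $\Sigma$, which stays injective by flatness). Hence $R_{\Sigma}\subseteq S_{\Sigma}$ is SL.

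To descend this to $R\subseteq S$ I would verify the hypotheses of Proposition \ref{3.25}: $\Sigma$ is saturated multiplicatively closed in $R$ as well, which is immediate because $R\subseteq S$ and $\Sigma$ is saturated in $S$ (if $a,b\in R$ with $ab\in\Sigma$ then $a,b\in\Sigma\subseteq R$). Applying Proposition \ref{3.25} then yields that $R\subseteq S$ is SL. The ``in addition'' clause of~(1) is a direct citation: since $R_{\Sigma}\subseteq S_{\Sigma}$ is SL with $S_{\Sigma}$ regular, Proposition \ref{5.3} gives at once that this extension is u-integral, infra-integral, seminormal and quadratic and that $R_{\Sigma}$ is regular.

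For part~(2) I would observe that the set of regular elements of $S$ is always saturated and multiplicatively closed, and that $S_{\Sigma}=\mathrm{Tot}(S)$ by definition; thus (2a) is exactly the first assertion of~(1), and (2b) is the regularity conclusion of the ``in addition'' clause of~(1) applied to $R_{\Sigma}\subseteq\mathrm{Tot}(S)$. For (2c) I would use the standard characterization that a ring with few zerodivisors has semilocal total ring of fractions: the maximal ideals of $\mathrm{Tot}(S)=R_{\Sigma}$ correspond to the finitely many maximal primes whose union is $\mathrm Z(S)$, so $\mathrm{Tot}(S)$ is semilocal; then Proposition \ref{5.32}, applied to the SL extension $R_{\Sigma}\subseteq\mathrm{Tot}(S)$ with semilocal top ring, forces $R_{\Sigma}$ to be semilocal as well. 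The only genuinely delicate step is the first paragraph, where saturatedness of $\Sigma$ must be invoked twice — once to identify which elements of $S$ become invertible in $S_{\Sigma}$, and once to see that the resulting fractions fall inside $R_{\Sigma}$; all subsequent steps are bookkeeping invocations of the earlier results, so I expect no serious obstacle beyond setting up the localizations and checking the compatibility hypotheses of Proposition \ref{3.25}.
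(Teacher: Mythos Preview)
Your proof is correct and follows essentially the same route as the paper: the core computation that $\mathrm U(S_{\Sigma})=\{a/s\mid a,s\in\Sigma\}\subseteq R_{\Sigma}$ via saturatedness, followed by Proposition~\ref{3.25} and Proposition~\ref{5.3}, is exactly what the paper does. The only cosmetic differences are a typo (you wrote $\mathrm{Tot}(S)=R_{\Sigma}$ where you meant $S_{\Sigma}$) and that in~(2c) you invoke Proposition~\ref{5.32} directly for semilocality of $R_{\Sigma}$, whereas the paper phrases it via integrality of $R_{\Sigma}\subseteq S_{\Sigma}$---both routes amount to the same underlying fact.
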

\begin{proof} (1) Let $x=a/s\in S_{\Sigma}$, with $a\in S$ and $s\in\Sigma$. Then, $x\in \mathrm U(S_{\Sigma})\Leftrightarrow$ there exists $y=b/t\in S_{\Sigma}$, with $b\in S$ and $t\in\Sigma$ such that $xy=1\ (*)$. Now, $(*)\Leftrightarrow ab/st=1\Leftrightarrow$ there exists $u\in\Sigma$ such that $uab=ust\in\Sigma$. It follows that $a\in\Sigma$ and $\mathrm U(S_{\Sigma})=\{a/s\in S_{\Sigma}\mid a,s\in\Sigma\}$. Then, $\mathrm U(S_ {\Sigma})\subseteq R_{\Sigma}$, with obviously $\mathrm U(S_{\Sigma})\subseteq\mathrm U(R_{\Sigma})\subseteq\mathrm U(S_{\Sigma})$ giving $\mathrm U(R_ {\Sigma})=\mathrm U(S_{\Sigma})$. Then $R_{\Sigma}\subseteq S_{\Sigma}$ is  SL, and so is  $R\subseteq S$ by Proposition \ref{3.25}. 

Assume, in addition, that $S_{\Sigma}$ is regular. Then, according to Proposition \ref{5.3}, $R_{\Sigma}\subseteq S_{\Sigma}$ is u-integral, infra-integral, seminormal, quadratic and $R_{\Sigma}$ is regular.

(2) Now, $\Sigma$ is the set of regular elements of $S$. Then $\mathrm{Tot}(S)=S_ {\Sigma} $. 

(a) and (b) follow from (1). 

(c) If $S$ has few zerodivisors, then $\mathrm Z(S)=\cup_{i=1}^nP_i$ is a finite union of prime ideals of $S$ and $\Sigma=S\setminus\mathrm Z(R)$ gives that $S_{\Sigma}=\mathrm{Tot}(S)$ has finitely many maximal ideals, so that $\mathrm{Tot}(S)$ is semilocal as $R_{\Sigma}$ since $R_{\Sigma}\subseteq S_{\Sigma}$ is integral. 
\end{proof}
 
In \cite{AC}, D. D. Anderson and S. Chun introduced strongly inert extensions 
 and related extensions in the following way: a ring extension $R\subseteq S$ is {\it strongly inert} (resp. {\it weakly strongly inert}) if for nonzero $a,b\in S$, then $ab\in R$ (resp. $ab\in R\setminus \{0\}$) 
implies $a,b\in R$. 

The link between strongly inert and SL extensions has been noticed by Anderson and  Chun and gives many examples of SL extensions.
 
 \begin{proposition} \label{7.1} Let $R\subseteq S$ be a strongly inert extension. The following properties hold:
 \begin{enumerate}
\item\cite[Proposition 3.1 (2) and Theorem 3.2]{AC} $R\subseteq S$ is SL and either $R=S$ or $R$ and $S$ are integral domains.
 \item If $R\neq S$, then  $0$ is the only proper ideal  shared by $R$ and $S$.
 \item If $S$ is $\mathrm J$-regular, then $R=S$.
  \end{enumerate}
 \end{proposition}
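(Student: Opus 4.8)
The statement has three parts, the first of which is quoted from \cite{AC}; I take (1) as given and concentrate on (2) and (3), both of which I plan to deduce from (1) together with the strong inertness hypothesis and the results of Section 5. The guiding observation is that, once (1) guarantees that $R$ and $S$ are integral domains as soon as $R\neq S$, the defining multiplicative property of strongly inert extensions becomes a powerful tool, and the shared-ideal statement (2) feeds directly into (3).

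For (2), I would assume $R\neq S$, so that by (1) both $R$ and $S$ are integral domains, and then show that any \emph{nonzero} ideal $I$ shared by $R$ and $S$ forces $R=S$. Since $I$ is an ideal of $R$ it is contained in $R$, and since it is an ideal of $S$ it is stable under multiplication by elements of $S$. Pick $0\neq a\in I$ and an arbitrary nonzero $s\in S$; then $sa\in I\subseteq R$, and $sa\neq 0$ because $S$ is a domain. Applying strong inertness to the nonzero elements $s,a$ with $sa\in R$ yields $s\in R$. As $s$ ranges over all nonzero elements of $S$, this gives $S\subseteq R$, hence $R=S$, a contradiction. Thus no nonzero shared ideal exists when $R\neq S$, which is exactly the assertion that $0$ is the only proper shared ideal.

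For (3), I would suppose $S$ is $\mathrm J$-regular and, towards a contradiction, that $R\neq S$; then (1) tells me that $R$ and $S$ are domains and that $R\subseteq S$ is SL. By Remark \ref{5.10}(1) the Jacobson radical $\mathrm J(S)$ satisfies $\mathrm J(S)\subseteq\mathrm J(R)\subseteq R$ and is an ideal shared by $R$ and $S$. I then split on whether $\mathrm J(S)$ vanishes. If $\mathrm J(S)\neq 0$, it is a nonzero proper ideal shared by $R$ and $S$, contradicting part (2). If $\mathrm J(S)=0$, then $S=S/\mathrm J(S)$ is regular, hence reduced and zero-dimensional; being also a domain, $S$ is a field, so $(S,0)$ is local and Example \ref{exam}(8) forces the SL extension $R\subseteq S$ to be trivial, again contradicting $R\neq S$. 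In either case I reach a contradiction, so $R=S$.

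I expect the only delicate points to be purely organizational: in (2), recording that a shared ideal is automatically contained in $R$ and closed under multiplication by $S$, which is precisely what permits the invocation of strong inertness; and in (3), citing Remark \ref{5.10}(1) to confirm that $\mathrm J(S)$ really lies in $R$ and is an ideal of $R$, so that part (2) applies to it. Both are immediate once the SL property from (1) is in hand, so I anticipate no substantial obstacle beyond correctly assembling these earlier results in the right order.
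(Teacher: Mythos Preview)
Your proof of (2) is correct and essentially identical to the paper's: both pick a nonzero $a\in I$, multiply by an element of $S\setminus R$ (you do it for every nonzero $s\in S$, the paper for a single $x\in S\setminus R$), and use strong inertness to derive a contradiction.

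Your proof of (3) is also correct but follows a genuinely different route from the paper. The paper invokes Theorem~\ref{5.2} to conclude that $R\subseteq S$ is integral, and then cites \cite[Theorem~3.5(7)]{AC} to the effect that every element of $S\setminus R$ is transcendental over $R$, forcing $S\setminus R=\varnothing$. You instead feed part~(2), which you just proved, back into the argument: Remark~\ref{5.10}(1) makes $\mathrm J(S)$ a shared ideal, so (2) forces $\mathrm J(S)=0$, whence $S$ is a regular domain, hence a field, and Example~\ref{exam}(8) finishes. Your approach has the advantage of being self-contained within the paper (no second appeal to \cite{AC}) and of actually using (2), which the paper's proof of (3) does not; the paper's approach, on the other hand, avoids the case split and yields the slightly stronger intermediate fact that the extension is integral.
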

\begin{proof} (2) Assume that $R\neq S$, so that $R$ and $S$ are integral domains. Let $I$ be a proper ideal shared by $R$ and $S$. We claim that $I=0$. Otherwise, there exists some $a\in I,\ a\neq 0$. Since $R\subseteq S$ is SL and $R\neq S$, there exists some $x\in S\setminus R$ with $x\neq 0 $. Then, $ax\in I\subseteq R$. It follows that $a x$ is in $R$. Since $R\subseteq S$ is strongly inert, this implies $x\in R$, a contradiction. Then, $0$ is the only proper ideal shared by $R$ and $S$. 
 
(3) Since $R\subseteq S$ is strongly inert, $R\subseteq S$ is SL. If, moreover, $S$ is $\mathrm J$-regular, then, by Theorem \ref{5.2}, $\mathrm J(R)=\mathrm J(S)$ and $R\subseteq S$ is integral. But, \cite[Theorem 3.5 (7)]{AC} says that any element of $S\setminus R$ is transcendental over $R$. It follows that $R=S$.   
  \end{proof}
 
\begin{remark} \label{7.2} The strongly local property is weaker than the strongly inert property in the following way: Let $R\subseteq S$ be SL and $a,b\in S$ be such that $ab\in\mathrm U(R)$. Then $ab\in\mathrm U(S)$ which implies that $a,b\in\mathrm U(S)=\mathrm U(R)$, so that $a,b\in R$.

But, in case $R$ is a field, the following Corollary shows that the notions of SL extension and of weakly strongly inert extension are equivalent.
\end{remark}

\begin{corollary} \label{7.8} Let $K\subseteq S$ be an extension where $K$ is a field. The following properties holds:
\begin{enumerate}
\item $K\subseteq S$ is weakly strongly inert if and only if $K\subseteq S$ is SL.
\item If, in addition, $S$ is an integral domain, then $K\subseteq S$ is strongly inert if and only if $K\subseteq S$ is SL. In this case,  $K\subseteq S$ is algebraically closed.
\item If $K\subseteq S$ is SL, then $K\subseteq S$ is 
 an FCP extension if and only  if either $K=S$ or 
$K\cong\mathbb Z/2\mathbb Z$ and $S\cong K^n$ for some integer $n$. 
\end{enumerate}
\end{corollary}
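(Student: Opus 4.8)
The plan is to treat the three assertions in turn, using throughout that $K\setminus\{0\}=\mathrm U(K)$ since $K$ is a field. For (1) I would argue both implications by rephrasing inertness in terms of units. Assuming $K\subseteq S$ is weakly strongly inert, I take $u\in\mathrm U(S)$ and apply the defining condition to the nonzero pair $(u,u^{-1})$: from $uu^{-1}=1\in K\setminus\{0\}$ inertness yields $u,u^{-1}\in K$, hence $u\in K\setminus\{0\}=\mathrm U(K)$, and with the trivial inclusion $\mathrm U(K)\subseteq\mathrm U(S)$ this gives $\mathrm U(K)=\mathrm U(S)$, i.e. SL. Conversely, if $K\subseteq S$ is SL and $a,b\in S$ are nonzero with $ab\in K\setminus\{0\}=\mathrm U(K)=\mathrm U(S)$, then $ab$ is a unit of $S$, so $a,b\in\mathrm U(S)=\mathrm U(K)\subseteq K$; this is weak strong inertness. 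This is the computation of Remark~\ref{7.2}, promoted to an equivalence by the field hypothesis.

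For (2) the point is that when $S$ is a domain the two inertness notions coincide: for nonzero $a,b$ in a domain one automatically has $ab\neq0$, so ``$ab\in K$'' and ``$ab\in K\setminus\{0\}$'' impose the same condition. Combined with (1) this gives strongly inert $\Leftrightarrow$ SL (the implication strongly inert $\Rightarrow$ SL is also Proposition~\ref{7.1}(1)). For algebraic closedness I would take $s\in S$ algebraic over $K$: then $K[s]$ is a finite-dimensional $K$-algebra which is a domain, hence a field, so every nonzero element of $K[s]$ is a unit of $K[s]$ and thus of $S$. Since SL gives $\mathrm U(S)=K\setminus\{0\}$, we get $K[s]\setminus\{0\}\subseteq K$, whence $K[s]=K$ and $s\in K$; thus $K$ is algebraically closed in $S$.

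The substance lies in (3). The reverse implication is immediate: if $K=S$ the lattice $[K,S]$ is trivial, and if $K\cong\mathbb Z/2\mathbb Z$ with $S\cong K^n$ then $S$ is a finite ring, so $[K,S]$ is finite, giving FIP and hence FCP. For the forward implication, assume $K\subseteq S$ is SL and FCP. First $\mathrm{Nil}(K)=\mathrm{Nil}(S)$ by Proposition~\ref{3.28}, so $S$ is reduced. Since an FCP extension is quasi-Pr\"ufer, I factor $K\subseteq\overline K\subseteq S$ with $K\subseteq\overline K$ integral and $\overline K\subseteq S$ Pr\"ufer, $\overline K$ being the integral closure of $K$ in $S$. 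The integral part is FCP, hence module finite, so $\overline K$ is a finite-dimensional reduced $K$-algebra, i.e. a finite product $\prod_{i=1}^m L_i$ of finite field extensions of $K$; in particular $\overline K$ is semilocal and lies in $\mathcal{TP}$. The Pr\"ufer part $\overline K\subseteq S$ is SL by Corollary~\ref{3.10}, hence local, so Proposition~\ref{3.18} forces $\overline K=S$. It then remains to read off the field structure from SL: embedding $K$ diagonally in $S=\prod_{i=1}^m L_i$, the equality $\mathrm U(K)=\mathrm U(S)=\prod_i L_i^{*}$ says every coordinatewise unit tuple is a constant tuple from $K^{*}$; testing a unit that differs from $1$ in a single coordinate shows that if $m\ge2$ then each $L_i^{*}$ is trivial, so $L_i\cong\mathbb F_2$ and hence $K\cong\mathbb Z/2\mathbb Z$, $S\cong K^m$, while $m=1$ gives $K=L_1=S$.

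The step I expect to be the main obstacle is inside (3): justifying $\overline K=S$ through Proposition~\ref{3.18} (a Pr\"ufer local extension over a $\mathcal{TP}$ ring is trivial) and correctly identifying $\overline K$ as a finite product of finite fields, followed by the coordinate bookkeeping that converts the unit-group equality into the conclusion that every factor is $\mathbb F_2$. The preliminary reductions (reducedness of $S$, the quasi-Pr\"ufer factorization, and passage to the subextension) are routine; the real content is the interplay between integrality, the triviality of the Pr\"ufer part, and the diagonal unit analysis.
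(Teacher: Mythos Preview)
Your argument is correct in all three parts, but parts (1)–(2) and especially part (3) take a different path from the paper.

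For (1) and (2) the paper invokes external results from \cite{AC} (their Theorem~4.1, Proposition~3.1, Theorem~3.5) for the implications weakly strongly inert $\Rightarrow$ SL, the passage to strongly inert over a domain, and algebraic closedness. You instead give short direct arguments: the pair $(u,u^{-1})$ for the first, the trivial observation that $ab\neq 0$ in a domain for the second, and the $K[s]$-is-a-field computation for the third. Your route is more self-contained and loses nothing.

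For (3) the difference is more substantial. The paper first argues that $S$ is semilocal (using that an FCP extension is a finite chain of minimal extensions, each of which perturbs the maximal spectrum by at most one ideal, via Proposition~\ref{5.521} and Definition~\ref{5.06}), and then applies the heavy structure Theorem~\ref{5.4} and Lemma~\ref{5.04} to read off $K\cong\mathbb Z/2\mathbb Z$ and $S\cong K^n$. Your route avoids Theorem~\ref{5.4} entirely: you use reducedness (Proposition~\ref{3.28}), the quasi-Pr\"ufer factorization of an FCP extension at the integral closure $\overline K$, the fact that integral FCP is module finite so $\overline K$ is a finite product of finite field extensions of $K$ (hence semilocal and in $\mathcal{TP}$), and then kill the Pr\"ufer part via Proposition~\ref{3.18}. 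The concluding coordinate analysis of $\mathrm U(K)=\prod_i L_i^{*}$ is elementary. Your approach is more direct and does not rely on the Section~8 machinery; the paper's approach has the advantage of reusing its main structural theorem, so the corollary becomes an immediate illustration of Theorem~\ref{5.4}.
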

\begin{proof} Since $\mathrm U(K)=K\setminus \{0\}$, we get that $K\subseteq S$ is SL if and only if $\mathrm U(S)=K\setminus \{0\}$.

(1) Assume that $K\subseteq S$ is SL and let $a,b\in S\setminus\{0\}$ be such that $ab\in K\setminus\{0\}$. Then, $ab\in\mathrm U(S)$, so that $a,b\in\mathrm U(S)=K\setminus\{0\}\subseteq K$ and $K\subseteq S$ is weakly strongly inert. Conversely, if $ K\subseteq S$ is weakly strongly inert, then $K\subseteq S$ is SL according to \cite[Theorem 4.1]{AC}.

(2) Assume that $K\subseteq S$ is SL and, in addition, that $S$ is an integral domain. It follows that $K\subseteq S$ is weakly strongly inert by (1) and $\mathrm Z(S)=\mathrm Z (K)=\{0\}$, so that $K\subseteq S$ is strongly inert by \cite[Proposition 3.1]{AC} and algebraically closed by \cite[Theorem 3.5]{AC}. Conversely, if $K\subseteq S$ is  strongly inert, then $K\subseteq S$ is SL according to \cite[Proposition 3.1]{AC}.

(3) Assume that $K\subseteq S$ is SL.

 If $K=S$, then $K\subseteq S$ has FCP.

If $K\cong\mathbb Z/2\mathbb Z$ and $S\cong K^n$ for some integer $n$, then $K\subseteq S$ has FCP by \cite[Proposition 1.4]{Pic 9} or because $S$ has finitely many elements.

Conversely, assume that $K\subseteq S$ has FCP. Then $S$ is semilocal by Proposition \ref{5.521} and Definition \ref{5.06}. It follows from Theorem \ref{5.4} that $K\cong\mathbb Z/2\mathbb Z$ because $\mathrm{MSupp}(S/K)=\{0\}$ and $S\cong K^n$ for some integer $n$ by Lemma \ref{5.04} since $K\subseteq S$ is infra-integral, and then integral. 
\end{proof}

We say that an extension $R\subseteq S$ is {\it semi-inert} if for nonzero $a,b\in S,\ ab\in R$ implies either $a\in R$ or $b\in R$. Such an extension exists by \cite[Proposition 3.1]{FO}, for example a minimal Pr\"ufer extension. 
 
More generally, if $R\subseteq S$ is semi-inert and $S\subseteq T$ is strongly inert, then $R\subseteq T$ is  semi-inert.
 
\begin{proposition} \label{7.3} Let $R\subseteq S$ be a semi-inert extension. Then $R\subseteq S$ is integrally closed. If, in addition, $R\subseteq S$ is local, then $R\subseteq S$ is SL.
\end{proposition}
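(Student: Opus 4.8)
The plan is to treat the two assertions separately, the integral-closedness being the substantive part and the SL conclusion being a short unit-theoretic consequence of semi-inertness together with the local hypothesis.

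For the first assertion I would prove that $R$ is integrally closed in $S$ by a minimal-degree descent. Suppose toward a contradiction that some $x\in S$ is integral over $R$ but $x\notin R$; then $x\neq 0$ since $0\in R$. Choose a monic $f(X)=X^{n}+a_{n-1}X^{n-1}+\cdots+a_{1}X+a_{0}\in R[X]$ of least degree $n\geq 1$ with $f(x)=0$. The case $n=1$ gives $x=-a_{0}\in R$ at once, so assume $n\geq 2$. Rewriting $f(x)=0$ as $x\,y=-a_{0}$ with $y:=x^{n-1}+a_{n-1}x^{n-2}+\cdots+a_{1}$ puts the product $xy$ into $R$. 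Two observations drive the argument: first, $y\neq 0$, for otherwise $x$ would already satisfy a monic relation of degree $n-1$, contradicting minimality; second, $x\neq 0$. Hence semi-inertness applies to $xy=-a_{0}\in R$, and since $x\notin R$ it forces $y\in R$. But then $x^{n-1}+a_{n-1}x^{n-2}+\cdots+a_{2}x+(a_{1}-y)=0$ exhibits $x$ as a root of a monic polynomial of degree $n-1$ with coefficients in $R$ (using $a_{1}-y\in R$), again contradicting the minimality of $n$. Therefore no such $x$ exists and $R\subseteq S$ is integrally closed.

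For the second assertion, assume in addition that $R\subseteq S$ is local, so that $\mathrm{U}(R)=\mathrm{U}(S)\cap R$. Since one always has $\mathrm{U}(R)\subseteq\mathrm{U}(S)$, it suffices to prove $\mathrm{U}(S)\subseteq\mathrm{U}(R)$. Given $u\in\mathrm{U}(S)$ with inverse $v\in\mathrm{U}(S)$, both are nonzero (the ring being nonzero) and $uv=1\in R$. Semi-inertness yields $u\in R$ or $v\in R$. In either case the element lying in $R$ is a unit of $S$ contained in $R$, hence a unit of $R$ by the local hypothesis; its inverse computed in $R$ then coincides with the given inverse in $S$ by uniqueness of inverses, so both $u$ and $v$ land in $\mathrm{U}(R)$. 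Thus $u\in\mathrm{U}(R)$, giving $\mathrm{U}(S)=\mathrm{U}(R)$, that is, $R\subseteq S$ is SL.

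The main obstacle is the descent step in the first part: one must check that the degree genuinely drops and handle the edge cases, namely $x=0$, $y=0$, and the smallest values of $n$ (for instance $n=2$, where $y=x+a_{1}$ and $y\in R$ gives $x\in R$ directly). Minimality of $n$ disposes of all of these uniformly. Once that is in place the second part is routine, requiring only that semi-inertness be invoked on the complementary pair $u,u^{-1}$ and combined with the definition of a local extension.
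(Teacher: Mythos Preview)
Your proof is correct and follows essentially the same approach as the paper: a minimal-degree descent on a monic relation for the integral-closedness (factoring out $x$ and using that both factors are nonzero to invoke semi-inertness, then contradicting minimality), and for the SL part applying semi-inertness to $u\cdot u^{-1}=1\in R$ combined with the local hypothesis and uniqueness of inverses. The only cosmetic difference is that you spell out the degree-$(n-1)$ relation explicitly, whereas the paper simply appeals to minimality of $n$.
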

   
\begin{proof} Let $s\in S\setminus R$ and assume that $s$ is integral over $R$. Then $s^n+\sum_{i= 0}^{n-1}a_is^{i}=0\ (*)$ for some positive integer $n$ and $a_i\in R$ for any $i\in\{0, \ldots,n-1\}$. We may assume that $n$ is the least integer such that $(*)$ is satisfied. Then, $n>1$ since $s\not\in R$. It follows that $(*)$ implies $s(s^{n-1}+\sum_{i=0}^{n-2}a _{i+1}s^{i})=-a_0\in R$. Since $s\neq 0$ because $s\in S\setminus R$ and $s^{n-1}+\sum_{i=0}^{n-2}a_{i+1}s^{i}\neq 0$ by the choice of $n$, it follows that either $s\in R$ or $s^{n-1}+\sum_{i=0}^{n-2}a_{i+1}s^{i}\in R$ because $R\subseteq S$ is semi-inert. In both cases, we get a contradiction, giving that $s$ is not integral over $R$. Then, $R\subseteq S$ is integrally closed.
  
Obviously, $\mathrm U(R)\subseteq\mathrm U(S)$. Assume, in addition, that $R\subseteq S$ is local. Then, $\mathrm U(S)\cap R=\mathrm U(R)$. Let $s\in\mathrm U (S)$. There exists $t\in\mathrm U(S)$ such that $st =1\in R\ (**)$, that is $s=t^{-1}$. Since $R\subseteq S$ is semi-inert, it follows that either $s$ or $t\in R$. Assume first that $s\in R$. Then, $s\in\mathrm U(S)\cap R=\mathrm U(R)$. Assume now that $s\not\in R$. We get that $t\in R\cap\mathrm U (S)=\mathrm U(R)$. Then, $t$ has an inverse in $R$, which is unique and is also its inverse in $S$, so that $s\in\mathrm U(R)$, a contradiction since $s\not\in R$. Hence, $\mathrm U(S)=\mathrm U(R)$ and $R\subseteq S$ is SL.
 \end{proof}
 
 We can  also build SL extensions with cyclotomic extensions.

 \begin{proposition} \label{7.4} Let $p$ be a prime integer different from 2 and such that 2 is a primitive root in $\mathbb Z/p\mathbb Z$. 
 
Set $K:=\mathbb Z/2\mathbb Z,\ L:=K[X]/(X^{p-1}+\cdots+1),\ R:=K[X]/(X^p-1)$ and $S:=K[X]/(X^{p+1}-X)$. The following properties hold:
  \begin{enumerate}
\item$X^{p-1}+\cdots+1$ is irreducible over $K$.
 \item There exists an injective ring morphism $f:R\to S$.
 \item $f$ is SL.
  \end{enumerate}
     \end{proposition}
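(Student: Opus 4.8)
The plan is to establish the three assertions in order, with everything after the irreducibility statement reducing to the Chinese Remainder Theorem and a unit count.

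For (1), I would recognize $\Phi(X):=X^{p-1}+\cdots+1$ as the $p$-th cyclotomic polynomial $\Phi_p(X)=(X^p-1)/(X-1)$ and invoke the classical description of its factorization over a finite field. Since $\gcd(2,p)=1$, all irreducible factors of $\Phi_p$ over $K=\mathbb F_2$ share the same degree, equal to the multiplicative order of $2$ in $(\mathbb Z/p\mathbb Z)^\times$. By hypothesis $2$ is a primitive root modulo $p$, so this order is $p-1=\deg\Phi_p$, forcing $\Phi_p$ to be irreducible; equivalently, a primitive $p$-th root of unity $\xi$ generates $\mathbb F_{2^{p-1}}$ over $K$, and its minimal polynomial, of degree $p-1$, must be $\Phi_p$ itself. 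Hence $L=K[X]/(\Phi_p)$ is the field $\mathbb F_{2^{p-1}}$.

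For (2), I would factor both defining polynomials over $K$. Because $p$ is odd, $X^p-1=(X+1)\Phi_p(X)$ and $X^{p+1}-X=X(X+1)\Phi_p(X)$, and the three factors $X$, $X+1$, $\Phi_p$ are pairwise coprime (one checks $\Phi_p(0)=\Phi_p(1)=1\neq 0$ in $K$, using that $p$ is odd, while $X$ and $X+1$ are obviously coprime). The Chinese Remainder Theorem then yields $R\cong K\times L$ and $S\cong K\times K\times L$, with $L$ a field by (1). I would take $f$ to be the diagonal embedding $(a,\ell)\mapsto(a,a,\ell)$, which is a unital ring morphism and is visibly injective. (Concretely, $f$ is a section of the natural surjection $S\twoheadrightarrow R$ coming from $X^p-1\mid X^{p+1}-X$.)

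For (3), I would compute the unit groups through the CRT decompositions: $\mathrm U(R)=\mathrm U(K)\times\mathrm U(L)=\{1\}\times(L\setminus\{0\})$ and $\mathrm U(S)=\{1\}\times\{1\}\times(L\setminus\{0\})$, since $\mathrm U(K)=\{1\}$ and $L$ is a field. Then $f(\mathrm U(R))=\{(1,1,\ell)\mid \ell\in L\setminus\{0\}\}=\mathrm U(S)$, so $f$ is SL. Alternatively, since all three rings are finite, it suffices to note that $f(\mathrm U(R))\subseteq\mathrm U(S)$ automatically, that $f$ is injective, and that $|\mathrm U(R)|=2^{p-1}-1=|\mathrm U(S)|$. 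The only real obstacle is step (1): once the irreducibility of $\Phi_p$ over $\mathbb F_2$ is secured from the primitive-root hypothesis, parts (2) and (3) are routine CRT bookkeeping and a unit count.
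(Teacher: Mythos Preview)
Your proof is correct and follows essentially the same route as the paper: the irreducibility in (1) is the standard cyclotomic-over-$\mathbb F_2$ fact (the paper cites Lidl--Niederreiter), the CRT decompositions $R\cong K\times L$ and $S\cong K^2\times L$ with the diagonal embedding $(a,\ell)\mapsto(a,a,\ell)$ are exactly the paper's construction in (2), and the unit-count argument $|\mathrm U(R)|=|\mathrm U(L)|=|\mathrm U(S)|$ is the paper's proof of (3). Your extra verification that $X$, $X+1$, $\Phi_p$ are pairwise coprime and your explicit description of the unit groups add useful detail the paper omits.
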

\begin{proof} (1) comes from \cite[Theorem 2.47]{LN} because 2 is a primitive root in $\mathbb Z/p\mathbb Z$.

(2) Since $X^p-1=(X-1)(X^{p-1}+\cdots+1)$, the Chinese Remainder Theorem shows that $R\cong[K[X]/(X-1)]\times[K[X]/(X^{p-1}+\cdots+1)]\cong K\times L$ with $K\subseteq L$ a field extension of degree $p-1$. Similarly, $S\cong[K[X]/(X)]\times[K[X]/(X -1)]\times[K[X]/(X^{p-1}+\cdots+1)]\cong K^2\times L$. There is an injective ring morphism $g:K\times L\to K^2\times L$, given by $(x,y)\mapsto(x,x,y)$. Then, the following commutative diagram 
$$\begin{matrix}
 K\times L & \overset{g}\to & K^2\times L \\
 \uparrow &           {}           & \downarrow \\ 
      R       &  \overset{f}\to   &         S        
\end{matrix}$$
implies an injective ring morphism $f:R\to S$ gotten by 
$$f:R\to K\times L\to K^2\times L\to S$$ 
In fact, $g$ is defined as $g:=(g_1,g_2)$ where $g_1:K\to K^2$ is the diagonal map and $g_2$ is the identity on $L$. From now, we may identify $R$ with $K\times L,\ S$ with $K^2\times L$ and $f$ with $g$. 

Let $x$ be the class of $X$ in $L$, so that $L=K[x]$. Setting $y:=(0,x)\in R$, we get that $y^p=(0,1_L)$ and any element of $R$ is of the form $(a, b)$, where $a\in K=\{0,1_K\}$ and $b=\sum_{i=0}^{p-2}\alpha_ix^{i}\in L$ with $\alpha_i\in K$ for each $i$. Then, $(a,b)=(a,0)+(0,\sum_{i=0}^{p-2} \alpha_ix^{i})=(a,a)-(0,a)+(0,\sum_{i=0}^{p-2}\alpha_ix^{i})$ can be written as $a.1_R+\sum_{i=0}^{p-2}\beta_iy^{i}$, with $a$ and $\beta_i\in K$ for each $i$. It follows that $R=K[y]$. 

Setting $z:=(0,0,x)$, we get that $z=f(y)$, so that we have the injective ring morphism $ f:R\to S$ defined by $f(y)=z$. We may remark that $S\neq K[z]$ because $(0,1,0)\in S\setminus K[z]$. 

(3) Since $|\mathrm U(K)|=1$, we get that $|\mathrm U(R)|=|\mathrm U(K\times L)|=|\mathrm U(L)|=|\mathrm U(K^2\times L)|=|\mathrm U(S)|$. It follows that $f(\mathrm U (R))=\mathrm U(S)$ since $f(\mathrm U(R))\subseteq\mathrm U(S)$ and $f$ is  SL. 
\end{proof}
 
In the previous proposition the injective ring morphism $f:R\to S$ we built may not be unique as it is shown in the following example:

\begin{example} \label{7.5} Set $K:=\mathbb Z/2\mathbb Z,\ R:=K[X]/(X^3-1)$ and $S:=K [X]/(X^4-X)$. We will build two injective ring morphisms $f:R\to S$ which are  SL.
 
Let $y$ be the class of $X$ in $R$ and $t$ the class of $X$ in $S$, so that $R=K[y]$ and $S=K[t]$. We may use the proof of Proposition \ref{7.4} with $p=3$ since 2 is a primitive root in $\mathbb Z/3\mathbb Z$. Then, we get that $|\mathrm U(R)|=3$ giving $\mathrm U(R)=\{1,y,y^2\}$. Since there exists an injective ring morphism $f:R\to S$ which is SL, we also have $|\mathrm U(S)|=3$. As $f$ is also a linear morphism over the $K$-vector space $R$, we may define $f$ by the image of the basis $\{1,y,y^2\}$ of $R$ over $K$. An easy calculation of $(a+bt+ct^2+dt^3)^3=1$, 
 with $a,b,c,d\in K$ shows that $\mathrm U(S)=\{1,t^3+t+1,t^3+t^2+1\}$. Since we must have $f(1)=1$ and $f(y)\neq f(y^2)$ both in $\mathrm U(S)$, we get that $f(y)$ and $f(y^2)$ are the two different elements of $\{t^3+t+1,t^3+t^2+1\}$. Whatever the value we give to $y$, we get that $f (y^2)=f(y)^2$.

 So, such an $f$ is a ring morphism, which is obviously injective. At last, $f(\mathrm U (R))=\mathrm U(S)$ shows that $f:R\to S$ is SL. There are two such morphisms: $f_1$ and $f_2$ defined by $f_1(y)=t^3+t+1$ and $f_2(y)=t^3+t^2+1$. 
\end{example}
 
 \begin{proposition}\label{7.6} Let $R$ be a ring. Set $K:=\mathbb Z/2 \mathbb Z,\ T:=K^n\times R$ and $S=K^m\times R$, where $n<m$ are two positive integers. There is an injective ring morphism $f:T\to S$ given by $f(x,y):=(\varphi(x),y)$ where  $\varphi:K^n\to K^m$ is an injective ring morphism. Then, $f$ is SL.
       \end{proposition}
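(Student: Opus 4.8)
The plan is to reduce the whole statement to the single fact that $\mathrm U(K)=\{1\}$, which forces the unit groups of $K^n$ and $K^m$ to be trivial. First I would record that, since $K=\mathbb Z/2\mathbb Z$ has $\mathrm U(K)=\{1\}$ and units of a finite product are computed coordinatewise, the only unit of $K^n$ is its identity $1_{K^n}=(1,\dots,1)$, and likewise the only unit of $K^m$ is $1_{K^m}$. Hence $\mathrm U(T)=\mathrm U(K^n)\times\mathrm U(R)=\{1_{K^n}\}\times\mathrm U(R)$ and $\mathrm U(S)=\{1_{K^m}\}\times\mathrm U(R)$, so that both unit groups are in canonical bijection with $\mathrm U(R)$.

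Next I would exploit that $\varphi$, being a unital ring morphism, sends $1_{K^n}$ to $1_{K^m}$. The defining formula then gives, for every $u\in\mathrm U(R)$, the equality $f(1_{K^n},u)=(\varphi(1_{K^n}),u)=(1_{K^m},u)$. Letting $u$ range over $\mathrm U(R)$ yields $f(\mathrm U(T))=\{(1_{K^m},u)\mid u\in\mathrm U(R)\}=\mathrm U(S)$, which is exactly the assertion that $f$ is SL. The injectivity of $f$ comes for free: $f(x,y)=f(x',y')$ forces $\varphi(x)=\varphi(x')$ and $y=y'$, whence $x=x'$ since $\varphi$ is injective (and such an injective unital $\varphi$ exists for $n<m$, e.g.\ by repeating one coordinate).

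A slicker, equivalent route is to view $f$ as the product $\varphi\times\mathrm{id}_R$ and to check that each factor is SL: the morphism $\varphi\colon K^n\to K^m$ is SL because it carries the unique unit $1_{K^n}$ onto the unique unit $1_{K^m}$, while $\mathrm{id}_R$ is trivially SL; the SL property then propagates to the product exactly as in Proposition \ref{3.2}. I do not expect any genuine obstacle here; the only point deserving care is the bookkeeping that $\varphi$ preserves the identity, for this is precisely what guarantees that the first coordinate of every element of $f(\mathrm U(T))$ lands on the all-ones tuple $1_{K^m}$ --- a unit --- rather than on some idempotent of $K^m$ that fails to be a unit.
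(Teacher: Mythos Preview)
Your argument is correct and follows essentially the same route as the paper: both reduce to the observation that $\mathrm U(K^n)=\{1_{K^n}\}$ and $\mathrm U(K^m)=\{1_{K^m}\}$, so that $\mathrm U(T)$ and $\mathrm U(S)$ are each identified with $\mathrm U(R)$ and $f$ carries one onto the other. If anything, your explicit computation $f(1_{K^n},u)=(1_{K^m},u)$ is cleaner than the paper's cardinality shortcut $|\mathrm U(T)|=|\mathrm U(S)|$, which as stated would not suffice when $\mathrm U(R)$ is infinite.
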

\begin{proof} Since $n<m$, we can build an injective ring morphism $\varphi:K^n\to K^m$ given, for instance, by $\varphi(x_1,\ldots,x_n)=(x_1,\ldots,x_n,x_n,\ldots,x_n)$ where the $m-n$ last terms are all equal to $x_n$. Then, the map $f:T\to S$ given by $f(x,y):= (\varphi(x),y)$ is an injective ring morphism. Since $K=\mathbb Z/2\mathbb Z$, it follows that $|\mathrm U(K)|=1=|\mathrm U(K^n)|=|\mathrm U(K^m)|$. But, $|\mathrm U(T)|=|\mathrm U(K^n)||\mathrm U(R)|$ and $|\mathrm U(S)|=|\mathrm U(K^m)||\mathrm U(R)|$, giving $|\mathrm U(T)|=|\mathrm U(S)|=|\mathrm U(R)|$, so that $f$  is SL.
\end{proof}

\end{document}